\documentclass[11pt,reqno,tbtags]{amsart}

\usepackage{amsthm,amssymb,amsfonts,amsmath}
\usepackage{dsfont}
\usepackage{amscd} 
\usepackage{mathrsfs}
\usepackage[numbers, sort&compress]{natbib} 
\usepackage{subfigure, graphicx}
\usepackage{vmargin}
\usepackage{setspace}
\usepackage{paralist}
\usepackage{hyperref}
\hypersetup{
    colorlinks=true, 
    linkcolor=blue,  
    filecolor=magenta,      
    urlcolor=cyan,           
}
\usepackage{color}
\usepackage{xfrac}
\usepackage[normalem]{ulem}
\usepackage{stmaryrd} 
\usepackage[refpage,noprefix,intoc]{nomencl} 
\usepackage{comment}

\providecommand{\R}{}
\providecommand{\Z}{}
\providecommand{\N}{}
\providecommand{\NZ}{}

\renewcommand{\R}{\mathbb{R}}
\renewcommand{\Z}{\mathbb{Z}}
\renewcommand{\N}{{\mathbb N}}
\renewcommand{\NZ}{{\mathbb N_{0}}}

\newcommand{\1}[1]{\mathds{1}}
\newcommand{\indc}{\textbf{1}}
\newcommand{\E}[1]{{\mathbf E}\left[#1\right]}

\newcommand{\p}[1]{{\mathbf P}\left\{#1\right\}}

\newcommand\cB{\mathcal B}
\newcommand\cC{\mathcal C}

\newcommand\cS{{\mathcal S}}

\newcommand{\tildep}{\tilde{p}}

\newcommand{\smallest}{\mathop{\rm smallest}}

\newcommand{\dist}{\mathrm{dist}}
\newcommand{\bdd}{\mathcal{B}^{+}_{\sfrac{1}{2}}(\Z^{d})\cap L^{1}(\Z^{d})}

\providecommand{\ora}[1]{}
\renewcommand{\ora}[1]{\overrightarrow{#1}}
\DeclareRobustCommand{\SkipTocEntry}[5]{} 
\expandafter\def\csname SCALED:x\endcsname{\xi}
\expandafter\def\csname SCALED:t\endcsname{\tau}
\newcommand{\scaled}[1]{
    \ifcsname SCALED:#1\endcsname
        \csname SCALED:#1\endcsname
    \else
        \textbf{[ERROR IN \textbackslash scaled MACRO]}
    \fi
}

\newtheorem{thm}{Theorem}
\newtheorem{lem}[thm]{Lemma}
\newtheorem{prop}[thm]{Proposition}
\newtheorem{cor}[thm]{Corollary}
\newtheorem{dfn}[thm]{Definition}
\newtheorem{definition}[thm]{Definition}

\theoremstyle{remark}
\newtheorem{remark}[thm]{Remark}
\numberwithin{equation}{section}
\numberwithin{thm}{section}

\makenomenclature										
\newcommand{\tmesh}{\Delta_t}
\newcommand{\xmesh}{\Delta_x}
\newcommand{\tgrid}{\NZ\tmesh}
\newcommand{\xgrid}{\Z^{d}\xmesh}
\newcommand{\norm}[1]{\|#1\|}
\newcommand{\BD}[1]{\mathcal{B}_{#1}^{+}}

\newcommand{\floor}[1]{\left\lfloor#1\right\rfloor}

\DeclareMathOperator{\BV}{BV}
\DeclareMathOperator{\TV}{TV}
\DeclareMathOperator{\loc}{loc}

\DeclareMathOperator{\CM}{CM}
\DeclareMathOperator{\divergence}{div}

\newcommand{\ve}{\varepsilon}
\newcommand{\om}{\omega}
\newcommand{\vp}{\varphi}

\newcommand{\La}{\Lambda}

\newcommand{\Ga}{\Gamma}
\newcommand{\al}{\alpha}
\newcommand{\ga}{\gamma}

\gdef\x{k}
\gdef\y{\ell}
\gdef\time{{T_0}}

\newcommand{\scheme}{\mathcal{S}}

\begin{document}
\title{Symmetric Cooperative Motion in Higher Dimensions} 
\author{Louigi Addario-Berry}
\author{Gavin Barill}
\author{Hannah Cairns}
\author{Jessica Lin}
\email{louigi.addario@mcgill.ca}
\email{gavinpcb@gmail.com}
\email{hannah.abigail.cairns@gmail.com}
\email{jessica.lin@mcgill.ca}

\keywords{recursive distributional equations, convergence of finite difference schemes}
\subjclass[2010]{Primary: 60F05, 60K35, Secondary: 65M12} 

\begin{abstract} 
We prove a distributional convergence result for a multidimensional version of symmetric cooperative motion which was introduced and studied in one dimension in \cite{HRW, SCM1}. Our approach relies on framing the associated recursive distributional equation as a discretization of the porous medium equation. A major challenge is to analyze the behaviour of finite difference schemes which approximate weak solutions of the porous medium equation with unbounded initial data. In overcoming this difficulty, we perform a detailed analysis of the probability mass function of symmetric cooperative motion, in which we introduce several new comparison arguments for the discrete process. Consequently, along the way, we establish a novel multidimensional convergence result for a finite difference scheme approximating the ZKB/Barenblatt solution of the porous medium equation, which is of independent interest. 
\end{abstract}

\maketitle

\section{Introduction} \label{sec:intro}

We consider a multidimensional generalization of the process known as symmetric cooperative motion, which was considered in \cite{HRW, SCM1}. Our process takes place in the lattice $\mathbb Z^d$, where the neighbours of a point $\x \in \mathbb Z^d$ are the points $\x \pm e_i$ for $i = 1, \ldots, d$, where $\{e_{1},\dots,e_{d}\}$ is the standard basis of $\R^{d}$.

For an integer $m \geq 1$,  a {\em $\CM(m,d)$-cooperative motion started from $0\in \Z^{d}$} is a $\Z^d$-valued stochastic process $(X^{n}, n \geq 0)$ defined as follows. Let $(E^{n}, n\geq 0)$ be IID with $\p{E^1=\pm e_{i}}=\frac{1}{2d}$ for all $i\in \left\{1, \ldots, d\right\}=:[d]$. Starting from $X^{0}=0$, we let
\begin{equation}\label{eq:defining_equation}
	X^{n+1}=\begin{cases}
		X^{n}+E^{n}&\text{if $X^{n}=X^{n,1}=X^{n,2}=\ldots=X^{n,m}$}\\
		X^{n}&\text{otherwise},
	\end{cases}
\end{equation}
where $X^{n,1},\ldots X^{n,m}$ are $m$ independent copies of $X^n$. 

As in \cite{HRW, SCM1}, one way to realize $\CM(m,d)$-cooperative motion is using a tree-indexed random process. Let $\mathcal{T}$ be the complete rooted $(m+1)$-ary tree, with root labeled $\emptyset$ and node $v$ having children $(vi: 1\leq i\leq m+1)$. Let $\mathcal{T}_{n}$ denote the subtree of $\mathcal{T}$ containing nodes at distance at most $n$ from the root, and let $\mathcal{L}_{n}$ denote the leaves of $\mathcal{T}_{n}$.

For each $n\in \mathbb{N}$, we let $E^{n}=(E_v: v\in \mathcal{T}_{n}\setminus \mathcal{L}_{n})$ be as above. For $v\in \mathcal{L}_{n}$, let $\Sigma^{n}_{v}=0\in \Z^{d}$. For $v\in \mathcal{T}_{n}\setminus \mathcal{L}_{n}$, we recursively define 
\begin{equation*}
\Sigma^{n}_{v}:=\begin{cases}\Sigma^{n}_{v1}+E_{v}&\text{if $\Sigma^{n}_{v1}=\Sigma^{n}_{v2}=\ldots=\Sigma^{n}_{v(m+1)}$}\\
\Sigma^{n}_{v1}&\text{otherwise.}
\end{cases}
\end{equation*}
This recursion leads to an ``output value'' $\Sigma^{n}_{\emptyset}$ at the root, which has the same distribution as $X^{n}$. As described in \cite{HRW}, this model has natural connections to the random hierarchical lattice introduced by Hambly and Jordan \cite{MR2079916}. In the case when $m$ is positive and non-integer, one can still define a $\CM(m,d)$-distributed process, using a \emph{recursive distributional equation} (RDE) (i.e. a process whose probability mass function defined by \eqref{e.pdef} satisfies \eqref{e.pscheme}). Moreover, by translating the process, one can also consider a $\CM(m,d)$-cooperative motion started at $x\in \Z^{d}$.

The main result (and primary motivation) of this paper is the following distributional convergence result for a $\CM(m,d)$-distributed process in dimension $d>1$. 
\begin{thm} \label{t.main}
	Let $d>1$, $m\geq 1$, and $(X^{n}, n \geq 0)$ be $\CM(m,d)$-distributed started from $x\in \Z^{d}$. There exists an $\R^{d}$-valued random variable $B$, whose density with respect to Lebesgue measure is given by $\bar{u}(\cdot,1)$ as defined in \eqref{e.Bdef}, such that 
	\begin{equation*}
		\frac{X^{n}}{n^{1/(dm+2)}} \xrightarrow[n\to\infty]{d} B.
	\end{equation*}
\end{thm}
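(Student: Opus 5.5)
The plan is to view the recursion for the probability mass function $p(\cdot,n)=\mathbf P\{X^n=\cdot\}$ as an explicit finite difference scheme for the porous medium equation. Then we pass to the limit along the natural parabolic scaling. Conditioning on the first step in the tree representation gives
\begin{equation*}
p(k,n+1)=p(k,n)-p(k,n)^{m+1}+\frac{1}{2d}\sum_{i=1}^{d}\bigl(p(k+e_i,n)^{m+1}+p(k-e_i,n)^{m+1}\bigr),
\end{equation*}
that is, $p(\cdot,n+1)-p(\cdot,n)=\frac{1}{2d}\Delta_{\mathrm{disc}}\, p(\cdot,n)^{m+1}$. This is a discretization of $u_t=\frac{1}{2d}\Delta(u^{m+1})$. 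Mass is conserved, and the scaling $u\mapsto \lambda^{d}u(\lambda x,\lambda^{dm+2}t)$ preserves both the equation and the mass.

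Set $\epsilon=n^{-1/(dm+2)}$. Define the rescaled profile $u_n(\xi,\tau)=\epsilon^{-d}p(\lfloor \xi/\epsilon\rfloor, \lfloor \tau/\epsilon^{dm+2}\rfloor)$. This is a scheme with mesh $\Delta_x=\epsilon$ and $\Delta_t=\epsilon^{dm+2}$. The initial datum is $\epsilon^{-d}\delta_0$, which approximates a unit Dirac mass; this is why the problem involves unbounded initial data. Showing $X^n/n^{1/(dm+2)}\to B$ in distribution amounts to showing that $u_n(\cdot,1)\to\bar u(\cdot,1)$ weakly as measures. Here $\bar u$ is the ZKB/Barenblatt solution of \eqref{e.Bdef} with unit mass. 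Since the Barenblatt profile is continuous and compactly supported, weak convergence together with conservation of mass and tightness suffices.

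The key steps, in order, are as follows.

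\textbf{(i) A priori estimates on $p$.}
\begin{itemize}
\item An $L^\infty$ decay bound $\sup_k p(k,n)\le C n^{-d/(dm+2)}$, matching the Barenblatt decay. I would obtain this by comparison and monotonicity of the scheme; the map $p\mapsto p-p^{m+1}+\frac1{2d}\sum p(\cdot\pm e_i)^{m+1}$ is monotone where $p$ is small enough, and $p\le 1$ always.
\item A bound on the support or tails: $p(k,n)=0$ for $|k|>n$ trivially. More usefully, I would show $\sum_{|k|>Rn^{1/(dm+2)}}p(k,n)\to0$ uniformly as $R\to\infty$, by comparing with a discrete supersolution that mimics a translated, widened Barenblatt profile.
\end{itemize}

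\textbf{(ii) Compactness.} From the $L^\infty$ bound and an $L^1$ equicontinuity estimate (discrete $L^1$-contraction of the monotone scheme gives time continuity), I extract subsequential limits $u$ in $C([\tau_0,T];L^1)$ for every $\tau_0>0$, with mass one.

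\textbf{(iii) Identification.} I pass to the limit in the discrete weak formulation, using a discrete Laplacian on test functions, to get that $u$ is a weak solution of the porous medium equation on $(0,\infty)$. The tightness from (i) shows $u(\cdot,\tau)\to\delta_0$ as $\tau\to0$. By uniqueness of nonnegative solutions with Dirac initial data (Pierre's theorem), $u=\bar u$, so the full sequence converges.

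\textbf{Main obstacle.} I expect the hardest step to be the behavior near $\tau=0$, where the data are unbounded. This requires the uniform $L^\infty$ bound and the uniform tightness of $p(\cdot,n)$, because standard convergence theory for monotone schemes needs bounded data and CFL-type conditions ($\Delta_t\,\sup u^{m}\lesssim \Delta_x^2$, which here reads $p^m\lesssim 1$).

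My first attempt would be to exploit that the scheme is exactly monotone for $p\le (m+1)^{-1/m}$ and to handle the first few steps directly. I would then build explicit discrete super- and subsolutions from Barenblatt profiles, using concavity of the pressure $u^{m}$ on the support to control the discretization error. Comparison against these profiles would give both decay and tightness.

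Finally, convergence for a general starting point $x$ follows by translation, since $x/n^{1/(dm+2)}\to0$.
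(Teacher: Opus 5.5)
The step your plan treats as routine, the decay bound $\sup_k p(k,n)\le Cn^{-d\beta}$ of Theorem~\ref{prop.pfacts}(i), is the heart of the problem. The route you propose for it does not work as stated. That route is comparison with explicit discrete supersolutions built from Barenblatt profiles, justified by concavity of the pressure. There are two concrete obstructions.

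\emph{Unit-speed propagation.} The scheme moves mass one lattice step per time step. If $w\ge 0$ and $k\notin\mathrm{supp}\,w$ is adjacent to $\mathrm{supp}\,w$, then $\mathcal S w(k)=\frac{1}{2d}\sum_{\ell\sim k}w(\ell)^{m+1}>0$. So the support of any supersolution must grow by a full lattice step at every time step. A discretized Barenblatt profile has radius growing by only $r(n+1)-r(n)\ll 1$. It therefore fails at lattice points just outside $B_{r(n+1)}$ and adjacent to $B_{r(n)}$, where $\bar u_{n+1}=0<\mathcal S\bar u_n$.

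\emph{Wrong-sign truncation error in the core.} Since $\partial_i^4\bar u^{m+1}\ge 0$ (Lemma~\ref{l.baruprop}(ii)), the discrete second difference of $\bar u^{m+1}$ exceeds the second derivative. In lattice units this error is of order $\bar u^{m+1}r(t)^{-4}\sim \bar u\,t^{-1-2\beta}$, which dominates the time error of order $\bar u\,t^{-2}$. Hence $\bar u_{n+1}<\mathcal S\bar u_n$ near the centre for large $n$. The discrete Barenblatt is naturally an approximate \emph{sub}solution. An $L^1$-approximate supersolution would not give a pointwise upper bound anyway, since an $L^1$-small excess can be a tall spike at the origin.

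Concavity of the pressure is the continuum Aronson--B\'enilan mechanism, and you have no discrete analogue of it. The missing idea is the paper's inversion: the upper bound is \emph{derived from a lower bound}. The steps are:
\begin{enumerate}
\item Dominate $p_{n+1}$ by a volcanic solution $\tilde p_n$ (Lemma~\ref{lemma.gives.tildep}).
\item Compare $\tilde p$ with the discrete Barenblatt only as an approximate subsolution in the asymmetric-distance sense (Proposition~\ref{p.approxsub} and Lemma~\ref{errorlemma}).
\item Use volcanicity to turn this $L^1$ information into $\tilde p\ge cn^{-d\beta}$ on a ball of radius $cn^{\beta}$ (Lemma~\ref{lowerboundonp}).
\item Use the lazy-random-walk representation with jump rates $\tilde p^{\,m}$ (Theorems~\ref{optim} and~\ref{stepfive}): rates bounded below on that ball force $\sup_k\tilde p(k,n)\le Cn^{-d\beta}$.
\end{enumerate}

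Your compactness step has a second gap. $L^1$-contraction plus translation invariance only propagates a modulus of continuity that the data already have. The Dirac datum has none, and restarting at $\tau_0$ just moves the question to time $\tau_0$. So you get neither spatial nor temporal equicontinuity. The $L^\infty$ bound alone gives only weak-$*$ compactness, which is not enough to pass to the limit in $u^{m+1}$.

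The paper gets a uniform spatial modulus for $\tau\ge\eta$ from the BV bound of Theorem~\ref{prop.pfacts}(iii). That bound is proved from the almost-volcanic structure of $p_n$ together with the $L^\infty$ bound. Time continuity then follows from Kru\v{z}kov's interpolation lemma, and limits on $[\eta,T]$ are identified via Karlsen--Risebro. A discrete energy estimate for $\nabla p^{m+1}$ is another possible source of spatial compactness.

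Tightness, by contrast, needs no supersolution once the $L^\infty$ bound is known. Either $\mathbf E|X^n|^2=\sum_{j<n}\mathbf E[p(X^j,j)^m]\le Cn^{2\beta}$, or Freedman's inequality as in the paper, gives it directly. Your identification step is sound once these estimates are in place: the weak formulation, the initial trace $\delta$ from scaled tightness, and uniqueness for measure data.
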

Observe that it is enough for us to prove Theorem \ref{t.main} in the case when $x=0$, since starting at an arbitrary $x\in \Z^{d}$ does not affect the limiting distribution. 

The function $\bar{u}$ is defined by 
\begin{equation}\label{e.Bdef}
	\bar{u}(x,t):=t^{-d\beta}(C-\ga|x|^{2}t^{-2\beta})_{+}^{\frac{1}{m}},
\end{equation}
where 
\begin{equation}\label{d.constants}
	\beta:=\frac{1}{dm+2}, \qquad \ga:=\frac{d m \beta}{m+1}, 
\end{equation}
and $C$ is a constant such that $\int \bar{u}(x, t)\, dx=1$ for all $t>0$. This function is known as the ZKB/Barenblatt solution \cite{barenblatt, zel1950collection} (with mass $1$) of the porous medium equation (PME), which we will soon introduce. In the case where $d=1$ and $m=1$, the first and third author (with collaborators) proved this result in \cite{HRW} for a generic initial distribution $\mu$ (i.e. not necessarily started at a single point). In a subsequent work \cite{SCM1}, in the case where $d=1$ and $m>0$ is arbitrary, the first and fourth author proved this result (with a collaborator) also for a generic initial distribution $\mu$. In all of the prior cases, the limiting random variable in those results agrees with $B$ as in Theorem \ref{t.main}. It is for this reason that Theorem \ref{t.main} and all subsequent results of this paper are stated for $d>1$. We expect that our proof technique could be adapted to the case $d=1$, with suitable modifications, but we did not pursue this since the case $d=1$ was already covered by the existing literature.

Our approach is to study the probability mass function of $X^{n}$. For $k=(k_{1}, \ldots, k_{d})\in \mathbb{Z}^{d}$ and $n \in \N_{0} := \N \cup \{0\}$, let
\begin{equation} \label{e.pdef}
	p(k,n) := \p{X^{n}=k}.
\end{equation}

As we will see in the sequel, $p(k,n)$ satisfies an RDE which will tie the process to solutions of the PME. Our approach crucially relies on various properties of $p(k,n)$, which require substantial new ideas to establish. This denotes the second major contribution of the current work. {To state this contribution, we require one additional definition: for $q: \mathbb Z^d \to \mathbb R$, we define the total variation of $q$ on a set $D \subset \Z^{d}$ as 
\begin{equation}\label{e.dtv}
[q]_{\TV(D)} := 
\frac{1}{2}\sum_{\{u,v\in D:\, u\sim v\}} |q(v)-q(u)|.
\end{equation}}
\begin{thm} \label{prop.pfacts}
	Let $d>1$, $m\geq 1$ and $(X^{n}, n \geq 0)$ be $\CM(m,d)$-distributed started from 0. Let $p(\cdot, n)$ be the PMF of $X^{n}$ (as in \eqref{e.pdef}). There exists $C = C(d,m) > 0$ such that,
	\begin{enumerate}[(i)]
		\item \label{p.Linfty} For all $k \in \Z^{d}$ and $n \in \N$, $p(k,n) \leq Cn^{-d\beta},$
		\item \label{p.conc} For all $r > 0$ and $n \in \N$, $\sum_{k\in \Z^{d}\cap B_{rn^{\beta}}^{c}}p(k,n) \leq C \exp(-r/C)$,
		\item \label{p.BV} For all $r>0$ and $n \in \N$, $[p(\cdot,n)]_{\TV(B_{rn^{\beta}}\cap\Z^{d})} \leq C r^{d-1}n^{-\beta},$
	\end{enumerate}
	where $B_{rn^{\beta}}$ denotes the ball of radius $rn^{\beta}$ in $\R^{d}$, $B_{rn^{\beta}}^{c}$ denotes its complement, and the discrete total variation $[\cdot]_{\TV(B_{rn^{\beta}}\cap\Z^{d})}$ is defined in \eqref{e.dtv}.
\end{thm}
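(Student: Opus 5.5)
The starting point is the recursion for $p$. Conditioning on the $m+1$ children in the tree representation gives
\begin{equation*}
p(k,n+1)=p(k,n)-p(k,n)^{m+1}+\frac{1}{2d}\sum_{j\sim k}p(j,n)^{m+1},
\end{equation*}
that is, $p(\cdot,n+1)=p(\cdot,n)+\frac{1}{2d}\Delta_{\Z^d}\big(p(\cdot,n)^{m+1}\big)$. This is an explicit finite-difference scheme for the porous medium equation $u_t=\frac{1}{2d}\Delta u^{m+1}$. The scheme is conservative and translation invariant, and it commutes with the lattice reflections $k_i\mapsto 2a-k_i$ and $k_i\mapsto 2a+1-k_i$. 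It is also \emph{monotone} (order preserving, hence an $L^1$-contraction) on data with $(m+1)\sup p^m\le 1$, because $s\mapsto s-s^{m+1}$ is nondecreasing on that range.

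This hypothesis fails at $n=0$, since $p(0,0)=1$. I would therefore compute $p(\cdot,n)$ by hand for the first few steps. The goal is to find an $n_0=n_0(m,d)$ at which $\max_k p(k,n_0)<(m+1)^{-1/m}$, and to check directly that $p(\cdot,n_0)$ is symmetric and nonincreasing in each $|k_i|$. From $n_0$ onward, comparison is available.

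\textbf{Step 1 (monotonicity by reflection).} Fix $i$ and a hyperplane $k_i=a+\tfrac12$ with $a\ge 0$. Let $R$ be the reflection across it, and compare $p(\cdot,n)$ with $p(R\cdot,n)$ on the half-space $\{k_i\le a\}$. At time $n_0$ we have $p\ge p\circ R$ there by the direct check above. The scheme commutes with $R$, and it is monotone with the two functions agreeing on the reflecting boundary, so this Alexandrov-type argument propagates the inequality to all $n\ge n_0$. The same argument for the hyperplanes $k_i=a$ gives the integer reflections. The conclusion is that $p(\cdot,n)$ is even in each coordinate and nonincreasing in each $|k_i|$, so in particular $p(0,n)=\max_k p(k,n)$.

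\textbf{Step 2 (proof of (i)).} I would build a discrete supersolution from the Barenblatt profile. Set $w(k,n)=A\,\bar u(k,\,n+n_1)$, with the mass enlarged and a time shift $n_1$; if needed, flatten $w$ to a constant near the origin so that it has discrete concave pressure. The required inequality $w(\cdot,n+1)\ge w+\frac{1}{2d}\Delta w^{m+1}$ follows from Taylor expansion in the bulk. The delicate region is the free boundary. There $\bar u^{m+1}$ behaves like $(C-\gamma|x|^2)_+^{(m+1)/m}$, which is $C^1$; convexity of the positive part shows that the discrete Laplacian of the truncation is at least the truncation of the Laplacian, and this is the sign needed. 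Order preservation then gives $p\le w$, hence $p\le Cn^{-d\beta}$. I expect this barrier construction, in particular the boundary layer and the choice of constants, to be the main obstacle. A fallback is a smoothing estimate: use the monotonicity from Step 1 together with mass one to show that the decrement $p(0,n)-p(0,n+1)$ is at least $c\,p(0,n)^{1+2/d+m}$, and integrate this difference inequality.

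\textbf{Step 3 (proof of (ii)).} I would use the process rather than the PMF. Given $\mathcal F_j$, the walk moves at step $j$ with probability $p(X^j,j)^m\le Cj^{-dm\beta}$ by (i). Hence the number of moves $N_n$ is stochastically dominated by a sum of independent Bernoulli variables with mean of order $\sum_{j\le n}j^{-dm\beta}\asymp n^{1-dm\beta}=n^{2\beta}$. A Chernoff bound gives $\p{N_n>sn^{2\beta}}\le e^{-cs}$ for large $s$. Conditionally on the move times, the steps are IID uniform on $\{\pm e_i\}$, so $X^n$ is a simple random walk run for $N_n$ steps. Azuma's inequality then gives $\p{|X^n|>rn^\beta,\ N_n\le sn^{2\beta}}\le 2d\,e^{-cr^2/s}$. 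Taking $s=r$ yields the bound $Ce^{-r/C}$.

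\textbf{Step 4 (proof of (iii)).} By Step 1, along each lattice line in direction $e_i$ the function $p(\cdot,n)$ is unimodal with maximum at most $Cn^{-d\beta}$. So its variation along the part of the line inside $B_{rn^\beta}$ is at most $2Cn^{-d\beta}$. The number of such lines meeting the ball is $O((rn^\beta+1)^{d-1})$. Summing over $i\in[d]$ gives $[p(\cdot,n)]_{\TV(B_{rn^\beta}\cap\Z^d)}\le C(rn^{\beta}+1)^{d-1}n^{-d\beta}$. This is at most $Cr^{d-1}n^{-\beta}$ whenever $rn^\beta\ge 1$. When $rn^\beta<1$ the ball contains at most the origin, so the left-hand side is $0$. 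Finally, the finitely many $n<n_0$ are absorbed into the constant.
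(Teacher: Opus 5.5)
Step 2 carries the whole weight of the theorem, and it does not work. Steps 3 and 4 only derive (ii) and (iii) from (i), much as the paper does. A discretized Barenblatt profile is not a discrete supersolution, and the signs you invoke point the other way.

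\emph{Literal reading.} If $w=A\bar u(\cdot,\cdot+n_1)$, then already $\partial_t w-\frac{1}{2d}\Delta w^{m+1}=(A-A^{m+1})\partial_t\bar u$. For $A>1$ this is negative wherever $\partial_t\bar u>0$. For example, at $|x|^2=(1-\beta)r(t)^2$ it is $\asymp-\bar u/t$. This swamps the $O(\bar u\,t^{-1}r(t)^{-2}+\bar u\,t^{-2})$ discretization errors there.

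\emph{Exact Barenblatt of larger mass.} To leading order, $w_{n+1}\ge\mathcal{S}w_n$ at the origin reads $\frac12\partial_t^2\bar u\ge\frac{1}{24d}\sum_i\partial_i^4\bar u^{m+1}$. By the computation in Lemma~\ref{l.baruprop}(ii), the right side is nonnegative; at the centre it is positive of order $\bar u\,t^{-1}r(t)^{-2}$. The left side is only $O(\bar u\,t^{-2})$. Since $t/r(t)^2\to\infty$, the inequality fails for all large $n$, for every mass and time shift. Flattening near the origin does not help: on a plateau $\mathcal{S}w_n=w_n$, so the plateau height could never decrease.

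\emph{Free boundary.} Convexity of $s\mapsto s_+^{(m+1)/m}$ makes the discrete Laplacian \emph{larger} than the truncated continuum one. That enlarges $\mathcal{S}w_n$, which is the subsolution sign. Concretely, at a site $k$ with $|k|\ge r(n+1)$ and a neighbour in $B_{r(n)}$, one has $\mathcal{S}w_n(k)>0=w_{n+1}(k)$. Such sites exist at all large times, because $r(n+1)-r(n)\to0$ while $\mathcal{S}$ spreads mass a full lattice step.

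\emph{Fallback.} This is unjustified as well. Coordinatewise monotonicity and unit mass give no lower bound on $-\Delta(p^{m+1})(0)$: a plateau of height $M$ on a cube of side $\asymp M^{-1/d}$ has $\Delta(p^{m+1})(0)=0$. Ruling out such plateaus dynamically is exactly the smoothing estimate you are trying to prove.

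\emph{What the paper does instead.} It uses the Barenblatt profile only in the direction the signs allow. Proposition~\ref{p.approxsub} shows its discretization is an approximate \emph{sub}solution. Comparing it through the asymmetric distance with a volcanic majorant $\tilde p(\cdot,n)\ge p(\cdot,n+1)$ gives $\tilde p(k,n)\ge cn^{-d\beta}$ for $|k|\le cn^{\beta}$. Theorem~\ref{stepfive} then converts this lower bound on the jump probabilities, $\tilde p^m\ge b\asymp n^{-dm\beta}$, into the upper bound $\sup_k\tilde p(k,n)\lesssim (b(n-n_0))^{-d/2}\asymp n^{-d\beta}$. It does so via the lazy-random-walk optimization and the breakdown-time estimate. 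Some lower-to-upper mechanism of this kind, or a genuine discrete smoothing estimate, is the missing idea.

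There are also two smaller, fixable points.

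\emph{Step 1.} The direct check fails: $p(0,2)=(2d)^{-m-1}<(2d)^{-1}-(2d)^{-m-1}=p(e_1,2)$. You also give no argument that a suitable $n_0$ exists for every (possibly non-integer) $m\ge1$. It suffices to reflect across the planes $k_i=a+\frac12$ with $a\ge1$, where the initial inequality holds trivially at $n=1$. However, across half-integer planes the two functions do not agree on a boundary: the outside neighbour of a site with $k_i=a$ carries the antisymmetric value. So the comparison needs $(1+\frac{1}{2d})A'\le1$ rather than $A'\le1$. This holds here, since $\max_k p(k,n)\le\frac{1}{2d}$ for $n\ge1$. The result is unimodality along each line except for a possible dip at the site with $k_i=0$. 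Each line then contributes at most $4\max_k p(k,n)$ in Step 4, which is what the paper's almost-volcanic argument achieves.

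\emph{Step 3.} The step directions are not IID conditionally on the move times, since whether the walk moves depends on where it is. Use instead that the jump chain $S$ is a simple random walk with $X^n=S_{N_n}$. Then bound $\p{\max_{j\le sn^{2\beta}}|S_j|>rn^{\beta}}$ by a maximal Hoeffding--Azuma inequality.
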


Properties (i)-(iii) are discrete versions of properties satisfied by the ZKB/Barenblatt solution $\overline{u}$, which can be observed by direct inspection. Indeed, it is easy to see that $\bar{u}(\cdot,t) \le ct^{-d\beta}$, $\overline{u}(\cdot, t)$ is supported in a ball of radius $ct^{\beta}$, and by a short calculation, for all $r\leq C$ it holds that $[\overline{u}]_{\TV(B_{rt^{\beta}})}\sim \int_{B_{rt^{\beta}}}|\nabla_{x}\overline{u}(x,t)|\, dx\leq Cr^{d-1}t^{-\beta}$.  Respectively, property (i) says that $p$ shares the same decay in time, property (ii) conveys that most of the mass of $p(\cdot, n)$ is concentrated in a ball of radius $n^{\beta}$, and property (iii) obtains a similar bound on the discrete total variation of $p(\cdot, n)$. 

In the next subsection, we describe the RDE satisfied by $p(k,n)$, and then frame the analysis of that RDE in the context of convergence of a finite difference scheme. 

\subsection{Recursive Distributional Equations and Finite Difference Schemes: Ideas and Challenges of the Problem at Hand}

As a consequence of the definition of the process in \eqref{eq:defining_equation}, it is easy to verify that the following relation holds:
\begin{equation} \label{e.pscheme}
\begin{aligned}
	p(k,n+1) &=p(k,n)(1-p(k,n)^{m})+\frac{1}{2d}\sum_{\ell\sim k} p(\ell,n)^{m+1}\\
	&=p(k,n)+\frac{1}{2d}\sum_{i=1}^{d} \left[p(k+e_{i},n)^{m+1}-2p(k,n)^{m+1}+p(k-e_{i},n)^{m+1}\right]\\
	&=p(k,n)+\frac{1}{2d}\sum_{i=1}^d \sum_{\zeta \in \{-1, 1\}} \left[p(k+\zeta e_{i},n)^{m+1}-p(k,n)^{m+1}\right].
\end{aligned}
\end{equation}

The above recurrence looks like the discretization of a PDE ``at scale 1''. Throughout the paper, we will refer to recursive relations as \emph{schemes} (short for finite difference schemes which are considered in numerical analysis). In order to connect \eqref{e.pscheme} with a PDE, we introduce discrete meshes in time and space. For $N\in \N$, we define
\begin{align*}
	\tmesh = \tmesh(N) :=  N^{-1} = \text{time mesh} \quad\text{and}\quad \xmesh = \xmesh(N) := N^{-1/(dm+2)} = \text{space mesh}
\end{align*}
These parameters satisfy the relation $\tmesh = (\xmesh)^{dm+2}$, which, as we will soon argue, is a natural scaling relation for the problem which keeps the associated PDE scale-invariant. We now define $u_{N}: \xgrid\times\tgrid \to \R$ by
\begin{equation} \label{e.UNdef}
	  u_{N}(k\xmesh, n\tmesh):= \frac{1}{(\xmesh)^{d}}\p{X^{n}=k} = \frac{1}{(\xmesh)^{d}}p(k,n). 
\end{equation}
Using the relation $\tmesh = (\xmesh)^{dm+2}$, we may rewrite \eqref{e.pscheme} as 
\begin{align} \label{e.longscheme}
	& \frac{u_{N}(k\xmesh, (n+1)\tmesh)-u_{N}(k\xmesh, n\tmesh)}{\tmesh}\notag\\
		& = \frac{1}{2d (\xmesh)^2 } \sum_{\ell \sim k} (u_N(\ell \xmesh,n\tmesh)^{m+1}-u_N(k \xmesh,n\tmesh)^{m+1})\notag\\
	& =\frac{1}{2d}\sum_{i=1}^{d}\frac{u_{N}(k\xmesh+e_{i}\xmesh,n\tmesh)^{m+1}-2u_{N}(k\xmesh,n\tmesh)^{m+1}+u_{N}(k\xmesh-e_{i}\xmesh,n\tmesh)^{m+1}}{(\xmesh)^{2}}.
\end{align}
This is a finite difference scheme which, formally, approximates the partial differential equation 
\begin{equation}\label{e.genpm}
		u_{t}=\frac{1}{2d}\Delta(u^{m+1}), 
\end{equation} 
which is known as the porous medium equation (PME). The PME is a well-studied nonlinear PDE which has a rich theory of analysis (see for example \cite{vazquez2007porous} for a broad overview of the PME). The ZKB/Barenblatt solution $\bar{u}$ introduced in \eqref{e.Bdef} is a type of ``weak solution'' of the PME, with initial condition the Dirac delta measure $\delta$, where the notion of solution and initial condition will be made precise in Section \ref{ss.sol}. 

In light of the relation \eqref{e.UNdef}, Theorem \ref{t.main} follows easily if we can prove that
\begin{equation}\label{e.punchline}
\lim_{N\to\infty} u_{N}(\cdot ,1)=\bar{u}(\cdot ,1)\quad\text{in $L^{1}_{\loc}(\R^{d})$.}
\end{equation}
This statement is a convergence result for a scheme \eqref{e.longscheme} of the porous medium equation \eqref{e.genpm}. The typical perspective is to start from an initial condition $u_{0}:\R^{d}\rightarrow \R$, and use this to define $u_{N}(\cdot, 0)$ by a suitable discretization (see, for example, \eqref{e.initL1}, below). By generating $u_{N}$ at later times using the scheme, the goal is to show that $u_{N}$ converges to the solution of the PME with initial condition $u_{0}$. Thus, if we had at our disposal a convergence result for $(u_{N})_{N\geq 1}$, for which the discretization of the initial condition was given exactly by 
\begin{equation*}
u_{N}(k\xmesh, 0)=\frac{1}{(\xmesh)^{d}}p(k,0)=\begin{cases}\frac{1}{(\xmesh)^{d}}&\text{if $k=0$},\\
0&\text{otherwise,}
\end{cases}
\end{equation*}
then \eqref{e.punchline} would be automatic. 

As in \cite{HRW, ACM, SCM1}, the introduction of the finite difference schemes perspective has been fruitful for proving distributional convergence results for cooperative motions. The approach has further been used by Morfe \cite{morfe} and Chen, Duquesne, and Shi \cite{cds} in analyzing a variety of discrete random models, including the resistance of the series-parallel graph. Similar to the challenges in \cite{HRW, ACM, SCM1}, the key issue is that the only initial condition $u_{0}$ for which a suitable discretization yields the above initial condition for $u_{N}$ is for $u_{0}=\delta$, the Dirac delta, the probability distribution which assigns mass 1 to the point 0 in $\R^{d}$. Convergence results for finite difference schemes approximating PME with initial conditions $u_{0}\in L^{\infty}(\R^{d})$ have been well-studied in the literature (see for example \cite{karlsenrisebro01} which holds for inviscid and viscous conservation laws, for which the PME is a special case). However, to our knowledge, there are limited convergence results for finite difference schemes approximating weak solutions to the PME with unbounded, measure-valued initial conditions in dimensions greater than 1. 

In the case when $d=1$, it was possible to circumvent this lack of convergence results in two different ways. In \cite{HRW}, the authors used a convergence result of Evje and Karlsen \cite{EK} for finite difference schemes approximating entropy solutions of the PME with carefully chosen, bounded, initial conditions. By a coupling argument, the authors were then able to establish the statement of Theorem \ref{t.main} in the case when $d=1$ and $m=1$. In \cite{SCM1}, the authors proved distributional convergence by considering the cumulative distribution function (CDF) of the process, which, in one dimension, satisfies an RDE which approximates another nonlinear PDE known as the the parabolic $p$-Laplace equation. By considering the CDF, one essentially gains an additional level of regularity (and access to a more robust theory of convergence results for finite difference schemes, such as the one established in the work of Barles and Souganidis \cite{BS}). Using these convergence results for finite difference schemes with suitable initial conditions, combined with monotone coupling arguments, the authors of \cite{SCM1} established the distributional convergence in Theorem \ref{t.main} for all $m>0$ and $d=1$. 

In dimensions larger than 1, neither of these approaches seemed feasible, because the coupling arguments in both of the prior works relied heavily on the one-dimensional nature of the process. Consequently, due to the lack of convergence results for finite difference schemes approximating the PME with unbounded, measure-valued initial data, we overcame this by proving our own. We prove that \eqref{e.punchline} holds in Theorem \ref{t.mainpde}, which is the third major contribution of our work. Our proof relies heavily on the properties of the probability mass function $p$ established in Theorem \ref{prop.pfacts}; this allows us to perform a compactness argument for a suitable family of approximations to extract a convergent limit with the correct initial condition. For the approximations, we rely on the priorly mentioned convergence results of Karlsen and Risebro \cite{karlsenrisebro01} for finite difference schemes approximating entropy solutions of the PME with $L^{\infty}$ initial data. 

While writing this paper, we became aware of a convergence result recently proven by Di Francesco and Matthes \cite[Theorem 4]{dpme} for a discrete scheme (which is not a finite difference scheme) approximating distributional solutions to the PME in one dimension with initial condition a probability measure with density $u_{0}\in L^{1}(\R)$. More generally, the authors of \cite{dpme} were interested in analyzing the evolution of a discrete interacting particle system, which is another very interesting way in which the PME has arisen in probabilistic settings. Nevertheless, their result and their methods (which are specific to $d=1$ and the particular discrete scheme they work with) do not apply to our setting. In general, it seems that there is a fundamental challenge to proving convergence results for schemes approximating the PME with unbounded initial data, which requires new ideas to overcome.

The prior discussion highlights another challenge in this setting, which is that the Barenblatt solution can be interpreted as many different types of ``weak solutions'' of the PME. It is both a nonnegative entropy solution and a distributional solution (see Section \ref{ss.sol} for a detailed discussion). In this paper, we navigate between these two notions, which adds an additional layer of technical care which must be taken into account in our analysis. 

\subsection{Ideas and Challenges Pertaining to the proof of Theorem \ref{prop.pfacts}}
As priorly mentioned, the proofs of Theorem \ref{t.main} and of Theorem \ref{t.mainpde} rely crucially on the properties of the probability mass function $p=p(k,n)$ of the $\CM(m,d)$-distributed process $(X_{n}, n\geq 0)$. In this subsection, we describe some of the main ideas in the proof of Theorem \ref{prop.pfacts}. In light of the discussion following the statement of Theorem \ref{prop.pfacts}, the three properties of $p$ which are established are analogous to three properties observed by the Barenblatt solution $\bar{u}$. 

The first property, which is a type of $L^{\infty}$-estimate for solutions of the PME, is traditionally proven using the celebrated \emph{Aronson-Benilan estimate} on the pressure \cite{AB, vazquez2007porous}. While there have been some works examining a discrete-in-space, continuous-in-time version of the Aronson-Benilan estimate on locally finite graphs \cite{dab}, as well as the aforementioned work \cite{dpme} for an interacting particle system when $d=1$, we do not establish a discrete version of the Aronson-Benilan estimate in this work. However, our approach does bear some similarities to theirs. 

We begin by restricting our analysis to solutions of the finite difference scheme \eqref{e.pscheme} which are radially symmetric in space in an $\ell^{1}$ sense (we refer to these functions as volcanic functions in space). Under certain assumptions on the $\ell^{\infty}$-norm of the initial condition, the scheme \eqref{e.pscheme} is monotone, in the sense that solutions of the scheme with ordered initial data remain ordered at all future times. Generally speaking, the theory of convergence for monotone finite difference schemes is quite robust, because it allows one to access comparison methods. We establish several properties for solutions of finite difference schemes with volcanic initial data, which remain volcanic in space for all times (we hereby refer to these functions simply as volcanic). 

Upon restricting to volcanic functions, we give a new interpretation of how a solution of the finite difference scheme can be seen as a type of \emph{lazy random walk}.  This allows us to prove a general estimate, Theorem \ref{stepfive} below, which says that if a volcanic solution of the finite difference scheme is bounded from below in a certain scaled neighborhood of the origin, then it must be bounded from above by a comparable bound in the entire space. The rough idea is to use the local lower bound as a lower bound on the ``jump probability'' of associated lazy random walk. In the theory of PDEs, this would be like having a lower bound on the ellipticity of a differential operator. This (local) lower bound can then be upgraded to a uniform upper bound, which plays a similar role to that played by Aronson-Benilan estimates on the pressure. 

In order to establish the desired local lower bound on a volcanic solution, we develop a comparison theory between solutions of the scheme and so-called \emph{approximate solutions} of the scheme. In particular, we use a discretization of the Barenblatt solution as an approximate solution, and based on the comparison theory we establish, we prove the local lower bound on a volcanic solution. The final challenge is that the probability mass function $p$ is not actually volcanic for all times. Thus, we require one additional level of comparison to relate $p$ to a volcanic solution. This completes the proof sketch of Theorem \ref{prop.pfacts}(i). 

The proofs of Theorem \ref{prop.pfacts}(ii), (iii) follow relatively easily from Theorem \ref{prop.pfacts}(i). To prove Theorem \ref{prop.pfacts}(ii), we use a concentration inequality known as Freedman's inequality (Theorem \ref{t.freedman}). In applying Freedman's inequality, the bound in Theorem \ref{prop.pfacts}(i) is crucial as it allows us to control the conditional variance of projections of the original process. The proof of Theorem \ref{prop.pfacts}(iii) also follows relatively easily from the upper bound in Theorem \ref{prop.pfacts}(i); we just need to introduce a suitable approximation to compare the true solution $p$ of the finite difference scheme to a volcanic one.

\subsection{Outline of the Paper}
The structure of the remainder of the paper is as follows. In Section \ref{sec:notation}, we begin by introducing some notation, terminology, and basic properties used throughout the paper. In Section \ref{s.lrw}, we present the proof of Theorem \ref{stepfive}, which gives another interpretation of volcanic solutions of the finite difference scheme in terms of a lazy random walk. In Section \ref{s.p1}, we complete the proof of
Theorem \ref{prop.pfacts}(i). In Section \ref{s.p23}, we prove the remaining parts of Theorem \ref{prop.pfacts}. Finally, in Section \ref{s.scheme}, we prove both Theorem \ref{t.mainpde} and the remaining parts of Theorem \ref{t.main}; we use the properties of $p$ obtained from Theorem \ref{prop.pfacts} to perform the compactness argument for suitable approximations. In the Appendix, we provide the proofs of some technical estimates which are used in Section \ref{s.lrw}.

\section{Notation, Terminology, and some Basic Properties Used Throughout the Paper}\label{sec:notation}
\subsection{Notation}\label{s.note}
We use the notation $[d]:=\left\{1,2, \ldots, d\right\}$. We let $\N$ be the positive integers and define $\N_{0}:=\N \cup \{0\}$. For $T > 0$, we let $Q_{T} := \R^{d} \times (0,T)$. We use $C$ and $c$ to denote positive constants which depend only on $d$ and $m$, and which may change from line to line. We use $x, y, z$ for points in $\mathbb R^d$, and $\x, \y$ for points in $\Z^{d}$. If $k,\ell\in \Z^{d}$ are neighbors, i.e. $k-\ell=\pm e_{i}$ for some $i\in [d]$, then we write $k\sim \ell$. For $x\in\mathbb R^d$, we denote the Euclidean norm of the point using single vertical bars, $|x| = (x_1^2 + \cdots + x_d^2)^{1/2}$, while $|x|_1 = \sum_{i=1}^d |x_i|$ refers to the $1$-norm of the point.

For measurable functions $v: U \to \R$ where $U \subset \R^{d}$ or $U \subset\R^{d}\times[0,\infty)$, and $p\in (0, \infty)$, we denote the $L^p$-norm of $v$ by $\norm{v}_{L^{p}(U)}=(\int_U |v (x)|^p\, dx)^{1/p}$. Similarly, for $q: \mathbb{Z}^d \to \mathbb R$, we write $\|q\|_{L^p(\Z^{d})}=(\sum_{k \in \Z^d} |v(k)|^p)^{1/p}$. We write $L^{\infty}(\R^{d})$ for the space of essentially bounded functions on $\R^{d}$, and $L^{\infty}(\Z^{d})$ (respectively, $L^{\infty}(\xgrid)$) for the set of bounded functions on the discrete lattice $\Z^{d}$ (respectively, $\xgrid$). 

We next extend the definition of total variation from lattice functions to more general functions, and introduce the {\em bounded variation norm}. 
If $U \subset \R^d$ open and $v \in L^1(U)$, then the total variation of $v$ is given by
\begin{equation}\label{eq:tv_cont}
	[v]_{\TV(U)} := \sup\left\{\int_{U} v \divergence(w)\,dx : w \in C^{\infty}_{c}(U;\R^{d}), |w(\cdot)| \leq 1\right\},
\end{equation}
where $C^{\infty}_{c}(U;\R^{d})$ is the set of smooth functions with compact support from $U$ to $\R^{d}$. We define the \emph{bounded variation} norm of $v$ by $\norm{v}_{\BV(U)} = \norm{v}_{L^{1}(U)} + [v]_{\TV(U)}$, and we denote the Banach space $\BV(U) := \{v : U \to \R : \norm{v}_{\BV(U)} < \infty\}$.

For $\La>0$ and $U \subset \R^d$, we let 
\begin{equation*}
\cB_{\La}^+(U)= \{u:U \to [0,\La]\}. 
\end{equation*} 

Lastly, we introduce some additional notation regarding sets and extensions of functions from $\Z^{d}$ to all of $\R^{d}$. For $N\in \mathbb{N}$, we define
\begin{equation*}
	\square_{N} := \left[-\frac{\xmesh}{2},\frac{\xmesh}{2}\right)^{d}= \left[-\frac{\xmesh(N)}{2},\frac{\xmesh(N)}{2}\right)^{d}
	= \left[-\frac{N^{-1/(dm+2)}}{2},\frac{N^{-1/(dm+2)}}{2}\right)^{d},
\end{equation*} 
and note that $|\square_N|:= \int \indc_{\square_N}(x)dx = (\xmesh)^d$.
For every $k\in \Z^d$, we let 
\begin{equation*}
	\square_{N}(k\xmesh) = \square_{N} + k\xmesh.
\end{equation*}
We say that $w_{N}: \R^{d} \to \R$ is \emph{$(\xgrid)$-piecewise constant} if
\begin{equation*}
	w_{N}(x) = w_{N}(k\xmesh) \text{ for all } x \in \square_{N}(k\xmesh).
\end{equation*}
Likewise, we say that $w_{N}: \R^{d} \times [0,\infty) \to \R$ is \emph{$(\xgrid\times\tgrid)$-piecewise constant} if
\begin{equation*}
	v_{N}(x,t) = v_{N}(k\xmesh,n\tmesh) \text{ for all } (x,t) \in \square_{N}(k\xmesh)\times [n\tmesh,(n+1)\tmesh).
\end{equation*}

\subsection{Schemes and Monotonicity}
Given $A: \R\rightarrow \R$, define $\scheme=\scheme[A]: L^{\infty}(\Z^{d}) \to L^{\infty}(\Z^{d})$ by
\begin{equation}\label{d.Sdef}
\scheme q(\x) := q(\x) +\frac{1}{2d}\sum_{\y \sim \x} \left[A(q(\y)) - A(q(\x))\right],\quad\text{where $q\in L^{\infty}(\Z^{d})$.}
\end{equation}
We say a sequence of functions $q=(q_{n})_{n \ge n_{0}}$ is a \emph{solution} of scheme $\scheme$ on $[n_{0}, \infty)$ with initial condition $f: \Z^{d}\rightarrow \R$ if 
\begin{equation}\label{e.operator}
\begin{cases}
q_{n+1} = \scheme q_n=q_{n}(\x) +\frac{1}{2d}\sum_{\y \sim \x} \left[A(q_{n}(\y)) - A(q_{n}(\x))\right]&\text{for $n\geq n_{0}$,}\\
q_{n_{0}}=f.
\end{cases}
\end{equation}

Throughout the paper, we will frequently consider functions $\tilde{p}=(\tilde{p}_{n}(k))_{n\geq n_{0}}$ with initial condition $f$ solving 
\begin{equation}\label{timeevol}
\begin{cases}
\tilde{p}_{n+1}(k) = \tilde{p}_{n}(k) + \frac{1}{2d} \sum_{\ell \sim k} \left[A(\tilde{p}_{n}(\ell)) - A(\tilde{p}_{n}(k))\right]&\text{with $A(u) = u^{m+1}$, for $n\geq n_{0}$},\\
\tilde{p}_{n_{0}}=f. &{}
\end{cases}
\end{equation}
Instead of referring to $\tilde{p}$ as the solution of $\mathcal{S}[A]$ with $A(u)=u^{m+1}$, we simply refer to $\tilde{p}$ as the solution of the discrete PME on $[n_{0}, \infty)$ with initial condition $f$. 

For the function $p$ defined in \eqref{e.pdef} and appearing in \eqref{e.pscheme}, setting $p_n(\x) = p(\x, n)$, we see that $p$ is a solution of the discrete PME \eqref{timeevol} on $[0, \infty)$ with initial condition $\indc_{0}$.

We next define a notion of monotonicity of schemes. Fix a collection of functions $\cC\subset L^\infty(\Z^d)$ and a mapping $\scheme:\cC\to L^\infty(\Z^d)$. We say $\scheme$ is {\em monotone on $\cC$} if $\scheme(\cC)\subset \cC$, and $\scheme q \le \scheme \tilde{q}$ pointwise whenever $q,\tilde{q} \in \cC$ are functions such that $q \le \tilde{q}$ pointwise. We say $\scheme$ is {\em locally monotone on $\cC$} if $\cS(\cC)\subset \cC$, and for any $k \in \Z^d$, $(\scheme q)(k) \le (\scheme \tilde{q})(k)$ whenever $q,\tilde{q} \in \cC$ are such that $q(\ell)\le \tilde{q}(\ell)$ for all $\ell \in \Z^d$ with $|\ell-k|_{1}\leq 1$. Note that local monotonicity implies monotonicity, since if $q \le \tilde{q}$ pointwise then in particular $q(\ell)\le \tilde{q}(\ell)$ for all $\ell$ with $|\ell-k|_{1}\leq 1$, for any $k \in \Z^d$.
\begin{lem}\label{l.Smonotone}
Fix a differentiable function $A:\R \to\R$ with $A'(\cdot) \in [0,1]$ and define an operator $\scheme=\scheme[A]:L^\infty(\Z^d)\to L^\infty(\Z^d)$ by \eqref{d.Sdef}. Then $\scheme$ is locally monotone (and consequently monotone) on $L^\infty(\Z^d)$, and for any non-negative $q \in L^\infty(\Z^d)$, it holds that $0 \le \scheme q \le \|q\|_{L^\infty(\Z^{d})}$. 
\end{lem}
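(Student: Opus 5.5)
The plan is to write $\scheme q(\x)$ as a function of the $2d+1$ values $q(\x)$ and $(q(\y))_{\y\sim\x}$, and check that this function is nondecreasing in each argument. Explicitly,
\begin{equation*}
\scheme q(\x) = \Big[q(\x) - A(q(\x))\Big] + \frac{1}{2d}\sum_{\y\sim\x} A(q(\y)),
\end{equation*}
using that $\x$ has exactly $2d$ neighbours. The map $s\mapsto s-A(s)$ has derivative $1-A'(s)\in[0,1]$, so it is nondecreasing. Each $s\mapsto A(s)$ is nondecreasing since $A'\ge 0$. Hence if $q(\y)\le\tilde q(\y)$ for all $\y$ with $|\y-\x|_1\le 1$, comparing term by term gives $\scheme q(\x)\le\scheme\tilde q(\x)$. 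This is local monotonicity. Monotonicity follows as already remarked in the paper. The invariance requirement $\scheme(L^\infty)\subset L^\infty$ is clear: $A$ is Lipschitz with constant $1$, so $|\scheme q(\x)|\le |q(\x)|+2\|q\|_\infty+2|A(0)|$, a bound uniform in $\x$.

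For the bounds, take $q\ge0$ bounded and set $M=\|q\|_{L^\infty(\Z^d)}$. Compare $q$ with the constant functions $0$ and $M$. Since constants $c$ satisfy $\scheme c = c$ (all differences vanish) and $0\le q\le M$ pointwise, monotonicity gives $0=\scheme 0\le \scheme q\le \scheme M = M$.

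There is no real obstacle. The only point needing care is the observation that $1-A'\ge0$, i.e.\ the upper bound $A'\le1$ together with the normalization $\frac1{2d}\cdot 2d=1$, which makes the diagonal coefficient nonnegative. This is the usual CFL-type condition.
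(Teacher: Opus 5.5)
Your proof is correct and follows the paper's argument. Both write $\mathcal{S}q(k)=[q(k)-A(q(k))]+\frac{1}{2d}\sum_{\ell\sim k}A(q(\ell))$, use that $u\mapsto u-A(u)$ and $u\mapsto A(u)$ are nondecreasing, and get $0\le\mathcal{S}q\le\|q\|_{L^\infty(\mathbb{Z}^d)}$ by comparing with constants, which are fixed points. Your explicit Lipschitz bound for $\mathcal{S}(L^\infty)\subset L^\infty$ is a small extra that also covers sign-changing $q$; the paper obtains this only from the sandwich, so only for nonnegative $q$.
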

\begin{proof}
Observe that $A(u)$ and $u-A(u)$ are nondecreasing functions in $u$, since $A'(\cdot) \in [0,1]$. Fix functions $q, \tilde{q} \in L^{\infty}(\Z^d)$, and suppose $k\in \mathbb{Z}^{d}$ is such that $q(\ell)\leq q'(\ell)$ for all $|\ell-k|_{1}\leq 1$. Then by the previous observation,
\begin{align*}\mathcal{S}q(\x) &= q(\x) + {1 \over 2d}  \sum_{\y \sim \x} \left[A(q(\y)) - A(q(\x))\right]
\\&= q(\x)  - A(q(\x)) + {1 \over 2d} \sum_{\y \sim \x} A(q(\y))
\\&\le q'(\x)  - A(q'(\x))+ {1 \over 2d} \sum_{\y \sim \x} A(q'(\y)).
\\&=\mathcal{S}q'(\x).\end{align*}
Since constants are solutions of the scheme $\scheme$, it follows that $0 \le \scheme q \le  \|q\|_{L^\infty(\Z^{d})}$, which implies that $\scheme q\in L^{\infty}(\Z^{d})$. Thus, $\mathcal{S}$ is indeed locally monotone (and consequently monotone).
\end{proof}
For $\Lambda > 0$, the same argument applied to the function class $\cB_{\Lambda}^+(\Z^d)$ gives the following result, whose proof is omitted.
\begin{cor}\label{cor:newsmon}
Fix a constant $\Lambda>0$ and a function $A:\R \to\R$ such that $A'(u) \in [0,1]$ whenever $u \in [0,\Lambda]$. Define an operator $\scheme=\scheme[A]:L^\infty(\Z^d)\to L^\infty(\Z^d)$ by \eqref{d.Sdef}. Then $\scheme$ is locally monotone (and consequently monotone) on $\cB_{\Lambda}^+(\Z^{d})$. 
\end{cor}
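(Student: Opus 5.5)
The plan is to repeat the argument of Lemma~\ref{l.Smonotone}, using the derivative bound on $A$ only on the interval $[0,\Lambda]$, which is where all the values now lie. The point is that every function in $\cB_{\Lambda}^+(\Z^d)$ takes values in $[0,\Lambda]$. Consequently, whenever $A$ or $u\mapsto u-A(u)$ is evaluated, the argument lies in $[0,\Lambda]$, and there both functions are nondecreasing, since $A'(u)\in[0,1]$ for $u\in[0,\Lambda]$.

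The first step is local monotonicity. Fix $q,\tilde q\in\cB_\Lambda^+(\Z^d)$ and $k\in\Z^d$ with $q(\ell)\le\tilde q(\ell)$ for all $|\ell-k|_1\le 1$. We write
\[
\scheme q(k)=\bigl(q(k)-A(q(k))\bigr)+\frac{1}{2d}\sum_{\ell\sim k}A(q(\ell)).
\]
Because $u\mapsto u-A(u)$ and $A$ are nondecreasing on $[0,\Lambda]$, and all the arguments $q(\cdot),\tilde q(\cdot)$ lie in $[0,\Lambda]$, each term is bounded by the corresponding term for $\tilde q$. Hence $\scheme q(k)\le\scheme\tilde q(k)$.

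The second step is invariance, $\scheme(\cB_\Lambda^+(\Z^d))\subset\cB_\Lambda^+(\Z^d)$, which is part of the definition of (local) monotonicity on a class. Constant functions $c$ satisfy $\scheme c=c$. The constants $0$ and $\Lambda$ belong to $\cB_\Lambda^+(\Z^d)$, and every $q$ in the class satisfies $0\le q\le\Lambda$ pointwise. Applying the local comparison from the first step at each $k$, once with the pair $(0,q)$ and once with the pair $(q,\Lambda)$, gives $0\le\scheme q\le\Lambda$. In particular $\scheme q$ is bounded, so it lies in $L^\infty(\Z^d)\cap\cB_\Lambda^+(\Z^d)$.

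Finally, local monotonicity implies monotonicity, exactly as remarked before Lemma~\ref{l.Smonotone}. I expect no real obstacle here. The only care needed is to note that differentiability, or monotonicity, of $A$ is used only on $[0,\Lambda]$. If $A$ is merely assumed differentiable on $[0,\Lambda]$, the mean value theorem on that interval still gives $0\le A(b)-A(a)\le b-a$ for $0\le a\le b\le\Lambda$, and this is all the argument uses.
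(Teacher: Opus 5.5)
Your proposal is correct and follows the route the paper intends. The paper omits this proof and says only that the argument of Lemma~\ref{l.Smonotone} carries over to $\cB_\Lambda^+(\Z^d)$, which is what you carry out: monotonicity of $A$ and $u\mapsto u-A(u)$ on $[0,\Lambda]$ via the mean value theorem, and invariance of the class by comparison with the constant solutions $0$ and $\Lambda$.
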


The operator $\mathcal{S}$ also has some desirable properties with respect to $\norm{\cdot}_{L^1(\Z^{d})}$.

\begin{lem} \label{lem.S}
Assume that there exists $\La > 0$ such that $\scheme=\scheme[A]$ is monotone on $\mathcal{B}^{+}_{\La}(\Z^{d})$. Assume further that $A: \R\rightarrow \R$ is such that $A'(u) \in [0,1]$ for $u \in [0,\La]$. It follows that

\begin{enumerate}[(i)]
\item For all $q \in L^1(\Z^{d}) \cap \mathcal{B}^{+}_{\La}(\Z^{d})$, $\sum_{\x \in \Z^{d}} \scheme q(\x) = \sum_{\x \in \Z^{d}}  q(\x)$,
\item For all $q, q' \in L^1(\Z^{d}) \cap \mathcal{B}^{+}_{\La}(\Z^{d})$, $\Vert \scheme q - \scheme q' \Vert_{L^1(\Z^d)} \le \Vert q - q' \Vert_{L^1(\Z^d)}$. 
\end{enumerate}
\end{lem}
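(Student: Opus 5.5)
For part (i) I will sum the defining identity \eqref{d.Sdef} over $\x\in\Z^d$. The sum of $q(\x)$ is finite by assumption. The correction is $\frac{1}{2d}\sum_{\x}\sum_{\y\sim\x}[A(q(\y))-A(q(\x))]$, and each unordered pair of neighbours contributes $A(q(\y))-A(q(\x))$ and $A(q(\x))-A(q(\y))$, which cancel. To justify the rearrangement I need absolute summability. Since $A'\in[0,1]$ on $[0,\La]$, $A$ is $1$-Lipschitz there, so $|A(q(\y))-A(q(\x))|\le |q(\y)|+|q(\x)|$. Summing over $\x$ and $\y\sim\x$ gives at most $4d\|q\|_{L^1(\Z^d)}<\infty$. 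Fubini then allows the rearrangement, the double sum vanishes, and (i) follows. Note also that $\scheme q\in\cB^+_\La$ by monotonicity on $\cB^+_\La$, and that $\scheme q\in L^1$ by the same bound.

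For part (ii) I will use the Crandall--Tartar argument: a monotone, mass-preserving map is an $L^1$-contraction. Set $q\vee q'$ and $q\wedge q'$ to be the pointwise max and min. Both lie in $L^1(\Z^d)\cap\cB^+_\La(\Z^d)$. By monotonicity,
\[
\scheme(q\wedge q')\le \scheme q,\ \scheme q'\le \scheme(q\vee q'),
\]
so pointwise
\[
|\scheme q-\scheme q'|\le \scheme(q\vee q')-\scheme(q\wedge q').
\]
Summing over $\x$ and applying (i) to $q\vee q'$ and to $q\wedge q'$ gives
\[
\|\scheme q-\scheme q'\|_{L^1(\Z^d)}\le \sum_\x \bigl(q\vee q'-q\wedge q'\bigr)(\x)=\|q-q'\|_{L^1(\Z^d)},
\]
using the identity $\max-\min=|q-q'|$.

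I expect no step to be a real obstacle. The only point needing care is the summability justification in (i), which is what allows the telescoping cancellation, and this relies on the Lipschitz bound for $A$ on $[0,\La]$. One could alternatively prove (ii) directly by writing $\scheme q-\scheme q'$ through mean-value coefficients $A'(\xi)\in[0,1]$, which gives a nonnegative-coefficient representation whose coefficients sum to $1$. The Crandall--Tartar route is cleaner.
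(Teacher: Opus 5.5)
Your proposal is correct and follows the paper's route: in (i) you cancel the paired edge contributions after summing, and in (ii) you write out in full the Crandall--Tartar argument that the paper simply cites as \cite[Proposition 1]{crantartar}. Your summability justification in (i), which bounds differences via the $1$-Lipschitz property of $A$ on $[0,\La]$, is slightly more careful than the paper's. The paper instead uses $0\le A(q)\le q$, which tacitly requires $A(0)=0$.
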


\begin{proof}
To prove (i), notice that since $A'(u) \in [0,1]$ for all $u \in [0,\La]$, it follows that $0 \leq A(q) \leq q$, where $q\in L^{1}(\Z^{d})$. Hence, we can rearrange the sum,
\begin{align*}
    \sum_{\x \in \Z^{d}}  (\scheme q(k) - q(k))
        &= {1 \over 2d} \sum_{i=1}^d \sum_{\zeta \in \{-1, 1\}} \left[\sum_{\x \in \Z^{d}}  A(q(\x + \zeta e_i)) - \sum_{\x \in \Z^{d}}  A(q(\x))\right] 
        \\&= {1 \over 2d} \sum_{i=1}^d \sum_{\zeta \in \{-1, 1\}} \left[\sum_{\y \in \Z^{d}}  A(q(\y)) - \sum_{\x \in \Z^{d}}  A(q(\x))\right]
        \\&= 0, 
\end{align*}
which is precisely the statement of (i). The inequality in (ii) is a consequence of \cite[Proposition 1]{crantartar}.

\end{proof}

In the case of the discrete PME, when $A(u) = u^{m+1}$, we obtain the following.
\begin{cor}\label{cor.Sell1}
	When $A(u) = u^{m+1}$, the associated operator $\scheme=\scheme[A]$ satisfies
	\begin{enumerate}[(i)]
	\item The scheme $\mathcal{S}[A]$ is monotone on $\mathcal{B}^{+}_{\sfrac{1}{2}}(\Z^{d})$.
		\item For all $q \in L^1(\Z^{d}) \cap \mathcal{B}^{+}_{\sfrac{1}{2}}(\Z^{d})$, $\sum_\x \scheme q(\x) = \sum_\x q(\x)$,
		\item For all $q, q' \in L^1(\Z^{d}) \cap \mathcal{B}^{+}_{\sfrac{1}{2}}(\Z^{d})$, $\Vert \scheme q - \scheme q' \Vert_{L^1(\Z^{d})} \le \Vert q - q' \Vert_{L^1(\Z^{d})}$. 
	\end{enumerate}
\end{cor}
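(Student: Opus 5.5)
This is a direct application of Corollary~\ref{cor:newsmon} and Lemma~\ref{lem.S} with $A(u)=u^{m+1}$ and $\La=\tfrac12$. The only thing to check is the derivative condition $A'(u)\in[0,1]$ for $u\in[0,\tfrac12]$; once that holds, the hypotheses of both results are met and items (i)--(iii) follow.

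First I verify the derivative bound. Here $A'(u)=(m+1)u^m$. For $u\in[0,\tfrac12]$ this is nonnegative, and since it is nondecreasing in $u$ it is bounded by its value at $u=\tfrac12$:
\begin{equation*}
A'(u)\le (m+1)2^{-m}.
\end{equation*}
The function $g(m)=(m+1)2^{-m}$ equals $1$ at $m=1$. Its ratio $g(m+1)/g(m)=\frac{m+2}{2(m+1)}\le 1$, and more generally $\frac{d}{dm}\log g=\frac{1}{m+1}-\log 2<0$ for $m\ge 1$. Hence $g(m)\le 1$ for all real $m\ge 1$, which is the standing assumption in this paper. So $A'(u)\in[0,1]$ on $[0,\tfrac12]$.

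For item (i), I apply Corollary~\ref{cor:newsmon} with $\La=\tfrac12$. It gives that $\scheme$ is locally monotone, and hence monotone, on $\mathcal{B}^+_{\sfrac12}(\Z^d)$. Part of that conclusion is the invariance $\scheme(\mathcal{B}^+_{\sfrac12})\subset\mathcal{B}^+_{\sfrac12}$. This comes from the proof of Lemma~\ref{l.Smonotone}: the map $\scheme q(k)=q(k)-A(q(k))+\frac1{2d}\sum_{\ell\sim k}A(q(\ell))$ is nondecreasing in each argument over $[0,\tfrac12]$, and constants are fixed points. Therefore $\scheme 0=0\le \scheme q\le \scheme(\tfrac12)=\tfrac12$.

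For items (ii) and (iii), monotonicity on $\mathcal{B}^+_{\sfrac12}(\Z^d)$ together with the derivative bound are exactly the hypotheses of Lemma~\ref{lem.S} with $\La=\tfrac12$. That lemma gives mass conservation and the $L^1$-contraction for $q,q'\in L^1(\Z^d)\cap\mathcal{B}^+_{\sfrac12}(\Z^d)$.

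The only step with any content is the elementary inequality $(m+1)2^{-m}\le 1$ for $m\ge1$.
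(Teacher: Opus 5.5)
Your proposal is correct and follows the paper's proof essentially verbatim. You obtain (i) from Corollary~\ref{cor:newsmon} and (ii)--(iii) from Lemma~\ref{lem.S} with $\Lambda=\tfrac12$, and the only real content is $(m+1)2^{-m}\le 1$ for $m\ge 1$, which you check exactly as the paper does by showing $m\mapsto(m+1)2^{-m}$ is decreasing on $[1,\infty)$ and equal to $1$ at $m=1$. Your extra remark on the invariance $\mathcal{S}(\mathcal{B}^{+}_{1/2}(\Z^{d}))\subset\mathcal{B}^{+}_{1/2}(\Z^{d})$ is harmless but not needed, since it is already part of the conclusion of Corollary~\ref{cor:newsmon}.
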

\begin{proof}
By Corollary \ref{cor:newsmon}, we simply need to check that $A'(u)\in [0,1]$ whenever $u\in [0, \sfrac{1}{2}]$. By direct computation, $A'(u) = (m+1)u^m$, and since $m\geq 1$, it follows that $A'(0)\leq A'(u)\leq A'(\sfrac{1}{2})$. In the case when $m=1$, it is clear that $0\leq A'(u)\leq 1$, which implies the claim. Moreover, it can be seen that $A'(\sfrac{1}{2})$ is decreasing in $m\ge 1$, since
$${d \over dm} (m+1)\frac{1}{2^{m}} = (1 - (m+1)\log 2)\frac{1}{2^{m}},$$
which is strictly negative when $m>1$. The second two claims follow by Lemma \ref{lem.S}.  
\end{proof}

Akin to the above, we can also consider the operator perspective on the scaled lattice $\xgrid\times\tgrid$. Let $\mathcal{S}_{N}: L^{\infty}(\xgrid) \to L^{\infty}(\xgrid)$ be defined by
\begin{equation} \label{e.SN}
	\mathcal{S}_{N}v(k\xmesh)= v(k\xmesh)+\frac{\tmesh}{2d (\xmesh)^2}\sum_{\ell \sim k}\left[A(v(\ell \xmesh))-A(v(k\xmesh))\right], 
\end{equation}
where $k, \ell\in \mathbb{Z}^{d}$. We may also identify $\mathcal{S}_{N}$ with a scheme $\mathcal{S}^{\xmesh}_{N}: L^{\infty}(\Z^{d})\rightarrow L^{\infty}(\Z^{d})$, by using input functions of the form $q(k):=v(\xmesh k)$. By doing so, the notions of local monotonicity and monotonicity can naturally be extended to $\mathcal{S}_{N}$. The finite difference scheme for $u_{N}$, \eqref{e.longscheme}, can then be written as
\begin{equation} \label{e.scheme}
	u_{N}(\cdot,(n+1)\tmesh) = \mathcal{S}_{N}u_{N}(\cdot,n\tmesh).
\end{equation}

For initial conditions of $u_{N}(\cdot, 0)$, we introduce two notions. Given a Radon measure $\mu$ on $\R^{d}$, we say that \emph{$u_{N}$ is generated by $\mathcal{S}_{N}$ with initial measure $\mu$} if $u_{N}$ is the unique $(\xgrid \times \tgrid)$-piecewise constant function satisfying \eqref{e.scheme} and
\begin{equation} \label{e.initmeas}
	u_{N}(k\xmesh,0) = \frac{1}{|\square_{N}|}\mu(\square_{N}(k\xmesh)).
\end{equation}
If there exists $u_{0} \in L^{1}_{\loc}(\R^{d})$ such that $d\mu=u_{0}\, dx$, then we say that \emph{$u_{N}$ is generated by $\mathcal{S}_{N}$ with initial condition $u_{0}$}; in other words,
\begin{equation}  \label{e.initL1}
	u_{N}(k\xmesh,0) = \frac{1}{|\square_{N}|}\int_{\square_{N}(k\xmesh)}u_{0}(x)\,dx.
\end{equation}

\begin{remark} \label{rmk.uNtwodefs}
	Let $u_{N}: \R^{d} \times [0,\infty)\rightarrow \R$ be the $(\xgrid\times\tgrid)$-piecewise constant function defined by
		\begin{equation*}
		u_{N}(k\xmesh,n\tmesh) := \frac{1}{|\square_{N}|} \p{X^{n} = k} = \frac{1}{(\xmesh)^{d}}p(k,n).
	\end{equation*}
By rearranging the scheme \eqref{e.longscheme} satisfied by $u_{N}$, we see that $u_{N}$ is generated by $\mathcal{S}_{N}[A]$ with $A(u) = u^{m+1}$. As for initial conditions, let $\delta$ denote the probability measure on $\R^{d}$ with $\delta(\left\{0\right\})=1$ (we also refer to this as the Dirac $\delta$). Then
	\begin{align*}
		u_{N}(k\xmesh,0) = \frac{1}{|\square_{N}|} \p{X^{0} = k} = \frac{\delta(\square_{N}(k\xmesh))}{|\square_{N}|}.
	\end{align*}
	Hence, ``$u_{N}$ generated by $\mathcal{S}_{N}[A]$ with $A(u) = u^{m+1}$ and initial measure $\delta$'' and ``$u_{N}$, the unique $(\xgrid\times\tgrid)$-piecewise constant function satisfying \eqref{e.UNdef}'' are two equivalent ways of defining the same function. 	
\end{remark}

Analogous to Corollary~\ref{cor.Sell1}, we next show that $\mathcal{S}_{N}$ has desirable properties on a certain family. However, due to the additional parameter $N$, we are able to allow for a more general family.

\begin{lem} \label{lem.SN} Fix $\La > 0$. Suppose $N$ is sufficiently large such that
	\begin{equation} \label{e.cfl}
		N^{dm\beta} \geq 4(m+1)\La^{m}.
	\end{equation}
	Let $\mathcal{S}_{N}=\mathcal{S}_N[A]$ be as defined in \eqref{e.SN} with $A(u)=u^{m+1}$. For $n\in \N$, let $\mathcal{S}_{N}^{n}$ be the $n$-fold composition of $\mathcal{S}_{N}$. Then the following holds,
	\begin{enumerate}[(i)]
		\item The scheme $\mathcal{S}_{N}$ is monotone on $L^{\infty}(\xmesh \Z^{d})\cap \mathcal{B}^{+}_{\La}(\R^{d})$.
		\item For $w \in L^{\infty}(\xmesh \Z^{d})\cap\mathcal{B}^{+}_{\La}(\R^{d})$, for any $n\geq 1$, 
		\begin{equation*}
		\norm{\mathcal{S}_{N}^{n}w}_{L^{\infty}(\xmesh \Z^{d})}\leq \norm{w}_{L^{\infty}(\xmesh \Z^{d})}
		\end{equation*}
		and thus $\mathcal{S}_{N}^{n}\in L^{\infty}(\xmesh\Z^{d})\cap\mathcal{B}_{\La}^{+}(\R^{d})$.
		\item For $w, w' \in L^{1}(\R^{d}) \cap \mathcal{B}^{+}_{\La}(\R^{d})\cap L^{\infty}(\xmesh\Z^{d})$,
		$$\norm{\mathcal{S}_{N}^{n}w - \mathcal{S}_{N}^{n}w'}_{L^{1}(\R^{d})} \leq \norm{w - w'}_{L^{1}(\R^{d})}.$$
		
	\end{enumerate} 
\end{lem}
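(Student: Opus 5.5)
The plan is to reduce Lemma \ref{lem.SN} to the unscaled results, Corollary \ref{cor:newsmon} and Lemma \ref{lem.S}, by absorbing the factor $\tmesh/(\xmesh)^{2}$ into a rescaled flux function. Under the identification $q(k)=v(k\xmesh)$ the scheme becomes
\begin{equation*}
\mathcal{S}^{\xmesh}_{N}q(k)=q(k)+\frac{1}{2d}\sum_{\ell\sim k}\left[A_N(q(\ell))-A_N(q(k))\right],\qquad A_N(u):=\frac{\tmesh}{(\xmesh)^{2}}u^{m+1}.
\end{equation*}
Since $\tmesh=(\xmesh)^{dm+2}$, we have $\tmesh/(\xmesh)^2=(\xmesh)^{dm}=N^{-dm\beta}$. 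Hence $\mathcal{S}^{\xmesh}_N=\scheme[A_N]$ with $A_N(u)=N^{-dm\beta}u^{m+1}$.

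The first step is to verify the hypothesis $A_N'(u)\in[0,1]$ for $u\in[0,\La]$. We have $A_N'(u)=N^{-dm\beta}(m+1)u^m\ge 0$. Since $u^m\le\La^m$ on this interval, the bound $A_N'(u)\le N^{-dm\beta}(m+1)\La^m\le \tfrac14$ follows directly from \eqref{e.cfl}. This is the CFL-type condition, and it is the only place the size of $N$ enters.

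Given this, Corollary \ref{cor:newsmon} shows that $\scheme[A_N]$ is monotone on $\cB^+_\La(\Z^d)$, which is (i). For (ii), the argument of Lemma \ref{l.Smonotone} applies: constants are fixed by the scheme, so $0\le w\le \norm{w}_{L^\infty}$ and monotonicity give $0\le \mathcal{S}_Nw\le \norm{w}_{L^\infty}\le\La$. This shows $\mathcal{S}_N w$ stays in the class, and induction on $n$ yields the bound for $\mathcal{S}_N^n$.

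For (iii), note that for $(\xgrid)$-piecewise constant functions the continuum norm is a multiple of the discrete one: $\norm{w}_{L^1(\R^d)}=(\xmesh)^d\sum_k|w(k\xmesh)|$. So it suffices to prove the discrete contraction. Lemma \ref{lem.S}(ii), applied to $\scheme[A_N]$ (whose hypotheses are now checked), gives the one-step contraction. Part (ii) keeps the iterates in $L^1\cap\cB^+_\La$; their mass is finite by Lemma \ref{lem.S}(i). We can therefore iterate the one-step estimate $n$ times.

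There is no real obstacle here. The only points requiring care are the bookkeeping of the scaling identity $\tmesh/(\xmesh)^2=N^{-dm\beta}$ and checking that the iterates remain in the class where the lemmas apply.
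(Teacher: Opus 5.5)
Your proposal is correct and follows essentially the same route as the paper: rescale the flux to $N^{-dm\beta}u^{m+1}$, verify the derivative bound on $[0,\La]$ via \eqref{e.cfl}, invoke the unscaled monotonicity and $L^1$-contraction results, and use (ii) to keep the iterates in the class. Your citation of Corollary \ref{cor:newsmon} rather than Lemma \ref{l.Smonotone} is actually the more accurate one, since the derivative bound only holds on $[0,\La]$. Your use of mass conservation instead of contraction against $w'=0$ to keep the iterates in $L^1$ is an immaterial variation.
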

\begin{proof}
To show (i), we consider $\tilde{A}(u):=\frac{\tmesh}{(\xmesh)^{2}}A(u)$. Then \eqref{e.SN} can be rewritten as 
\begin{equation*}
\mathcal{S}_{N}v(k\xmesh)=v(k\xmesh)+\frac{1}{2d}\sum_{k\sim \ell}\left[\tilde{A}(v(\ell \xmesh))-\tilde{A}(v(k\xmesh))\right].
\end{equation*}
Using the identification with $\mathcal{S}_{N}^{\xmesh}: L^{\infty}(\Z^{d})\rightarrow L^{\infty}(\Z^{d})$, we now appeal to Lemma \ref{l.Smonotone}. For this, it is enough to check that $\tilde{A}'(u)\in [0,1]$ for all $u\in [0, \La]$.  
Noting that $\tilde{A}'(u)=\frac{\tmesh}{(\xmesh)^{2}}(m+1)u^{m}$, and recalling that $\frac{\tmesh}{(\xmesh)^2} = \frac{N^{-1}}{N^{-2/(dm+2)}} = N^{-dm\beta}$, this implies that 
\begin{equation*}
\tilde{A}'(u)=N^{-dm\beta}(m+1)u^{m}.
\end{equation*}
For $u\in [0, \La]$, it follows by hypothesis \eqref{e.cfl} that $\tilde{A}'(u)\in [0,1]$. Hence, an application of Lemma \ref{l.Smonotone} yields (i). 

To see (ii), note that the second conclusion of Lemma \ref{l.Smonotone} gives that $0 \leq \mathcal{S}_{N}w\leq \norm{w}_{L^{\infty}(\R^{d})} \leq \La$, meaning $\mathcal{S}_{N}w \in \mathcal{B}^{+}_{\La}(\R^{d})\cap L^{\infty}(\xmesh\Z^{d})$. By iteration, we have (ii).

To prove property (iii), we remark that for a $\xmesh \Z^{d}$-piecewise constant function $w$, $\int_{\R^{d}} |w(x)|\, dx=\sum_{k\in \Z^{d}} (\xmesh)^{d}|w(k\xmesh)|=(\xmesh)^{d}|w(\cdot \xmesh)|_{1}$. Therefore, the case $n=1$ follows directly from Lemma \ref{lem.S}. By setting $w'_{N} \equiv 0$, we have $\norm{\mathcal{S}_{N}w_{N}}_{L^{1}(\R^{d})} \leq \norm{w_{N}}_{L^{1}(\R^{d})}$. This fact, along with property (ii) says that $\mathcal{S}_{N}w_{N} \in L^{1}(\R^{d}) \cap \mathcal{B}^{+}_{\La}(\R^{d})\cap L^{\infty}(\xmesh\Z^{d})$ whenever $w_{N} \in L^{1}(\R^{d}) \cap \mathcal{B}^{+}_{\La}(\R^{d})\cap L^{\infty}(\xmesh\Z^{d})$. Hence, we can conclude property (iii) for all $n$ by iteratively applying Lemma 2.4.
\end{proof}

\subsection{Volcanic functions in space.} 

We say that a function $q: \mathbb Z^d \to \mathbb R$ is \emph{nondecreasing towards the origin} if, for every two neighbours $\x \sim \y$ in $\mathbb Z^d$ with $|\x|_{1} > |\y|_{1}$,
we always have $q(\x) \le q(\y)$. That is, if we take one step toward the origin on $\Z^d$, in the $\ell^{1}$-sense, the function does not decrease.

We note that this does not imply that $q(\x) \le q(\y)$ for any two points $\x, \y$ with $|\x|_{1} > |\y|_{1}$. For example, in two dimensions, we can define $$q(\x) = \begin{cases}1&\text{ if } \x_1 = 0 \text{ or } \x_2 = 0\\0&\text{ otherwise.}\end{cases}$$ This is nondecreasing toward the origin. However, $|(0,3)|_{1}=3> |(1,1)|_{1}=2$, while $q((0,3))=1> q((1,1))=0$.

We say that a function $q: \mathbb Z^d \to \mathbb R$ is \emph{symmetric} if the value of $q$ remains the same when the input arguments are reflected in any coordinate plane, and when any pair of coordinates is transposed. That is,  for any permutation $\pi \in S_d$ and $\varepsilon_1, \ldots, \varepsilon_d \in \{\pm1\}$,
\begin{equation}\label{e.symmdef}
q(\varepsilon_1 \x_{\pi(1)}, \ldots, \varepsilon_d \x_{\pi(d)}) = q(\x_1, \ldots, \x_d).
\end{equation}

If $q$ is symmetric and nondecreasing toward the origin, we call $q$ a \emph{volcanic function}. For $\La>0$, we say that the scheme (\ref{timeevol}) is \emph{volcano-preserving on $\mathcal{B}^{+}_{\La}(\Z^{d})$} if $\scheme q$ is volcanic whenever $q \in \mathcal{B}^{+}_{\La}(\Z^{d})$ is volcanic.

We now provide a sufficient condition on $A$ to guarantee that $\mathcal{S}$ is volcano-preserving.

\begin{lem}\label{monotonepreserved}
Fix $\Lambda>0$ and let $A: \R\rightarrow \R$ be such that $0\leq A'(\cdot)\leq \sfrac{2}{3}$ on $[0,\Lambda]$. Then the scheme
\[
\scheme q(\x) := q(\x) +\frac{1}{2d}\sum_{\y \sim \x} \left[A(q(\y)) - A(q(k))\right]
\]
defined by \eqref{d.Sdef} is volcano-preserving on $\mathcal{B}^{+}_{\Lambda}(\Z^{d})$. 
\end{lem}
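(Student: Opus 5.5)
The plan is to verify the two defining properties of a volcanic function separately for $\scheme q$, where $q\in\mathcal{B}^{+}_{\Lambda}(\Z^{d})$ is volcanic. Symmetry is immediate. The maps $\x\mapsto(\varepsilon_1\x_{\pi(1)},\ldots,\varepsilon_d\x_{\pi(d)})$ are graph automorphisms of $\Z^d$, so they permute the neighbours of each point. The scheme \eqref{d.Sdef} is defined by the same formula at every point, so it commutes with these maps. Hence if $q$ satisfies \eqref{e.symmdef}, so does $\scheme q$.

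For monotonicity toward the origin, fix neighbours $\x\sim\y$ with $|\x|_1>|\y|_1$. Write $\x=\y+\zeta e_i$ with $\zeta\in\{\pm1\}$; then $|\x_i|=|\y_i|+1$, so $\zeta\y_i\ge 0$. I would write
\[
\scheme q(\x)=q(\x)-A(q(\x))+\frac{1}{2d}\sum_{\y'\sim \x}A(q(\y')),
\]
and similarly for $\y$, and then compare the two sums term by term via a pairing of neighbours.

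\emph{Pairing, off-axis neighbours.} For $j\ne i$ and $\sigma=\pm1$, pair $\x+\sigma e_j$ with $\y+\sigma e_j$. These are neighbours differing by $\zeta e_i$, and their $i$-th coordinates are $\x_i$ and $\y_i$. Hence $|\x+\sigma e_j|_1>|\y+\sigma e_j|_1$, so $q(\x+\sigma e_j)\le q(\y+\sigma e_j)$.

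\emph{Pairing, on-axis neighbours.} Pair $\x+\zeta e_i$ with $\y-\zeta e_i$. If $\y_i\neq 0$, then walking along the $e_i$-axis gives $q(\x+\zeta e_i)\le q(\x)\le q(\y)\le q(\y-\zeta e_i)$, each step being a move toward the origin. If $\y_i=0$, then $\y-\zeta e_i$ is the reflection of $\x=\y+\zeta e_i$ in the $i$-th coordinate plane. Symmetry then gives $q(\y-\zeta e_i)=q(\x)\ge q(\x+\zeta e_i)$.

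\emph{Leftover terms.} The only unpaired neighbours are $\y$ (a neighbour of $\x$) and $\x$ (a neighbour of $\y$). Since $A$ is nondecreasing on $[0,\Lambda]$ and all values lie in $[0,\Lambda]$, the paired contributions satisfy the desired inequality. It remains to compare the leftover terms:
\[
\scheme q(\y)-\scheme q(\x)\ \ge\ \bigl(q(\y)-q(\x)\bigr)-\Bigl(1+\tfrac{1}{2d}\Bigr)\bigl(A(q(\y))-A(q(\x))\bigr)=g(q(\y))-g(q(\x)),
\]
where $g(u):=u-(1+\frac{1}{2d})A(u)$. On $[0,\Lambda]$ we have $g'(u)\ge 1-\frac{3}{2}\cdot\frac{2}{3}=0$ (using $d\ge1$ and $A'\le\frac23$), so $g$ is nondecreasing. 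Since $q(\y)\ge q(\x)$, the right-hand side is nonnegative.

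The main obstacle is bookkeeping rather than analysis. It lies in the on-axis pairing, especially the boundary case $\y_i=0$, where symmetry is genuinely needed, and in checking that the leftover terms produce exactly the factor $1+\frac{1}{2d}$. This factor is where the threshold $\frac23$ on $A'$ enters.
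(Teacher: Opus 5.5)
Your proof is correct and is essentially the paper's argument: your neighbour pairing, with the leftover factor $1+\frac{1}{2d}$, is the computation the paper does in its Case 2 ($\ell_1=0$), which yields the same coefficient $\frac{2d+1}{2d}$ before invoking $A'\le\frac23$. The only difference is organizational: the paper treats $\ell_1\ge1$ separately, via local monotonicity applied to the translate $q(\cdot-e_1)$ (which needs only $A'\le1$), whereas your single pairing covers both cases at once.
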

\begin{proof}
Let $q$ be a volcanic function. We now check that $\mathcal{S}(q)$ is volcanic, under the hypotheses. 

By the definition of $\scheme$, if we compose $q$ with some reflection or coordinate transposition, then $\scheme q$ will undergo the same reflection or transposition, so $\scheme q$ is symmetric.

Let $\x, \y$ be neighbours with $|\x|_{1} > |\y|_{1}$. The proof is complete if we can show $\scheme q(\x) \le \scheme q(\y)$. By symmetry, we can assume without loss of generality that $k$ and $\ell$ differ in the first coordinate, and $\y_1 \ge 0$, so that the two points are $$\y = (\y_1, \y_2, \ldots, \y_d), \qquad \x = (\y_1 + 1, \y_2, \ldots, \y_d).$$
We consider two cases.

\medskip
\emph{Case 1. $\y_1 \ge 1$.} We use the local monotonicity property of $\mathcal{S}$. In this case, every coordinate $\y_i$ has the same sign as the corresponding coordinate $\x_i$. Let $v\sim 0$ be a neighbour of the origin, and we will compare $k+v$ and $\ell+v$. If $v = \pm e_i$ with $i\neq 1$, then $|\ell_{1}|<|k_{1}|$ and $|\ell_{i}\pm1|=|k_{i}\pm1|$. If $v=\pm e_{1}$, then since $\ell_{1}\geq 1$, we still have $|\ell_{1}\pm1|<|k_{1}\pm1|$, and $|\ell_{i}|=|k_{i}|$. 

Thus, $|\y+v|_{1}<|\x+v|_{1}$, and they are clearly neighbors, so since $q$ is volcanic, $q(\x+v)\leq q(\y+v)=q(k+e_{1}+v)$. By the local monotonicity of $\mathcal{S}$, this implies that $\mathcal{S}q(k)\leq \mathcal{S}q(k+e_{1})=\mathcal{S}q(\ell)$, as desired.

\medskip \emph{Case 2. $\y_1 = 0$.}
In this case, the analysis is more delicate. Using the symmetry of $q$, without loss of generality, we may write $\y = (0, \ldots, 0, \y_{m+1}, \ldots, \y_d)$, where $m\geq 1$ denotes the number of zero coordinates of $\y$. 

Since $q$ is volcanic and $\x$ and $\y$ are neighbours with $|\x|_{1}> |\y|_{1}$, $q(\x)\leq  q(\y)$. To prove the desired claim, we compute
\begin{align*}&\scheme q(\y) - \scheme q(\x)\\
&=q(\y)-q(\x)+{1\over2d}\sum_{i=1}^{m}[A(q(\y \pm e_{i}))-A(q(\y))]-{1\over2d}\sum_{i=1}^{m}[A(q(\x \pm e_{i}))-A(q(\x))]\\
&\quad +{1\over2d}\sum_{i=m+1}^{d}[A(q(\y \pm e_{i}))-A(q(\y))]-[A(q(\x \pm e_{i}))-A(q(\x))]
\end{align*}

We will analyze each of the above sums separately. By symmetry, we have that for all $1\leq i\leq m$, $q(\y\pm e_{i}) = q(\y+e_{1})=q(k)$, so
\begin{equation*}
    \label{monotonezeropiecea}
    R_1 := {1\over2d}\sum_{i=1}^{m}[A(q(\y \pm e_{i}))-A(q(\y))] = {2m \over 2d} [A(q(k)) - A(q(\ell))].
\end{equation*}

For the next term, we note that for $1< i\leq m$, $|k\pm e_{i}|_{1}>|k|_{1}$ and $|k+e_{1}|_{1}>|k|_{1}$. Since $q$ is volcanic and $A'\geq 0$, we have $A(q(k+v))-A(q(k))\leq 0$ for all $v\sim 0$ with $v\neq -e_{1}$. This implies
\begin{align*}
R_2 :=-{1\over2d}\sum_{i=1}^{m}[A(q(\x \pm e_{i}))-A(q(\x))]&\geq -\frac{1}{2d}[A(q(\x-e_{1})-A(q(\x))]\\
&=\frac{1}{2d}[A(q(\x))-A(q(\ell))].
\end{align*}

For the last term, we observe that for $m+1\leq i\leq d$, $|k\pm e_{i}|_{1}>|\ell\pm e_{i}|_{1}$, and thus by the same argument as in the last step, $A(q(\y \pm e_{i})) - A(q(\x \pm e_{i})) \ge 0.$ This implies 
\begin{align*}  \label{monotonezeropiecec}
R_3 &:={1\over2d}\sum_{i=m+1}^{d}[A(q(\y \pm e_{i}))-A(q(\y))]-[A(q(\x \pm e_{i}))-A(q(\x))]\notag\\
&\geq {1\over2d}\sum_{i=m+1}^{d}[A(q(\x))-A(q(\y))]={2d - 2m \over 2d} [A(q(\x))-A(q(\y))]
\end{align*}
Adding up the three prior inequalities, we get the bound
\begin{align*}
    \scheme q(\y)-\scheme q(\x) &= q(\y)-q(\x) + R_{1}+R_{2}+R_{3}
                    \\&\ge q(\y)-q(\x) + {2d+1 \over 2d} [A(q(\x))-A(q(\y))].
\end{align*}
Recall that $q(k) \le q(\ell)$, and since $0\leq A'(\cdot) \le 2/3$, we have 
\begin{equation*}
0\leq A(q(\ell)) - A(q(k)) \le \frac{2}{3} (q(\ell) - q(k)). 
\end{equation*}
Therefore, since $d>1$, we conclude
\begin{align*}
    \scheme q(\y) - \scheme q(k)
        &\ge q(\ell)- q(k) - {(2d + 1) \over 3d}[q(\ell)-q(k)]
        \\
        &\ge [q(\ell)- q(k)]\left[1 - {2d+1 \over 3d}\right]
        \\&\ge 0,
\end{align*}
and this completes the proof. 
\end{proof}

\begin{cor}\label{cflmonotonespace}If $A(u) = u^{m+1}$ with $m \ge 1$, then the scheme (\ref{timeevol}) is volcano-preserving on $\mathcal{B}^{+}_{\sfrac{1}{3}}(\Z^{d})$. 
\end{cor}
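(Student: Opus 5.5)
This follows directly from Lemma \ref{monotonepreserved} with $\Lambda = \sfrac{1}{3}$ and $A(u)=u^{m+1}$. The plan is to check its hypothesis, namely that $0 \le A'(u) \le \sfrac{2}{3}$ for all $u \in [0,\sfrac{1}{3}]$, and then quote the lemma.

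First, $A'(u) = (m+1)u^m \ge 0$ on $[0,\infty)$, and $A'$ is nondecreasing in $u$ there. Hence on $[0,\sfrac13]$ it suffices to bound $A'(\sfrac13) = (m+1)3^{-m}$. For $m=1$ this equals exactly $\sfrac{2}{3}$, which is admissible.

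For $m \ge 1$ we need the bound uniformly in $m$, including non-integer $m$. This step is the same as in Corollary \ref{cor.Sell1}. We differentiate in $m$:
\begin{equation*}
\frac{d}{dm}\big[(m+1)3^{-m}\big] = \big(1-(m+1)\log 3\big)3^{-m}.
\end{equation*}
Since $(m+1)\log 3 \ge 2\log 3 > 1$, this derivative is negative for all $m \ge 1$. Therefore $(m+1)3^{-m} \le 2\cdot 3^{-1} = \sfrac{2}{3}$ for every real $m \ge 1$.

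Combining these gives $0 \le A'(u) \le \sfrac23$ on $[0,\sfrac13]$. Lemma \ref{monotonepreserved} then yields that the scheme \eqref{timeevol} is volcano-preserving on $\mathcal{B}^{+}_{\sfrac{1}{3}}(\Z^{d})$. There is no real obstacle here. The only point requiring care is that the bound is sharp at $m=1$, so the monotonicity-in-$m$ argument (rather than a crude estimate) is what makes the constant $\sfrac13$ work.
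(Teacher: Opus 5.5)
Your proof is correct and is essentially the paper's argument: both apply Lemma \ref{monotonepreserved} after checking $0\le A'\le \sfrac{2}{3}$ on $[0,\sfrac13]$, using that $(m+1)u^m$ decreases in $m$ for $m\ge 1$. The only difference is the order of reductions. You first reduce to the endpoint $u=\sfrac13$, as in Corollary \ref{cor.Sell1}, and then differentiate in $m$. The paper instead differentiates in $m$ at each fixed $u\le \sfrac13$ and then bounds $2u\le \sfrac23$.
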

\emph{Proof.} By Lemma \ref{monotonepreserved}, we smply need to check that $0 \le A'(\cdot) \le 2/3$ on the interval $[0,1/3]$. 

Since $A'(u) = (m+1)u^m$, if $u \le 1/3$, then $A'(u)$ decreasing in $m$ for $m \ge 1$, because
\begin{align*}
    {\partial A'(u) \over \partial m}=(1 + (m+1) \ln u) u^m\le (1 + 2\ln u) u^m\le 0.
\end{align*}
Therefore $(m+1)u^m \le (1+1)u^1 \le 2/3$. So $A'(u) \le 2/3$ as long as $u \in [0, 1/3]$ and the result follows.\qed

\subsection{The Barenblatt Solution}\label{ss.bb}

\begin{equation*}
	\bar{u}(x,t):=t^{-d\beta}(C-\ga|x|^{2}t^{-2\beta})_{+}^{\frac{1}{m}},
\end{equation*}
where $\beta=\frac{1}{dm+2}$ and $\ga=\frac{m\beta}{2(m+1)}$, and $C$ is chosen so that $\int \bar{u}(x,t)\, dx=1$ for all $t$. 

This is the unique distributional solution (see Section \ref{ss.sol}) to
\begin{equation} \label{e.pmdelta}
	\begin{cases}
		u_{t}-\frac{1}{2d}\Delta(u^{m+1})=0&\text{in $\R^{d}\times (0, \infty)$},\\
		u(x,0)=\delta(x)&\text{in $\R^{d}$},
	\end{cases}
\end{equation}
where $\delta$ here is the Dirac delta. A reference for the construction of the Barenblatt solution is given in \cite{vazquez2007porous}. 

We notice that the parameter $C$ is used to tune the mass of the solution. Moreover, we also note that for any $t_{0}\geq 0$, $\bar{u}(\cdot,\cdot+t_{0})$ solves the PME with initial condition $\bar{u}(\cdot, t_{0})$. This implies that we can think of the Barenblatt solutions as a 2-parameter family of solutions to the PME. 

Instead of parametrizing by the mass and a time-shift, we now write this family in terms of different parametrizations which are better suited for our analysis. Let $R, \Gamma$ be positive real parameters. We now define
\begin{equation}\label{hformula0}\bar{u}^{(R,\Gamma)}(x, t) := {R^d \Gamma \over r(t)^d} \left(1 - \left({|x| \over r(t)}\right)^2\right)^{1/m}_+,~ r(t) := R \left(1 + {t \over \time}\right)^\beta,~ \time := {2d\gamma R^2 \over \Gamma^m},\end{equation}
where we recall $\beta=\frac{1}{dm+2}$ and $\ga=\frac{m\beta}{2(m+1)}$.

It is clear that the function $\bar{u}^{(R,\Gamma)}(\cdot, t)$ is positive in the open ball $|x| < r(t)$, and 0 outside it. As such, we call $B_{r(t)}$ the \emph{positive region}, and its complement the \emph{zero region}. The parameters have clear meanings: $R = r(0)$ is the radius of the positive region at time zero, and $\Gamma = \bar{u}^{(R,\Ga)}(0, 0)$ is the largest possible value of $\bar{u}$ (for all $x, t$).

\begin{lem}\label{l.baruprop}
For each $R, \Gamma>0$ the Barenblatt solution $\bar{u}=\bar{u}^{(R,\Gamma)}$ satisfies the following properties:
\begin{enumerate}[(i)]
\item For $m>0$, fixed $x$, $t\mapsto \bar{u}(x,t)$ is unimodal (i.e. $\max_{t} \bar{u}(x,t)$ is unique). 
\item For $m\geq 1$, for any $i\in [d]$, ${\partial^4 \over \partial x_i^4} \bar{u}(x,t)\geq 0$ for all $|x|<r(t)$. 
\end{enumerate}
\end{lem}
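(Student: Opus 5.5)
For (i), I will reparametrize time by $\rho(t):=r(t)^{-2}$. Since $r(t)=R(1+t/\time)^\beta$ is strictly increasing in $t$, the map $t\mapsto\rho(t)$ is a strictly decreasing bijection onto $(0,R^{-2}]$. Set $s:=1/m$. In this variable, for fixed $x$,
\[
\bar u(x,t)=R^d\Gamma\,\rho^{d/2}\bigl(1-|x|^2\rho\bigr)_+^{s}=:\phi(\rho).
\]
If $x=0$, then $\phi$ is a power of $\rho$ and is strictly monotone, so $t\mapsto \bar u(0,t)$ is monotone with its maximum at $t=0$. If $x\neq 0$, then $\phi$ is positive exactly for $\rho<|x|^{-2}$. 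On that interval,
\[
\frac{d}{d\rho}\log\phi=\frac{d}{2\rho}-\frac{s|x|^2}{1-|x|^2\rho},
\]
which is strictly decreasing in $\rho$. It goes from $+\infty$ as $\rho\to0^+$ to $-\infty$ as $\rho\to|x|^{-2}$, so it vanishes at exactly one $\rho^*$. Hence $\phi$ increases and then decreases on its positivity interval, and is zero beyond it. The maximiser over the admissible range $(0,R^{-2}]$ is therefore unique: it is either $\rho^*$, or the endpoint $R^{-2}$ (i.e. $t=0$). Composing with the monotone map $t\mapsto\rho(t)$ preserves this, which gives (i).

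For (ii), I will fix $t$ and all coordinates except $x_i$, and write $y=x_i$. I absorb $r(t)$ and the positive prefactor $R^d\Gamma r(t)^{-d}$, which do not affect the sign. Then, inside the positive region, $\bar u$ is a positive multiple of $g(y)=w^{s}$, where
\[
w:=c-y^2>0,\qquad c:=1-\sum_{j\neq i}x_j^2/r(t)^2 .
\]
I then differentiate directly, using $w'=-2y$ and collecting powers of $w$:
\begin{align*}
g'&=-2sy\,w^{s-1},\\
g''&=-2s\,w^{s-1}+4s(s-1)y^2w^{s-2},\\
g'''&=12s(s-1)y\,w^{s-2}-8s(s-1)(s-2)y^3w^{s-3},\\
g''''&=s(s-1)\,w^{s-4}\Bigl[12w^2-48(s-2)y^2w+16(s-2)(s-3)y^4\Bigr].
\end{align*}
The sign of $\partial^4_{x_i}\bar u$ is therefore the sign of $s(s-1)$ times the sign of the bracket.

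The main obstacle is this sign determination, and I expect it to be where the argument either closes or fails. For $m=1$ we have $s=1$, the prefactor vanishes, and the claim holds with equality. For $m>1$ the exponent is $s=1/m\in(0,1)$. Then $s(s-1)<0$, while the bracket equals
\[
12w^2+48(2-s)y^2w+16(2-s)(3-s)y^4>0,
\]
so the formula gives $\partial^4_{x_i}\bar u<0$ in the positive region. This is the opposite of the stated inequality. As a sanity check, $m=2$ gives $g=\sqrt{c-y^2}$, whose fourth derivative is visibly negative.

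So, with the profile exponent $1/m$ exactly as written in \eqref{hformula0}, this plan proves (ii) only for $m=1$; for $m>1$ it shows the stated inequality fails. My next step would be to check the paper's convention at the point where (ii) is used. The same computation gives $\ge0$ whenever the exponent $s$ satisfies $s\ge3$ (every factor is then nonnegative) or $s=1$. So either the relevant exponent differs from $1/m$, or the intended quantity is a different power of $\bar u$, for instance the pressure-type power $\bar u^{m}$, which is quadratic in $y$ and has vanishing fourth derivative. I would verify the convention there before finalising (ii), since I cannot derive the stated sign from \eqref{hformula0} as it stands.
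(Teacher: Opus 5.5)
Your proof of (i) is correct and is essentially the paper's argument. The paper log-differentiates in $t$ and finds that $\partial_t\bar u$ has the sign of $2\beta(1-\vartheta^2)^{-1}-1$, which decreases in $t$. Your $\rho=r(t)^{-2}$ is only a monotone reparametrization of time, and your critical point $|x|^2\rho^*=dm/(dm+2)=1-2\beta$ is exactly the paper's $\vartheta^2=1-2\beta$.

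For (ii), your fourth-derivative formula is right. So is your conclusion that the inequality, read literally for $\bar u$, fails when $m>1$: for $m=2$, $\partial_{x_1}^4\bar u(0,t)=-3R^d\Gamma\,r(t)^{-d-4}<0$. What is missing is the correct reading, and $\bar u^m$ is not it. The paper's proof of (ii) concerns $\partial_{x_i}^4\bigl(\bar u^{m+1}\bigr)$, the power coming from $A(u)=u^{m+1}$. So does its only use, Lemma \ref{l.corex}, where $\mathcal{E}_i$ is minus a positive-weight integral of $\partial_i^4(\bar u^{m+1})$. In your notation this is the exponent $s=(m+1)/m\in(1,2]$, and your own formula then closes the argument at once. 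Indeed $s(s-1)=(m+1)/m^2>0$, and since $s-2\le 0$ and $s-3<0$, the bracket equals $12w^2+48(2-s)y^2w+16(2-s)(3-s)y^4\ge 12w^2>0$. With $c=1$, $m^2$ times your bracket is $4\bigl(3m^2+(6m^2-12m)y^2+(4-m^2)y^4\bigr)$, which is exactly the polynomial the paper checks on $|y|\le 1$. The hypothesis $m\ge 1$ is precisely $s\le 2$, which supports this reading.

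Finally, your aside that exponents $s\ge 3$ would also give a nonnegative sign is false. For $s>2$ the middle term $-48(s-2)y^2w$ is negative and wins somewhere. For $s=3$ the bracket $12w^2-48y^2w$ is negative once $y^2>w/4$, and $(c-y^2)^4$ has fourth derivative $-156c^2$ at $y^2=c/2$. Among $s>1$, your bracket is nonnegative on all of $\{w>0\}$ exactly when $s\le 2$.
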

\begin{proof}[Proof of Lemma \ref{l.baruprop}]
To prove the unimodality claim, we recall that for fixed $x$, $\bar{u}(x,\cdot)$ could be initially 0, and once there exists $T$ such that $|x|<r(T)$, then $\bar{u}(x,t)$ is strictly positive for all times $t\geq T$ (so the time derivative must be positive at time $T$). We now compute $\tfrac{\partial\overline{u}}{\partial t}(x,t)$ assuming that $|x|\leq r(t)$ (since this is where the maximum of $t\mapsto u(x,t)$ will occur). We introduce the shorthand $\vartheta := \tfrac{|x|}{r(t)}$ and proceed by logarithmic differentiation. Since
$$\log \bar{u}= \log(R^d \Gamma) - d \log r(t) + {1 \over m} \log(1 - \vartheta^2),$$
using the fact that $\tfrac{\partial \vartheta}{\partial t} = -\tfrac{|x|r'(t)}{r(t)^2} = -\tfrac{\vartheta r'(t)}{r(t)}=- \tfrac{\vartheta\beta}{(t + \time)}$, and $md\beta+2\beta=1$, we obtain
\begin{align} 
    {1 \over \bar{u}} {\partial \bar{u} \over \partial t} &= {-d \beta \over t+T_0} - {1 \over m(1-\vartheta^2)} {\partial\vartheta^2 \over \partial t} 
    \notag
    \\&={1 \over m(t+T_0)} \left(-md\beta + 2\beta {\vartheta^2 \over 1 - \vartheta^2}\right)
    \notag
    \\
      &={1 \over m(t+T_0)} \left({1-md\beta \over 1 - \vartheta^2} - 1\right)\notag\\
  &={1 \over m(t+T_0)} \left({2\beta \over 1 - \vartheta^2} - 1\right).
    \label{first.time.derivative}
\end{align}
We conclude that $\tfrac{\partial\overline{u}}{\partial t}$ has the same sign as $2\beta(1 - \vartheta^2)^{-1} - 1$, and this is decreasing in time for a fixed $x$, because $\vartheta = |x|/r(t)$ is decreasing in time (and less than 1). Therefore, for fixed $x$, the time derivative of $\bar{u}(x,t)$ is initially positive and then becomes negative for higher values of $t$. This implies that $\bar{u}(x,t)$ is unimodal in $t$ for a fixed $x$.

For the second claim, fix $i\in [d]$, and we begin by rewriting
 \[
{\partial^4 \over \partial x_i^4} \bar{u}^{m+1} = \left({R^d \Gamma \over r(t)^d}\right)^{m+1} {\partial^4 \over \partial x_i^4} \left(s-\left({x_i \over r(t)}\right)^2\right)^{(m+1)/m}_+, 
\]
where $s := 1 - (\sum_{j \ne i} x_j^2) / r(t)^2$ does not depend on the $i$-th coordinate, and thus $s - x_i^2 / r(t)^2 = 1 - |x|^2/r(t)^2$ is greater than zero by the assumption that $|x| < r(t)$.

For $y\in \R$, we make the change of variables $x_i = s^{1/2} r(t) y$. Then $${\partial^4 \over \partial x_i^4} \bar{u}^{m+1} = C(R, \Gamma, t, x_1, \ldots, x_{i-1}, x_{i+1}, \ldots, x_d) {d^4 \over dy^4} (1-y^2)^{(m+1)/m}$$ where $C({\cdots})$ is a positive constant that depends on everything except $x_i$. In other words, the sign of $\partial_i^4 \bar{u}^{m+1}$ will be the same as the sign of $(d/dy)^4 (1-y^2)^{(m+1)/m}$. We also notice that $s > x_i^2/r(t)^2$, so $|y| < 1$.

We now calculate\footnote{This calculation can be verified directly by hand, or perhaps more preferably, with numerical/computational assistance.} the fourth derivative of $(1-y^2)^{(m+1)/m}$, given by $${d^4 \over dy^4} (1-y^2)^{(m+1)/m} = {4(1+m) (1-y^2)^{1/m-3} \over m^4} (3m^2+(6m^2-12m)y^2+(4-m^2)y^4).$$
Let $\psi(y) := 3m^2 + (6m^2-12m)y^{2} + (4-m^2) y^4$ be the factor on the right. Then since $m\geq 1$, it follows that $\psi(y)\geq 0$ for all $|y|\leq 1$.

The other factors are positive, so $(d/dy)^4 (1-y^2)^{(m+1)/m}$ is nonnegative, and $${\partial^4 \over \partial x_i^4} \bar{u}^{m+1} = C {d^4 \over d^4y} (1-y^2)^{(m+1)/m} \ge 0.$$ This inequality proves the claim about the fourth derivative.

\end{proof}

\section{The proof of Theorem \ref{stepfive}: Symmetric Cooperative Motion as a Lazy Random Walk}\label{s.lrw}

The main goal of this section is to prove the following result. Recall that $\beta=\tfrac{1}{dm+2}$. 
\begin{thm}\label{stepfive}
Let $\tildep=(\tilde{p}(\cdot, n))_{n\geq n_{0}}$ be a solution of \eqref{timeevol} on $[n_{0}, \infty)$ with initial condition $\tilde{p}_{n_{0}}$. Assume that $\tilde{p}(\cdot, n)\in \mathcal{B}^{+}_{\sfrac{1}{3}}(\Z^{d})\cap L^{1}(\Z^{d})$ and $\tilde{p}(\cdot, n)$ is volcanic for every $n\geq n_{0}$. If there are positive constants $C$ and $N$ so that for all $n \ge N$,
\begin{equation*}
\tildep(\x, n) \ge Cn^{-d\beta}\quad\text{for all $|\x| \le Cn^\beta$},
\end{equation*}
Then there is a constant $C'=C'(C, d,m, |\tilde{p}_{n_{0}}|_{1},N)>0$ such that for all $n\geq N$, 
\begin{equation*}
\sup_{\x\in\Z^d} \tildep(\x,n)\le C'n^{-d\beta}.
\end{equation*}
\end{thm}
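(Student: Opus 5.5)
The plan is to read the scheme \eqref{timeevol} as the forward equation of a time-inhomogeneous lazy random walk, and to turn the lower bound on $\tildep$ near the origin into a lower bound on the rate at which this walk jumps. Writing the first line of \eqref{e.pscheme} for $\tildep$,
\begin{equation*}
\tildep(\x,n+1)=\tildep(\x,n)\bigl(1-\tildep(\x,n)^m\bigr)+\sum_{\y\sim\x}\tildep(\y,n)\,\frac{\tildep(\y,n)^m}{2d},
\end{equation*}
we see that $\tildep(\cdot,n)/M$, with $M:=|\tildep_{n_0}|_1$, is the law at time $n$ of a Markov chain $(Y_n)_{n\ge n_0}$. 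Mass is conserved by Corollary \ref{cor.Sell1}(ii), and the assumption $\tildep\le 1/3$ makes all the transition weights lie in $[0,1]$. The chain moves as follows: at time $n$, a walker at $\y$ jumps with probability $\tildep(\y,n)^m$, and if it jumps it moves to a uniformly chosen neighbour; otherwise it stays put. Since $\tildep(\cdot,n)$ is volcanic, its maximum is attained at the origin. It therefore suffices to show $\mathbf P\{Y_n=0\}\le C'n^{-d\beta}$ for $n\ge N$. For $n\le 2N$ (or any bounded range) this is trivial from $\tildep\le 1/3$ by enlarging $C'$, so we may take $n$ large.

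The key structural point is that the \emph{direction} of each jump is uniform and independent of everything else. Only the holding decision depends on the current position and time. Hence the sequence of distinct positions visited is a simple random walk $(S_j)_{j\ge0}$ started from $Y_{n_0}$, and $Y_n=S_{J_n}$, where $J_n$ is the number of jumps made by time $n$. This gives
\begin{equation*}
\mathbf P\{Y_n=0\}=\sum_{j}\mathbf P\{S_j=0,\ J_n=j\}\le \mathbf P\{J_n-J_{n/2}<c_0n^{2\beta}\}+\sup_{j\ge c_0n^{2\beta}}\mathbf P\{S_j=0,\ J_n=j\}.
\end{equation*}
The hypothesis gives a jump probability of at least $(Cn^{-d\beta})^m=C^mn^{-dm\beta}$ at every time in $[n/2,n]$ while the walker lies in $B_{C(n/2)^\beta}$. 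Over the window $[n/2,n]$ this yields an expected number of jumps of order $n\cdot n^{-dm\beta}=n^{2\beta}$, which is exactly the diffusive number of steps needed to move distance $n^\beta$. This is consistent with the scaling $\tmesh=(\xmesh)^{dm+2}$.

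First I would control the event of few jumps, by splitting according to whether the walker spends most of $[n/2,n]$ inside $B_{C(n/2)^\beta}$. If it does, the jump indicators dominate independent Bernoulli$(C^mn^{-dm\beta})$ variables on those times, and a Chernoff bound makes $\{J_n-J_{n/2}<c_0n^{2\beta}\}$ have probability $e^{-cn^{2\beta}}\ll n^{-d\beta}$. If it does not, the walker sits outside the ball at many times. To be at $0$ at time $n$ it must then travel distance $\gtrsim n^\beta$ during $[n/2,n]$. That travel requires $\gtrsim n^{2\beta}$ steps of $S$ except on an event whose probability is bounded by a simple random walk maximal inequality, so in either case enough jumps occur.

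The second term is the main obstacle, because the jump count $J_n$ is not independent of the path $S$: the holding times depend on where $S$ is. I would handle this by conditioning on the whole embedded path $S$. Given $S$, the holding time after the $j$-th jump is a (time-inhomogeneous) geometric variable depending only on $S_j$ and the current time, and these holding times are independent of one another. The event $\{S_j=0,\,J_n=j\}$ says that the $j$-th visited site is $0$ and that the walker is still holding there at time $n$. I would bound it by $\mathbf P\{S_j=0\}$ times a conditional holding factor at most $1$, and then sum over the $j$ compatible with $J_n=j$. The danger is that this sum over $j$ could gain a factor. To avoid it, I would use the jump-rate lower bound at the origin: since $0\in B_{Cn^\beta}$, the jump probability there is at least $C^mn^{-dm\beta}$. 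As a result, for each excursion of $S$ through $0$, the probability that the holding period straddles time $n$ is comparable to $n^{-2\beta}$ times the expected number of visits to $0$ in the relevant index range. By the local limit theorem, $\mathbf P\{S_j=0\}\le Cj^{-d/2}$, and the expected number of visits to $0$ among steps $j\in[c_0n^{2\beta},\,Cn^{2\beta}]$ is $\lesssim n^{2\beta}\cdot n^{-d\beta}$. Multiplying by the factor $n^{-2\beta}$ gives the required $O(n^{-d\beta})$. The constants depend on $C,d,m,M$ and $N$, as allowed in the statement.
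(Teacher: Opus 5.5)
Your reduction to bounding $\mathbf P\{Y_n=0\}$ and the embedded-walk representation $Y_n=S_{J_n}$ are fine. The gap is that after this the argument uses nothing about the rates except $r\in[0,3^{-m}]$ and $r\ge b:=C^mn^{-dm\beta}$ on $B_{Cn^\beta}$, and under only these constraints the desired bound is false.

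Here is a counterexample for $d\ge3$. Let $\rho=Cn^\beta$ and start the walker at a point $x$ with $|x|=2\rho$. Set $r(k,t)=0$ for $|k|>\rho$ and $t<n-\tau$, set $r(0,\cdot)=b$, and set $r=3^{-m}$ everywhere else, where $\tau=K\rho^2$ and $K$ is a large constant.
\begin{itemize}
\item The walker is frozen until time $n-\tau$.
\item It then performs a constant-rate simple random walk, which hits the origin before time $n$ with probability $\asymp\rho^{2-d}$.
\item Once at the origin it stays there until time $n$ with probability at least $(1-b)^{\tau}\to1$, since $b\rho^2\asymp n^{(2-dm)\beta}\to0$.
\end{itemize}
Hence $\mathbf P\{Y_n=0\}\gtrsim n^{(2-d)\beta}\gg n^{-d\beta}$.

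So the step you flag as the main obstacle cannot be closed as proposed. The bound $r(0,\cdot)\ge b$ limits how long the walker stays at the origin, not when it arrives. Position- and time-dependent holding elsewhere can synchronize arrivals with time $n$, so there is no reason a visit of $S$ to $0$ straddles time $n$ with probability $\lesssim n^{-2\beta}$.

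Three further steps also fail as written:
\begin{itemize}
\item Nothing gives $J_n\lesssim n^{2\beta}$. The rates are bounded above only by $\tildep(0,\cdot)^m$, which is exactly what you are trying to control. Without this cutoff, $\sum_{j\ge c_0n^{2\beta}}\mathbf P\{S_j=0\}$ picks up a factor $\log n$ when $d=2$.
\item In the few-jumps case, the maximal inequality bounds the probability of travelling distance $\asymp n^\beta$ in $c_0n^{2\beta}$ steps only by a constant $e^{-c/c_0}$, not by $o(n^{-d\beta})$.
\item Your first display replaces a sum over $j$ by a supremum, which is not valid.
\end{itemize}

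The missing ingredient is that $r=\tildep^m$ is itself volcanic and keeps the law volcanic. This excludes configurations like the one above, with rate $0$ outside the ball and a slow origin. The paper builds this into an optimal control problem.
\begin{itemize}
\item $V_{n_0\to n_1}(h)$ maximizes $\mathbf P\{Z_{n_1}=0\}$ over rates satisfying \eqref{e.rconds}, and Lemma \ref{l.pupperopt} gives $\tildep(\cdot,n_1)\le\sigma V_{n_0\to n_1}(h)$.
\item Theorem \ref{optim} shows by backward induction that the constant rate $b$ is optimal as long as $n_1-n_0\le T(\rho,b)$. The induction step uses two facts. First, $\Delta q^{(b)}\ge0$ outside $B_\rho$ before the breakdown time, which allows replacing $r_n$ by $r_n\vee b$. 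Second, for volcanic $r_n\vee b$, $h$ and $q^{(b)}$, all products of discrete gradients across an edge are nonnegative, which allows replacing $r_n\vee b$ by $b$.
\item Corollary \ref{laplacianpositivityuntilstep} gives $T(\rho,b)\gtrsim\rho^2/b$. Combined with the Fourier bound $q^{(b)}_k(0)\le K(bk)^{-d/2}$ and $b(n_1-n_0)\gtrsim n_1^{2\beta}$, this yields $n_1^{-d\beta}$.
\end{itemize}
Any repair of your pathwise argument would have to use volcanicity in an equally essential way, precisely where you decouple $S$ from $J_n$.
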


Theorem \ref{stepfive} will be used to prove Theorem \ref{prop.pfacts}(i). 
The key to establishing Theorem \ref{stepfive} is to interpret solutions of \eqref{timeevol} as a type of lazy random walk. This interpretation will allows us to control the values of $\tilde{p}$ with an associated optimization problem (see Lemma \ref{l.pupperopt} for a precise statement.)

We begin by introducing the framework of lazy random walks. Let $n_0\in \mathbb{N}_{0}$. Fix functions $r: \mathbb Z^d \times \mathbb \{n_0, n_0+1,\ldots \} \to [0, 1]$ and a probability mass function $h: \mathbb Z^d \to [0,1]$. We now define the $r$-lazy random walk $(Z_n)_{n\geq n_{0}}$ on $\mathbb Z^d$, started from an initial distribution $h$, as follows. 

The starting location is a random point $Z_{n_0}$ in $\mathbb Z^d$ such that $\p{Z_{n_0}= \x} = h(\x)$ (we write this as $Z_{n_{0}}\sim h$). We define the random walk recursively. At every time step, we choose a neighbour of $Z_n$ uniformly at random, and call it $Y_n$. Then $$Z_{n+1} = \begin{cases}Y_n&\text{with probability }r(Z_n,n)\\Z_n&\text{ otherwise}.\end{cases}$$
We highlight that the jump probability of the lazy random walk depends on both its location and the time the random walker steps.

Writing 
\[
\p{Z_{n+1} = \x} = \sum_{\y \in \mathbb Z^d} \p{Z_n = \ell} \p{Z_{n+1} = k \mid Z_n = \ell}\, ,
\]
by the definition of $Z_{n+1}$, we see that
 \begin{equation*}
 \p{Z_{n+1} = k \mid Z_n = \ell}=\begin{cases} 1 - r(\x, n)&\text{if $\y = \x$,}\\
\frac1{2d}r(\ell, n)&\text{if $\y\sim \x$,}\\
0&\text{otherwise.}
\end{cases}
\end{equation*}
Letting $f(\x, n):= \p{Z_n = \x}$, we obtain 
\begin{align}
    \notag\label{lin}f(\x, n+1)
        &= (1 - r(\x, n))f(\x, n) + \frac1{2d}\sum_{\y \sim \x} r(\y, n) f(\y, n)
    \\&= f(\x,n) + {1 \over 2d} \sum_{\y \sim \x} \left[r(\y,n) f(\y,n) - r(\x,n) f(\x,n)\right].
\end{align}
This recurrence has a unique solution for a given initial condition $f(\x, n_0) = h(\x)$. Here, we see that functions $r(k,n)$, corresponding to the jump probabilities, act effectively as diffusivity parameters in this model.

Thus, we see that a $\CM(m,d)$-cooperative motion $X=(X_n)_{n\geq 0}$ is a lazy random walk started from $\indc_{0}$, with $r(\x, n) = p(\x, n)^{m}$. Moreover, if we have any solution $\tildep$ of the scheme \eqref{timeevol} on any domain, and we set $r(\x, n) = \tildep(\x, n)^m$, then $f(\x, n) = \tildep(\x, n)$ solves the above recurrence (\ref{lin}).

\subsection{Volcanic solutions of \eqref{timeevol} and an associated Optimization Problem}\label{optimizationproblem} 
Let $Z=(Z_n)_{n\geq n_{0}}$ be a lazy random walk as described above, with jump probability $r(\x, n)$ that varies with space and time. We will introduce an optimization problem over these walks and use it to get a bound on solutions of \eqref{timeevol}. 

We fix starting and ending times $n_0 < n_1$ and a fixed volcanic initial distribution $h(\x)$ with $Z_{n_0} \sim h$. Loosely speaking, our goal is to choose a jump probability function $r(\x, n)$ that maximizes the probability $\p{Z_{n_1} = 0}$ of being at the origin at time $n_1$.

However, for the problem to be useful in our setting, we add some more conditions as follows. We are given two more parameters: a radius $\rho > 0$ and a jump probability $b\in [0, 2/3]$. We will only allow jump probability functions $r=(r(\x, n))_{n}$ that satisfy the following three conditions for every time step $n \in \{n_0, \ldots, n_1 - 1\}$.
\begin{equation}
\label{e.rconds}
\begin{aligned}
&\text{(i) $r(\cdot, n)$ is volcanic; }
\\&\text{(ii) $\p{Z_n = \cdot}$ is volcanic for volcanic initial data; }
\\&\text{(iii) $r(\x, n) \ge b$ for any point $\x \in \mathbb Z^d$ with $|\x| \le \rho$.}
\end{aligned}
\end{equation}
The second condition is a restriction on $r(\x, n)$ because as can be seen by the recursion, $\p{Z_n = \x}$ depends continuously on the initial distribution and the jump probabilities.

We first show that it is possible to satisfy all these conditions by taking $r \equiv b$.

\begin{lem}\label{constantworks}The constant jump probability $r \equiv b$ satisfies all three conditions (\ref{e.rconds}).\end{lem}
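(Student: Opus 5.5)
Condition (i) is immediate, and (iii) holds trivially since $r(\x,n)=b\ge b$ everywhere. A constant function is invariant under every reflection and coordinate transposition, so it is symmetric. It is also trivially nondecreasing toward the origin, so it is volcanic. The only substantive point is condition (ii): if $Z_{n_0}\sim h$ with $h$ volcanic, then $f(\cdot,n)=\p{Z_n=\cdot}$ is volcanic for every $n\in\{n_0,\dots,n_1\}$.

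For (ii), I will observe that with $r\equiv b$ the recurrence \eqref{lin} becomes
\[
f(\x,n+1)=f(\x,n)+\frac{1}{2d}\sum_{\y\sim\x}\left[b f(\y,n)-b f(\x,n)\right],
\]
which is exactly the scheme $\scheme[A]$ of \eqref{d.Sdef} with the linear flux $A(u)=bu$. This $A$ is differentiable with $A'(u)=b\in[0,2/3]$ for all $u\in\R$. Hence Lemma \ref{monotonepreserved} applies for any $\La>0$: $\scheme[A]$ maps volcanic functions in $\mathcal{B}^{+}_{\La}(\Z^d)$ to volcanic functions. Since the hypothesis $0\le A'\le 2/3$ holds on all of $\R$, $\La$ can be taken arbitrary.

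The remaining step is an induction on $n$. Take $\La:=\|h\|_{L^\infty(\Z^d)}\le 1$, which is valid since $h$ is a probability mass function. Lemma \ref{l.Smonotone} applies because $A'\in[0,1]$, and it gives $0\le f(\cdot,n)\le \La$ for all $n$, so each $f(\cdot,n)$ stays in $\mathcal{B}^+_{\La}(\Z^d)$. Starting from $f(\cdot,n_0)=h$ volcanic, each application of $\scheme[A]$ keeps the iterate volcanic and in $\mathcal{B}^+_\La(\Z^d)$. Therefore $\p{Z_n=\cdot}$ is volcanic for all $n_0\le n\le n_1$, which is (ii).

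There is no real obstacle here. The only thing to check is that Lemma \ref{monotonepreserved} is stated for general $A$ and not just the power nonlinearity, so that it covers $A(u)=bu$. The restriction $b\le 2/3$ in the optimization problem is exactly what that lemma requires.
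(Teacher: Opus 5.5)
Your proposal is correct and follows the paper's proof: you identify $\p{Z_n=\cdot}$ with the solution of $\scheme[A]$ for the linear flux $A(u)=bu$ and apply Lemma \ref{monotonepreserved}, using $b\in[0,2/3]$. Your additional check via Lemma \ref{l.Smonotone} that every iterate stays in $\mathcal{B}^{+}_{\La}(\Z^d)$ is a detail the paper leaves implicit.
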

\begin{proof}
The first and third conditions are obviously satisfied. For the second condition, we must check that the probabilities $\p{Z_n = \x}$ are volcanic when $r \equiv b$.

By assumption, the initial condition $h$ is volcanic. We observe that the sequence of functions $q=(q_n(\cdot))_{n_{0}\leq n\leq n_{1}}$ given by $q_{n}(\x) := \p{Z_n = \x}$ is a solution of \eqref{e.operator} on $[n_{0}, n_{1}]$, with $A(u) = bu$ and initial condition $h$. Then since $A'(\cdot) = b \in [0, 2/3]$, Lemma \ref{monotonepreserved} tells us that volcanicity is preserved. Hence, $q_n(\x) = \p{Z_n = \x}$ is volcanic for $n_0 \le n < n_1$, and the second condition is also satisfied. 
\end{proof}
We now try to solve the following optimization problem:
\begin{center}
\fbox{
\begin{minipage}{0.6\textwidth}
How can we choose $r(\x, n)$ to maximize the probability
that $Z_{n_1} = 0$, given the restrictions \eqref{e.rconds}?
\end{minipage}
}
\end{center}

Let $V^{(\rho, b)}_{n_0 \to n_1}(h)$ be the optimal value in this problem:
\[
V^{(\rho, b)}_{n_0 \to n_1}(h) := \sup\left\{\vphantom{\strut}\p{Z_{n_1} = 0 \mid Z_{n_0} \sim h}\;\middle\vert\;r\text{ satisfies \eqref{e.rconds}}\right\}.
\] 
This value depends on all the parameters $\rho, b, n_0, n_1, h$, but to keep the notation from becoming cluttered, we will drop the superscript $(\rho, b)$ and simply write $V_{n_0 \to n_1}(h)$.

We begin with a lemma that connects this optimal value to our main goal of the section, the proof of Theorem~\ref{stepfive}. It gives us an upper bound on $\tildep$ in terms of $V_{n_0 \to n_1}$.
\begin{lem}\label{l.pupperopt} Let $n_0 \le n_1$ be nonnegative integers. Let $\tildep: \mathbb Z^d \times \{n_0, \ldots, n_1\} \to [0, 1]$ be a function such that $\tilde{p}=(\tilde{p}(\cdot,n))_{n_{0}\leq n\leq n_{1}}$ is a solution of the scheme (\ref{timeevol}) on $[n_{0}, n_{1}]$ with initial condition $\tilde{p}(\cdot, n_{0})$, and for all time steps $n \in \{n_0, \ldots, n_1\}$,
\begin{itemize}
\item $\tildep(\cdot, n)$ is volcanic and
\item $\tildep(\x, n) \ge b^{1/m}$ whenever $|\x| \le \rho$.
\end{itemize}
Assume the $\sigma := \sum_k \tildep(\x, n_0)<\infty$. Let $h(\x) := \tildep(\x, n_0) / \sigma$. Then for all $k\in \mathbb{Z}^{d}$, $$\tilde{p}(k,n_{1})\leq \sigma V^{(\rho,b)}_{n_0 \to n_1}(h).$$ 
\end{lem}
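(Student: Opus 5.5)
The plan is to realize $\tildep/\sigma$ itself as the law of an admissible lazy random walk. The natural choice of jump probability is $r(\x,n) := \tildep(\x,n)^m$ for $n_0\le n<n_1$. By the observation following \eqref{lin}, with this choice $f(\x,n)=\tildep(\x,n)$ solves the linear recurrence \eqref{lin}. Since \eqref{lin} is linear in $f$, the function $f/\sigma$ also solves it, now with initial condition $h=\tildep(\cdot,n_0)/\sigma$. By uniqueness of solutions of \eqref{lin} for given initial data, we get $\p{Z_n=\x}=\tildep(\x,n)/\sigma$ for the $r$-lazy walk started from $Z_{n_0}\sim h$. Note that $h$ is a probability mass function because $\sigma<\infty$, and it is volcanic because $\tildep(\cdot,n_0)$ is.

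Next I check the three conditions \eqref{e.rconds}.

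For condition (i), $u\mapsto u^m$ is nondecreasing on $[0,\infty)$ and $\tildep(\cdot,n)\ge 0$ is volcanic. Composing with this map preserves both symmetry and the property of being nondecreasing toward the origin, so $r(\cdot,n)$ is volcanic. We also need $r$ to take values in $[0,1]$, which holds because $\tildep\in[0,1]$.

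For condition (iii), $r(\x,n)=\tildep(\x,n)^m\ge b$ whenever $|\x|\le\rho$, directly from the hypothesis $\tildep\ge b^{1/m}$ there.

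Condition (ii) is the main obstacle, because it reads as a requirement for all volcanic initial data, not just for $h$. Under the literal reading needed here, it asks that $\p{Z_n=\cdot}$ be volcanic for the walk started from our volcanic $h$. That holds immediately, since this law equals $\tildep(\cdot,n)/\sigma$ and $\tildep(\cdot,n)$ is volcanic by hypothesis. I would interpret condition (ii) in this sense, relative to the given initial distribution $h$; this is consistent with how $V_{n_0\to n_1}(h)$ is defined and with the proof of Lemma \ref{constantworks}. If a stronger reading were intended, I would instead try to argue via Lemma \ref{monotonepreserved} applied to the linear scheme with variable coefficients. That route would need a bound on $r$ that is not available in general, so I would commit to the first interpretation.

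Therefore $r$ is admissible, and
\[
\tildep(0,n_1)=\sigma\,\p{Z_{n_1}=0\mid Z_{n_0}\sim h}\le \sigma V^{(\rho,b)}_{n_0\to n_1}(h).
\]

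Finally, because $\tildep(\cdot,n_1)$ is volcanic, its maximum is attained at the origin. From any $\x$ there is a nearest-neighbour path to $0$ along which $|\cdot|_1$ strictly decreases (move one coordinate toward zero at each step), so $\tildep(\x,n_1)\le\tildep(0,n_1)$. Combining this with the previous display gives the claimed bound for every $\x\in\Z^d$.
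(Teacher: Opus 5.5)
Your proposal is correct and follows the paper's proof essentially step for step. You take $r=\tildep^{\,m}$ and identify $\tildep(\cdot,n)/\sigma$ with the law of the $r$-lazy walk started from $h$, using uniqueness for \eqref{lin}. You then check \eqref{e.rconds}, reading condition (ii) relative to the given volcanic $h$ exactly as the paper does, and conclude from the fact that a volcanic function is maximized at the origin.
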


\begin{proof} Let $(Z_n)_{n_{0}\leq n\leq n_{1}}$ be a lazy random walk with jump probability $r(\x, n) = \tildep(\x, n)^m$ and initial distribution $h$, as defined above. We first observe that $\tilde{p}(k,n) = \sigma \p{Z_n = \x}$, because the function $f(\x, n) := \sigma \p{Z_n = \x}$ satisfies the relation (\ref{lin}) with initial condition $f(\x, n_{0}) = \sigma h(\x) = \tildep(\x, n_{0})$. Since $\tilde{p}$ also satisfies \eqref{lin} with the same initial condition, we conclude that the two functions are equal: $\sigma \p{Z_n = \x} = \tildep(\x, n)$ for $\x \in \mathbb Z^d$ and $n \in \{n_0, \ldots, n_1\}$.

We now argue that $r(\x, n) = \tildep(\x, n)^m$ satisfies \eqref{e.rconds}. Since $\tildep$ is volcanic for each $n\in [n_{0}, n_{1}]$, the jump probability $r(\x, n) = \tildep(\x, n)^m$ and the distributions $\p{Z_n = \x}$ are also volcanic for each $n\in [n_{0}, n_{1}]$, so the first two conditions hold. We have assumed that $\tildep(\x, n) \ge b^{1/m}$ when $|\x| \le \rho$, which implies that $r(\x, n) = \tildep(\x, n)^m \ge b$, so the third condition holds and $r$ satisfies the three conditions \eqref{e.rconds}.

This tells us that $\p{Z_{n_1} = 0}$ is bounded above by $V_{n_0 \to n_1}(h)$ by definition. Therefore $$\widetilde p(0, n_1) = \sigma \p{Z_{n_1} = 0} \le \sigma V_{n_0 \to n_1}(h).$$
Our assumption that $\tilde{p}(\cdot, n_{1})$ is volcanic implies in particular that $\sup_\x \tildep(\x, n_1) = \tildep(0, n_1)$, so $\tilde{p}(k,n_1) \le \tilde{p}(0,n_1) \le \sigma V_{n_0 \to n_1}(h)$ for all points $\x \in \mathbb Z^d$.
\end{proof}

By Lemma \ref{l.pupperopt}, it is now clear that if we can obtain an upper bound on $V^{(\rho,b)}_{n_0 \to n_1}$, then this paves a path to prove Theorem \ref{stepfive}. This also allows us to give some intuition as to why Theorem \ref{stepfive} should hold. The hypothesis of the theorem states that on a ball centered at the origin, the jump probabilities $\tilde{p}(k,n)^{m}$ are sufficiently large. Heuristically, this suggests that the probability of being at the origin cannot be too large. We make these heuristics precise in the rest of this section, and this will give us an upper bound on precisely the quantity $V^{(\rho,b)}_{n_0 \to n_1}$. This is how we will establish Theorem \ref{stepfive}.

\subsection{A formula for $V_{n_0 \to n_1}(h)$}
\label{ss.dislap}
In order to make use of Lemma \ref{l.pupperopt}, we establish a formula for $V_{n_0 \to n_1}(h)$ which will hold as long as $n_{1}-n_{0}$ is less than a quantity $T(\rho, b)$ called the \emph{breakdown time}. We then show that the supremum in $V_{n_0 \to n_1}(h)$ is in fact attained by a lazy random walk with \emph{constant} jump probability $b$.  

For $f: \mathbb Z^d \to \mathbb R$, recall that the \emph{discrete Laplacian} $\Delta f: \mathbb Z^d \to \mathbb R$ is defined by $$(\Delta f)(\x) = \sum_{\y \sim \x} (f(\y) - f(\x)).$$ 

We first collect some identities for the discrete Laplacian, which we will use throughout this section. We think of the lattice $\mathbb Z^d$ as the vertex set of a directed graph with nearest-neighbour edges $(\x, \x \pm e_i)$ for $\x \in \mathbb Z^d$ and $i\in [d]$. Each vertex has $2d$ edges out and $2d$ edges in. Let $\sum_{e = (\x, \y)}$ denote the sum over all directed edges.

The \emph{discrete gradient} is the function $\nabla f(e) = f(\y) - f(\x)$ on directed edges $e = (\x, \y)$.
\begin{lem}
\label{laprearranger} Let $f,g:\mathbb{Z}^{d}\to\mathbb{R}$, and suppose at least one of $f$ and $g$ has bounded support. Then we have
\begin{enumerate}[(i)]
\item Green's identity:
\begin{equation*}
\sum_{k\in \Z^{d}} \Delta f(\x) g(\x) = \sum_{k\in \Z^{d}} f(\x) \Delta g(\x).
\end{equation*}
\item Discrete integration by parts: 
$$\sum_{e~\text{an edge of}~\Z^d} \nabla f(e) \nabla g(e) = -2\sum_{\x\in\mathbb{Z}^{d}} f(\x) \Delta g(\x).$$
\item Product rule: for $e=(k,\ell)$,
$$\nabla(fg)(e) = f(\y) \nabla g(e) + g(\x) \nabla f(e)$$\end{enumerate}
\end{lem}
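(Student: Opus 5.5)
The plan is to verify all three identities by direct expansion over the finitely many lattice points where the sums are actually nonzero, with reindexing by translation as the only tool. Assume without loss of generality that $g$ has bounded support (the statements are symmetric in the roles, or the argument is identical with $f$). Every sum below then has only finitely many nonzero terms: $\Delta g$ is supported in the $1$-neighbourhood of the support of $g$, and $\nabla g(e)$ vanishes unless an endpoint of $e$ lies in that support. Hence all rearrangements are legitimate.

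For (i), write
\[
\sum_{\x}\Delta f(\x)g(\x)=\sum_{i=1}^d\sum_{\zeta\in\{-1,1\}}\sum_{\x}\bigl[f(\x+\zeta e_i)-f(\x)\bigr]g(\x).
\]
In the term $\sum_{\x} f(\x+\zeta e_i)g(\x)$, substitute $\y=\x+\zeta e_i$ to get $\sum_{\y} f(\y)g(\y-\zeta e_i)$. Summing over $\zeta\in\{\pm1\}$ turns $\{-\zeta\}$ back into $\{\pm1\}$, so the whole expression equals $\sum_{\y}f(\y)\sum_{\zeta,i}[g(\y+\zeta e_i)-g(\y)]=\sum_{\y} f(\y)\Delta g(\y)$. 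Each substituted sum is finite because $g$ has bounded support, so splitting the bracket is allowed.

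For (ii), expand
\[
\nabla f(e)\nabla g(e)=f(\y)g(\y)-f(\y)g(\x)-f(\x)g(\y)+f(\x)g(\x)
\]
for $e=(\x,\y)$, and sum over directed edges, which is the same as summing over $\x$ and over the $2d$ neighbours $\y\sim\x$. Grouping the terms as $-f(\x)[g(\y)-g(\x)]$ plus $f(\y)[g(\y)-g(\x)]$, the first group sums to $-\sum_{\x} f(\x)\Delta g(\x)$. For the second group, reverse each edge: the map $(\x,\y)\mapsto(\y,\x)$ is a bijection on directed edges, so that group equals $\sum_{(\x,\y)} f(\x)[g(\x)-g(\y)]=-\sum_{\x} f(\x)\Delta g(\x)$. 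Adding the two groups gives $-2\sum_{\x} f\Delta g$. Again, finiteness comes from the bounded support of $g$, since every term contains a factor $g(\x)$ or $g(\y)$ evaluated inside the support or at a neighbour of it. If instead $f$ has bounded support, the same bookkeeping works.

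For (iii), compute
\[
f(\y)g(\y)-f(\x)g(\x)=f(\y)\bigl[g(\y)-g(\x)\bigr]+g(\x)\bigl[f(\y)-f(\x)\bigr],
\]
which is immediate.

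No step is a real obstacle. The only care needed is justifying the reindexing and the splitting of sums, and the bounded-support hypothesis handles this. The edge-reversal bijection in (ii) is the one place to be careful about the factor $2$.
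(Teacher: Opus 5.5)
Your proposal is correct and follows essentially the same route as the paper. Part (i) is proved by reindexing the neighbour sum, swapping the roles of $k$ and $\ell$. Part (ii) expands the product of gradients and reverses the directed edges, which gives the factor $2$. Part (iii) is the one-line algebraic identity. In each case the bounded-support hypothesis justifies splitting and rearranging the sums.
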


\begin{proof}[Proof of (i)]
By symmetry and the definition of the discrete Laplacian, 
\begin{align*}\sum_{\x\in \Z^{d}} \Delta f(\x) g(\x) &=  \sum_{\x\in \Z^{d}} \sum_{\y \sim \x} f(\y) g(\x) - 2d\sum_{\x\in \Z^{d}}f(\x) g(\x) \\&=  \sum_{\x\in \Z^{d}} \sum_{\y \sim \x} f(\x) g(\y) - 2d\sum_{\x\in \Z^{d}} f(\x) g(\x) \\&=\sum_{\x\in \Z^{d}} f(\x) \Delta g(\x)\end{align*} The infinite sums are all well-defined by the assumption of bounded support. 
\end{proof}
\begin{proof}[Proof of (ii)]
We check:\begin{align*}
    \sum_e \nabla f(e) \nabla g(e)
        &=\sum_{\x\in \Z^{d}} \sum_{\y \sim \x} (f(\y) - f(\x)) (g(\y) - g(\x))
        \\&= 4d \sum_{\x\in \Z^{d}} f(\x) g(\x) - 2\sum_{\x\in \mathbb{Z}^{d}} \sum_{\y \sim \x} f(\x) g(\y)
        \\&= - 2\sum_{\x\in \Z^{d}} f(\x) \Delta g(\x),
\end{align*}
where in the second line we switch $\y$ and $\x$ in two of the sums.\end{proof}

\begin{proof}[Proof of (iii)]
We expand both sides:
\begin{align*}
\nabla(fg)(e) &= f(\y)g(\y) - f(\x) g(\x)\\
f(\y) \nabla g(e) + g(\x) \nabla f(e) &= f(\y) g(\y) - f(\y) g(\x) + g(\x) f(\y) - g(\x) f(\x)
\end{align*}
and $-f(\y) g(\x) + g(\x) f(\y) = 0$, so the two lines are the same.
\end{proof}

With the discrete Laplacian notation, the recurrence (\ref{lin}) can be rewritten as 
\begin{equation*}
f_{n+1} = f_n + \frac{1}{2d}\Delta(r_n f_n),
\end{equation*}
where $f_n(k)=f(k,n)$ and $r_n(k)=r(k,n)$. 

Let $(Y_{n})_{n\geq 0}$ be a lazy random walk with constant jump probability $b$ and initial distribution $\indc_{0}$. We call this a $b$-lazy random walk. Let $q^{(b)}_n(\x) := \mathbb P[Y_n = \x]$. Then $q_n^{(b)}$ satisfies the recurrence $q_{n+1}^{(b)} = q_n^{(b)} + \tfrac{b}{2d}\Delta q_n^{(b)}$, and for each $n$, $q_{n}^{(b)}$ has bounded support. We will see later that the discrete Laplacian $q_n^{(b)}(\x)$ is nonnegative in a certain region of space/time. In order to express this region precisely, let the \emph{breakdown time} $T(\rho, b)$ be given by 
\begin{equation}
    \label{defoftrb}
    T(\rho, b) := \min\left\{n \ge 0: \text{there exists } k \in \mathbb Z^d \text{ with }|\x| > \rho \text{ and }\Delta q^{(b)}_n(\x) < 0\right\}.
\end{equation}

We now calculate the exact value of $V_{n_0 \to n_1}$ when $n_1 - n_0 \le T(\rho, b)$. Our argument breaks down when the time gap is larger, hence the name ``breakdown time.''

\begin{thm}\label{optim}
Let $\rho \ge 0$ and $b \in [0, 2/3]$. Let $q^{(b)}=(q_{n}^{(b)})_{n\geq 0}$ be the distribution of a $b$-lazy random walk as above. For all nonnegative integers $n_0 \le n_1$ with $n_1 - n_0\leq T(\rho, b)$, for any volcanic probability mass function $h: \Z^d \rightarrow [0,1]$,
\[
V_{n_0 \to n_1}(h) = \sum_{\x\in \mathbb{Z}^{d}} h(\x) q^{(b)}_{n_1 - n_0}(\x).
\]
\end{thm}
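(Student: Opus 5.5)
The plan is to prove two inequalities. The lower bound $V_{n_0\to n_1}(h)\ge \sum_\x h(\x)q^{(b)}_{n_1-n_0}(\x)$ is immediate. By Lemma \ref{constantworks}, $r\equiv b$ is admissible. With $r\equiv b$, the walk started from $h$ is a $b$-lazy walk from a random start. Since $q^{(b)}_n$ is symmetric under $\x\mapsto-\x$, we get $\p{Z_{n_1}=0}=\sum_\y h(\y)q^{(b)}_{n_1-n_0}(-\y)=\sum_\y h(\y)q^{(b)}_{n_1-n_0}(\y)$.

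For the upper bound, fix an admissible $r$ and write $f_n=\p{Z_n=\cdot}$, so that $f_{n+1}=f_n+\frac1{2d}\Delta(r_nf_n)$. Set $N:=n_1-n_0$ and use the duality functional
\[
\Phi_n:=\sum_\x f_n(\x)\,q^{(b)}_{n_1-n}(\x),\qquad n_0\le n\le n_1 .
\]
Then $\Phi_{n_1}=f_{n_1}(0)=\p{Z_{n_1}=0}$ (as $q^{(b)}_0=\indc_0$) and $\Phi_{n_0}=\sum h\,q^{(b)}_N$, so it suffices to show $\Phi_{n+1}\le\Phi_n$. Write $j=n_1-n-1$, so $q^{(b)}_{j+1}=q^{(b)}_j+\frac b{2d}\Delta q^{(b)}_j$. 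Using Green's identity from Lemma \ref{laprearranger}(i), which is valid since $q^{(b)}_j$ has bounded support,
\[
\Phi_{n+1}-\Phi_n=\frac1{2d}\sum_\x f_n(\x)\,\bigl(r_n(\x)-b\bigr)\,\Delta q^{(b)}_j(\x).
\]
We need this to be $\le 0$.

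Split the sum by region. For $|\x|\le\rho$ we have $r_n\ge b$, so we want $\Delta q^{(b)}_j\le0$ there. For $|\x|>\rho$ we have $j<N\le T(\rho,b)$, so $\Delta q^{(b)}_j\ge0$ by the definition of the breakdown time, but $r_n-b$ has no sign. A pointwise sign argument therefore fails, and this is the main obstacle. To handle it, use volcanicity: $f_n$ and $r_n$ are volcanic (conditions (i)–(ii) of \eqref{e.rconds}), and so is $q^{(b)}_j$ by Lemma \ref{monotonepreserved} with $A(u)=bu$. I would rewrite $\sum_\x g\,\Delta q$ with $g=f_n(r_n-b)$ via discrete integration by parts (Lemma \ref{laprearranger}(ii)) as $-\frac12\sum_e\nabla g\nabla q$, and exploit that $\nabla q^{(b)}_j$ points toward the origin.

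Concretely, I would try a layer-cake decomposition of the volcanic function $f_nr_n$ into indicators of "volcanic" down-sets, writing $\sum_\x f_nr_n\Delta q=\int_0^\infty\sum_{\x\in D_t}\Delta q\,dt$. Each $D_t$ is closed under steps toward the origin, and by Green's theorem $\sum_{\x\in D}\Delta q$ equals the boundary flux $\sum_{\x\in D,\y\notin D,\x\sim\y}(q(\y)-q(\x))\le0$, since $q$ decreases outward across the boundary of a down-set. The comparison with $b\,f_n$ then reduces to comparing the level sets of $f_nr_n$ and $bf_n$. These coincide in the $|\x|>\rho$ part only in the sense that the region where $\Delta q\ge0$ contributes positively. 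I expect the true argument to combine the two regions using $\sum_\x\Delta q=0$ and the fact that $f_n(r_n-b)$ is not necessarily volcanic.

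A cleaner alternative I would try first: show the maximizer is pointwise extremal, namely that increasing $r$ on $\{|\x|>\rho\}$ only spreads mass outward. This uses Lemma \ref{l.Smonotone}-type monotonicity of the linear map $f\mapsto f+\frac1{2d}\Delta(rf)$ applied to the backward dual functions $q^{(b)}_j$, which are subharmonic outside $B_\rho$ before the breakdown time. Either way, once $\Phi_{n+1}\le\Phi_n$ is established, telescoping from $n_0$ to $n_1$ gives $\p{Z_{n_1}=0}\le\sum_\x h(\x)q^{(b)}_N(\x)$. Taking the supremum over admissible $r$ then yields the formula.
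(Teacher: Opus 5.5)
Your lower bound is fine. Your duality functional $\Phi_n$ is a correct, and somewhat cleaner, replacement for the paper's backward Bellman induction. The identity $\Phi_{n+1}-\Phi_n=\frac1{2d}\sum_k f_n(k)\bigl(r_n(k)-b\bigr)\Delta q^{(b)}_j(k)$ reduces everything to the inequality $\sum_k f_n r_n\,\Delta q^{(b)}_j\le b\sum_k f_n\,\Delta q^{(b)}_j$. This is exactly the inequality the paper proves inside its induction step, with $h=f_n$.

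That inequality is, however, the whole content of the theorem, and you do not prove it. The layer-cake paragraph ends with an expectation rather than an argument. The ``cleaner alternative'' has its heuristic backwards: for $|k|>\rho$ and $j<T(\rho,b)$ one has $\Delta q^{(b)}_j(k)\ge 0$, so \emph{increasing} $r_n$ there weakly \emph{increases} $\Phi_{n+1}$. Your remark that on $\{|k|\le\rho\}$ one wants $\Delta q^{(b)}_j\le 0$ is also a red herring. It is false in general (e.g.\ $\Delta q^{(b)}_1(2e_1)=b/(2d)>0$ while $|2e_1|=2$ may be $\le\rho$), and it is not needed.

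The missing step is to split the sign of $r_n-b$ rather than splitting space. Write $r_n-b=(r_n-b)_+-(b-r_n)_+$.
\begin{itemize}
\item Negative part: because $r_n\ge b$ on $\{|k|\le\rho\}$, $(b-r_n)_+$ is supported in $\{|k|>\rho\}$, where $\Delta q^{(b)}_j\ge 0$. So its contribution $-\sum_k f_n(b-r_n)_+\Delta q^{(b)}_j$ is $\le 0$. This is the paper's replacement of $r_n$ by $r'=r_n\vee b$.
\item Positive part: $(r_n-b)_+=r'-b$ is nonnegative and volcanic. Hence $g:=f_n(r'-b)$, a product of nonnegative volcanic functions, is volcanic. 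Then $\sum_k g\,\Delta q^{(b)}_j=-\frac12\sum_e\nabla g(e)\nabla q^{(b)}_j(e)\le 0$ by Lemma~\ref{laprearranger}(ii), since two volcanic functions have gradients of the same sign across every edge.
\end{itemize}
Your boundary-flux argument over the down-sets $\{g>t\}$ proves exactly this last fact. So you had the tool, but applied it to $f_n r_n$ instead of to $f_n(r_n\vee b-b)$. The paper reaches the same bound via the product rule, bounding $\nabla(r'h)\,\nabla q\ge b\,\nabla h\,\nabla q$ edgewise. With this, $\Phi_{n+1}\le\Phi_n$, and your telescoping completes the proof.
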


\begin{proof}[Proof of Theorem \ref{optim}]
For ease of notation, we write $q_{n}(\x) = q^{(b)}_n(\x)$ and $r_n(\x) = r(\x, n)$.

We claim that for all $n\in \left\{n_{0}, n_{0}+1, \ldots, n_{1}\right\}$, 
\begin{equation}
\label{optiminduction}
V_{n \to n_1}(h) = \sum_{k\in \Z^{d}} h(\x) q_{n_1-n}(\x).
\end{equation} We will prove this by induction downward in $n$, starting from the base case $n = n_1$ where $V_{n_1 \to n_1}(h) = h(0)$ and the statement is obvious.

Suppose $n\in [n_{0}, n_1)$. We know by the induction assumption that $$V_{(n+1) \to n_1}(h) = \sum_{\x\in \Z^{d}} h(\x) q_{n_1 - (n+1)}(\x).$$

We will regard optimization of the jump probabilities $r$ as an optimal control problem: instead of choosing $r$ for all values of $k$ and $n$, we will choose $r(\x, n)$ at the current step $n$ and then we will choose $r(\x, n')$ for $n < n' \le n_1 - 1$. By the Bellman optimality principle, we can find $V_{n \to n_1}(h)$ by choosing the jump probability $r_n$ that gives us the best value at step $n+1$.

If the lazy random walk $Z_n$ has $\p{Z_n = \x} = h(\x)$, then the distribution of $Z_{n+1}$ is
\begin{align*}
    \p{Z_{n+1} = \x} = h(\x) + {1 \over 2d} \sum_{\y \sim \x} \left[r(\y, n) h(\y) - r(\x, n) h(\x)\right]= h(\x) + \frac{1}{2d} \Delta(r_nh)(\x).
\end{align*}
We can now ask about the value of the optimal control problems at time step $n + 1$, starting from these new distributions. We consider the set of jump probabilities $r_n: \mathbb Z^d \to [0, 1]$ that satisfy the following three conditions:
\begin{equation}
\label{e.rcondstwo}
\text{(i) $r_n$ is volcanic; } \quad
\text{(ii) $h +\tfrac{1}{2d} \Delta(r_n h)$ is volcanic; } \quad
\text{(iii) $r_n(\x) \ge b$ for $|\x| \le \rho$}.
\end{equation}
We highlight that \eqref{e.rcondstwo} is a condition for only $r_{n}$, whereas \eqref{e.rconds} is a condition for $r=(r_{n})_{n_{0}\leq n\leq n_{1}-1}$, as well as on $\p{Z_{n}=\cdot}$ for $n\in \left\{n_{0}, \ldots, n_{1}-1\right\}$. 

Let
\begin{equation}
    v := \sup_{r_n} V_{(n+1) \to n_1}\left(h + {1 \over 2d} \Delta(r_n h)\right) = \sup_{r_n} \sum_{\x\in \Z^{d}} \left(h(\x) + {1 \over 2d}\Delta(r_n h ) \right) q_{n_1 - (n+1)}(\x),
\end{equation}
where we take the supremum over jump probabilities $r_n$ that satisfy \eqref{e.rcondstwo}, and the second equality holds by the induction hypothesis.

We claim that $V_{n \to n_1}(h) = v$. To prove this, we show that $V_{n \to n_1}(h) \le v$ and that $v \le V_{n \to n_1}(h)$ separately. First, if $r$ is a jump probability that satisfies the conditions \eqref{e.rconds} with initial time $n$, then $r$ satisfies \eqref{e.rconds} with initial time $ n+1$ and starting distribution $h(\x) = \p{Z_{n+1} = \x} = h + \tfrac{1}{2d} \Delta(r_n h)$. Thus,$$\p{Z_{n_1} = 0} \le V_{(n+1) \to n_1}\left(h + {1 \over 2d} \Delta(r_n h)\right) \le v,$$because $r_n$ satisfies the conditions \eqref{e.rcondstwo}. Therefore $V_{n \to n_1}(h) \le v$.

Conversely, if $(r_{n'})_{n+1\leq n'\leq n_{1}}$ satisfies \eqref{e.rconds}, and $r_{n}$ satisfies \eqref{e.rcondstwo}, then we claim that $(r_{n'})_{n\leq n'\leq n_{1}}$ satisfies \eqref{e.rconds}. Indeed, $\p{Z_{n}=\cdot}=h$ is volcanic, and $\p{Z_{n+1}=\cdot}=h+\frac{1}{2d}\Delta h$ is volcanic by property (ii) of \eqref{e.rcondstwo}. Hence, we have that 
\begin{equation*}
V_{(n+1) \to n_1}\left(h + {1 \over 2d} \Delta(r_n h)\right)\leq V_{n \to n_1}\left(h\right),
\end{equation*}
and taking a supremum over $r_{n}$ satisfying \eqref{e.rcondstwo}, we obtain that $v\leq  V_{n \to n_1}\left(h\right)$.

We next prove that $v = \sum_{\x\in \Z^{d}} h(\x) q_{n_1 -n}(\x)$, which will give us the conclusion. By Lemma \ref{constantworks}, constant jump probability $r_n \equiv b$ satisfies all three conditions \eqref{e.rcondstwo}, so
\begin{align}
    \notag
    \sum_{\x \in \mathbb Z^d}
        h(\x) q_{n_1 - n}(\x)
    &=
    \sum_{\x\in \Z^{d}} h(\x) \left(
        q_{n_1 - (n+1)}(\x) + {b \over 2d} \Delta q_{n_1 - (n+1)}(\x)
    \right)
    \\\label{e.achieved}
    &=
    \sum_{\x\in \Z^{d}} \left(h(\x)
        + {1 \over 2d} \Delta(bh)\right) q_{n_1 - (n+1)}(\x),
\end{align}
where in the first line, we use the recurrence $q_{n+1} = q_n + (b/2d) \Delta q_n$, and in the second line, we use part (i) of Lemma~\ref{laprearranger}. Therefore, $\sum_{\x\in \Z^{d}} h(\x) q_{n_1 - n}(\x) \le v$.

We now prove that $\sum_{\x\in \Z^{d}} h(\x) q_{n_1 - n}(\x) \ge v$, which is more difficult and requires the hypothesis that $n_1 - n_0 \le T(\rho, b)$. We will first show that it is enough to consider functions $r_n$ with $r_n \ge b$ pointwise everywhere, and then show that $r_n \equiv b$ is an optimal choice.

Suppose $r_{n}:\mathbb Z^d \to [0, 1]$ is a jump probability that satisfies conditions \eqref{e.rcondstwo}(i) and \eqref{e.rcondstwo}(iii). Our inductive hypothesis tells us that
\begin{equation}\label{e.valueinductivehypothesis}V_{(n+1) \to n_1}\left(h + {1 \over 2d}\Delta(r_nh)\right) = \sum_{k\in \Z^{d}} h(\x) q_{n_1 - (n +1)}(\x) + {1 \over 2d} \sum_{k\in \Z^{d}} \Delta(r_nh)(\x) q_{n_1 - (n + 1)}(\x).\end{equation}
We will focus on analyzing the second term in the sum. We use the first part of Lemma \ref{laprearranger}(i) to move the discrete Laplacian to the other factor:
\begin{equation}\label{value.one}\frac{1}{2d}\sum_{k\in \Z^{d}} \Delta(r_nh)(\x) q_{n_1 - (n + 1)}(\x) = \frac{1}{2d}\sum_{k\in \Z^{d}} r_n(\x) h(\x) \Delta q_{n_1 - (n + 1)}(\x).\end{equation}

Consider the modified jump probability $r'(\x) := r_n(\x) \vee b$. Then we claim that for any $\x \in \mathbb Z^d$, $$r_n(\x) h(\x) \Delta q_{n_1 - (n + 1)}(\x) \le r'(\x) h(\x) \Delta q_{n_1 - (n + 1)}(\x).$$ To see this, note that the two sides are equal for $|\x| \le \rho$, because $r_n$ satisfies the third condition of \eqref{e.rcondstwo}. On the other hand, if $|\x| > \rho$, then the discrete Laplacian $\Delta q_{n_1 - n - 1}(\x)$ is nonnegative by definition of $T(\rho, b)$, and $h \ge 0$ and $r' \ge r_{n}$, so the relation also holds. Summing this up over all $\x$, we have
\begin{equation*}
\sum_{\x\in \Z^{d}} r_n(\x) h(\x) \Delta q_{n_1 - (n + 1)}(\x)\le \sum_{\x\in \Z^{d}} r'(\x) h(\x) \Delta q_{n_1 - (n + 1)}(\x).
\end{equation*}
We now perform integration by parts on the term on the right, using Lemma~\ref{laprearranger}(ii) and (iii),
\begin{align*}
    &\sum_{\x\in \Z^{d}} r'(\x) h(\x) \Delta q_{n_1-(n+1)}(\x)
    \\&\qquad\qquad
        = - \frac12\sum_{e=(\x,\y)} \nabla(r'h)(e) \nabla q_{n_1-(n+1)}(e)
    \\&\qquad\qquad
        = - \frac12\left(\sum_{e=(\x,\y)} r'(\y) \nabla h(e) \nabla q_{n_1-(n+1)}(e) + \sum_{e=(\x,\y)} h(\x) \nabla r'(e) \nabla q_{n_1-(n+1)}(e)\right),
\end{align*}

The functions $r', h, q_{n_1-(n+1)}$ are volcanic, the first two by assumption and the last one by Lemma~\ref{constantworks}. This determines the sign of the discrete gradient across any edge. For example, $r'(\y) - r'(\x)$ has the same sign as $h(\y) - h(\x)$. Therefore, the products of two discrete gradients in the above sums are nonnegative. This means that the second sum is nonnegative, and $r' \ge b$, so the whole expression in brackets is at least $\sum_e b \nabla h(e) \nabla q_{n_1 - (n+1)}(e)$. Because there is a minus sign in front, we obtain the upper bound, 
\begin{align*}\sum_{\x \in \mathbb Z^d} r'(\x) h(\x) \Delta q_{n_1-(n+1)}(\x) &\le -\frac12 \sum_{e=(k,\ell)} b \nabla h(e) \nabla q_{n_1-(n+1)}(e) \\&= b \sum_{\x \in \mathbb Z^d} h(\x) \Delta q_{n_1-(n+1)}(\x),\end{align*}  where in the last step, we performed another integration by parts using Lemma \ref{laprearranger}(ii). We now put all this together. By \eqref{e.valueinductivehypothesis} and the inequality we have just proved,
\begin{align*}
    V_{(n+1) \to n_1}\left(h + {1 \over 2d} \Delta(r_n h)\right) &\le  \sum_{\x\in \Z^{d}} h(\x) q_{n_1-(n+1)}(\x) + \frac{1}{2d}\sum_{\x\in \Z^{d}} r'(\x) h(\x)\Delta q_{n_1-(n+1)}(\x) \\&\le \sum_{\x\in \Z^{d}} h(\x) q_{{n_1}-(n+1)}(\x) + \frac{1}{2d} \sum_{\x\in \Z^{d}} b h(\x) \Delta q_{{n_1}-(n+1)}(\x)
    \\&=\sum_{\x\in \Z^d} h(\x) \left(q_{n_1 - (n+1)}(\x) + {b \over 2d} \Delta q_{n_1 - (n+1)}(\x)\right)
    \\&=\sum_{\x\in\Z^d} h(\x) q_{n_1 - n}(\x).
\end{align*}
We have proven the above bound for any jump probability $r_n$ which satisfies \eqref{e.rcondstwo}(i) and \eqref{e.rcondstwo}(iii), so it certainly also holds for jump probabilities $r_n$ which satisfy all three conditions of \eqref{e.rcondstwo}. Taking the supremum over all such $r_n$, we get $v \le \sum_{\x \in \mathbb Z^d} h(\x) q_{n_1 - n}(\x)$. 

Finally, we conclude that $$V_{n \to n_1}(h) = v = \sum_\x h(\x) q_{n_1 - n}(\x).$$ This verifies the induction step, so the formula is true for all $n \in \{n_0, \ldots, n_1\}$, and in particular it is true for $n_0$, which gives us the result $V_{n_0 \to n_1}(h) = \sum_{k\in \Z^{d}} h(\x) q_{n_1 - n_0}(\x)$.
\end{proof}

\subsection{The proof of Theorem \ref{stepfive}}

In order to prove Theorem \ref{stepfive}, we will require an auxiliary result which gives us a bound on the breakdown time $T(\rho, b)$, as defined in \eqref{defoftrb}. 

\begin{thm}\label{inc}
There are positive constants $\Lambda=\Lambda(d)$ and $\x_2=\x_{2}(d,m)$ such that, if $b\leq \tfrac{1}{8}$, and $q^{(b)}=(q_{n}^{(b)})_{n\geq 0}$ be a $b$-lazy random walk, then for all $|\x| \ge \x_2$, and $n \le \Lambda \tfrac{|\x|^2}{6b}$, 
$$q^{(b)}_n(\x) \le q^{(b)}_{n+1}(\x).$$\end{thm}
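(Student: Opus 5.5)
Since $q^{(b)}_{n+1}-q^{(b)}_n=\frac{b}{2d}\Delta q^{(b)}_n$, the claim is that $\Delta q^{(b)}_n(\x)\ge 0$ whenever $|\x|\ge \x_2$ and $n\le \Lambda|\x|^2/(6b)$; in heat-kernel terms, the kernel at a far point is still increasing in time. I would prove this with an explicit product/Fourier representation of $q^{(b)}_n$ and a local (Gaussian or large-deviation) expansion of the second differences.

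\emph{Representation.} Write the $b$-lazy walk as a sum of $n$ i.i.d.\ steps, each $0$ with probability $1-b$ and $\pm e_i$ with probability $b/(2d)$. Conditioning on the number $J\sim\mathrm{Bin}(n,b)$ of actual jumps gives
\[
q^{(b)}_n(\x)=\sum_j \binom{n}{j}b^j(1-b)^{n-j}\,P_j(\x),
\]
where $P_j$ is the simple random walk kernel. Equivalently, $q^{(b)}_{n+1}(\x)-q^{(b)}_n(\x)=b\bigl(\E{P_{J'+1}(\x)}-\E{P_{J'}(\x)}\bigr)$ with $J'\sim\mathrm{Bin}(n,b)$. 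It therefore suffices to understand $\frac{b}{2d}\Delta q_n$ directly. For that, use the exact Fourier formula
\[
q^{(b)}_n(\x)=(2\pi)^{-d}\int_{[-\pi,\pi]^d}\Bigl(1-b+\tfrac{b}{d}\sum_i\cos\theta_i\Bigr)^n e^{-i\x\cdot\theta}\,d\theta .
\]
Since $b\le 1/8$, the symbol $\phi(\theta)$ lies in $[1-2b,1]\subset[3/4,1]$, so it is strictly positive and $\log\phi$ is analytic on the whole torus. This is the reason for the hypothesis $b\le 1/8$: there is no periodicity or parity obstruction, and one can shift the contour $\theta\mapsto\theta+i\eta$ (saddle point).

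\emph{Saddle point/exponential tilt.} Choose $\eta=\eta(\x/n)$ solving $\nabla\Lambda_b(\eta)=\x/n$, where $\Lambda_b(\eta)=\log\bigl(1-b+\frac bd\sum_i\cosh\eta_i\bigr)$. Then
\[
q_n(\x)=e^{-n I(\x/n)}\,(\text{local CLT for the tilted walk}),
\]
uniformly, and similarly for $q_n(\x\pm e_i)$. Then $\Delta q_n(\x)/q_n(\x)=\sum_i\bigl(e^{\eta_i}+e^{-\eta_i}-2\bigr)+\text{error}=\sum_i 4\sinh^2(\eta_i/2)+\text{error}$. The main term is of order $|\eta|^2$. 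In the diffusive/moderate regime $|\x|\lesssim n$, we have $\eta\approx d\x/(bn)$, so the main term is $\approx d^2|\x|^2/(b^2n^2)$, while the corrections from the prefactor are of order $1/(bn)$. Positivity therefore reduces to $|\x|^2\ge C bn$, i.e.\ $n\le \Lambda|\x|^2/b$ for small $\Lambda(d)$. This matches the Gaussian heuristic $\Delta$ of $(bn)^{-d/2}e^{-d|x|^2/(2bn)}$, whose sign is that of $d|x|^2/(bn)^2-d\cdot\frac{1}{bn}$. In the ballistic regime ($|\x|\gtrsim n$, including $n<|\x|_1$ where $q_n(\x)=0$), positivity is easier: $\eta$ is large, so the main term dominates. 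At the boundary of the support, where $q_n(\x)=0$ but neighbours may be positive, $\Delta q_n(\x)\ge 0$ holds trivially. The requirement $|\x|\ge \x_2$ ensures that relative errors of order $1/|\x|$ and lattice effects are small, and covers small $n$ where no CLT is available. For very small $n$, for instance $n\le |\x|_1+O(1)$, I would check positivity directly from the combinatorial formula: the dominant paths are near-minimal ones, and $\Delta$ picks up the inward neighbours with larger mass.

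\emph{Main obstacle.} The hard step is making the error term in the local expansion uniform in $b\in(0,1/8]$, in $n$, and across all directions of $\x$. This is needed because $d^2|\x|^2/(b^2n^2)$ must beat the $O(1/(bn))$ prefactor correction with constants depending only on $d$, and because near coordinate axes some $\eta_i$ vanish. I would control the error through the tilted characteristic function. After tilting, the step law has covariance of order $b$ times a matrix bounded below (using $\cosh\ge1$), and the cumulants are bounded by those of a Bernoulli($b$)-thinned walk. This gives a local CLT with relative error $O((bn)^{-1/2})$, which is uniform. I would relegate this computation to the Appendix, as the paper indicates.
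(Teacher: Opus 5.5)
Your reduction to $\Delta q^{(b)}_n(k)\ge 0$ and the Gaussian heuristic (the sign of $d^2|k|^2/(bn)^2-d^2/(bn)$) are correct. However, there is a genuine gap: the proposal stops exactly where the proof has to happen, and the one quantitative claim you make is too weak.

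\emph{Precision.} The allowed range $n\le\Lambda|k|^2/(6b)$ reaches down to $|k|^2\asymp bn$. There $\Delta q_n(k)/q_n(k)$ is only of order $1/(bn)$, and on the whole range $bn\lesssim|k|^2\lesssim(bn)^{3/2}$ it is at most of order $(bn)^{-1/2}$. A local CLT for the tilted walk with relative error $O((bn)^{-1/2})$ puts an error of that same order into each ratio $q_n(k\pm e_i)/q_n(k)$, which swamps the signal. You would need these ratios to second order: the Gaussian correction (about $-d^2/(bn)$) plus remainders that are $o(1/(bn))$ uniformly in $b$ and in the tilt. That means an Edgeworth-type expansion exploiting that the $(bn)^{-1/2}$ term is odd, or a direct Fourier bound on the second difference. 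None of this is carried out.

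\emph{Coverage.} The parameter that separates your regimes is $|k|/(bn)$, not $|k|/n$. Because $b$ may be arbitrarily small, $bn$ can stay bounded while $n\gg|k|_1$; for instance take $k=Le_1$, $b=L^{-2}$, $n=L^2$. In that case:
\begin{itemize}
\item the tilt is $\eta_1\approx\log(2dL)$, so your approximation $\eta\approx dk/(bn)$ fails;
\item the total transverse variance of the tilted walk is about $bn/d=1/d$, so no local CLT is available;
\item the case is neither ``ballistic'' ($|k|\gtrsim n$) nor ``very small $n$'' ($n\le|k|_1+O(1)$).
\end{itemize}
The inequality does hold there, but nothing in your sketch proves it.

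Uniformity in $b$ is the entire difficulty. The paper avoids it through the representation you wrote down and then set aside. To remove the parity obstruction of $P_j$, it realizes the $b$-lazy walk as a $\tfrac12$-lazy walk run at the times of a Bernoulli$(2b)$ thinning, so that
\[
q^{(b)}_{n+1}(k)-q^{(b)}_n(k)=2b\sum_{j\ge0}\bigl(q^{(1/2)}_{j+1}(k)-q^{(1/2)}_j(k)\bigr)\,\mathbf P\{\mathrm{Bin}(n,2b)=j\}.
\]
The argument then has three pieces:
\begin{itemize}
\item For $j+1\le\Lambda|k|^2$ the increments are positive (Lemma~\ref{half}, derived from the multinomial identity behind Theorem~\ref{simple}).
\item For $j\ge\Lambda|k|^2/2$ the binomial weights shrink by a factor $8/9$ per step, since $2bn\le\Lambda|k|^2/3$ and $1-2b\ge 3/4$.
\item A local CLT for the single, $b$-independent walk $q^{(1/2)}$ (Lemma~\ref{weightlowerbound}) shows the positive part dominates the geometrically small tail.
\end{itemize}
No $b$-dependent local limit theorem is ever needed.
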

The proof of Theorem~\ref{inc} can be found in Section~\ref{app}; it yields the following lower bound on the breakdown time. 
\begin{cor}\label{laplacianpositivityuntilstep}Let $T(\rho,b)$ be defined as in \eqref{defoftrb}. There exist positive constants $c=c(d,m)$ and $\rho_{0}=\rho_{0}(d,m)$ such that for any $b\leq \tfrac{1}{8}$ and $\rho\geq \rho_{0}$, $T(\rho,b) \ge c\rho^{2}b^{-1}$.\end{cor}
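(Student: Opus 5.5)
The corollary should follow from Theorem \ref{inc} by rewriting the sign of the Laplacian as a time increment. The recurrence for the $b$-lazy walk is $q^{(b)}_{n+1} = q^{(b)}_n + \tfrac{b}{2d}\Delta q^{(b)}_n$, so for $b>0$ we have
\begin{equation*}
\Delta q^{(b)}_n(\x) = \frac{2d}{b}\left(q^{(b)}_{n+1}(\x) - q^{(b)}_n(\x)\right).
\end{equation*}
Thus $\Delta q^{(b)}_n(\x)\ge 0$ is equivalent to $q^{(b)}_n(\x)\le q^{(b)}_{n+1}(\x)$. The case $b=0$ is trivial: $q^{(0)}_n=\indc_0$ for all $n$, so $\Delta q^{(0)}_n \ge 0$ away from the origin and $T(\rho,0)=\infty$ for $\rho\ge 1$. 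It is therefore enough to control the first time at which monotonicity in $n$ fails at some point $|\x|>\rho$.

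Take $\rho_0 := \x_2(d,m)$ and $c := \Lambda(d)/6$, both supplied by Theorem \ref{inc}. Fix $b\le \tfrac18$ and $\rho\ge\rho_0$, and let $n < c\rho^2 b^{-1}$. For any $\x$ with $|\x|>\rho$ we have $|\x| > \rho\ge \x_2$, and
\begin{equation*}
n < \frac{\Lambda\rho^2}{6b} \le \frac{\Lambda |\x|^2}{6b}.
\end{equation*}
Theorem \ref{inc} then gives $q^{(b)}_n(\x)\le q^{(b)}_{n+1}(\x)$, which by the identity above means $\Delta q^{(b)}_n(\x)\ge 0$. So no $n<c\rho^2b^{-1}$ can belong to the set in \eqref{defoftrb}, and the minimum satisfies $T(\rho,b)\ge c\rho^2 b^{-1}$. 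If that set is empty, then $T=\infty$ and the bound holds trivially.

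There is essentially no obstacle here once Theorem \ref{inc} is available. The only care needed is to match the constants, namely to take $\rho_0 \ge \x_2$ so that every $|\x|>\rho$ lies in the range where Theorem \ref{inc} applies, and to handle the degenerate case $b=0$ separately. All the real difficulty lies in Theorem \ref{inc}, whose proof is deferred to the appendix.
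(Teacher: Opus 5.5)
Your proposal is correct and is essentially the paper's own argument: rewrite $\Delta q^{(b)}_n$ as $\tfrac{2d}{b}\bigl(q^{(b)}_{n+1}-q^{(b)}_n\bigr)$ and apply Theorem~\ref{inc}, taking $c=\Lambda/6$ and $\rho_0=k_2$. Your separate treatment of the degenerate case $b=0$ is a harmless extra that the paper leaves implicit.
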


\begin{proof}
Let $q_{n}^{(b)}$ be defined as in the text above \eqref{defoftrb}. The discrete Laplacian is $$\Delta q_{n-1}^{(b)}(\x) = \sum_{\y \sim \x} q_{n-1}^{(b)}(\y) - q_{n-1}^{(b)}(\x) = \frac{2d}{b}\left(q^{(b)}_n(\x) - q^{(b)}_{n-1}(\x)\right)$$ and by Theorem \ref{inc}, this is nonnegative if $|\x| \ge \x_2$ and $n-1 \le \Lambda |\x|^2/6b$. This implies the corollary with $c = \Lambda/6 = 1/48d^4$ and $\rho_0 = \x_2$.

\end{proof}

 We may now combine all of the prior results to prove Theorem \ref{stepfive}. 
 
 \begin{proof}[Proof of Theorem \ref{stepfive}]
By the hypotheses, there are positive constants $C, \beta$ and $N$ so that for $n \ge N$,
\begin{equation*}
\tildep(\x, n) \ge Cn^{-d\beta}\quad\text{for all $|\x| \le Cn^\beta$}, 
\end{equation*}
where $\tilde{p}(\cdot, n)$ is volcanic for each $n$. 

Fix $n_{1}\geq (2N)\vee (2C^{m})^{\tfrac{1}{dm\beta}}$ and let $\rho := C(n_{1}/2)^\beta$. By the above, we have
\begin{equation*}
\tildep(\x,j)^m\geq \frac{C^{m}}{n_{1}^{dm\beta}}=:b\quad\text{for $|\x| \le \rho$ and $n_{1}/2 \le j \le n_{1}$}. 
\end{equation*}
Observe that by the choice of $n_{1}$, $b\leq \tfrac{1}{2}$.

Let $n_0:=\tfrac{n_{1}}{2}\vee (n_{1} - T(\rho,b))$, where $T(\rho,b)$ is as in (\ref{defoftrb}), and let $\sigma := \sum_{k\in \Z^{d}} \tildep(\x, n_0)$. Given that $\tilde{p}(\cdot,n)$ is volcanic, all hypotheses of Lemma \ref{l.pupperopt} are satisfied. Thus, 
\begin{equation*}
\tilde{p}(k,n_{1})\leq \sigma V^{(\rho, b)}_{n_0 \to n_{1}}(\sigma^{-1}\tilde{p}(\cdot, n_{0})).
\end{equation*}
 By Theorem~\ref{optim}, this implies that for all $k\in \mathbb{Z}^{d}$, 
\begin{align}\label{e.pqbd}
\tilde{p}(k,n_{1})&\leq \sigma \sum_{k\in \Z^{d}}  \sigma^{-1}\tilde{p}(k,n_{0})q^{(b)}_{n_{1} - n_0}(\x)
\notag \\&\le \sigma \sup_{\x\in \Z^{d}} q_{n_{1} - n_0}^{(b)}(\x)\notag
\\&= \sigma q^{(b)}_{n_{1}-n_0}(0), 
\end{align} 
where in the last line, we used the fact that by Lemma \ref{monotonepreserved} (with the function $A(u) = bu$), $q_n^{(b)}(\x)$ is volcanic, since $b\leq \tfrac{1}{2}$. 

In order to conclude, we simply need to estimate the probability that a $b$-lazy random walk $(Y_{n})_{n\geq 0}$ is at $0$ after some number of steps. We seek an estimate which is uniform in $b$, and consequently, we will consider the asymptotics of the characteristic function associated to the $b$-lazy random walk. After one step, the characteristic function $\phi$ is given by 
$$\phi(\vartheta_1, \ldots, \vartheta_d) := \E{\exp(i Y_1 \cdot \vartheta)} = 1 - b + {b \over 2d} \sum_{j=1}^d \cos \vartheta_j.$$ Due to the independent increments of the lazy random walk, $\phi^n$ will be the characteristic function of $Y_n$, and we can recover the probability that $Y_n = 0$ by using Fourier series inversion and evaluating at 0, 
$$q^{(b)}_n(0)=\p{Y_n = 0} = {1 \over (2\pi)^d} \int_{-\pi}^\pi \cdots \int_{-\pi}^{\pi} \phi^n(\vartheta_1, \ldots, \vartheta_d) \,d\vartheta_1 \cdots d\vartheta_d.$$ Now we can use classic asymptotic analysis arguments. Since $1-x \le e^{-x}$, 
\begin{align*}
    q^{(b)}_n(0) &=
        {1 \over (2\pi)^d} \int_{-\pi}^\pi\cdots\int_{-\pi}^{\pi} \left(1-b+b\left(\frac1{2d}\sum_{i=1}^d \cos \vartheta_i\right)\right)^n \,d\vartheta_1 \cdots \,d\vartheta_d
    \\&\le {1 \over (2\pi)^d} \int_{[-\pi, \pi]^{d}} \exp\left(-bn\left(1 - \frac{1}{2d}\sum_{i=1}^d \cos \vartheta_i\right)\right) \,d\vartheta.
\end{align*}

Let $f(\vartheta_1, \ldots, \vartheta_d) := 1 - {1 \over 2d} \sum_{i=1}^d \cos \vartheta_i$. The only minimum of $f$ in the domain of integration is at $\vartheta = 0$, where $f = 0$. By Laplace's method, we obtain the asymptotic estimate that for $t\gg 1$, 
$$H(t) := \int_{[-\pi, \pi]^{d}} \exp(-tf(\vartheta)) \,d\vartheta \sim \frac{K}{t^{d/2}},$$ 
where $K=K(d)>0$. Since $H(t)$ is bounded for all $t$, we conclude that there exists $K=K(d)$ such that $H(t) \le K/t^{d/2}$ for all $t \ge 0$. This constant depends only on $d$, not on the probability $b$, and hence $q^{(b)}_{n}(0)\leq K(bt)^{-\tfrac{d}{2}}$. Continuing from \eqref{e.pqbd}, this implies that 
\begin{equation*}
\tilde{p}(k,n_{1})\leq \frac{K\sigma}{(b(n_{1}-n_{0}))^{d/2}}. 
\end{equation*}

Finally, we estimate a lower bound on $n_{1} - n_0$.  By Corollary~\ref{laplacianpositivityuntilstep}, we have that $T(\rho, b) \ge c\rho^2/b$, where $c=c(d,m)$. Therefore, by the choices of $n_{1}$ and $n_{0}$, we have
\begin{equation*}
b(n_{1}-n_0) \ge b\min\{n_{1}/2, T(\rho, b)\}\geq \min\{\tfrac{bn_{1}}{2}, c\rho^2\}=\min\{cn_{1}^{1-dm\beta}, cn_{1}^{2\beta}\}.
\end{equation*}
But these are of the same order in $n$, since $1 - dm\beta = (dm+2)\beta - dm\beta = 2\beta$. Therefore, for all $k\in \Z^{d}$, there exists $C'=C'(C, d,m, \sigma)$ such that 
$$\tilde{p}(k,n_{1}) \le {K\sigma \over (b(n_{1}-n_0))^{d/2}} \le C'(n_{1}^{-2\beta})^{d/2} = {C' \over n_{1}^{d\beta}}.$$
This works for all $n_{1}$ sufficiently large, depending on $C, d, m, N$. Finally, by enlarging the constant $C'=C'(C, d,m, \sigma, N)$, we can obtain the desired estimate for all $n_{1}\geq N$. 
\end{proof}

\section{The proof of Theorem \ref{prop.pfacts}(i)}\label{s.p1}
Recall from \eqref{e.pdef} that $p(k,n)$ is the probability mass function of $X^{n}$, where $(X^{n}, n\geq 0)$ is $\CM(m,d)$-cooperative motion started from $0\in \Z^{d}$. As in the prior section, we will frequently write $p_{n}(k)=p(k,n)$ and consider the sequence of functions $p=(p_{n})_{n \geq 0}$. The goal in this section is to prove Theorem \ref{prop.pfacts}(i), the statement that there exists $C=C(d,m)>0$ such that $p(\x, n) \le Cn^{-d\beta}$. 

While Theorem \ref{stepfive} yields a good approach to proving upper bounds for solutions of \eqref{timeevol} which are volcanic in space for each time step (we hereby call such sequences of functions ``volcanos''), the sequence  $p=(p_{n})_{n\geq 0}$ is not a volcano. Thus, we begin by first finding a volcano $\tilde{p}=(\tilde{p}_{n})_{n\geq 0}$ such that $\tilde{p}_{n}$ lies above $p_{n}$ for some $n$ (see Section \ref{psmaller}). In order to verify the hypotheses of Theorem \ref{stepfive} for $\tilde{p}$, namely a sufficient lower bound in a ball centered at the origin, we compare $\tilde{p}$ to a \emph{discretization} of the Barenblatt solution $\bar{u}=\bar{u}^{(R, \Ga)}$, for suitably chosen parameters. The discretization of $\bar{u}$ is \emph{not} a solution of \eqref{timeevol} on $[0, \infty)$, however it turns out that is \emph{approximately} a solution, in a sense which we make precise below. This leads us to develop a theory of comparison between solutions and approximate solutions, which is how we will establish the desired lower bound on $\tilde{p}$. By finally applying Theorem \ref{stepfive}, we will complete the proof of Theorem \ref{prop.pfacts}(i).

\subsection{Preliminaries}

\subsubsection{Finding a volcano above $p_{n}$ for some $n$.}\label{psmaller} Many of the techniques from the prior sections are developed for volcanic functions. The actual probability distribution of cooperative motion at a fixed time is generally not volcanic; for example, $p_{1}(0) = 0$ and $p_1(e_1) = (2d)^{-1}$.  Instead, we use the following lemma to find a solution $\tilde{p}=(\tilde{p}_{n})_{n\geq 0}$ of \eqref{timeevol} with initial condition $\tilde{p}_{0}$ such that $\tilde{p}_{n}$ is volcanic for all $n\geq 0$ and there exists $N_{0}$ such that $\tilde{p}_n(\x)\ge{p}_{n+N_{0}}(\x)$ for all $n \ge 0$.

\begin{lem}
    \label{lemma.gives.tildep} There exists $N_{0}=N_{0}(d,m)\in \mathbb{N}$ and a solution $\tildep_n$ of the scheme (\ref{timeevol}) on $[0, \infty)$ with initial condition $\tilde{p}_{0}$ such that $\tilde{p}_{n}$ is volcanic for all $n\geq 0$,  $|\tildep_0|_1 <\infty$ and $$0 \le p_{n+N_{0}}(\x) \le \tildep_n(\x) \le \frac{1}{3} \qquad \text{for all }\x \in \mathbb Z^d, n \ge 0.$$
\end{lem}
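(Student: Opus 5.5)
The plan is to take $N_0=1$: after one step the mass function is already small and has finite support, and we place a simple volcanic function above it. Monotonicity and volcano preservation, which hold on $\mathcal{B}^{+}_{\sfrac{1}{3}}(\Z^{d})$, then propagate the ordering for all later times.

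\emph{Step 1: compute $p_1$.} Since $X^0=0$ and all $m$ independent copies of $X^0$ also equal $0$, the walker always moves at the first step, so $X^1=E^0$. Hence $p_1(\pm e_i)=\frac{1}{2d}$ for $i\in[d]$, and $p_1(\x)=0$ otherwise, including at $\x=0$. Because $d>1$, we get $0\le p_1\le \frac14\le\frac13$ pointwise.

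\emph{Step 2: choose the initial volcano.} Set $\tildep_0:=\frac13\indc_{\{|\x|_1\le 1\}}$.
\begin{itemize}
\item It is symmetric in the sense of \eqref{e.symmdef}, because the $\ell^1$ unit ball is invariant under coordinate reflections and permutations.
\item It is nondecreasing towards the origin. For neighbours $\x\sim\y$ with $|\x|_1>|\y|_1$, either $|\y|_1\le 1$, in which case $\tildep_0(\y)=\frac13\ge\tildep_0(\x)$, or $|\y|_1\ge 2$, in which case both values are $0$. So $\tildep_0$ is volcanic.
\item It has finite mass: $|\tildep_0|_1=\frac{2d+1}{3}<\infty$.
\item It dominates $p_1$ pointwise, since $\frac13\ge\frac{1}{2d}$ on the neighbours of $0$ and $p_1$ vanishes elsewhere.
\end{itemize}
Let $\tildep_n$ be the solution of \eqref{timeevol} on $[0,\infty)$ with initial condition $\tildep_0$.

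\emph{Step 3: induction on $n$.} The claim is that $0\le p_{n+1}\le\tildep_n\le\frac13$ and that $\tildep_n$ is volcanic; the case $n=0$ is Step 2. On $[0,\frac13]$ we have $A'(u)=(m+1)u^m\le\frac23\le 1$ (as in the proof of Corollary \ref{cflmonotonespace}), so Corollary \ref{cor:newsmon} with $\Lambda=\frac13$ shows that $\scheme$ is monotone on $\mathcal{B}^{+}_{\sfrac{1}{3}}(\Z^{d})$. Suppose the claim holds at step $n$. Then $p_{n+1},\tildep_n\in\mathcal{B}^{+}_{\sfrac13}(\Z^{d})$, and monotonicity gives
\[
p_{n+2}=\scheme p_{n+1}\le \scheme\tildep_n=\tildep_{n+1}.
\]
The bound $0\le\tildep_{n+1}\le\|\tildep_n\|_{L^\infty(\Z^{d})}\le\frac13$, and likewise $0\le p_{n+2}$, follow from Lemma \ref{l.Smonotone}, applied to any $A$ that agrees with $u^{m+1}$ on $[0,\frac13]$ and has derivative in $[0,1]$ everywhere. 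Volcanicity of $\tildep_{n+1}$ follows from Corollary \ref{cflmonotonespace}. This closes the induction, and the lemma follows with $N_0=1$.

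There is no real obstacle in this argument. The only point needing care is that monotonicity is available only for functions bounded by $\frac13$, which rules out starting at $n=0$ (where $p_0(0)=1$); this is exactly why we shift by one step.
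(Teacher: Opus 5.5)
Your proof is correct and is essentially the paper's argument. The paper also takes $N_0=1$, places the volcanic bump $\frac{1}{2d}\indc_{\{|\x|_1\le 1\}}$ above $p_1$ (your height $\frac13$ works equally well), and propagates both the ordering and volcanicity using monotonicity of $\scheme$ and Lemma \ref{monotonepreserved}.
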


\begin{proof} Since $d\geq 2$, $p_1(\x)=(2d)^{-1}\indc_{\pm e_{i}}(k)$. Set $$\tildep_0(\x) = \begin{cases}\frac{1}{2d}&\text{ if $\x=0, \pm e_{i}$}\\0&\text{ otherwise}.\end{cases}$$ Then $\tilde{p}_{0}$ is symmetric and nondecreasing towards the origin. Moreover, $p_1 \le \tildep_0\le1/3$. Let $\tilde{p}=(\tilde{p}_{n})_{n\geq 0}$ be a solution of \eqref{timeevol} with initial condition $\tilde{p}_{0}$. Then by Corollary \ref{cor.Sell1}, $p_{n+1} = \scheme^n p_1 \le \scheme^n \tildep_0 = \tildep_n$, and by Lemma \ref{monotonepreserved}, $\tildep_n$ is volcanic for every $n$. This implies the result with $N_{0} = 1$. Also by Corollary \ref{cor.Sell1}, for every $n$, 
\begin{equation*}
\sum_{\x\in \Z^{d}} |\tildep_n(\x)| =\sum_{\x\in \Z^{d}} |\tildep_0(\x)|= (2d)^{-1}(2d+1).
\end{equation*}

\end{proof}

\subsubsection{Approximate subsolutions}
We say $q=(q_{n})_{n\geq 0}\subset \mathcal{B}^{+}_{\sfrac{1}{2}}(\Z^{d})\cap L^{1}(\Z^{d})$ is a {\em subsolution} of \eqref{e.operator} on $[n_{0}, \infty)$ with initial condition $f: \Z^{d}\rightarrow \R$ if 
\begin{equation*}
\begin{cases}
q_{n+1} \le \scheme q_n&\text{for $n\geq n_0$},\\
q_{n_{0}}=f.
\end{cases}
\end{equation*}
One can see by induction and Corollary \ref{cor.Sell1} that if $\tilde{p}=(\tildep_n)_{n\geq 0}$ is a solution of \eqref{timeevol} with $\tilde{p}_{0}\in \mathcal{B}^{+}_{\sfrac{1}{2}}(\Z^{d})$, and $q=(q_n)_{n\geq 0}$ is a subsolution of \eqref{timeevol} with initial condition $q_{0}$, and $q_{0}\leq \tilde{p}_{0}$, then $q_{n} \le \tildep_n$ for $n \ge 0$.

This idea is a little too rigid to be useful for this scheme, so we generalize it and introduce the idea of an \emph{approximate} subsolution. We begin by defining a distance.

If $q, r \in  \mathcal{B}^{+}_{\sfrac{1}{2}}(\Z^{d})\cap L^{1}(\Z^{d})$, we define the \emph{asymmetric distance} $$\dist[q,r] := \sum_{\x\in\mathbb{Z}^d}\max\{q(\x)-r(\x),0\}=\sum_{\x\in \mathbb{Z}^{d}} \left(q(\x)-r(\x)\right)_{+}.$$ 
Intuitively, this measures how much $q$ sticks up above $r$. If $q \le r$ pointwise, then the asymmetric distance is zero.

The asymmetric distance satisfies the triangle inequality, $\dist[q,r] \le \dist[q,s] + \dist[s,r]$ and $\dist[q,r] \le |q - r|_1$. As one might expect, it is not symmetric: $\dist[q,r] - \dist[r,q] = |q|_1 - |r|_1$, which is not in general zero. 

We note that the asymmetric distance satisfies basic monotonicity properties on $\mathcal{B}^{+}_{\sfrac{1}{2}}(\Z^{d})$. If $q\leq \tilde{q}$, then for any $r$, $\dist[q,r]\leq \dist[q',r]$. Similarly, if $r\leq r'$, then for any $q$, $\dist[q,r']\leq \dist[q,r]$.

For $q=(q_n)_{n\geq 0}\subset\bdd$, let 
\begin{equation*}
\smallest(q) := \inf_n |q_n|_1.
\end{equation*}
This allows us to define an approximate subsolution. 
\begin{dfn}\label{d.approxsub}
We say sequence $q=(q_n)_{n\geq 0}\subset\bdd$ is an {\em approximate subsolution} to \eqref{e.operator} on $[0, \infty)$ with initial condition $q_{0}$ if $$\sum_{n=0}^\infty \dist[q_{n+1},\scheme q_n] \le \frac12 \smallest(q).$$
\end{dfn}

We now show that we can compare approximate subsolutions with solutions, by considering the asymmetric distance (and not total ordering, as in the case of true subsolutions). 

\begin{lem}\label{errorlemma}Let $\tilde{p}=(\tildep_n)_{n\geq 0}$ and $q=(q_n)_{n\geq 0}$ be sequences in $\bdd$, with $\tilde{p}$ a solution of \eqref{timeevol} with initial condition $\tilde{p}_{0}$ and $q$ an approximate subsolution of \eqref{timeevol} with initial condition $q_{0}$. Then for any $n\in \NZ$, $$\dist[q_n,\tildep_n] \le \dist[q_0,\tildep_0] + \frac12 \smallest(q).$$ \end{lem}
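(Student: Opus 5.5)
Fix $n\geq 0$. The plan is to show the one-step inequality
\[
\dist[q_{n+1},\tildep_{n+1}] \le \dist[q_n,\tildep_n] + \dist[q_{n+1},\scheme q_n],
\]
and then sum it over steps. Indeed, iterating from $0$ to $n-1$ gives
\[
\dist[q_n,\tildep_n]\le \dist[q_0,\tildep_0]+\sum_{j=0}^{n-1}\dist[q_{j+1},\scheme q_j]\le \dist[q_0,\tildep_0]+\tfrac12\smallest(q),
\]
where the last step uses Definition \ref{d.approxsub} together with the nonnegativity of $\dist$.

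For the one-step inequality, I use the triangle inequality for the asymmetric distance, noted just before Definition \ref{d.approxsub}. This gives
\[
\dist[q_{n+1},\tildep_{n+1}]\le \dist[q_{n+1},\scheme q_n]+\dist[\scheme q_n,\scheme\tildep_n],
\]
since $\tildep_{n+1}=\scheme\tildep_n$. It then remains to show that $\scheme$ is nonexpansive for the asymmetric distance:
\[
\dist[\scheme q,\scheme r]\le\dist[q,r]\qquad\text{for } q,r\in\bdd.
\]

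The main point is this contraction property. I would obtain it from the Crandall--Tartar framework behind Corollary \ref{cor.Sell1}: a map that is monotone on $\mathcal{B}^{+}_{\sfrac12}(\Z^d)$ and conserves mass is also contractive in the asymmetric distance. Concretely, set $s:=q\vee r$. Then $s\in\bdd$, because both $q$ and $r$ take values in $[0,\tfrac12]$ and are summable. By the monotonicity in Corollary \ref{cor.Sell1}(i), $\scheme s\ge\scheme q$ and $\scheme s\ge \scheme r$. This gives the pointwise bound
\[
(\scheme q-\scheme r)_+\le \scheme s-\scheme r,
\]
and summing it yields
\[
\dist[\scheme q,\scheme r]\le \sum_k(\scheme s-\scheme r)(k)=\sum_k (s-r)(k)=\sum_k (q-r)_+(k)=\dist[q,r].
\]
Here the middle equality is mass conservation from Corollary \ref{cor.Sell1}(ii), which is legitimate because $\scheme s$ and $\scheme r$ are summable.

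The only care needed is to confirm that every object stays in the class where Corollary \ref{cor.Sell1} applies. The iterates $\tildep_n$ and $q_n$ lie in $\bdd$ by hypothesis. The maximum $s$ lies in $\bdd$ by the observation above. Finally, $\scheme q_n$ lies in $\mathcal{B}^+_{\sfrac12}$ by Lemma \ref{l.Smonotone}, applied in its range form through Corollary \ref{cor:newsmon}, and it is summable by mass conservation. With these checks the argument closes.
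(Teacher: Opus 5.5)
Your proof is correct and follows the paper's argument. Both prove the telescoped one-step inequality $\dist[q_{n+1},\tildep_{n+1}]\le\dist[q_{n+1},\scheme q_n]+\dist[q_n,\tildep_n]$ from the triangle inequality and monotonicity of $\scheme$. The only difference is the comparison function: the paper uses $q_n\wedge\tildep_n$ with the $L^1$-contraction of Corollary \ref{cor.Sell1}(iii), while you use $q_n\vee\tildep_n$ with mass conservation (Corollary \ref{cor.Sell1}(ii)), which amounts to reproving the relevant Crandall--Tartar step directly.
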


\begin{proof} We proceed by induction to show that for any $n\in \NZ$, \begin{equation} \label{eq:errorlemma}\dist[q_n, \tildep_n] \le \dist[q_0, \tildep_0] + \sum_{j=0}^{n-1} \dist[q_{j+1}, \scheme q_j]. \end{equation} 
We consider the base case when $n = 0$, where the sum is empty and this is the trivial statement that $\dist[q_0, \tildep_0] \le \dist[q_0, \tildep_0]$. 

Suppose (\ref{eq:errorlemma}) holds $n$. Let $r_n(\x) := \min\{\tildep_n(\x), q_n(\x)\}$. By monotonicity, $\scheme r_n \le \scheme \tildep_n$ and $\scheme r_n \le \scheme q_n$.

Now using the monotonicity property of asymmetric distance, we have
\begin{equation*}
 \dist[q_{n+1},\tildep_{n+1}]=\dist[q_{n+1},\scheme \tildep_n]\leq  \dist[q_{n+1},\scheme r_n]\leq \dist[q_{n+1},\scheme q_n] + \dist[\scheme q_n,\scheme r_n].
\end{equation*}
But $\scheme r_n \le \scheme q_n$, so $\dist[\scheme q_n,\scheme r_n] = |\scheme q_n - \scheme r_n |_1\leq |q_n - r_n|_1$ by Corollary \ref{cor.Sell1}(iii). Since $r_n = \min\{\tildep_n, q_n\}$, it follows that $| q_n - r_n |_1 = \dist[q_n,\tildep_n]$. Therefore,
\begin{align*}
    \dist[q_{n+1},\tildep_{n+1}]
    &\le \dist[q_{n+1},\scheme q_n] + \dist[q_n,\tildep_n]
    \\[-.9em]&\le \dist[q_{n+1},\scheme q_n] + \dist[q_0,\tildep_0] + \sum_{j=0}^{n-1} \dist[q_{j+1},\scheme q_j]\\[-.9em]
    &=\dist[q_0,\tildep_0]+\sum_{j=0}^{n} \dist[q_{j+1},\scheme q_j].
\end{align*}
Thus, \eqref{eq:errorlemma} holds for all $n\in \NZ$. Since $q$ is an approximate subsolution to \eqref{timeevol} on $[0, \infty)$ with initial condition $q_{0}$, we conclude that 
\[
\dist[q_n, \tildep_n] \le \dist[q_0, \tildep_0] + \frac12\smallest(q).
\hfill\qedhere
\]
\end{proof}

\gdef\duration{\mathop{\rm duration}}
\subsection{The function $\bar{u}_{n}$ as an approximate subsolution}

We recall the Barenblatt solution introduced in Section \ref{ss.bb}:
\begin{equation}\label{hformula}\bar{u}^{(R,\Gamma)}(x, t) := {R^d \Gamma \over r(t)^d} \left(1 - \left({|x| \over r(t)}\right)^2\right)^{1/m}_+, \quad r(t) := R \left(1 + {t \over \time}\right)^\beta, \quad \time := {2d\gamma R^2 \over \Gamma^m},\end{equation}
where $\beta=\frac{1}{dm+2}$ and $\ga=\frac{m\beta}{2(m+1)}$.

For $n\in \N_{0}$, let $\bar{u}^{(R,\Gamma)}_n: \mathbb Z^d \to \mathbb R$ be the function $$\bar{u}^{(R,\Gamma)}_n(\x) = \bar{u}^{(R,\Gamma)}(\x,n).$$ This is the continuous (Barenblatt) solution with discrete integer coordinates plugged in. We will omit the dependence on $R, \Gamma$ from the notation and simply write $\bar{u}_n$ and $\bar{u}(\x, n)$. We will always have $\Gamma<\sfrac{1}{3}$ and since $\int \bar{u}(x,t)\, dx<\infty$, $\bar{u}_n$ belongs to $\bdd$.

The main goal of this subsection is to show that $\bar{u}=(\bar{u}_{n})_{n\geq 0}$ is in fact an approximate subsolution of \eqref{timeevol} with initial condition $\bar{u}_{0}$, in the sense of Definition \ref{d.approxsub}. 
Throughout this section, we use the operator $\mathcal{S}$ given by \eqref{d.Sdef} with $A(u)=u^{m+1}$. In other words, 
\begin{equation}\label{e.bbSdef} 
\scheme q(\x) := q(\x) +\frac{1}{2d}\sum_{\y \sim \x} \left[q(\y)^{m+1} -q(k)^{m+1}\right]. 
\end{equation}
We will be interested in estimating 
 \begin{equation}\label{discretepmeerror}
       \bar{u}_{n+1}(\x) - \scheme  \bar{u}_n(\x) =  \bar{u}_{n+1}(\x) -  \bar{u}_{n}(\x) -
            \frac{1}{2d}
                \sum_{\y \sim \x} \left[ \bar{u}_n(\y)^{m+1} -  \bar{u}_n(\x)^{m+1}\right].
\end{equation} 

\begin{prop}\label{p.approxsub}
For any $\Ga\in (0, \sfrac{1}{3})$, there exists $R=R(d,m)>0$ such that $\bar{u}=(\bar{u}_{n})_{n\geq 0}$ as defined in \eqref{hformula} is an approximate subsolution of \eqref{timeevol} on $[0, \infty)$ with initial condition $\bar{u}_{0}$,  i.e., for $\mathcal{S}$ as in \eqref{e.bbSdef}, we have
\begin{equation}
\sum_{n=0}^\infty \dist[\bar{u}_{n+1},\scheme \bar{u}_n]=\sum_{n=0}^{\infty} \sum_{\x \in \mathbb Z^d} (\bar{u}_{n+1}(\x) - \scheme \bar{u}_n(\x))_+ \le \frac12 \smallest(\bar{u}).
\end{equation}

\end{prop}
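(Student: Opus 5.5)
The plan is to compare the discrete error \eqref{discretepmeerror} with the continuous equation $\bar u_t=\frac{1}{2d}\Delta(\bar u^{m+1})$, which $\bar u$ satisfies classically inside the positive region $|x|<r(t)$. First I record the scaling in \eqref{hformula}. We have $r(n)=R(1+n/\time)^\beta$, and $\bar u(\cdot,n)\le \Ga R^d r(n)^{-d}$. Moreover $|\bar u_n|_1 \approx \int \bar u(x,n)\,dx = c_{d,m}\Ga R^d$ up to lattice discretization errors of relative size $O(1/R)$. Hence $\smallest(\bar u)\ge c\,\Ga R^d$ once $R$ is large. The goal is therefore to show $\sum_n\sum_k(\bar u_{n+1}-\scheme\bar u_n)_+\le \tfrac14 c\,\Ga R^d$ for $R$ large (depending on $\Ga$ as well; I will allow $R=R(d,m,\Ga)$).

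For each $(k,n)$ I split the error. The time part is $\bar u(k,n+1)-\bar u(k,n)=\int_n^{n+1}\partial_t\bar u(k,s)\,ds$. The space part is $\frac{1}{2d}\sum_i \delta_i^2(\bar u_n^{m+1})(k)$, where $\delta_i^2$ is the second difference in direction $e_i$. At points with $|k\pm e_i|<r(n)$ for all $i$, the function $\bar u^{m+1}$ is smooth along the segment. Taylor's theorem with Lemma \ref{l.baruprop}(ii) (nonnegative fourth derivative) then gives $\delta_i^2(\bar u_n^{m+1})(k)\ge\partial_i^2(\bar u_n^{m+1})(k)$. This is the sign needed, since it makes $-\frac1{2d}\Delta$ of the discrete term at most the continuous term. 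The positive part of the error is thus bounded by
\[
\int_n^{n+1}\partial_t\bar u(k,s)\,ds-\partial_t\bar u(k,n),
\]
which is at most $\sup_{s\in[n,n+1]}|\partial_t^2\bar u(k,s)|$. By the scaling, $|\partial_t^2\bar u|\lesssim \Ga R^d r^{-d}(t+\time)^{-2}$ away from the boundary, with a blow-up near $|x|=r(t)$ controlled via \eqref{first.time.derivative} and the factor $(1-\vartheta^2)^{1/m-2}$. Summing over $k$ in the ball of radius $r(n)$ gives roughly $\Ga R^d(n+\time)^{-2}$ per step, up to boundary corrections. Summing over $n$ then gives $\lesssim\Ga R^d/\time=\Ga^{1+m}R^{d-2}/(2d\ga)$. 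Relative to $\Ga R^d$ this is $O(\Ga^m R^{-2})$, which is small for large $R$.

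The main obstacle is the boundary layer: points $k$ with some neighbour outside $B_{r(n)}$, or within distance $O(1)$ of the free boundary. There $\bar u^{m+1}$ is only $C^{1,1/m}$, so the Taylor/fourth-derivative argument fails, and $\partial_t\bar u$ is singular when $m>1$. I would handle these points crudely. The positive part of the error is at most $\bar u_{n+1}(k)+\frac{1}{2d}\sum_{\ell\sim k}\bar u_n(k)^{m+1}$, and near the boundary $\bar u_n\lesssim \Ga R^dr^{-d}(r^{-1})^{1/m}$, since $1-\vartheta^2\lesssim 1/r$. A more careful alternative is to use the unimodality in Lemma \ref{l.baruprop}(i), together with $\bar u_n(\ell)^{m+1}\ge$ the actual values, to note that outside $B_{r(n+1)}$ the error is $\le 0$. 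Both $\bar u_{n+1}(k)=0$ and $-\scheme\bar u_n(k)\le0$ there, because $\bar u_n(k)=0$. So only the annulus $r(n)-1\le|k|\le r(n+1)+1$ remains, which has about $r^{d-1}$ points. On this annulus the per-point error is of order $\Ga R^d r^{-d}\,r^{-1/m}$, plus the increment $|\bar u_{n+1}-\bar u_n|$. Summing gives roughly $\Ga R^d r(n)^{-1-1/m}$ per step. This is not summable in $n$ as written for $m\ge1$, so I would need to exploit that the outward-moving boundary makes $\bar u_{n+1}-\bar u_n$ small: the boundary speed is $r'(n)\approx\beta r/(n+\time)$, which is tiny. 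The per-step annulus contribution would then carry an extra factor $(n+\time)^{-1}r$, giving a summable bound of size $\Ga R^d\cdot o(1)$ as $R\to\infty$. Getting this bookkeeping to close is where I expect the real work, and I would first try the crude bound with the time-increment factor before refining.
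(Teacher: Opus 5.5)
Your interior and exterior arguments are sound and match the paper. The fourth-derivative sign from Lemma \ref{l.baruprop}(ii) makes the spatial error nonpositive on the stencil, Taylor in time handles the rest, and the error is nonpositive outside $B_{r(n+1)}$.

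\textbf{The gap is the boundary layer, which you leave open.} You organize the estimate step by step. Your crude bound of $\Gamma R^d r(n)^{-1-1/m}$ per step is, as you note, not summable. Your proposed remedy, an extra factor $r'(n)$ coming from the slow motion of the free boundary, holds only on average over lattice points. The reason is the cusp $\bar u\approx \Gamma R^d r^{-d}\bigl(2(r(t)-|k|)_+/r\bigr)^{1/m}$. A lattice point within distance $r'(n)$ of the free boundary sees an increment of order $r'(n)^{1/m}$, which is much larger than $r'(n)$ when $m>1$. Applying this pointwise bound to the $\asymp r(n)^{d-1}$ layer points gives a per-step contribution of order $\Gamma R^d r(n)^{-1}(n+T_0)^{-1/m}$. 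Its sum over $n$ diverges whenever $\beta+1/m\le 1$, e.g.\ for $m\ge 2$. Recovering the factor $r'(n)$ at a single step would require counting lattice points in spherical shells of width $r'(n)\to 0$, which is not uniformly available.

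The missing idea is to exchange the order of summation: fix $k$ and sum over the discrete interval of times $n$ with $(k,n)\in\mathcal{G}$.
\begin{itemize}
\item You already invoke unimodality of $t\mapsto \bar u(k,t)$ (Lemma \ref{l.baruprop}(i)), but only to get the zero region. Its real use is this: the positive increments $(\bar u_{n+1}(k)-\bar u_n(k))_+$ telescope to at most $U(k)$, the largest value of $\bar u(k,\cdot)$ over those times.
\item Since $1-\vartheta^2\lesssim 1/r$ there, $U(k)\lesssim \Gamma R^d|k|^{-d-1/m}$.
\item The remaining term satisfies $\sum_n \bar u_n(k)^{m+1}\le \#\{n:(k,n)\in\mathcal{G}\}\,U(k)^{m+1}$.
\item Inverting $r$ gives $\#\{n:(k,n)\in\mathcal{G}\}\lesssim |k|^{dm+1}/(R^{dm}\Gamma^m)$. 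Hence this count times $U(k)^m$ is bounded by a constant (Lemma \ref{l.dur}).
\item Summing $U(k)$ over $|k|\ge R/2$ gives $\lesssim \Gamma R^{d-1/m}$. This is the order your heuristic predicted, now obtained by averaging in time at each point.
\end{itemize}

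\textbf{Two secondary points.}

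First, in the interior you bound $|\partial_t^2\bar u|$. This carries the factor $(1-\vartheta^2)^{1/m-2}$, which is not integrable over the ball for $m\ge 1$. So the per-step sum is not $\lesssim \Gamma R^d(n+T_0)^{-2}$ as you claim: with your stencil cutoff it picks up an extra factor $r(n)^{1-1/m}$, or $\log r(n)$ when $m=1$. This is still summable in $n$. The paper avoids it by bounding only the positive part. In \eqref{eq:partial2t} the most singular term has coefficient $4\beta^2(\tfrac1m-\vartheta^2)\le 4\beta^2(1-\vartheta^2)$ for $m\ge1$, which leaves only the integrable factor $(1-\vartheta^2)^{1/m-1}$.

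Second, the statement requires $R=R(d,m)$ independent of $\Gamma$. This is used downstream: in \eqref{e.n0ga}, $\Gamma$ is chosen after $R$, through $n_0=\lceil R\sqrt d\,\rceil$ and $\min_{|k|\le R}\tilde p_{n_0}(k)$. Allowing $R=R(d,m,\Gamma)$ would therefore be circular. Your relative error sizes, $O(\Gamma^mR^{-2})$ in the interior and $O(R^{-1/m})$ in the layer, are uniform in $\Gamma<\frac13$, so you should fix $R$ independently of $\Gamma$.
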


In order to prove Proposition \ref{p.approxsub}, we perform a detailed analysis of the function $\bar{u}$, and its discretization, in three regions of $\Z^{d}\times \N$. We will consider the {\em core}, the {\em gap}, and the {\em outer region}, respectively defined by 
\begin{equation}\label{e.regdefs}
\begin{cases}
\mathcal{C}:=\left\{(k,n)\in \Z^{d}\times \N: |k|<r(n)-\sqrt{d}\right\},\\
\mathcal{G}:=\left\{(\x,n) \in \mathbb Z^d \times \mathbb N: r(n) - \sqrt{d} \le |\x| < r(n) + 1\right\},\\
\mathcal{O}:=\left\{(k,n)\in \Z^{d}\times \N : |\x| \ge r(n) + 1\right\},
\end{cases}
\end{equation}
where we recall that $|k|$ refers to the Euclidean norm of $k\in \Z^{d}$. Notice that $\bar{u}(\cdot, t)$ is supported on $\overline{B_{r(t)}}$. Outside of the support, $\bar{u}$ is identically zero, making the discretization $\bar{u}_{n}$ in the outer region $\mathcal{O}$ identically zero as well. 

In the gap $\mathcal{G}$, the function $\bar{u}$ has singular derivatives. The discrete regions we have created give us ``some room'' which will allow us to control the error in a uniform fashion. For $|x|<r(t)$, $\bar{u}$ is smooth, and hence in the core region $\mathcal{C}$, the analysis of the discretization $\bar{u}_{n}$ will rely heavily on derivative estimates of $\bar{u}$, which, to our knowledge, have not previously appeared  in the literature. 

We estimate the total error in the core, the gap, and the outer region in Proposition \ref{p.out}, Proposition \ref{l.gap} and Proposition \ref{l.core} respectively. 
Combining these will straightforwardly yield Proposition \ref{p.approxsub}. 

We first start with total error in the outer region $\mathcal{O}$, which is the most straightforward region to analyze.
\begin{prop}\label{p.out}
Let $\Ga \in (0, \sfrac{1}{3})$, and any $R\in [(2d\ga)^{-1}, \infty)$ where $\ga=\ga(d,m)$ is defined near \eqref{hformula}. For $\mathcal{O}$ as defined in \eqref{e.regdefs} and $\mathcal{S}$ as defined in \eqref{e.bbSdef},
\begin{equation*}
\sum_{(k,n)\in \mathcal{O}} (\bar{u}_{n+1}(k)-\mathcal{S}\bar{u}_{n}(k))_{+}=0.
\end{equation*}
\end{prop}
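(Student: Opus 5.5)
For $(k,n)\in\mathcal{O}$ we have $|k|\ge r(n)+1$, and I will show that both quantities in the difference are controlled by this. First I show $\bar{u}_{n+1}(k)=0$. Then $(\bar{u}_{n+1}(k)-\mathcal{S}\bar{u}_n(k))_+=(-\mathcal{S}\bar{u}_n(k))_+$. Since $\bar{u}_n\in\mathcal{B}^+_{\sfrac13}(\Z^d)$ (because $\Gamma<\sfrac13$ and $\Gamma$ is the maximum of $\bar u$), Corollary \ref{cor.Sell1} and Lemma \ref{l.Smonotone} give $\mathcal{S}\bar{u}_n\ge 0$. Hence every summand vanishes. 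Alternatively, one can see this directly: $\bar u_n(k)=0$ because $|k|>r(n)$, so
\[
\mathcal{S}\bar u_n(k)=\frac{1}{2d}\sum_{\ell\sim k}\bar u_n(\ell)^{m+1}\ge 0 .
\]
This direct computation avoids the monotonicity lemmas altogether.

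So the only real step is to show $\bar{u}_{n+1}(k)=0$, i.e.\ $|k|\ge r(n+1)$. It suffices to prove $r(n+1)\le r(n)+1$ for all $n\ge0$. Since $r(t)=R(1+t/T_0)^\beta$ is concave in $t$ (because $\beta<1$), its increments are largest at $t=0$. Using $(1+x)^\beta\le 1+\beta x$, I get
\[
r(n+1)-r(n)\le r(1)-r(0)=R\big((1+1/T_0)^\beta-1\big)\le \frac{R\beta}{T_0}=\frac{\beta\,\Gamma^m}{2d\gamma R}.
\]
The remaining task is to check that this is at most $1$. Here I use $\Gamma<\sfrac13<1$, so $\Gamma^m<1$, and $\beta\le 1$. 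Then the hypothesis $R\ge(2d\gamma)^{-1}$ gives $\beta\Gamma^m/(2d\gamma R)\le \beta\Gamma^m<1$.

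Thus $|k|\ge r(n)+1\ge r(n+1)$, so $\bar u(k,n+1)=0$, since $\bar u(\cdot,t)$ vanishes for $|x|\ge r(t)$. Each term in the sum over $\mathcal{O}$ is therefore $(0-\text{nonnegative})_+=0$, which is the claim. I expect no real obstacle; the only care needed is the elementary growth bound $r(n+1)-r(n)\le 1$, and that is exactly where the lower bound on $R$ enters.
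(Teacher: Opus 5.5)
Your proof is correct and follows the paper's argument. Both reduce to showing $\bar u_{n+1}(k)=0$ via $r(n+1)-r(n)\le R\beta/T_0=\beta\Gamma^m/(2d\gamma R)<1$; the paper gets this from $\max_t|r'(t)|$ and you from concavity plus Bernoulli's inequality, which is the same estimate. The only cosmetic difference is that the paper notes $\mathcal{S}\bar u_n(k)=0$ exactly, since every neighbour of $k$ also lies outside $B_{r(n)}$, whereas you use only $\mathcal{S}\bar u_n(k)\ge 0$, which suffices.
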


\begin{proof}
By definition of $\bar{u}$, we have $\bar{u}_{n}(k)=0$ for all $|k|\geq r(n)$. By the definition of $\mathcal{S}$ in \eqref{e.bbSdef}, this implies that $\mathcal{S}\bar{u}_{n}(k)=0$ for all $(k,n)\in \mathcal{O}$. 

We now claim that $\bar{u}_{n+1}(k)=0$ for all $(k,n)\in \mathcal{O}$. By the form of $\bar{u}$, it suffices to show that 
\begin{equation}\label{steponebound.parta}
r(n+1)\leq r(n)+1,
\end{equation} since in this case, $|k|\geq r(n)+1\geq r(n+1)$ implies $\bar{u}_{n+1}(k)=0$. 

To verify \eqref{steponebound.parta}, we use \eqref{hformula} and the fact that $\beta\in (0,1)$, which implies that  
\begin{equation*}
r(n+1)\leq r(n)+\max_{t\geq 0}\left|\frac{dr}{dt}\right|= r(n)+\max_{t\geq 0}\frac{R\beta}{T_{0}}\bigg(1+\frac{t}{T_{0}}\bigg)^{\beta-1}\leq r(n)+\frac{R\beta}{T_{0}}\leq r(n)+\frac{\Ga^{m}}{2d\ga R}.
\end{equation*}
Since $\Ga\in (0, \sfrac{1}{3})$ and $R \geq (2d\ga)^{-1}$, this implies \eqref{steponebound.parta}, and hence $\bar{u}_{n+1}(k)=0$ for all $(k,n)\in \mathcal{O}$. Combining this with the first observation, we obtain the desired result. 
\end{proof}

The next result pertains to the total error in the gap. 

\begin{prop}\label{l.gap}
There exists $c_{g}=c_{g}(d, m)>0$ such that for any $\Ga \in (0,1]$ and any $R\geq 2\sqrt{d}$, for $\mathcal{G}$ as defined in \eqref{e.regdefs} and $\mathcal{S}$ as defined in \eqref{e.bbSdef}, 
\begin{equation}
\sum_{(k,n)\in \mathcal{G}} (\bar{u}_{n+1}(k)-\mathcal{S}\bar{u}_{n}(k))_{+}\leq c_{g}\Ga R^{d-\frac{1}{m}}.
\end{equation}
\end{prop}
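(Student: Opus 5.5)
The plan is to bound the positive part crudely, pointwise, and then sum over the gap \emph{site by site in $k$} rather than time step by time step, using the unimodality in Lemma \ref{l.baruprop}(i). Summing step by step (number of gap sites $\sim r(n)^{d-1}$ times a pointwise bound) gives a series in $n$ that diverges for $m\ge 2$, so this reorganization is essential.

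\emph{Pointwise bound.} Since $\bar{u}_n\ge 0$, dropping the neighbour terms in \eqref{e.bbSdef} gives $\scheme\bar{u}_n(k)\ge \bar{u}_n(k)-\bar{u}_n(k)^{m+1}$. Hence
\[
(\bar{u}_{n+1}(k)-\scheme\bar{u}_n(k))_+\le (\bar{u}_{n+1}(k)-\bar{u}_n(k))_+ + \bar{u}_n(k)^{m+1},
\]
and I will estimate the two resulting sums over $\mathcal{G}$ separately. Fix $k$ and let $I_k:=\{n:(k,n)\in\mathcal{G}\}$, which is the set of $n$ with $|k|-1<r(n)\le |k|+\sqrt d$. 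Since $r$ is increasing, $I_k$ is an interval of integers, and it is nonempty only if $|k|\ge R-\sqrt d\ge R/2$. On $I_k$ we have $r(n)\asymp |k|$ because $R\ge 2\sqrt d$. The same estimate as in Proposition \ref{p.out}, $r(n+1)\le r(n)+\Ga^m/(2d\ga R)\le r(n)+C$, shows that for $n\in I_k$ we have $|k|\ge r(n+1)-C$. Therefore, for $t\in\{n,n+1\}$,
\[
1-\frac{|k|^2}{r(t)^2}\le \frac{C}{r(t)},\qquad\text{so}\qquad \bar{u}(k,t)\le C\,\Ga R^d|k|^{-d-\frac1m}.
\]

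\emph{First sum.} By Lemma \ref{l.baruprop}(i), $t\mapsto\bar{u}(k,t)$ is unimodal, so the positive increments along the interval $I_k$ telescope. This gives
\[
\sum_{n\in I_k}(\bar{u}_{n+1}(k)-\bar{u}_n(k))_+\le \max_{n\in I_k\cup(I_k+1)}\bar{u}(k,n)\le C\Ga R^d|k|^{-d-\frac1m}.
\]
Summing over $|k|\ge R/2$ by comparison with $\int_{R/2}^\infty s^{d-1}s^{-d-1/m}\,ds\asymp R^{-1/m}$ yields $C\Ga R^{d-1/m}$. This sum converges because the exponent $-1-\frac1m<-1$.

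\emph{Second sum.} I write $\bar{u}_n(k)^{m+1}=\bar{u}_n(k)\cdot\bar{u}_n(k)^m$ and bound $\#I_k\cdot\max_{I_k}\bar{u}_n(k)^m$ by a constant. From \eqref{hformula}, $r'(t)=\beta r(t)/(t+\time)$, so the number of steps needed for $r$ to cross an interval of length $1+\sqrt d$ near $|k|$ is
\[
\#I_k\le 1+C\,\frac{n+\time}{r(n)}\qquad (n\in I_k).
\]
On the other hand,
\[
\bar{u}_n(k)^m\le C\,\Ga^m\Bigl(1+\frac{n}{\time}\Bigr)^{-dm\beta}\,r(n)^{-1}.
\]
Multiplying, and using $\time=2d\ga R^2/\Ga^m$ together with $1-dm\beta=2\beta$, the product is at most $C R^2(1+n/\time)^{2\beta}/r(n)^2=C$. (The term coming from the ``$1+$'' in $\#I_k$ is harmless, since $\Ga\le 1$ and $r\ge R/2$.) Consequently $\sum_{n\in I_k}\bar{u}_n(k)^{m+1}\le C\max_{I_k}\bar{u}_n(k)\le C\Ga R^d|k|^{-d-1/m}$, and summing over $k$ exactly as in the first sum again gives $C\Ga R^{d-1/m}$.

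The main obstacle is recognizing that the per-time-step summation fails to converge. The key point is that each $k$ spends only a controlled amount of time in the gap, and unimodality converts the sum of positive time increments into a single maximum. The remaining work is the routine bookkeeping of constants in $\#I_k$; there I would use the explicit form of $r(t)$ and the relation $1-dm\beta=2\beta$ to make the product dimensionless.
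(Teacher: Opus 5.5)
Your proposal is correct and follows essentially the same argument as the paper. Both use the pointwise bound $(\bar{u}_{n+1}-\mathcal{S}\bar{u}_n)_+\le(\bar{u}_{n+1}-\bar{u}_n)_++\bar{u}_n^{m+1}$, reorganize the sum site by site so that unimodality (Lemma \ref{l.baruprop}(i)) telescopes the positive increments into a single maximum bounded by $C\Gamma R^d|k|^{-d-1/m}$, and show that the time spent in the gap times this maximum to the power $m$ is bounded by a constant; the paper computes that duration through the convex inverse $h=r^{-1}$, which is the same calculation as your bound via $r'(t)=\beta r(t)/(t+T_0)$ and $1-dm\beta=2\beta$.
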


In order to prove Proposition \ref{l.gap}, we require some bounds on ``how long'' points spend in the region $\mathcal{G}$. For $k\in \Z^{d}$ such that $(k,n)\in \mathcal{G}$, we define
\begin{equation}\label{e.quantities}
\begin{cases}
\duration(\x) = \#\{n: (\x, n) \in \mathcal{G}\}=\text{``the number of steps that $\x$ is in the gap''.}\\
U(\x) = \max\{\bar{u}(\x, n) \vee \bar{u}(\x, n+1): (\x, n) \in \mathcal{G}\}.
\end{cases}
\end{equation}
Observe that if $(\x,n)\in \mathcal{G}$, then $|\x| \ge r(n) - \sqrt{d} \ge R - \sqrt{d} \ge R/2$ by the choice of $R$, which implies that
\begin{equation}\label{e.Gbd}
\left\{\x: (\x, n)\in \mathcal{G}\right\}\subset \left\{\x: |\x|>R/2\right\}.
\end{equation} 

We now estimate both of the quantities in \eqref{e.quantities}.
\begin{lem}\label{l.dur}
There exists $c_{u}=c_{u}(d,m)>0$ such that for any $\Ga \in (0,1]$, $R\geq 2\sqrt{d}$, for all $k$ with $(k,n)\in \mathcal{G}$, 
\begin{equation*}
U(k)\leq c_{u}{R^d \,\Gamma \over |\x|^{d+1/m}}\quad\text{and}\quad 
 \duration(\x) \,U^m(\x) \le c.
 \end{equation*}
\end{lem}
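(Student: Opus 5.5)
We work directly with the explicit formula \eqref{hformula}. Fix $k$ with $(k,n)\in\mathcal{G}$ for some $n$, and set $\rho=|k|$. Then $\rho\ge R/2$ by \eqref{e.Gbd}. The whole argument rests on one observation: in the gap, $\bar{u}$ is small because $|k|$ sits within $O(1)$ of the free boundary $r(n)$.

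\textbf{Bound on $U$.} Take $(k,n)\in\mathcal{G}$ and $t\in\{n,n+1\}$. By the proof of Proposition \ref{p.out}, $r(n+1)\le r(n)+1$ (this needs $R\ge(2d\gamma)^{-1}$; otherwise use the bound $r'\le \Gamma^m/(2d\gamma R)$ directly, absorbing constants). Hence $|k|\ge r(t)-\sqrt d-1$. We may assume $|k|<r(t)$, since otherwise $\bar u(k,t)=0$. Then
\[
1-\frac{|k|^2}{r(t)^2}\le \frac{2(r(t)-|k|)}{r(t)}\le \frac{2(\sqrt d+1)}{r(t)}.
\]
Moreover $r(t)\ge |k|$, and $r(t)\le |k|+\sqrt d+1\le C|k|$ because $|k|\ge R/2\ge\sqrt d$. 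It follows that
\[
\bar u(k,t)=\frac{R^d\Gamma}{r(t)^d}\Big(1-\tfrac{|k|^2}{r(t)^2}\Big)^{1/m}\le \frac{R^d\Gamma}{|k|^d}\Big(\frac{C}{|k|}\Big)^{1/m},
\]
which is the first claim.

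\textbf{Bound on the duration.} The times $n$ with $(k,n)\in\mathcal{G}$ are exactly those with $r(n)\in(|k|-1,|k|+\sqrt d]$. Since $r$ is increasing, $\duration(k)\le 1+|I|$, where $I=r^{-1}\big((|k|-1,|k|+\sqrt d]\big)$. By the mean value theorem, $|I|\le (\sqrt d+1)/\min_{t\in I} r'(t)$. On $I$ we have
\[
r'(t)=\frac{\beta r(t)}{t+T_0}=\beta\,\frac{r(t)^{1-1/\beta}R^{1/\beta}}{T_0}=\beta\,\frac{\Gamma^m R^{1/\beta-2}}{2d\gamma\, r(t)^{1/\beta-1}},
\]
using $(1+t/T_0)=(r/R)^{1/\beta}$. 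Since $1/\beta=dm+2$ and $r(t)\asymp |k|$ on $I$, this gives $r'(t)\ge c\,\Gamma^m R^{dm}|k|^{-dm-1}$, and therefore
\[
\duration(k)\le 1+C\,\frac{|k|^{dm+1}}{\Gamma^m R^{dm}}.
\]

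\textbf{Combining.} From the bound on $U$, $U^m\le C R^{dm}\Gamma^m|k|^{-dm-1}$. Multiplying, the second term of the duration bound gives $C\,\frac{|k|^{dm+1}}{\Gamma^mR^{dm}}\cdot R^{dm}\Gamma^m|k|^{-dm-1}=C$. The term $1$ gives $U^m\le C R^{dm}\Gamma^m|k|^{-dm-1}\le C\Gamma^m 2^{dm}R^{-1}\le C$, using $|k|\ge R/2$, $\Gamma\le1$ and $R\ge 2\sqrt d$.

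The main point of care is the duration estimate. It requires the derivative identity for $r'$ expressed through $r$, and the comparison $r(t)\asymp|k|$ on $I$. The reason the product is bounded is that the power $|k|^{dm+1}$ cancels exactly, which reflects the scaling $1/\beta=dm+2$.
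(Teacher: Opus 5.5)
Your proof is correct and follows essentially the paper's route. You bound $U$ using that $|k|$ lies within $\sqrt d+1$ of $r(t)$, so that $1-|k|^2/r(t)^2\lesssim r(t)^{-1}\le |k|^{-1}$ whenever $|k|<r(t)$, and you bound $\mathrm{duration}(k)$ by the length of $r^{-1}\big((|k|-1,|k|+\sqrt d\,]\big)$ via a derivative estimate; the differences from the paper are cosmetic (mean value theorem for $r$ instead of convexity of $h=r^{-1}$, and treating the additive $1$ in the product instead of absorbing it into the duration bound).

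Your caveat about $r(n+1)\le r(n)+1$ is apt. The paper imports this from the proof of Proposition \ref{p.out}, where it assumes $R\ge(2d\gamma)^{-1}$, a hypothesis not in the lemma. It holds anyway, since $r'(t)\le R\beta/T_0=(m+1)\Gamma^m/(dmR)<1$ once $\Gamma\le 1$ and $R\ge 2\sqrt d$.
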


\begin{proof}
Fix $k$ such that $(k,n)\in \mathcal{G}$. We first prove that there exists a constant $c=c(d,m)$ such that 
 \begin{equation}\label{durationestimate}
\duration(\x) \le c{|\x|^{dm+1} \over R^{dm} \Gamma^m}.
\end{equation}
In order to estimate $\duration(\x)$, we consider the function $h: [0, \infty)\rightarrow [0, \infty)$ defined by $h(s):=r^{-1}(s)=T_{0}((s/R)^{1/\beta}-1)=\time ((s/R)^{dm+2} - 1)$. 

The function $h$ is convex, and there exists $c=c(d,m)>0$ such that
\begin{equation*}
  h'(s) = {\time \over \beta R^{\frac{1}{\beta}}} s^{dm+1}= {2d\gamma R^{2-\frac{1}{\beta}} s^{dm+1} \over \beta \Gamma^m}= c{s^{dm+1} \over R^{dm} \Gamma^m}>0.
\end{equation*}
Set $t_{1}:=h(|k|-1)$ and $t_{2}:=h(|k|+\sqrt{d})$, 
and note that $\left\{n: (k,n)\in \mathcal{G}\right\}\subset \left\{n: t_1<n\leq t_{2}\right\}$. By convexity and the derivative estimate above, we estimate
\begin{align*}
t_{2}-t_{1}\leq h'(|k|+\sqrt{d})\Big[(|k|+\sqrt{d})-(|k|-1)\Big]\leq  {c \over R^{dm} \Gamma^m} (|\x| + \sqrt{d})^{dm+1}.
\end{align*}
This implies that 
\begin{equation*}
\duration(\x)\leq {c \over R^{dm} \Gamma^m} (|\x| + \sqrt{d})^{dm+1}+1.
\end{equation*}
We have assumed that $R\geq 2\sqrt{d}$, so by \eqref{e.Gbd}, $|\x|\geq \sqrt{d}$. Moreover, since $\Gamma \le 1$, we have $|\x|^{dm+1} (R^{dm} \Gamma^m)^{-1}\ge 2^{-dm}|k|\Ga^{-m}\geq \sqrt{d}2^{-dm}$, and hence, by increasing $c$, we obtain \eqref{durationestimate}. 

We now fix $n\in V(\x) := \{n : (\x,n) \in \mathcal{G} \text{ or } (\x,n-1) \in \mathcal{G}\}$. As we show in \eqref{estimateforu} below, $V(\x)$ can be used to control $U(k)$.

We introduce some more notation to simplify (\ref{hformula}). Set $\vartheta=\vartheta(k) := |\x| / r(n)$, so that $$\bar{u}_n(\x) = {R^d \, \Gamma \over r(n)^d} \left(1 - \vartheta^2\right)_+^{1/m},$$
and $\mathcal{G}=\left\{(\x,n): \vartheta \in \big[1- \tfrac{\sqrt{d}}{r(n)}, 1+\tfrac{1}{r(n)}\big)\right\}$. For $(\x, n)\in \mathcal{G}$ such that $|\x|\leq r(n)$,
\begin{equation*}
1-\vartheta^2 = (1+\vartheta)(1-\vartheta) \le 2(1-\vartheta)\leq 2 \frac{\sqrt{d}}{r(n)}=:\frac{c'}{r(n)}.
\end{equation*}

If $(k,n-1)\in \mathcal{G}$, then by (\ref{steponebound.parta}) and the fact that $d> 1$, we have
$$|\x| \ge r(n-1) - \sqrt{d} \ge r(n) - 1 - \sqrt{d}\geq r(n)-2\sqrt{d}.$$
In this case, we obtain the identical estimate above that if $|k|\leq r(n)$, 
\begin{equation*}
1-\vartheta^2\leq 2(1-\vartheta)\leq 2\frac{2\sqrt{d}}{r(n)}\leq \frac{2c'}{r(n)}. 
\end{equation*}

Therefore, for $(\x, n)\in \mathcal{G}$ or $(k, n-1)\in \mathcal{G}$ such that $|\x|\leq r(n)$, we have
\[\bar{u}_n(\x)={R^d\,\Gamma\over r(n)^d}\left(1-\vartheta^2\right)^{1/m} \le 2c'{R^d\,\Gamma \over r(n)^{d+1/m}} \le 2c'{R^d \, \Gamma \over |\x|^{d+1/m}}.\]
If $|\x|>r(n)$, then $\bar{u}_n(\x) = 0$ and the same inequality holds. Therefore, for $\x$ such that $(\x, n)\in \mathcal{G}$, 
\begin{equation}\label{estimateforu} U(\x) = \max\{\bar{u}_{n}(\x) \vee \bar{u}_{n+1}(\x): (\x, n) \in \mathcal{G}\}=\max_{n\in V(k)} \bar{u}_{n}(k)\le 2c'{R^d \,\Gamma \over |\x|^{d+1/m}}. \end{equation}
Combining this with (\ref{durationestimate}), we obtain the existence of $c_{u}=c_{u}(d,m)>0$ such that \begin{equation}\label{durationanduestimate}\duration(\x) \,U^m(\x) \le c_{u}.\end{equation}

\end{proof}

Equipped with Lemma \ref{durationestimate}, we may now prove Proposition \ref{l.gap}. 
\begin{proof}[Proof of Proposition \ref{l.gap}]
Fix $k$ such that $(k,n)\in \mathcal{G}$. Then from (\ref{discretepmeerror}), dropping negative terms from the right hand side, and then taking the positive part, we obtain
\[
(\bar{u}_{n+1}(\x) - \scheme \bar{u}_n(\x))_{+} \le (\bar{u}_{n+1}(\x) - \bar{u}_n(\x))_{+} + \bar{u}_{n}(\x)^{m+1}.
\]
Since $r(t)$ is monotone, by the definition of $\mathcal{G}$, $\left\{n: (k,n)\in \mathcal{G}\right\}$ is a discrete interval. Moreover, since $t\mapsto \bar{u}(k,t)$ is unimodal in $t$ by Lemma \ref{l.baruprop}(i), it follows that $E:=\left\{n: \bar{u}_{n+1}(k)-\bar{u}_{n}(k)>0\right\}$ is also a discrete interval. Therefore, 
\begin{align*}
    \sum_{n: (k,n) \in \mathcal{G}} (\bar{u}_{n+1}(\x) - \bar{u}_{n}(\x))_{+}\leq \sum_{n\in E: (k,n)\in \mathcal{G}} (\bar{u}_{n+1}(\x) - \bar{u}_{n}(\x))
        \le \max_{n: (k,n) \in \mathcal{G}}\bar{u} (k, n+1)\le U(k), 
\end{align*}
where we used that the sum telescopes, and that $\bar{u}\geq 0$ everywhere. By Lemma \ref{l.dur}, this implies that there exists $c=c(d,m)$ such that
\begin{align*}
\sum_{n: (k,n) \in \mathcal{G}} (\bar{u}_{n+1}(\x) - \scheme \bar{u}_n(\x))_{+} &\le U(k)+\sum_{n: (k,n) \in \mathcal{G}} \bar{u}_{n}(\x)^{m+1}\\
&\leq U(k)+\duration(k)U(k)^{m+1}\\
&\leq cU(k).
\end{align*}
Combining this with Lemma \ref{l.dur} and \eqref{e.Gbd}, we obtain that
\begin{equation}\label{gaperrorfinal}
\begin{aligned}
\sum_{(k,n)\in \mathcal{G}} (\bar{u}_{n+1}(\x) - \scheme \bar{u}_n(\x))_{+}
&\leq \sum_{\x \in \mathbb Z^d:|\x| \ge R/2}
       cU(\x)\\
    &\le cR^d\,\Gamma
        \sum_{\x \in \mathbb Z^d: |\x| \ge R/2} \frac1{|\x|^{d+1/m}}
    \\&\le cR^d \,\Gamma \int_{R/2}^{\infty} \frac{r^{d-1}}{r^{d+1/m}}\, dr=:c_{g}\Ga R^{d-\frac{1}{m}},
\end{aligned}
\end{equation}
where $c_{g}=c_{g}(d,m)$. 

\end{proof}

The final preliminary result concerns the core region. 
\begin{prop}\label{l.core}
There exists $c_{c}=c_{c}(d,m)>0$ such that for any $\Ga>0$, $R>0$, for $\mathcal{C}$ as defined in \eqref{e.regdefs} and $\mathcal{S}$ as defined in \eqref{e.bbSdef},  
\begin{equation*}
\sum_{(k,n)\in \mathcal{C}} (\bar{u}_{n+1}(k)-\mathcal{S}\bar{u}_{n}(k))_{+}\leq c_{c}R^{d-2} \,\Gamma^{m+1}.
\end{equation*}
\end{prop}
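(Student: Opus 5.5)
In the core $\mathcal{C}$ the whole unit cube around $(\x,n)$ and its neighbours lies inside the positive region, since $|\x\pm e_i|\le |\x|+1<r(n)$ would need to be checked. Strictly speaking, $|\x|<r(n)-\sqrt d$ gives $|\y|<r(n)$ for all $\y\sim\x$ when $\sqrt d\ge1$. So $\bar u$ and $\bar u^{m+1}$ are smooth along all segments we use, and we will compare the discrete error \eqref{discretepmeerror} with the PDE. Since $\bar u$ solves $\bar u_t=\frac1{2d}\Delta(\bar u^{m+1})$ classically there, we write
\[
\bar u_{n+1}(\x)-\scheme\bar u_n(\x)=\Big[\bar u(\x,n+1)-\bar u(\x,n)-\partial_t\bar u(\x,n)\Big]-\frac1{2d}\sum_{i=1}^d\Big[\delta_i^2(\bar u_n^{m+1})(\x)-\partial_i^2(\bar u^{m+1})(\x,n)\Big],
\]
where $\delta_i^2$ is the second central difference.

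For the spatial part we use Taylor's theorem with integral remainder:
\[
\delta_i^2 g(\x)-\partial_i^2g(\x)=\int_{-1}^{1}\frac{(1-|s|)^3}{6}\,\partial_i^4 g(\x+se_i)\,ds .
\]
By Lemma \ref{l.baruprop}(ii) this is nonnegative for $g=\bar u^{m+1}$, because the segment stays in $|x|<r(n)$. Hence the spatial bracket enters with a favourable sign and can be dropped when taking positive parts. For the time part, $t\mapsto\bar u(\x,t)$ is smooth on $[n,n+1]$ (the point stays in the positive region since $r$ increases), and $\bar u(\x,n+1)-\bar u(\x,n)-\partial_t\bar u(\x,n)=\int_n^{n+1}(n+1-s)\partial_t^2\bar u(\x,s)\,ds$. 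Therefore
\[
(\bar u_{n+1}(\x)-\scheme\bar u_n(\x))_+\le \tfrac12\sup_{s\in[n,n+1]}|\partial_t^2\bar u(\x,s)|.
\]

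The main work is to bound $\partial_t^2\bar u$. From \eqref{first.time.derivative}, $\partial_t\bar u=\bar u\,\frac{1}{m(t+\time)}\big(\frac{2\beta}{1-\vartheta^2}-1\big)$. Differentiating again, using $\partial_t\vartheta=-\beta\vartheta/(t+\time)$, we get
\[
|\partial_t^2\bar u|\le \frac{c\,\bar u}{(t+\time)^2}\Big(1+(1-\vartheta^2)^{-2}\Big).
\]
Combined with $\bar u\le R^d\Gamma r^{-d}(1-\vartheta^2)^{1/m}$, this gives $|\partial_t^2\bar u|\le cR^d\Gamma r(t)^{-d}(t+\time)^{-2}(1-\vartheta^2)^{1/m-2}$. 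The singular factor $(1-\vartheta^2)^{1/m-2}$ is the obstacle. It is integrable in space only when $1/m-2>-1$, so it must be handled using the core cutoff: $1-\vartheta\gtrsim \mathrm{dist}(\x,\partial B_{r})/r$ with distance at least $\sqrt d-1\ge c$. If this is too lossy, I would instead split off the time-concave part: the terms in $\partial_t^2\bar u$ from the $(1-\vartheta^2)^{-2}$ factor have a definite sign, and I would try to show they are negative, as $\partial_t$ of a decreasing bracket suggests. Only the tame part $c\,\bar u/(t+\time)^2$ then survives in the positive part.

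Assuming the tame bound, sum over $\x$ with $|\x|<r(n)$, which gives roughly $\int\bar u\,dx\approx cR^d\Gamma$ by mass conservation. Then
\[
\sum_{(\x,n)\in\mathcal C}(\cdot)_+\le cR^d\Gamma\sum_{n\ge0}(n+\time)^{-2}\le cR^d\Gamma\,\time^{-1}=cR^{d-2}\Gamma^{m+1},
\]
using $\time=2d\gamma R^2/\Gamma^m$ (for $\time<1$, the $n=0$ term is handled similarly with $\time$ replaced by $\max(\time,1)$). This is the claimed bound with $c_c=c_c(d,m)$.
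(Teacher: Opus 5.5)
Your treatment of the spatial error matches the paper's: the integral Taylor remainder plus Lemma~\ref{l.baruprop}(ii), with the neighbours of a core point still in the positive region. The time error, however, is the real content of the proposition, and it is not established.

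\textbf{Why your first route fails.} The trouble starts when you replace the remainder $\int_n^{n+1}(n+1-s)\,\partial_t^2\bar u(k,s)\,ds$ by $\tfrac12\sup_s|\partial_t^2\bar u(k,s)|$. For $m>1$ this absolute value is genuinely of size $R^d\Gamma\, r^{-d}(t+T_0)^{-2}(1-\vartheta^2)^{1/m-2}$ near $|x|=r(t)$; this is its true size, not an artifact of your crude estimate. The core cutoff $1-\vartheta\gtrsim r(n)^{-1}$ therefore only gives $\sum_k|\partial_t^2\bar u(k,\cdot)|$ of order $R^d\Gamma\, r(n)^{1-1/m}(n+T_0)^{-2}$. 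After summing in $n$ you get a total of order $R^{d-1-1/m}\Gamma^{m+1}$. This exceeds $c_cR^{d-2}\Gamma^{m+1}$ by the unbounded factor $R^{1-1/m}$, so it cannot give the proposition with $c_c=c_c(d,m)$. (It would still suffice for Proposition~\ref{p.approxsub}, but that is a different statement.)

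\textbf{Your fallback is the right idea but is assumed, not proved.} Since only positive parts are summed, only an upper bound on $\partial_t^2\bar u$ is needed. But the justification you offer is not correct. As grouped in \eqref{eq:partial2t}, the $(1-\vartheta^2)^{-2}$ terms sum to $4\beta^2(\tfrac1m-\vartheta^2)(1-\vartheta^2)^{-2}$. This is positive wherever $\vartheta^2<\frac1m$, which is everywhere if $m=1$. The squared bracket contributes $+\frac{4\beta^2}{m}(1-\vartheta^2)^{-2}$, and differentiating the decreasing bracket does not offset this pointwise. The sign you want appears only after a cancellation across orders. Writing $w=1-\vartheta^2$ and eliminating $\vartheta^2$, the bracket in \eqref{eq:partial2t} equals
\[
\Big(1+\frac1m\Big)-\frac{2\beta}{w}\Big(1+\frac2m-2\beta\Big)-\frac{4\beta^2}{w^2}\Big(1-\frac1m\Big).
\]
Since $m\ge1$ and $\beta<\frac12$, the last two terms are nonpositive. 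Hence $\partial_t^2\bar u\le \frac{m+1}{m^2}\,\bar u\,(t+T_0)^{-2}$. Keep $(\partial_t^2\bar u)_+$ in the Taylor step and use this bound; then your summation goes through (lattice-point count in $B_{r(n)}$ times $R^d\Gamma\, r(n)^{-d}$).

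\textbf{Comparison with the paper.} The paper uses a weaker form of the same one-sided idea. It notes that the positive part of $\frac1m-\vartheta^2$ is at most $1-\vartheta^2$, and so bounds $\partial_t^2\bar u$ from above by $g\propto(1-\vartheta^2)^{1/m-1}$, which is integrable on $B_1$. It then compares the lattice sum of $g$ with its integral, using that $g$ is monotone in $|k|$ and decreasing in $t$.

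\textbf{A smaller point.} Your remark that $T_0<1$ is handled by using $\max(T_0,1)$ does not work as written, since the $n=0$ term is of order $R^d\Gamma\, T_0^{-2}$. The paper's step $\sum_n(n+T_0)^{-2}\lesssim T_0^{-1}$ tacitly needs $T_0\gtrsim1$ as well; this holds in the application.
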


In the core region, we know that the continuous function $\bar{u}$ solves the porous medium equation $\partial_t \bar{u} = {1 \over 2d} \Delta(\bar{u}^{m+1})$. By (\ref{discretepmeerror}) this implies 
\begin{align*}
    \bar{u}_{n+1}(\x) - \scheme \bar{u}(\x, t)&= \underbrace{\bar{u}(\x, n+1) - \bar{u}(\x, n) - {\partial \bar{u} \over \partial t}(\x,n)}_{\text{(time error)}}\\
        &\quad+\underbrace{{1 \over 2d} \Delta (\bar{u}^{m+1}(k,n)) - {1 \over 2d} \sum_{\y \sim \x} [\bar{u}(\y, n)^{m+1} - \bar{u}(\x, n)^{m+1}]}_{\text{(space error)}}.
\end{align*}

We now estimate the contributions of these two error terms individually. Proposition \ref{l.core} is an immediate consequence of the following two lemmas. 
\begin{lem}\label{l.coret}
There exists $c_{c}=c_{c}(d,m)>0$ such that for any $\Ga>0$ and $R>0$, for $\mathcal{C}$ as defined in \eqref{e.regdefs}, \begin{equation*}
\sum_{(k,n)\in \mathcal{C}}\left(\bar{u}(\x, n+1) - \bar{u}(\x, n) - {\partial \bar{u} \over \partial t}(\x,n)\right)_{+}\leq c_{c}R^{d-2} \,\Gamma^{m+1}.
\end{equation*}

\end{lem}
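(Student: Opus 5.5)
The plan is to write the time error as a Taylor remainder, isolate the sign of $\partial_t^2\bar{u}$ near the free boundary, and integrate the remaining (integrable) part over the core and over time. Throughout, $\bar{u}=\bar{u}^{(R,\Gamma)}$ and $\vartheta=|x|/r(t)$.

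\emph{Step 1 (Taylor remainder).} Fix $(k,n)\in\mathcal{C}$. Since $r(\cdot)$ is increasing, $|k|<r(n)-\sqrt d<r(s)$ for every $s\in[n,n+1]$. Hence $t\mapsto\bar{u}(k,t)$ is smooth on $[n,n+1]$, and
\[
\bar{u}(k,n+1)-\bar{u}(k,n)-\partial_t\bar{u}(k,n)=\int_n^{n+1}(n+1-s)\,\partial_t^2\bar{u}(k,s)\,ds .
\]
It follows that the positive part of the left side is at most $\int_n^{n+1}(\partial_t^2\bar{u}(k,s))_+\,ds$. So it suffices to bound $\sum_n\sum_{k:(k,n)\in\mathcal C}\int_n^{n+1}(\partial_t^2\bar{u}(k,s))_+\,ds$.

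\emph{Step 2 (sign structure of $\partial_t^2\bar{u}$).} Write $\partial_t\bar{u}=\bar{u}\,g$, where, by \eqref{first.time.derivative},
\[
g=\frac{1}{m(t+T_0)}\Big(\frac{2\beta}{1-\vartheta^2}-1\Big).
\]
Then $\partial_t^2\bar{u}=\bar{u}\,(g^2+\partial_t g)$. Using $\partial_t\vartheta^2=-2\beta\vartheta^2/(t+T_0)$, the terms of order $(1-\vartheta^2)^{-2}$ in $g^2+\partial_t g$ have total coefficient
\[
\frac{4\beta^2}{(t+T_0)^2}\Big(\frac1{m^2}-\frac{\vartheta^2}{m}\Big)
=\frac{4\beta^2}{(t+T_0)^2}\Big(\frac1{m^2}-\frac1m+\frac{1-\vartheta^2}{m}\Big).
\]
Since $m\ge1$, we have $\tfrac1{m^2}-\tfrac1m\le0$, so the worst singular part is nonpositive. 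This is the key point: only a single power of $(1-\vartheta^2)^{-1}$ survives in the positive part. All remaining terms are $O\big((t+T_0)^{-2}(1-\vartheta^2)^{-1}\big)$ for $\vartheta<1$, giving
\[
(\partial_t^2\bar{u})_+\le \frac{C}{(t+T_0)^2}\,\frac{R^d\Gamma}{r(t)^d}\,(1-\vartheta^2)^{\frac1m-1}.
\]

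\emph{Step 3 (summing in space, then time).} For fixed $s\in[n,n+1]$, I will compare the lattice sum over $\{k:|k|<r(n)-\sqrt d\}$ of $(1-|k|^2/r(s)^2)^{1/m-1}$ with $C\int_{B_{r(s)}}(1-|x|^2/r(s)^2)^{1/m-1}dx=C'r(s)^d$. The integral is finite because $\frac1m-1>-1$. The comparison uses the fact that the integrand is radially monotone and that the core stays distance $\sqrt d$ from the boundary. In particular, each unit cube around a core point lies inside $B_{r(s)}$, and the integrand over that cube is at least comparable to its value at the lattice point, up to a constant. Doing this carefully (including the case where $r$ is small and the core is empty or tiny) is the step I expect to be the main technical nuisance. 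I would first try the simple monotone-cube comparison: assign to each $k$ the unit cube on the side toward the origin. The per-time bound is then $C R^d\Gamma/(s+T_0)^2$. Summing over $n$ turns the $s$-integrals into
\[
\int_0^\infty\frac{C R^d\Gamma}{(s+T_0)^2}\,ds=\frac{CR^d\Gamma}{T_0}=\frac{C\,R^{d-2}\Gamma^{m+1}}{2d\gamma}.
\]
This is the claimed bound $c_cR^{d-2}\Gamma^{m+1}$, uniformly in $\Gamma,R>0$.
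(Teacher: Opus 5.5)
Your argument is essentially the paper's: a Taylor expansion in time, the explicit formula for $\partial_t^2\bar u$, comparison of the lattice sum over the core with $\int_{B_r}(1-|y|^2/r^2)^{1/m-1}\,dy$, and summation in time to get $R^d\Gamma/T_0$. In the formula for $\partial_t^2\bar u$, the $(1-\vartheta^2)^{-2}$ part is nonpositive because $m\ge 1$. Your rewriting $\frac1m-\vartheta^2=(\frac1m-1)+(1-\vartheta^2)$ makes explicit what the paper compresses into one line.

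One step is stated backwards. In Step 3 the integrand $h(x)=(1-|x|^2/r(s)^2)^{1/m-1}$ is nondecreasing in $|x|$. On the unit cube on the side \emph{toward} the origin, monotonicity therefore gives $\int_{Q_k}h\le h(k)$, which is the wrong inequality. Take instead, as the paper does, the outward cube $\prod_i[k_i,k_i+1]$ for $k$ in the closed positive orthant, and then use symmetry. On that cube $h\ge h(k)$ pointwise, and the margin $|k|<r(n)-\sqrt d\le r(s)-\sqrt d$ is exactly what keeps the cube inside $B_{r(s)}$. Your fallback claim (``comparable up to a constant'') is also correct, for any unit cube with a corner at $k$. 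Since $r(s)-|k|>\sqrt d$, one checks that $1-|y|^2/r(s)^2\le 2(1-|k|^2/r(s)^2)$ whenever $|y-k|\le\sqrt d$, hence $h(y)\ge h(k)/2$. Either fix closes Step 3.

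The one real difference from the paper is in the time step. You keep the integral remainder and integrate over $s\in[n,n+1]$. The paper instead evaluates its majorant at $s=n$, using that it is decreasing in $t$, and then sums $\sum_{n\ge 0}(n+T_0)^{-2}$. Your route yields exactly $\int_0^\infty(s+T_0)^{-2}\,ds=1/T_0$ for every $T_0>0$, so it gives the stated uniformity in all $\Gamma,R>0$. The paper's sum is only $O(1/T_0)$ when $T_0\gtrsim 1$, because the $n=0$ term is $T_0^{-2}$. This is harmless in the paper, since $T_0\ge 1$ for the parameters used in Proposition \ref{p.approxsub}.
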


\begin{lem}\label{l.corex}
For $\mathcal{C}$ as defined in \eqref{e.regdefs}, 
\begin{equation*}
\sum_{(k,n)\in \mathcal{C}}\left({1 \over 2d} \Delta (\bar{u}^{m+1}(k,n)) - {1 \over 2d} \sum_{\y \sim \x} [\bar{u}(\y, n)^{m+1} - \bar{u}(\x, n)^{m+1}]\right)_{+}=0. 
\end{equation*}
\end{lem}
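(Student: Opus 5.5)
The plan is to show that each summand is nonpositive pointwise: for every $(k,n)\in\mathcal{C}$ the discrete Laplacian of $\bar u_n^{m+1}$ dominates the continuous one. Once that holds, the positive part vanishes term by term and the sum is $0$. The tool is a one-dimensional Taylor expansion with integral remainder in each coordinate direction, combined with the sign of the fourth derivative from Lemma \ref{l.baruprop}(ii). That computation was in fact carried out for $\partial_{x_i}^4 \bar u^{m+1}$, which is the quantity needed here.

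\emph{Step 1: the segments stay in the positive region.} Fix $(k,n)\in\mathcal{C}$ and $i\in[d]$. For $s\in[-1,1]$ we have $|k+se_i|\le |k|+1<r(n)-\sqrt d+1\le r(n)$, since $d>1$. Hence the whole segment $\{k+se_i: |s|\le 1\}$ lies in the open ball $B_{r(n)}$. On this ball $\bar u^{m+1}(\cdot,n)=c\,(1-|x|^2/r(n)^2)^{(m+1)/m}$ is $C^\infty$, because the base is strictly positive there. So $g_i(s):=\bar u(k+se_i,n)^{m+1}$ is smooth on $[-1,1]$.

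\emph{Step 2: Taylor expansion.} Expanding $g_i$ with integral remainder and symmetrizing, the odd-order terms cancel and we get
\begin{equation*}
g_i(1)+g_i(-1)-2g_i(0)-g_i''(0)=\int_0^1\frac{(1-s)^3}{6}\bigl(g_i''''(s)+g_i''''(-s)\bigr)\,ds .
\end{equation*}
Since $g_i''''(s)=\partial_{x_i}^4\bar u^{m+1}(k+se_i,n)$ and $|k\pm se_i|<r(n)$, Lemma \ref{l.baruprop}(ii) gives that the right-hand side is $\ge 0$. Note that $\sum_{i=1}^{d} g_i''(0)=\Delta(\bar u^{m+1})(k,n)$, while $\sum_{i=1}^{d}\bigl(g_i(1)+g_i(-1)-2g_i(0)\bigr)=\sum_{\ell\sim k}\bigl[\bar u(\ell,n)^{m+1}-\bar u(k,n)^{m+1}\bigr]$. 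Summing over $i$ and multiplying by $\frac1{2d}$ therefore shows that the bracketed space error in Lemma \ref{l.corex} is $\le 0$. Its positive part is then $0$, and summing over $\mathcal{C}$ gives the claim.

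\emph{Main obstacle.} The only delicate point is checking that Lemma \ref{l.baruprop}(ii) really applies to $\bar u^{m+1}$, including the sign analysis of $\psi(y)$ for $m\ge1$, at every point of the segment. This reduces to the strict inequality $|k\pm se_i|<r(n)$, which is exactly what the $\sqrt d$ margin in the definition of $\mathcal{C}$ guarantees. Otherwise the argument is routine.
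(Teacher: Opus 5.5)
Your proposal is correct and follows the paper's proof. The paper likewise splits the space error into one-dimensional errors $\mathcal{E}_i$, writes each via the Taylor integral remainder \eqref{e.ibp} against $\partial_{x_i}^4\bar u^{m+1}$, and concludes from Lemma \ref{l.baruprop}(ii) that each summand is nonpositive; your Step 1, which checks that the segment $k+se_i$, $|s|\le 1$, stays in $B_{r(n)}$ thanks to the $\sqrt d$ margin, makes explicit a point the paper leaves implicit.
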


\begin{proof}[Proof of Lemma \ref{l.coret}]
As a consequence of Taylor's theorem, there exists $s\in [n, n+1]$ such that 
\begin{equation*}
\bar{u}(\x, n+1) - \bar{u}(\x, n) - \partial_t \bar{u}(\x, n) = \frac12 \partial^2_t \bar{u}(\x, s), 
\end{equation*}
and thus, we will be concerned with estimating $\left(\partial^2_t \bar{u}(\x, t)\right)_{+}$. 
Recall the parameterization $\vartheta=\vartheta(x,t)=|x| / r(t)$; that in Lemma \ref{l.baruprop}(ii), we computed $\partial_{t}\bar{u}$ in \eqref{first.time.derivative}; and that $\tfrac{\partial \vartheta}{\partial t} =- \tfrac{\vartheta\beta}{(t + \time)}$. Thus, by a direct computation, we have that
\begin{equation}\label{eq:partial2t}
\partial^{2}_{t}\bar{u}(x, t) = {R^d\Gamma(1 - \vartheta^2)^{\tfrac{1}{m}}\over r(t)^d m(t+T_0)^2} \left[-\left({2\beta \over 1 - \vartheta^2} - 1\right) + {1 \over m} \left({2\beta \over 1 - \vartheta^2} - 1\right)^2 - {4\beta^2 \vartheta^2 \over (1 - \vartheta^2)^2}\right], 
\end{equation}
and combining terms, it follows that there exist positive constants $K_{1}, K_{2}$, and $K$, depending only on $d,m$, such that 
\begin{align}\label{gbound}
\partial^{2}_{t} \bar{u}(x, t)&\leq {R^d\Gamma\over r(t)^d m(t+T_0)^2}\left[K_{1}
(1 - \vartheta^2)^{\tfrac{1}{m}} + K_2(1 - \vartheta^2)^{\tfrac{1}{m} - 1} + 4\beta^2 (\tfrac{1}{m} - \vartheta^2) (1 - \vartheta^2)^{\tfrac{1}{m}-2}\right]\notag\\
&\leq {KR^d\Gamma\over r(t)^d m(t+T_0)^2}(1 - \vartheta^2)^{\tfrac{1}{m} - 1}\notag\\&={KR^d\Gamma\over r(t)^d m(t+T_0)^2}\bigg(1 - \Big(\frac{|x|}{r(t)}\Big)^{2}\bigg)^{\tfrac{1}{m} - 1}=: g(x,t),
\end{align}
where for the last inequality, we used that $m\geq 1$. We now simply need to obtain an upper bound on the function $g$. First, since $m\geq 1$, $t\mapsto g(k,t)$ is a decreasing function, and hence by the first observation of the proof, 
\begin{equation}\label{e.taylors}
\bar{u}(\x, n+1) - \bar{u}(\x, n) - \partial_t \bar{u}(\x, n)\leq \frac{1}{2}g(k,n). 
\end{equation}
Furthermore, by definition of $g$, it is clear that $g\geq 0$ on $\mathcal{C}$ and that for any $t$ fixed, $g(k,t)$ is monotone increasing with respect to $|k|$ in $\mathcal{C}$. 

Fix $n$ such that there exists $k$ with $(k,n)\in \mathcal{C}$. We claim that 
\begin{equation}\label{e.core1}
\sum_{k: (k,n)\in \mathcal{C}} g(\x,n)
    \le \int\limits_{B_{r(n)}}
        g(y, n) \,dy,
        \end{equation}
        where $B_{r(n)}$ is the Euclidean ball in $\R^{d}$ of radius $r(n)$. In order to prove \eqref{e.core1}, due to the symmetry of $g$, it is enough to prove an analogous bound in the positive orthant $E_{n}^{+}:=\mathbb{Z}^{d}_{+}\cap \left\{k: (k,n)\in \mathcal{C}\right\}$. More precisely, since $k\in E_{n}^{+}$, we know $|\x| < r(n) - \sqrt{d}$, and hence $[k_{1}, k_{1}+1]\times \ldots [k_{d}, k_{d}+1]\subseteq \left\{|\x|<r(n)\right\}=B_{r(n)}\cap \mathbb{Z}^{d}$. Owing to the monotonicity property of $g$, we conclude
        \[
\sum_{k\in E_{n}^{+}} g(\x,n) \le \int_{\bigcup_{k\in E_{n}^{+}} [k_{1}, k_{1}+1]\times \ldots [k_{d}, k_{d+1}]}g(y,n) \,dy \le \int\limits_{\R^{d}_{+}\cap B_{r(n)}} g(y,n) \,dy,  
\]
where $\mathbb R^d_+ = [0, \infty)^d$. Summing over all orthants, we obtain \eqref{e.core1}. 

We now conclude by \eqref{e.core1} that 
\begin{align*}
\sum_{k: (k,n)\in \mathcal{C}} g(\x,n)
    &\leq {K R^d \Gamma \over r(n)^d(n+T_0)^2}
        \int_{B_{r(n)}}
            \Big(1 - \frac{|y|^{2}}{r(n)^{2}}\Big)^{\tfrac{1}{m}-1} \,dy
    \\&= {K R^d \Gamma \over (n+T_0)^2}
        \int_{B_{1}}
            (1 - z^2)^{\tfrac{1}{m}-1} \,dz
    \\&= {KR^d \,\Gamma \over (n+T_0)^2},
    \end{align*}
    by increasing the value of $K=K(d,m)$ if necessary. 
    
    Combining this with \eqref{e.taylors} and repeatedly adjusting the constant $K$ as needed, we obtain \begin{equation*}
\sum_{(k,n)\in \mathcal{C}}\left(\bar{u}(\x, n+1) - \bar{u}(\x, n) - {\partial \bar{u} \over \partial t}(\x,n)\right)_{+}\le \sum_{n=0}^\infty {KR^d \Gamma \over (n+T_0)^2} \le {KR^d \Gamma \over T_0}=KR^{d-2} \,\Gamma^{m+1}, 
\end{equation*}
where we used the definition of $T_{0}$ from \eqref{hformula}. 
The result follows.
\end{proof}

\begin{proof}[Proof of Lemma \ref{l.corex}]
For fixed $(k,n)\in \mathcal{C}$, we can re-express the space error as 
\[
{1 \over 2d} \Delta(\bar{u}(k,n))^{m+1}- {1 \over 2d} \sum_{\y \sim \x} [\bar{u}(\y, n)^{m+1} - \bar{u}(\x,n)^{m+1}] = \sum_{i=1}^d \mathcal{E}_i(\x, n)
\] 
where $\mathcal{E}_i$ is the error along a single coordinate:
\[
\mathcal{E}_i(\x,n) := {1 \over 2d} \left[{\partial^2 \over \partial^2 x_i} \bar{u}(\x, n)^{m+1} - \left(\bar{u}(\x+e_i, n)^{m+1} - 2\bar{u}(\x, n)^{m+1} + \bar{u}(\x-e_i, n)^{m+1}\right)\right] .
\]

To estimate the error, we observe the following: if $f \in C^4(\R)$, then 
\begin{equation}\label{e.ibp}
f''(x) - (f(x+1) - 2f(x) + f(x-1)) = -\int_{x-1}^{x+1} {(1 - |s-x|)^3 \over 6} f^{(4)}(s) \,ds.
\end{equation}
The proof of \eqref{e.ibp} relies on the integral version of Taylor's theorem, and appears immediately after this argument. 

Using \eqref{e.ibp}, we obtain the alternative formula 
\begin{equation*}
\mathcal{E}_i(k,n) = -{1\over2d} \int_{k_i - 1}^{k_i + 1} {(1 - |s - k_i|)^3 \over 6} \partial^4_i(\bar{u}^{m+1}(k_1, \ldots, k_{i-1}, s, k_{i+1}, \ldots, k_d)) \,ds. \label{errori}
\end{equation*}
By Lemma \ref{l.baruprop}(ii), $\bar{u}^{(4)}(k,n)\geq 0$ for all $(k,n)\in \mathcal{C}$. This completes the proof. 
\end{proof}

\begin{proof}[Proof of \eqref{e.ibp}]
By Taylor's theorem in integral form, we have 
\begin{align*}
f(x+1)&=f(x)+f'(x)+\frac{1}{2}f''(x)+\frac{1}{6}f^{(3)}(x)+\int_{x}^{x+1}f^{(4)}(s)\frac{(1-|s-x|)^{3}}{6}\, ds,\\
f(x-1)&=f(x)-f'(x)+\frac{1}{2}f''(x)-\frac{1}{6}f^{(3)}(x)+\int_{x-1}^{x}f^{(4)}(s)\frac{(1-|s-x|)^{3}}{6}\, ds.
\end{align*}
Adding the two expressions and rearranging, we arrive at \eqref{e.ibp}. 
\end{proof}

In order to conclude, we require one additional result to control $\smallest(\bar{u})$. 

\begin{lem}\label{small} There is a constant $c_0 = c_0(d,m)>0$ such that for any $\Ga\in (0,1]$ and $R\geq 2\sqrt{d}$, $\smallest(\bar{u}) \ge c_0R^d \Gamma$.\end{lem}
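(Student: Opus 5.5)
The idea is to bound $|\bar{u}_n|_1 = \sum_{\x\in\Z^d}\bar{u}(\x,n)$ from below, uniformly in $n\ge 0$, by comparing it with the integral over a sub-ball where $\bar{u}$ is comparable to its maximum.

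Fix $n$ and write $r=r(n)\ge R\ge 2\sqrt d$. On the ball $|x|\le r/2$ we have $1-|x|^2/r^2\ge 3/4$, so
\[
\bar{u}(x,n)\ge \frac{R^d\Gamma}{r^d}\Big(\tfrac34\Big)^{1/m}.
\]
The task is then to count lattice points in $B_{r/2}$. Since $r/2\ge\sqrt d$, every unit cube $\x+[0,1)^d$ with $|\x|\le r/2-\sqrt d$ sits inside $B_{r/2}$. These cubes are disjoint and cover $B_{r/2-\sqrt d}$, so
\[
\#\{\x\in\Z^d: |\x|\le r/2\}\ \ge\ |B_{r/2-\sqrt d}|.
\]
This crude bound degenerates when $r/2$ is close to $\sqrt d$, so instead I would use the simpler observation that $\#\{\x: |\x|\le s\}\ge c_d s^d$ for all $s\ge 1$. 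To see this, note that the cube $\{\x: \max_i|\x_i|\le s/\sqrt d\}$ lies in the ball of radius $s$ and contains at least $(2\lfloor s/\sqrt d\rfloor+1)^d$ lattice points. This count is at least $1$ when $s<\sqrt d$ and at least $(s/\sqrt d)^d$ when $s\ge\sqrt d$; in both regimes it is $\ge c_d s^d$ for $s\ge 1$ once $s$ is restricted to $s\ge\sqrt d$, and here $s=r/2\ge\sqrt d$ holds.

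Putting these together,
\[
|\bar{u}_n|_1\ \ge\ c_d\,(r/2)^d\cdot \frac{R^d\Gamma}{r^d}\Big(\tfrac34\Big)^{1/m}=c_0R^d\Gamma,
\]
with $c_0=c_0(d,m)$. The factors of $r^d$ cancel exactly, which is what makes the bound uniform in $n$. Taking the infimum over $n$ gives $\smallest(\bar{u})\ge c_0R^d\Gamma$.

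The only step requiring care is the lattice-point count. The bound must hold uniformly for all $r\ge R\ge 2\sqrt d$, with no issue at small radii. The hypothesis $R\ge 2\sqrt d$ is exactly what ensures the ball $B_{r/2}$ has radius at least $\sqrt d$, and so contains a cube with a definite number of lattice points. The hypothesis $\Gamma\le 1$ plays no real role beyond matching the setting of the other lemmas.
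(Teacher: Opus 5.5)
Your proof is correct, but it takes a different route from the paper's.

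\emph{The paper's route.} It uses radial monotonicity of $\bar{u}_n$ to bound each lattice value from below by the integral over the adjacent unit box, giving $|\bar{u}_n|_1 \ge \int_{[1,\infty)^d}\bar{u}_n(x)\,dx$. Rescaling by $r(n)$ turns this into $R^d\Gamma\int_{[r(n)^{-1},\infty)^d\cap B_1}(1-|y|^2)^{1/m}\,dy$. The hypothesis $R\ge 2\sqrt d$ is then used to replace the $n$-dependent domain by the fixed set $[(2\sqrt d)^{-1},\infty)^d\cap B_1$, so $c_0$ is an integral of the Barenblatt profile.

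\emph{Your route.} You combine the pointwise bound $\bar{u}_n\ge (3/4)^{1/m}R^d\Gamma/r^d$ on $B_{r/2}$ with the lattice-point count $(2\lfloor s/\sqrt d\rfloor+1)^d\ge (s/\sqrt d)^d$. This gives the explicit constant $c_0=(2\sqrt d)^{-d}(3/4)^{1/m}$ and does not need monotonicity of $\bar{u}_n$ in $|x|$. The paper's argument is the same Riemann-sum comparison it uses elsewhere, for instance in the core estimate. Yours is more elementary and self-contained.

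Two small remarks.
\begin{enumerate}
\item Since $2\lfloor a\rfloor+1\ge a$ for every $a\ge 0$, your count bound holds for all $s\ge 0$. So your argument never actually uses $R\ge 2\sqrt d$, contrary to your closing comment.
\item The discarded ``crude bound'' paragraph misstates the covering. The cubes $k+[0,1)^d$ with $|k|\le r/2-\sqrt d$ need not cover $B_{r/2-\sqrt d}$. The correct observation is that any cube $k+[0,1)^d$ meeting $B_{s-\sqrt d}$ has $|k|<s$, so $\#\{k:|k|\le s\}\ge |B_{s-\sqrt d}|$. This plays no role in your final argument.
\end{enumerate}
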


\emph{Proof.} According to the form of the Barenblatt solution, as previously noted, at a fixed time $n$, $\bar{u}_n(\x)$ decreases as $|\x|$ increases. Consequently, if $\x\in\mathbb Z^{d}_{+}$,
\[
\bar{u}_n(\x) \ge \int_{[\x_1,\x_1+1] \times \cdots[\x_d,\x_d+1]} \bar{u}_{n}(x) \,dx.
\]
 This implies that 
 \[
 |\bar{u}_n |_1 = \sum_{\x \in \mathbb Z^d} \bar{u}_n(\x) \ge \sum_{k\in \mathbb{Z}^{d}_{+}} \bar{u}_{n}(k)\geq  \int_{[1, \infty)^d} \bar{u}_{n}(x)\, dx. 
 \]

Using \eqref{hformula} and a change of variables, with $B_{r(n)}$ denoting the Euclidean ball of radius $r(n)$, we have
\begin{align*}
    \int_{[1, \infty)^{d}} \bar{u}_n(x) \,dx
        &= {R^d \Gamma \over r(n)^d} \int\limits_{[1, \infty)^{d}\cap B_{r(n)}} \left(1 - \left({|x| \over r(n)}\right)^2\right)^{1/m} \,dx
        \\&= R^d \Gamma \int\limits_{[r(n)^{-1}, \infty)^{d}\cap B_{1}} (1-|y|^2)^{1/m} \,dy.
\end{align*}

Since $R = r(0) \geq 2 \sqrt{d}$, it follows that for every $n\geq 0$, $[r(n)^{-1}, \infty)^{d}\supseteq [(2\sqrt{d})^{-1}, \infty)^{d}$. Thus 
$$\int_{[1, \infty)^{d}} \bar{u}_n(x) \ge R^d \Gamma \int\limits_{[(2\sqrt{d})^{-1}, \infty)^{d}\cap B_{1}} (1-|y|^2)^{1/m} \,dy=: c_{0}R^{d} \Gamma.$$ 
It is clear that $c_{0}=c_{0}(d,m)>0$, and hence $|\bar{u}_n|_1=\sum_{k} \bar{u}_n(\x) \ge c_0 R^d \Gamma$. Since $n$ was arbitrary, the result follows. \qed

\begin{proof}[Proof of Proposition \ref{p.approxsub}]
We now make explicit choices of $R$ and $\Gamma$. We allow arbitrarily small $\Gamma\in (0,\sfrac{1}{3})$, while $R\in [1, \infty)$ will be moderately large and fixed, given by
\begin{equation}\label{e.Rdef}
R = \max\left\{2\sqrt{d}, (2d\ga)^{-1}, \left( 4c_g\over c_0\right)^{m}, \sqrt{\frac{4c_c}{c_{0}}}\right\},
\end{equation} 
where the constant $c_0$ is from Lemma \ref{small}, $c_g$ is from Proposition \ref{l.gap}, and $c_c$ is from Proposition \ref{l.core}. All these constants (and hence $R$ itself) depend only on $d$ and $m$.

Choosing $R$ as in \eqref{e.Rdef} and $\Ga<1/3$, the hypotheses of Proposition \ref{p.out}, Proposition \ref{l.gap} and Proposition \ref{l.core} hold simultaneously. It follows that 
\begin{align*}
\sum_{(k,n)\in \Z^{d}\times \N}& (\bar{u}_{n+1}(\x) - \scheme \bar{u}_n(\x))_+\\
&\leq \sum_{(k,n)\in \mathcal{O}}(\bar{u}_{n+1}(\x) - \scheme \bar{u}_n(\x))_+ +\sum_{(k,n)\in \mathcal{G}}(\bar{u}_{n+1}(\x) - \scheme \bar{u}_n(\x))_+\\
&\quad +\sum_{(k,n)\in \mathcal{C}}(\bar{u}_{n+1}(\x) - \scheme \bar{u}_n(\x))_+\\
&\leq c_{g}\Ga R^{d-\frac{1}{m}}+c_{c}R^{d-2} \,\Gamma^{m+1}. 
\end{align*}
To bound each of these terms, we will use Lemma \ref{small} which says $\smallest(\bar{u})\geq c_0 R^d \Gamma$. 

For the first term, by the choice of $R$, $R \ge \left(\tfrac{4c_g}{c_{0}}\right)^{m}$, so 
\begin{equation*}
c_{g}\Ga R^{d-\frac{1}{m}}\leq c_{g}\Ga R^{d}\frac{c_{0}}{4c_{g}}\leq \frac{1}{4}\smallest(\bar{u}). 
\end{equation*}

Similarly, for the second term, by choice of $R$, $R^{2}\geq 4c_{c}c_{0}^{-1}$. Hence
\begin{equation*}
c_{c}R^{d-2} \,\Gamma^{m+1}\leq \frac{c_{0}}{4}R^{d}\Ga^{m+1}\leq \frac{1}{4}\Ga^{m}\smallest(\bar{u})\leq \frac{1}{4} \smallest(\bar{u}).
\end{equation*}
This yields the desired claim. 
\end{proof}

\subsection{The proof of Theorem \ref{prop.pfacts}(i).}

We now recall the function $(\tilde{p}_{n})_{n\geq 0}$ defined in Lemma \ref{lemma.gives.tildep}, which is a solution of \eqref{timeevol} on $[0, \infty)$ with initial condition $\tilde{p}_{0}$, where $\tildep_n(\x)\in [0, \sfrac{1}{3}]$ is volcanic for all $n$. Moreover, by definition of the scheme, $\tildep_n(\x)$ is strictly positive whenever $|\x|_1 \le n$.

We aim to control $\dist[\bar{u}_{n}, \tilde{p}_{n+n_{0}}]$ for some $n_{0}\in \mathbb{N}$ to be chosen. Since $(\tilde{p}_{n+n_{0}})_{n\geq 0}$ is still a solution of the scheme \eqref{timeevol} on $[0, \infty)$ with initial condition $\tilde{p}_{n_{0}}$, as a consequence of Proposition \ref{p.approxsub} and Lemma \ref{errorlemma}, for any $\Ga\in (0, \sfrac{1}{3})$ and for a fixed $R=R(d,m)$, for $\bar{u}=\bar{u}^{(R, \Ga)}$, we have
\begin{equation*}
\dist[\bar{u}_{n}, \tilde{p}_{n+n_{0}}]\leq \dist[\bar{u}_{0}, \tilde{p}_{n_{0}}]+\frac{1}{2}\smallest(\bar{u}).
\end{equation*}

We now choose $n_{0}$ (and $\Ga$) so that $\dist[\bar{u}_{0}, \tilde{p}_{n_{0}}]=0$. Set 
\begin{equation}\label{e.n0ga}
n_0 = \lceil R \sqrt{d} \rceil\quad\text{and}\quad \Gamma = (\min_{|\x| \le R} \tildep_{n_0}(\x))\wedge \sfrac{1}{3}.
\end{equation}
Observe that both $n_{0}$ and $\Ga$ depend only on $d,m$, since $R=R(d,m)$. Recall that $\norm{\bar{u}}_{L^{\infty}(\R^{d}\times [0, \infty))}\leq \Ga$, and  
$R<n_{0}$, we know $\Ga>0$. Letting $\bar{u}_{n}=\bar{u}^{(R,\Ga)}(\cdot, n)$ for these choices of $R, \Ga$, we have
\begin{equation*}
\dist[\bar{u}_{0}, \tilde{p}_{n_{0}}]=\sum_{\x\in \mathbb{Z}^{d}} \left(\bar{u}_{0}(\x)-\tilde{p}_{n_{0}}(\x)\right)_{+}\leq \sum_{|\x| \le R} (\Gamma - \tildep_{n_0}(\x))_+ = 0.
\end{equation*}
Combining this with the prior display, we have that for these choices of $n_{0}, \Ga, R$,  
\begin{equation}\label{e.intermed}
\dist[\bar{u}_{n}, \tilde{p}_{n+n_{0}}]=\sum_{\x\in \mathbb{Z}^{d}} \left(\bar{u}_{n}(\x)-\tilde{p}_{n+n_{0}}(\x)\right)_{+}\leq \frac{1}{2}\smallest(\bar{u}).
\end{equation}

This observation now allows us to prove the following lower bound on $\tilde{p}$ inside the support of the associated Barenblatt solution $\bar{u}$. 
\begin{lem}\label{lowerboundonp}
Let $R$ be defined as in \eqref{e.Rdef} and $n_{0}, \Ga$ be defined by \eqref{e.n0ga}, which only depend on $d,m$. There are positive constants $c=c(d, m)<1$ and $N=N(d,m)$ such that for all $n\geq N$, $$\tildep_{n+n_0}(\x) \ge \frac{c}{r(n)^d} \quad\text{for all $|\x|\leq cr(n)$.}$$
\end{lem}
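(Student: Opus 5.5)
Starting from \eqref{e.intermed}, I would combine mass control with the volcanic structure of $\tilde p$ to turn an $L^1$-type comparison into a pointwise lower bound near the origin. The inequality says that, for every $n$, $\bar u_n$ sticks up above $\tilde p_{n+n_0}$ by total mass at most $\tfrac12\smallest(\bar u)\le \tfrac12|\bar u_n|_1$. Since $\sum_\x \min\{\bar u_n(\x),\tilde p_{n+n_0}(\x)\} = |\bar u_n|_1-\dist[\bar u_n,\tilde p_{n+n_0}]$, I get
\[
\sum_{\x}\min\{\bar u_n(\x),\tilde p_{n+n_0}(\x)\}\ge \tfrac12|\bar u_n|_1\ge \tfrac12 c_0R^d\Gamma
\]
by Lemma \ref{small}. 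So $\tilde p_{n+n_0}$ carries at least a fixed amount of mass $c_1:=\tfrac12 c_0R^d\Gamma$ (which depends only on $d,m$) on the set where it sits below $\bar u_n$. In particular, this mass lies inside the support $B_{r(n)}$ of $\bar u_n$, and at each point it is bounded by $\bar u_n\le R^d\Gamma r(n)^{-d}$.

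The next step removes the mass near the boundary. Fix a small $c<1$. The contribution of the points with $|\x|\le 2c\,r(n)$ is at most
\[
\#\{\x\in\Z^d:|\x|\le 2c\,r(n)\}\cdot R^d\Gamma\, r(n)^{-d}\le C(2c)^dR^d\Gamma
\]
for $r(n)$ large, i.e.\ for $n\ge N$. Choosing $c$ small makes this at most $c_1/2$. Then $\tilde p_{n+n_0}$ carries mass at least $c_1/2$ in the annulus $A_n:=\{2c\,r(n)<|\x|<r(n)\}$. Some crude rounding may be needed here, since I only need the bound for $n$ large.

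The final step converts this mass bound into a lower bound at points with $|\x|\le c\,r(n)$. This is where volcanicity of $\tilde p_{n+n_0}$ (Lemma \ref{lemma.gives.tildep}) is used, and it is the main obstacle. Volcanic only means monotone along $\ell^1$-decreasing nearest-neighbour steps, so I cannot directly compare points of different Euclidean norm. My plan is to use monotone lattice paths. Given $\x$ with $|\x|\le c\,r(n)$, consider the points $\y\in A_n$ with $|\y_i|\ge|\x_i|$ and $\operatorname{sign}(\y_i)=\operatorname{sign}(\x_i)$ for every $i$, up to the symmetries \eqref{e.symmdef}. For such $\y$, there is a path from $\y$ to $\x$ that decreases $|\cdot|_1$ at every step, hence $\tilde p(\y)\le\tilde p(\x)$. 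The problem is that not every $\y\in A_n$ dominates $\x$ coordinatewise. I would handle this using the permutation and reflection symmetries: first replace $\x$ by its symmetric representative $|\x_1|\ge\dots\ge|\x_d|\ge0$. It is cleaner to use the bound in the form $\tilde p(\x)\ge\tilde p(\y)$ whenever $|\y_i|\ge\|\x\|_\infty$ for all $i$. The points of $A_n$ with some small coordinate would then need separate treatment. I would first try to show that the mass of $\tilde p$ in $A_n$ near the coordinate hyperplanes (within $c\,r(n)$ of them) is also small, using the same pointwise bound by $\bar u_n$, which holds on the relevant set. The set of lattice points in $A_n$ having some coordinate of absolute value at most $c\,r(n)$ has cardinality $\le Cc\,r(n)^d$, so its $\min$-mass is $\le Cc R^d\Gamma$, again small for small $c$. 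The remaining mass $\ge c_1/4$ then sits on at most $Cr(n)^d$ points $\y$ with all $|\y_i|>c\,r(n)\ge\|\x\|_\infty$. Each of these satisfies $\tilde p_{n+n_0}(\y)\le\tilde p_{n+n_0}(\x)$ by the monotone path, so $\tilde p_{n+n_0}(\x)\ge c_1/(4Cr(n)^d)$. Shrinking $c$ gives the statement.

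One subtlety remains: the bounded-by-$\bar u_n$ argument applies only to $\min\{\bar u_n,\tilde p\}$, not to $\tilde p$ itself. The averaging in the last step should therefore be done with $\min\{\bar u_n,\tilde p\}\le\tilde p(\y)\le\tilde p(\x)$, which is consistent throughout.
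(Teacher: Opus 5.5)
Your proof is correct and rests on the same two ingredients as the paper's proof: the bound \eqref{e.intermed}, and the volcanic comparison $\tilde p_{n+n_0}(y)\le \tilde p_{n+n_0}(x)$ whenever $|y_i|\ge c\,r(n)\ge |x|$ for all $i$. The difference is only bookkeeping. You average $\min\{\bar u_n,\tilde p_{n+n_0}\}$ over the lattice points all of whose coordinates exceed $c\,r(n)$, and discard the rest using lattice-point counts and $\bar u_n\le R^d\Gamma\, r(n)^{-d}$; your removal of the ball $|k|\le 2c\,r(n)$ is harmless but unnecessary. The paper instead argues by contradiction at the level $c\,r(n)^{-d}$, choosing $c$ through Riemann-sum limits of $\sum_{A_n(c)}(\bar u_n-c\,r(n)^{-d})_+$.
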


\begin{proof}
 Let $g(z) := R^d \Gamma (1 - |z|^2)^{1/m}_+$, and note that $\int_{\R^{d}} g(z)\, dz>0$. We consider the set $A(c) = \{z \in \mathbb R^d: |z_1|, \ldots, |z_d| \ge c\}$ which is simply the complement of the ball of radius $c$ in the $L^{\infty}$-norm. As $c \to 0+$, as a consequence of the monotone convergence theorem, we have $$\lim_{c \to 0+} \int_{A(c)} (g(z) - c)_+ \,dz = \int_{\mathbb R^d} g(z) \,dz.$$ 
Consequently, we may fix a constant $c=c(d,m)\in (0,1)$ such that 
\begin{equation}\label{e.c3choice}
\int_{A(c)} (g(z) - c)_+ \,dz \ge \frac23 \int_{\mathbb R^d} g(z) \,dz.
\end{equation}

Let $A_n(c) := \{\x \in \mathbb Z^d: |\x_1|, \ldots, |\x_d| \ge cr(n)\}$. Recall that by Lemma \ref{lemma.gives.tildep}, $\tilde{p}_{n+n_{0}}$ is volcanic for every $n$. It follows (by applying the volcanic property iteratively over neighbors) that for any $\x\in A_{n}(c)$ and $\ell\in \Z^{d}$ with $|\y| \le cr(n)$, $\tilde{p}_{n+n_{0}}(\x) \le \tilde{p}_{n+n_{0}}(\y)$. Therefore, by \eqref{e.intermed}, for any fixed $|\y| \le cr(n)$,
\begin{equation} \label{lowerboundonp.one}
\sum_{\x\in A_{n}(c)} \left(\bar{u}_{n}(\x)-\tilde{p}_{n+n_{0}}(\y)\right)_{+}\leq  \sum_{\x\in \mathbb{Z}^{d}} \left(\bar{u}_{n}(\x)-\tilde{p}_{n+n_{0}}(\x)\right)_{+}\leq \frac{1}{2}\smallest(\bar{u})\leq \frac{1}{2}|\bar{u}_{n}|_{1}. 
\end{equation}

By definition of $\bar{u}_{n}$ in \eqref{hformula}
\begin{equation}\label{e.grel}
\bar{u}_n(r(n)z) = {R^d \Gamma \over r(n)^d} \left(1 - |z|^2\right)^{1/m}_+ = \frac{g(z)}{r(n)^{d}}.
\end{equation}
Since $r(n)\to \infty$ as $n\to \infty$, we may apply a Riemann approximation to conclude that 
\[
\lim_{n \to \infty} |\bar{u}_n |_1= \lim_{n \to \infty} \frac{1}{r(n)^{d}} \sum_{ (r(n)z)\in \mathbb{Z}^{d}} g(z) = \int_{\mathbb R^d} g(z) \,dz.\]
As another consequence of \eqref{e.grel}, we have
\begin{align*}\lim_{n \to \infty} \sum_{\x \in A_n(c)} \left(\bar{u}_n(\x) - \frac{c}{r(n)^d}\right)_+ = \lim_{n \to \infty} \frac{1}{r(n)^{d}} \sum_{r(n)z \in A_n(c)} (g(z) - c)_+ = \int_{A(c)} (g(z) - c)_+ \,dz.\end{align*}
By \eqref{e.c3choice} and the prior display, we have
\begin{equation*}
\lim_{n \to \infty} \sum_{\x \in A_n(c)} \left(\bar{u}_n(\x) - \frac{c}{r(n)^d}\right)_+\geq \lim_{n \to \infty} \frac{2}{3}|\bar{u}_n |_1>0. 
\end{equation*}
Consequently, there is $N>0$ so that for all $n \ge N$, $$\sum_{\x \in A_n(c)} \left(\bar{u}_n(\x) - \frac{c}{r(n)^d}\right)_+\geq  \frac{5}{9}|\bar{u}_n |_1>0. $$ 
Now for the purposes of contradiction, fix $n \ge N$, and suppose that there is a point $\y$ such that $|\y| \le cr(n)$ and $\tilde{p}_{n+n_{0}}(\y) < \tfrac{c}{r(n)^d}$. Then by the prior display, $$\sum_{\x \in A_n(c)} (\bar{u}_n(\x) - \tilde{p}_{n+n_{0}}(\y))_+ \ge \sum_{\x \in A_n(c)} \left(\bar{u}_n(\x) - \frac{c}{r(n)^d}\right)_+ \ge \frac59 |\bar{u}_n |_1,$$ which contradicts the inequality (\ref{lowerboundonp.one}). This implies that $\tilde{p}_{n+n_{0}}(\x) \ge \tfrac{c}{r(n)^d}$ if $|\x| \le cr(n)$ and $n \ge N$. 
\end{proof}

Equipped with Lemma \ref{lowerboundonp}, we are now ready to finish the proof of Theorem \ref{prop.pfacts}(i). 
\begin{proof}[Proof of Theorem \ref{prop.pfacts}(i)]
By Lemma \ref{lowerboundonp}, for $n_{0}$ as in \eqref{e.n0ga}, and by the definition of $r$ in \eqref{hformula}, by adjusting $c=c(d,m)$, we have that for all $n\geq N$, 
\begin{equation*}
\tildep_{n+n_0}(\x) \ge \frac{c}{r(n)^d}=\frac{c}{(T_{0}+n)^{d\beta}} \quad\text{for all $|\x|\leq cr(n)$.}
\end{equation*}
Consequently, for all $n\geq N\vee T_{0}$, by adjusting $c=c(d,m)$ and since $n_{0}>0$, we have
\begin{equation*}
\tildep_{n+n_0}(\x) \ge \frac{c}{(2n)^{d\beta}}\geq \frac{c}{(n+n_{0})^{d\beta}}.
\end{equation*}
By Theorem \ref{stepfive}, it follows that for all $n\geq n_{0}+ (N\vee T_{0})$, $\sup_{k} \tilde{p}(k,n)\leq C'n^{-d\beta}$. For $N_{0}$ as in Lemma \ref{lemma.gives.tildep}, this implies that there is $C''=C''(d,m)$ such that for all $n\geq n_{0}+ N\vee T_{0}$, we have 
\begin{equation*}
p(k,n+N_{0})\leq \tilde{p}(k,n)\leq C'n^{-d\beta}\leq C''(n+N_{0})^{-d\beta}\quad\text{for all $k\in \mathbb{Z}^{d}$.}
\end{equation*}
Since $n_{0}, N_{0}, N, T_{0}$ all depend only on $d,m$ and are all finite, this statement can then be upgraded to the statement that there exists $C=C(d,m)>0$ such that 
\begin{equation*}
p(k,n)\leq Cn^{-d\beta}\quad\text{for all $n\in \mathbb{N}$ and all $k\in \mathbb{Z}^{d}$.} \qedhere
\end{equation*}
\end{proof}

\section{The proof of Theorem \ref{prop.pfacts}(ii) and Theorem \ref{prop.pfacts}(iii)}\label{s.p23}
In this section, we prove Theorem \ref{prop.pfacts}(ii) and Theorem \ref{prop.pfacts}(iii). 

\subsection{The proof of Theorem \ref{prop.pfacts}(ii)}
We prove that there is a constant $C=C(d,m)>0$ such that for all $r > 0$ and $n \in \N$, 
\begin{equation*}
\sum_{k\in \Z^{d}\cap B^{c}_{rn^{\beta}}} p(k,n)= \sum_{|k|>rn^{\beta}}p(k,n)\leq C \exp(-r/C).
\end{equation*}
We begin by recalling Freedman's inequality, which is a type of concentration inequality. Let $(\mathscr F_n)_{n\geq 0}$ be a filtration and $Y_n$ be a real-valued martingale with $Y_0 = 0$. Let $V_n:=\text{Var}[Y_n \mid \mathscr F_{n-1}]$ be the conditional variance. 
\begin{thm}\label{t.freedman}\cite[Freedman, Theorem 1.6]{freedman}
If $|Y_{n+1} - Y_n| \le 1$ for all $n$, then for any $a, b>0$,
$$\mathbb P\left(\exists n: Y_n \ge a \text{ and } \sum_{j=1}^n V_j \le b\right) \le \exp\left(-{a^2 \over 2(a+b)}\right).$$
\end{thm}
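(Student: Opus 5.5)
The plan is the standard exponential-supermartingale argument. We combine a one-step moment generating function bound for increments bounded by $1$ with optional stopping at the first time $Y_n \ge a$, and then optimize over the exponential parameter. Write $D_n := Y_n - Y_{n-1}$. Then $\E{D_n \mid \mathscr F_{n-1}} = 0$, $D_n \le 1$, and $V_n = \E{D_n^2 \mid \mathscr F_{n-1}}$. Fix $\lambda > 0$ and set $g(\lambda) := e^\lambda - 1 - \lambda$.

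\emph{Step 1 (one-step bound).} The function $x \mapsto (e^{\lambda x} - 1 - \lambda x)/x^2$, extended by $\lambda^2/2$ at $x=0$, is nondecreasing on $\R$. Hence for $x \le 1$ we have $e^{\lambda x} \le 1 + \lambda x + g(\lambda) x^2$. Applying this with $x = D_n$ and taking conditional expectations gives
\begin{equation*}
\E{e^{\lambda D_n} \mid \mathscr F_{n-1}} \le 1 + g(\lambda) V_n \le \exp(g(\lambda) V_n).
\end{equation*}
Consequently $M_n := \exp\bigl(\lambda Y_n - g(\lambda) \sum_{j=1}^n V_j\bigr)$ is a nonnegative supermartingale with $M_0 = 1$. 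This uses that $V_n$ is $\mathscr F_{n-1}$-measurable.

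\emph{Step 2 (stopping).} Let $\tau := \inf\{n : Y_n \ge a \text{ and } \sum_{j\le n} V_j \le b\}$. This is a stopping time because each $V_j$ is predictable. On $\{\tau < \infty\}$ we have $M_\tau \ge \exp(\lambda a - g(\lambda) b)$. Optional stopping at the bounded time $\tau \wedge n$ gives $\E{M_{\tau \wedge n}} \le 1$. Letting $n \to \infty$ with Fatou's lemma (or restricting to $\{\tau \le n\}$ and using monotone convergence) yields
\begin{equation*}
\mathbb P(\tau < \infty) \le \exp\bigl(-\lambda a + g(\lambda) b\bigr).
\end{equation*}
The event $\{\tau < \infty\}$ is exactly the event in the statement.

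\emph{Step 3 (optimization).} We choose $\lambda = \log(1 + a/b)$. With $u = a/b$, the exponent becomes $-b\, h(u)$, where $h(u) = (1+u)\log(1+u) - u$. The elementary inequality $h(u) \ge u^2/(2(1+u/3)) \ge u^2/(2(1+u))$ can be checked by comparing derivatives at $u=0$ twice. It gives $b\, h(a/b) \ge a^2/(2(a+b))$, which is the claimed bound.

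The only delicate points are the monotonicity claim in Step 1, which is what allows only one-sided boundedness $D_n \le 1$ (the hypothesis $|D_n| \le 1$ is more than enough), and the careful justification of optional stopping for the possibly infinite stopping time. Both are routine, and I expect no serious obstacle.
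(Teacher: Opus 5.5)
Your argument is correct and complete. The paper gives no proof here and simply cites Freedman's Theorem 1.6, and what you wrote is essentially Freedman's original argument: the supermartingale $\exp(\lambda Y_n - (e^\lambda-1-\lambda)\sum_{j\le n}V_j)$, stopping at the first time both conditions hold, and the choice $\lambda=\log(1+a/b)$, which gives the intermediate bound $(b/(a+b))^{a+b}e^a$ before the final Bernstein-type simplification. The one phrase to tighten is in Step 3. The check there is that both sides of $h(u)\ge u^2/(2(1+u/3))$ and their first derivatives agree at $u=0$, while $h''(u)=1/(1+u)\ge 27/(3+u)^3$ for $u\ge 0$.
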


Equipped with this result, we can now proceed with the proof of Theorem \ref{prop.pfacts}(ii).

\begin{proof}[Proof of Theorem \ref{prop.pfacts}(ii)]

If $r\in (0,1]$, we obtain the desired bound for all $n\in \mathbb{N}$, so long as $C>1.$ Thus, let $r > 1$, and let $(X_j)_{j\in \mathbb{N}}$ be $\text{CM}(m)$-distributed. Fix $i \in [d]$, and let $Y_j$ be the $i$-th coordinate of $X_j$.

At any step of the $\text{CM}(m)$, with probability $p^m / d$, we move in the $i$-th coordinate, and when we do, we take a Bernoulli step that is conditionally independent of the past. Thus, $(Y_j)_{j\geq 0}$ is a martingale, and its conditional variance is given by 
\begin{equation*}
V_{j}=\text{Var}[Y_j \mid \mathscr F_{j-1}] = \mathbb E[(Y_j - Y_{j-1})^2 \mid \mathscr F_{j-1}] = \frac{1}{d}p^m(X_{j-1}, j-1).
\end{equation*}

By Theorem \ref{prop.pfacts}(i), this has a deterministic upper bound, given by $$V_j = c p^m(X_{j-1}, j-1) \le {c \over j^{dm\beta}}.$$
Since $0 < dm\beta < 1$, we have 
\begin{equation*}
\sum_{j=1}^{n}V_{j}=c\sum_{j=1}^n j^{-dm\beta} = cn^{1-dm\beta} + O(1),
\end{equation*}
which implies $\sum_{j=1}^n V_j \le cn^{1-dm\beta} = Cn^{2\beta}$, where $C=C(d,m)$. Let $b:=Cn^{2\beta}$, so that $\sum_{j=1}^n V_j \le b$.

Let $a:=rn^\beta/\sqrt{d}$. Then by properties of norm-equivalences on $\R^{d}$, we have
$$\{|X_n| \ge rn^\beta\} \subseteq \bigcup_{i=1}^d\{(X_n)_i \ge a\} \cup \bigcup_{i=1}^d \{-(X_n)_i \ge a\}.$$ 
By a union bound, symmetry of the process, and the prior claim, this implies 
$$\mathbb P\left(|X_n| \ge rn^\beta\right) \le 2d\mathbb P\left((X_n)_i \ge a\right) = 2d \mathbb P\left(Y_n \ge a\right) = 2d \mathbb P\left(Y_n \ge a \text{ and }\sum_{j=1}^n V_j \le b\right).$$
We now conclude by using Freedman's inequality (Theorem \ref{t.freedman}). In particular, we have
$$\mathbb P\left(|X_n| \ge rn^\beta\right) \le 2d\mathbb P\left(Y_n \ge a \text{ and } \sum_{j=1}^n V_j \le b\right) \le 2d\exp\left(-{r^2 n^{2\beta}/d \over 2(rn^{\beta}/\sqrt{d} + Cn^{2\beta})}\right).$$
Using that $r > 1$, we conclude with the lower bound
\begin{equation*}
{r^2 n^{2\beta}/d \over 2(rn^{\beta}/\sqrt{d} + Cn^{2\beta})} = \frac{r^2}{2d(rn^{-\beta}/\sqrt{d} + C)} \geq \frac{r^2}{2d(r/\sqrt{d} + C)} \geq \frac{r^2}{C(r+1)} \geq \frac{r}{C}.
\end{equation*}

\end{proof}

\subsection{The proof of Theorem \ref{prop.pfacts}(iii)}
The goal is to obtain an upper bound on the total variation of $p(\cdot, n)$ for a domain $D:=B_{rn^{\beta}}\cap\Z^{d}$ of the form
$$[p(\cdot, n)]_{\TV(D)} := 
\frac{1}{2}\sum_{\{u,v\in D:u\sim v\}} |p(v,n)-p(u,n)|\leq Cr^{d-1}n^{-\beta}.$$

 In order to prove this, we begin by introducing a weaker version of spatial monotonicity. We say that a function $q: \mathbb Z^d \to \mathbb R$ is \emph{almost-volcanic} if, $q$ is symmetric, as in \eqref{e.symmdef}, and for any two neighbours $\x \sim \y$,
\begin{itemize}
\item if $|\x|_1 > |\y|_1 \ge 1$, then $q(\x) \le q(\y)$, and if 
\item if $|\x|_1 \ge 2$, then $q(\x) \le q(0)$.
\end{itemize}
Observe that this definition differs from the definition of volcanic by only requiring that $q(\ell)$ is nondecreasing towards the origin for points $|\y|_{1}\geq 1$, and furthermore that $q(0)\geq q(\x)$ for $|\x|_{1}\geq 2$. We first show that being almost-volcanic is preserved under certain monotonicity conditions on the scheme. Unlike Lemma \ref{monotonepreserved}, the analogous statement for volcanic functions, we state this result directly for the specific function $p_{n}$, the distribution of a \text{CM}($m$)-distributed process at time $n$. 

\begin{lem} For any $n\in \mathbb{N}_{0}$, if $0\leq p_{n}\leq 1/3$ is almost-volcanic, then $p_{n+1}$ is almost-volcanic.
\end{lem}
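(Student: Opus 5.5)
We follow the pattern of the proof of Lemma~\ref{monotonepreserved} and use the local monotonicity of $\scheme$ on $\mathcal{B}^{+}_{\sfrac{1}{3}}(\Z^{d})$ from Corollary~\ref{cor:newsmon}, where $A'(u)=(m+1)u^m\le 2/3\le 1$ on $[0,1/3]$. Symmetry of $p_{n+1}$ is immediate, since $\scheme$ commutes with coordinate reflections and permutations. Write $q=p_n$. We must check two things: (a) $\scheme q(\x)\le \scheme q(\y)$ for neighbours with $|\x|_1>|\y|_1\ge 1$, and (b) $\scheme q(\x)\le \scheme q(0)$ for $|\x|_1\ge 2$.

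For (a) we argue as in Lemma~\ref{monotonepreserved}. By symmetry we take $\x=\y+e_1$ with $\y_1\ge 0$. In Case 1 ($\y_1\ge1$), every neighbour pair $(\x+v,\y+v)$ again satisfies $|\x+v|_1>|\y+v|_1$. The pair $v=-e_1$ is also fine, and it is the one place where the condition $|\y+v|_1\ge1$ could fail. When $\y+v=0$ we have $|\x+v|_1=1$, but this forces $\y=e_1$, $\x=2e_1$, and then $\x+v=e_1$, which is adjacent to $0$. The inequality $q(\x+v)\le q(\y+v)$ would then require $q(e_1)\le q(0)$, which is not given. To handle this, we invoke instead the second almost-volcanic condition where it applies: $q(\x+v)\le q(0)$ holds whenever $|\x+v|_1\ge 2$. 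In the remaining subcase we compare directly, writing out $\scheme q(2e_1)-\scheme q(e_1)$ by hand as in Case 2 of Lemma~\ref{monotonepreserved}.

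In Case 2 ($\y_1=0$, $|\y|_1\ge1$), the computation of $R_1,R_2,R_3$ in Lemma~\ref{monotonepreserved} goes through verbatim. It only compares points whose $\ell^1$ norms are at least $1$, and here $|\y|_1\ge1$ ensures that $\y\pm e_i$ never hits the origin in a bad way except when $\y=\pm e_j$. In that exceptional case some terms involve $q(0)$; these enter with the favourable sign, since $q(0)$ appears as a neighbour of $\y$ and $A(q(0))\ge A(q(\x\pm e_i))$ for $|\x\pm e_i|_1\ge 2$.

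For (b) we cannot use local monotonicity. Instead we write, for $|\x|_1\ge2$,
\[
\scheme q(0)-\scheme q(\x)=\bigl[q(0)-A(q(0))\bigr]-\bigl[q(\x)-A(q(\x))\bigr]+\tfrac{1}{2d}\sum_{v\sim0}\bigl[A(q(v))-A(q(\x+v))\bigr].
\]
The first bracket difference is nonnegative, because $u\mapsto u-A(u)$ is nondecreasing and $q(0)\ge q(\x)$. Each summand is compared as follows. If $|\x+v|_1\ge2$, then $q(\x+v)\le q(0)$. More usefully, $|\x+v|_1\ge|v|_1=1$, and by the almost-volcanic chain along coordinate paths from $\x+v$ down to the unit vector $v$ (or some $\pm e_j$, equal to $q(v)$ by symmetry) we get $q(\x+v)\le q(v)$. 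If instead $\x+v=0$, which happens when $|\x|_1=1$, this is excluded since $|\x|_1\ge2$. Hence every summand is nonnegative.

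The main obstacle I expect is the chain argument in (b), namely showing $q(\x+v)\le q(e_1)$ for all $|\x+v|_1\ge1$ using only monotonicity along neighbours with $|\cdot|_1\ge1$. This should follow by decreasing the largest coordinate in absolute value step by step, each step reducing $|\cdot|_1$ by one without passing through $0$. The second delicate point is the special configuration $\y=e_1$, $\x=2e_1$ in (a), which needs a direct computation using $A'\le 2/3$ and $d>1$.
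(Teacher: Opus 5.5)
Your overall plan has the right shape, and your part (b) is exactly the paper's argument. The gap is in Case~2 of (a) when $\ell=\pm e_j$, i.e.\ after symmetry $\ell=e_2$, $k=e_1+e_2$.

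You assert that the $R_1,R_2,R_3$ computation of Lemma~\ref{monotonepreserved} goes through verbatim, with terms involving $q(0)$ controlled by $A(q(0))\ge A(q(k\pm e_i))$ for $|k\pm e_i|_1\ge 2$. But look at $R_3$, the sum over the nonzero coordinate $i=2$ of $\ell$. There the neighbour $\ell-e_2=0$ is paired with $k-e_2=e_1$, so the verbatim argument needs $q(0)\ge q(e_1)$. Since $|k-e_2|_1=1$, your justification does not apply. The inequality is also genuinely unavailable: it fails already for $p_1$, where $p_1(0)=0<p_1(e_1)=\tfrac{1}{2d}$, which is why almost-volcanic functions were introduced at all. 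So the step fails in exactly the configuration that distinguishes this lemma from Lemma~\ref{monotonepreserved}.

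To repair it, lower-bound $A(q(0))$ by $A(q(k))$ instead. This is valid by the second almost-volcanic condition, since $|k|_1=2$. Then use $q(e_1)=q(\ell)$. The bookkeeping gives
\[
\mathcal{S}q(\ell)-\mathcal{S}q(k)\ \ge\ \bigl(q(\ell)-q(k)\bigr)-\frac{2d+2}{2d}\bigl(A(q(\ell))-A(q(k))\bigr).
\]
The coefficient is now $(2d+2)/(2d)$ rather than $(2d+1)/(2d)$. Nonnegativity follows from $A'\le 2/3$ on $[0,1/3]$ together with $(2d+2)/(2d)\le 3/2$, and this last inequality uses $d\ge 2$ essentially. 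This is the paper's Case~2.

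The subcase $\ell=e_1$, $k=2e_1$, which you only defer to ``a direct computation'', is handled the same way. In $\mathcal{S}q(e_1)$, replace the neighbour value $q(0)$ by its lower bound $q(2e_1)$, then compare the remaining neighbours $e_1+v$ and $2e_1+v$ termwise. This gives coefficient $(2d+1)/(2d)$ and matches the paper's Case~1.

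These two $|\ell|_1=1$ computations are where the real content of the lemma lies. By contrast, the chain argument you flag as the main obstacle in (b) is routine.
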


\begin{proof}
Recall that $p=(p_{n})_{n\geq 0}$ solves the scheme \eqref{timeevol} on $[0, \infty)$ with initial condition $p_{0}=\mathbf{1}_{0}$, and by hypothesis, let us assume that $0\leq p_{n}\leq 1/3$. Since $A(u)=u^{m+1}$ in the scheme \eqref{timeevol}, we have $0\leq A'(\cdot)\leq \sfrac{2}{3}$ for $0\leq u\leq \sfrac{1}{3}$. 

By the same reasoning as in Lemma~\ref{monotonepreserved}, if $p_{n}$ is symmetric, then $p_{n+1}$ is also symmetric. Therefore, it remains to verify that $p_{n+1}$ satisfies the above two additional conditions necessary to be almost-volcanic.

We begin with the first claim, which is that $p_{n+1}(\x) \le p_{n+1}(\y)$ for $|\x|_1 > |\y|_1 \ge 1$. In the case of $\x, \y$ with $|\y|_1 \ge 2$, this follows immediately from Case 1 in the proof of Lemma~\ref{monotonepreserved}. Therefore, we simply need to check this inequality in the case when $|\x|_1 = 2, |\y|_1 = 1$.

Suppose $\x$ is a point with $|\x|_1 = 2$. By symmetry, it is enough to consider the cases $\x = 2e_1$ or $\x = e_1 + e_2$. 

\medskip
\emph{Case 1. $\x = 2e_1$.} 
If we look at all neighbors of $e_1$, there are $2d-2$ neighbours the form $e_1 \pm e_j, j \ne 1$, one neighbor which is $2e_{1}$ and one neighbor which is the origin. Thus, for $\ell\sim e_{1}$, since $p_{n}$ is almost-volcanic, the symmetry and monotonicity properties imply that
\begin{equation*}
p_{n}(\ell)\begin{cases}
=p_n(e_1+e_2)&\text{if $\ell=e_1 \pm e_j, j \ne 1$},\\
=p_{n}(2e_{1})&\text{if $\ell=2e_{1}$},\\
\geq p_{n}(2e_{1})&\text{if $\ell=0$}.
\end{cases}
\end{equation*}
Therefore, since $A$ is nondecreasing, we have
\begin{align*}
    p_{n+1}(e_1)
        &= p_n(e_1) + {1 \over 2d} \sum_{\y \sim e_1}
                (A(p_n(\y)) - A(p_n(e_1)))
        \\&\ge p_{n}(e_{1}) + {2d-2 \over 2d} (A(p_n(e_1+e_2)) - A(p_{n}(e_{1}))) +
            {2 \over 2d} (A(p_{n}(2e_{1})) - A(p_{n}(e_{1}))).
\end{align*}

On the other hand, $2e_1$ has $2d-2$ neighbours $2e_1 \pm e_j, j \ne 1$, one neighbour that is $e_1$, and one neighbour that is $3e_1$. Using again the almost-volcanicity, we conclude
\begin{equation*}
p_{n}(\ell)\begin{cases}
\leq p_n(e_1+e_2)&\text{if $\ell=2e_1 \pm e_j, j \ne 1$},\\
\leq p_{n}(2e_{1})&\text{if $\ell=3e_{1}$},\\
=p_{n}(e_{1})&\text{if $\ell=e_{1}$}.
\end{cases}
\end{equation*}

Using the above and the scheme, we have that
 $$p_{n+1}(2e_1) \le p_{n}(2e_{1}) + {2d-2 \over 2d} (A(p_{n}(e_{1}+e_{2})) - A(p_{n}(2e_{1}))) + {1 \over 2d} (A(p_{n}(e_{1})) - A(p_{n}(2e_{1}))) + 0.$$
 
Subtracting these two equations, we get $$p_{n+1}(e_1) - p_{n+1}(2e_1) \ge p_{n}(e_{1})- p_{n}(2e_{1}) + {2d+1 \over 2d} (A(p_{n}(2e_{1})) - A(p_{n}(e_{1}))).$$ Now, arguing as in the proof of Lemma~\ref{monotonepreserved}, by the mean-value theorem, 
\begin{multline*}
(p_{n}(e_{1})- p_{n}(2e_{1}))\left[1 - {2d + 1 \over 2d} \left({A(p_{n}(2e_{1})) - A(p_{n}(e_{1})) \over (p_{n}(e_{1})- p_{n}(2e_{1}))}\right)\right]\\
 = (p_{n}(e_{1})- p_{n}(2e_{1}))\left[1 - {2d+1 \over 2d} A'(t)\right],
 \end{multline*} for some $t \in [p_{n}(2e_{1}), p_{n}(e_{1})]$. Since $A'(t) \le 2/3$ whenever $0 \le t \le 1/3$ and $(2d+1)/2d \le 3/2$, the above expression is at least $(p_{n}(e_{1})- p_{n}(2e_{1})) [1 - (3/2) (2/3)] = 0$.

\medskip
\emph{Case 2. $\x = e_1 + e_2$.} 

We perform a similar analysis as above. We have that for $\ell\sim e_{1}+e_{2}$, by symmetry,
\begin{equation*}
p_{n}(\ell)\begin{cases}
\leq p_n(2e_{1})&\text{if $\ell=2e_1+e_{2}, e_{1}+2e_{2}$},\\
=p_{n}(e_{1})&\text{if $\ell=e_{1}, e_{2}$},\\
\leq p_{n}(e_{1}+e_{2})&\text{otherwise}.
\end{cases}
\end{equation*}
This implies
\begin{equation*}
p_{n+1}(e_1 + e_2) \le p_{n}(e_{1}+e_{2}) + {2 \over 2d} (A(p_n(e_1)) - A(p_{n}(e_{1}+e_{2}))) + {2 \over 2d} (A(p_n(2e_1)) - A(p_{n}(e_{1}+e_{2}))).
\end{equation*}
Subtracting this from the inequality for $p_{n+1}(e_1)$ from the first case, we obtain the bound
$$p_{n+1}(e_1) - p_{n+1}(e_1 + e_2) \ge p_{n}(e_{1}) -  p_{n}(e_{1}+e_{2}) + {2d+2 \over 2d} (A( p_{n}(e_{1}+e_{2})) - A(p_{n}(e_{1}))).$$
Since again, $A'(t) \le 2/3$ whenever $0\leq t \le 1/3$, and $(2d+2)/2d \le 3/2$ because $d \ge 2$, this implies the right side is at least 0.

\medskip 

Finally, we must also show that $p_{n+1}(\x) \le p_{n+1}(0)$ if $|\x|_1 \ge 2$. In this case, because $p_n$ is almost-volcanic, $p_n(\x) \le p_n(0)$ and all the neighbours $\y \sim \x$ have $p_n(\y) \le p_n(e_1)$, so since $A$ is increasing, 
\begin{align*}
    p_{n+1}(\x) &= p_n(\x) + {1 \over 2d} \sum_{\ell\sim k} (A(p_n(\y)) - A(p_n(\x)))
    \\&\le p_n(\x) + {1 \over 2d} \sum_{\ell\sim k} (A(p_n(e_1)) - A(p_n(\x))).
\end{align*}
If we  remove all $A(p_n(\x))$ terms out of the sum, and using that $t\mapsto t-A(t)$ is nondecreasing for $0\leq t\leq 1/3$, we get
\begin{align*} p_{n+1}(\x) &\le p_n(\x) - A(p_n(\x)) + {1 \over 2d} \sum_{\ell\sim k} A(p_n(e_1))
    \\&\le p_n(0) - A(p_n(0)) + {1 \over 2d} \sum_{\ell\sim k} A(p_n(e_1))
    \\&=p_{n+1}(0).
\end{align*}
This proves the second condition, so $p_{n+1}$ is indeed almost-volcanic.\end{proof}

Since $d \ge 2$,  
\begin{equation*}
p_1(\x)=\begin{cases}1/2d&\text{when $\x \sim 0$},\\
0&\text{otherwise,}
\end{cases}
\end{equation*}
which is clearly almost-volcanic, so by the prior result, $p_n$ is also almost-volcanic for any $n \ge 1$. 
In the rest of this section we suppose that $p_n$ is almost-volcanic for all $n\geq 1$. We can use this with the bound in Theorem \ref{prop.pfacts}(i) to get a bound on the total variation of $p_n$, for all $n\geq 1$.
\begin{proof}[Proof of Theorem \ref{prop.pfacts}(iii)]
In order to estimate $[p_{n}]_{\TV(D)} := \tfrac{1}{2}\sum_{\{u,v\in D: u\sim v\}} |p_{n}(v)-p_{n}(u)|$ on the set $D := B(rn^\beta) \cap \mathbb Z^d$, we observe that since both the set $D$ and the almost-volcanic function $p_{n}$ are invariant under reflection and coordinate transposition, we may reduce this expression to studying 
\begin{equation*}
[ p_n ]_{\TV(D)} = d \sum_{\x \in \mathcal{D}}\ |p_n(\x + e_1) - p_n(\x)|, 
\end{equation*} 
where 
\begin{equation*}
\mathcal{D}:=\left\{k\in \mathbb Z^d: \x, \x+e_1 \in D\right\}.
\end{equation*}

Let 
\begin{equation*}
\overline{p}_{n}(\x) = \begin{cases} p_n(e_{1})&\text{if $\x=0$},\\
p_{n}(\x)&\text{otherwise.}
\end{cases}
\end{equation*} 
This makes $\overline{p}_{n}$ a volcanic function, and this implies that 
\begin{equation*}
\overline{p}_{n}(\x + e_1) - \overline{p}_{n}(\x)=\begin{cases}\geq 0&\text{if $\x_{1}<0$},\\
\leq 0&\text{if $\x_{1}\geq 0$.} 
\end{cases}
\end{equation*}
For each $k\in \mathcal{D}$, we rewrite $k=(k_{1}, k')$ where $k'\in \Z^{d-1}$, and for such $k'$, we denote that $k'\in \mathcal{D}'$. We now estimate the total variation of $\overline{p}_{n}$, which by the prior observation and symmetry is given by,
\begin{align*}
d\sum_{\x \in \mathcal{D}}& |\overline{p}_{n}(\x + e_1) - \overline{p}_{n}(\x)| \\
&= d \sum_{k'\in \mathcal{D}'} \Bigg[\sum_{k_{1}<0: (k_{1}, k')\in \mathcal{D}} \overline{p}_{n}(\x + e_1) - \overline{p}_{n}(\x)+\sum_{k_{1}\geq 0: (k_{1}, k')\in \mathcal{D}} \overline{p}_{n}(\x) - \overline{p}_{n}(\x + e_1)\Bigg]\\
&=d \sum_{k'\in \mathcal{D}'} [\overline{p}_{n}(0, k')-\overline{p}_{n}(-B+1, k')+\overline{p}_{n}(0, k')-\overline{p}_{n}(B+1, k')]\\
&\leq 2d  \sum_{k'\in \mathcal{D}'} \overline{p}_{n}(0, k'),
\end{align*}
where the point $B$ is such that $(B, k')\in \mathcal{D}$ and $(B+1, k')\notin \mathcal{D}$. 

Since $\mathcal{D}$ (and hence $\mathcal{D}'$) has radius at most $rn^\beta$, in order for $k'\in \mathcal{D}'$, we must have $|\x_i| \le rn^\beta$ for $i = 2, \ldots, d$. Therefore the sum has at most $(2rn^\beta)^{d-1}$ terms. By Theorem \ref{prop.pfacts}(i), each term is at most $c/n^{d\beta}$, and thus  
\begin{equation*}
[ \overline{p}_n ]_{\TV(\mathcal{D})}\leq cr^{d-1}n^{-\beta}.
\end{equation*}

We now compare the total variation of $p_n$ to the total variation of $\overline{p}_{n}$. Since only the origin of $p_{n}$ differs from the origin of $\overline{p}_{n}$, we have 
\begin{equation*}
[ p_n ]_{\TV(D)}\leq [ \overline{p}_n ]_{\TV(D)}+2d|p_n(e_1) - p_n(0)|.
\end{equation*}
Again by Theorem \ref{prop.pfacts}(i), the second term is bounded above by $cn^{-d\beta}$, and hence
\begin{equation*}
[ p_n ]_{\TV(D)}\leq cr^{d-1}n^{-\beta}+cn^{-d\beta}. 
\end{equation*}

If $r < n^{-\beta}$, then $D=\left\{0\right\}$, and by definition $[p_{n}]_{\TV(D)}=0$. Otherwise $cn^{-d\beta} = c(n^{-\beta})^{d-1}n^{-\beta} \le cr^{d-1}n^{-\beta}$ and the total variation of $p_{n}$ is at most $cr^{d-1}n^{-\beta}$, which implies the claimed bound. 

\end{proof}

\section{The proof of Theorem \ref{t.main} Using the Framework of Finite Difference Schemes}\label{s.scheme}

In this section, we complete the proof of our main result, Theorem \ref{t.main}, by situating the central limit theorem as a consequence of the convergence of a finite difference scheme. As priorly mentioned, to our knowledge, there is no general theory of convergence for finite difference schemes approximating the porous medium equation with initial data which do not belong to $L^\infty(\R^{d})$. Theorem \ref{prop.pfacts} is used crucially throughout our argument. 

\subsection{Notation}
For an interval $I \subset [0,\infty)$, we let
\begin{multline*} 
	C(I;L^{1}(\R^{d})) = \{u: \R^{d} \times I \to \R \mid u(\cdot,t) \in L^{1}(\R^{d}) \text{ for all }t \in I, \text{ and} \\ 
	\lim_{t \to t_{0}}\norm{u(\cdot,t)-u(\cdot,t_{0})}_{L^{1}(\R^d)} = 0 \text{ for all }t_{0} \in I \}. 
\end{multline*}
We rely on continuity properties of functions with respect to the $L^{1}(\R^{d})$ norm. We refer to a \emph{modulus of continuity} as a function $\nu:[0,\infty)\to [0,\infty)$ which is nondecreasing, continuous at $0$, and for which $\nu(0) = 0$. In the rest of this section, we assume that $\nu$ is a modulus of continuity. We say that a function $w \in L^{1}(\R^{d})$ has \emph{spatial modulus of continuity $\nu$} if 
\begin{equation*}
	\norm{w(\cdot + y) - w(\cdot)}_{L^{1}(\R^{d})} \leq \nu(|y|)\quad\text{for all $y\in \R^{d}$}.
\end{equation*}
For an interval $I\subset [0,\infty)$, we say that $v \in L^{1}(\R^{d} \times I)$ has {\em spatial modulus of continuity $\nu$} if 
\[
\norm{v(\cdot+y,t)-v(\cdot,t)}_{L^1(\R^d)} \le \nu(|y|)\quad\text{for all $t\in I$ and $y\in \R^{d}$}\, ,
\]
and that $v$ has {\em temporal} modulus of continuity $\nu$ if, 
\begin{equation*}
	 \norm{v(\cdot,t) - v(\cdot,s)}_{L^{1}(\R^{d})} \leq \nu(|t-s|)\quad\text{for all $s,t \in I$.}
\end{equation*}
We say a collection $(v_a)_{a \in A}\subset L^1(\R^d\times I)$ has spatial (resp.\ temporal) modulus of continuity $\nu$ if for every $a\in A$, $v_a$ has spatial (resp.\ temporal) modulus of continuity $\nu$. Note that if $v \in L^{1}(\R^{d} \times I)$ has a temporal modulus of continuity on $\R^{d} \times I$, then $v \in C(I;L^{1}(\R^{d}))$. Conversely, if $I$ is a bounded interval and $v \in C(I; L^{1}(\R^{d}))$ then $v$ has a temporal modulus of continuity on $\R^{d}\times I$. Lastly, for an arbitrary interval $I \subset [0,\infty)$, we say that a sequence of functions $(v_{N})_{N\in\N}$ defined on $\R^{d} \times I$ has \emph{approximate} temporal modulus of continuity $\nu$ on $\R^{d}\times I$ if for all $N \in \N$ and all $s,t\in I$, 
\begin{equation*}
	\norm{v_{N}(\cdot,t) - v_{N}(\cdot,s)}_{L^{1}(\R^{d})} \leq \nu(|s-t|+\tfrac{1}{N}).
\end{equation*}
\begin{remark} \label{rmk.WLOGmod}
	We remark that a modulus of continuity (be it spatial, temporal, or approximate temporal) is not unique; a modulus of continuity can always be replaced by any other modulus of continuity which dominates it. Hence, we will often assume without loss of generality that any two {\em a priori} distinct moduli of continuity are in fact the same.\end{remark}
	
	\subsection{Entropy and Distributional Solutions}\label{ss.sol}
We define two notions of solution for the Cauchy problem 
\begin{equation} \label{e.pme}
	\begin{cases}
		u_{t} - \frac{1}{2d}\Delta(u^{m+1}) = 0 &\text{in } Q_{T}=\R^{d}\times (0,T], \\
		u(x,0) = u_{0}(x) &\text{in $\R^{d}$}.
	\end{cases}
\end{equation}
Our work builds upon that of Karlsen and Risebro \cite{karlsenrisebro01}. In that article, they work with a \emph{nonnegative entropy solution}, which is defined as follows. 
\begin{definition}
	For $T > 0$, we say that a measurable function $u: Q_{T}\to \R$ is a \emph{nonnegative entropy solution} to the equation $u_{t} - \frac{1}{2d}\Delta(u^{m+1}) = 0$ in $Q_{T}$ if the following holds:
	\begin{enumerate}
		\item $u \in  L^{1}(Q_{T}) \cap L^{\infty}(Q_{T}) \cap C((0,T);L^{1}(\R^{d}))$.
		\item $u \geq 0$ a.e.\@ in $Q_{T}$.
		\item For all $c \in \R$ and all non-negative $\vp \in C^{\infty}_{c}(\R^{d} \times [0,T])$,
		\begin{equation} \label{e.etrpycond}
			\int_{\R^{d}}\int_{0}^{T}\left(|u-c|\vp_{t} +\left|u^{m+1}-c^{m+1}\right|\frac{1}{2d}\Delta \vp\right)\,dt\,dx \geq 0.
		\end{equation}
		\item $u^{m+1} \in L^{2}((0,T) ;H^{1}(\R^{d})),$ where $H^{1}(\R^{d})$ is the Sobolev space $W^{1,2}(\R^{d})$.
	\end{enumerate}

\end{definition}

\begin{definition}
	Let $u_{0} \in L^{1}(\R^{d}) \cap L^{\infty}(\R^{d})$. For $T > 0$, we say $u: Q_{T}\to \R$ is a \emph{nonnegative entropy solution} to \eqref{e.pme} (the initial value problem) if it is a nonnegative entropy solution in the above sense and as $t \to 0^{+}$, $\norm{u(\cdot,t) - u_{0}}_{L^{1}(\R^{d})} \to 0.$
\end{definition}

\begin{remark}
	We remark that the definition in \cite{karlsenrisebro01} is for a more general PDE, and we tailor the definition presented here to reflect the special case we are interested in. Furthermore, we require that $u \geq 0$ a.e.\@ in order to ensure that $u \mapsto u^{m+1}$ is a nondecreasing function. In Remark  \ref{rmk.KRdiffs}, we further outline the details of how our setting is a specific case of the one considered in \cite{karlsenrisebro01}.
\end{remark}

\begin{remark}
	If $u\in  L^{1}(Q_{T}) \cap L^{\infty}(Q_{T}) \cap C((0,T);L^{1}(\R^{d}))$ is a nonnegative entropy solution to the initial value problem \eqref{e.pme}, we can take the convention of defining $u(x,0) := u_{0}(x)$. Hence, the convergence $\norm{u(\cdot,t)-u_{0}}_{L^{1}(\R^{d})} \to 0$ allows us to conclude that in fact $u \in C([0,T); L^{1}(\R^{d}))$ instead of merely $u \in C((0,T);L^{1}(\R^{d}))$.
\end{remark}

There is another kind of solution to \eqref{e.pme} which allows for distributional initial conditions, i.e. the case where $u_{0}$ is a Radon measure. 

\begin{definition} \label{defn.distsol}
	For $T > 0$, we say that a measurable function $u: Q_{T}\to \R$ is a \emph{distributional solution} of $u_{t} - \frac{1}{2d}\Delta(u^{m+1}) = 0$ in $Q_{T}$ if 
	\begin{enumerate}
		\item $u \in L^{1}_{\loc}(Q_{T})$, the space of locally integrable functions on $Q_{T}$.
		\item $u^{m+1} \in L^{1}_{\loc}(Q_{T})$.
		\item $u \in C((0,T);L^{1}_{\loc}(\R^{d}))$.
		\item For all $\vp \in C^{\infty}_{c}(Q_{T})$, we have
		\begin{equation} \label{e.distIBP}
			\int_{\R^{d}} \int_{0}^{T} \left(u \vp_{t} + (u^{m+1})\frac{1}{2d}\Delta\vp\right)\,dt\,dx = 0.
		\end{equation}
	\end{enumerate}
\end{definition}

We remark that in the above definition, condition (3) is immediate from the other conditions by the following theorem.

\begin{thm}[\cite{vazquez2007porous}, Theorem 13.16] \label{thm.distribcont}
	For $T > 0$, let $u: Q_{T}\to \R$ be a measurable function satisfying conditions (1), (2), and (4) of Definition \ref{defn.distsol}. Then, $u$ is locally H\"older continuous, in particular $u \in C(Q_{T})$ and $u \in C((0,T); L^{1}_{\loc}(\R^{d}))$. 
\end{thm}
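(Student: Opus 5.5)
Since this result is quoted from \cite{vazquez2007porous}, the plan is to reconstruct the standard regularity argument for degenerate parabolic equations. Throughout I work, as in the rest of the paper, with nonnegative $u$, so that $u^{m+1}$ is the monotone nonlinearity $\Phi(u)=u^{m+1}$. The argument goes through local energy estimates, local boundedness, and then DiBenedetto's intrinsic-scaling version of De Giorgi's method.

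\textbf{Step 1: energy estimates.} I first upgrade the very weak formulation \eqref{e.distIBP} to an energy (weak) formulation. The required regularity is $\nabla(u^{m+1}) \in L^2_{\loc}(Q_T)$, together with local energy (Caccioppoli) inequalities for the truncations $(u-k)_\pm$ on cylinders $Q_\rho=B_\rho(x_0)\times(t_0-\theta\rho^2,t_0]\Subset Q_T$.

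\begin{itemize}
\item I mollify $u$ in space and time and pass to the equation satisfied by the mollification, using only $u,u^{m+1}\in L^1_{\loc}$.
\item I test against $\varphi=\zeta^2\,\big((u_\varepsilon^{m+1}-k^{m+1})_\pm\big)$, where $\zeta$ is a cutoff.
\item Before this I need a local $L^\infty$ bound. I obtain it by a Moser-type iteration starting from $L^1_{\loc}$: the $L^1$--$L^\infty$ smoothing for PME subsolutions, comparing locally with the Barenblatt profile $\bar u$ of Section~\ref{ss.bb} as a barrier, or running Moser directly on powers of $u$.
\end{itemize}

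This step contains the technical care needed to justify the test functions. I would follow the Steklov-average approach.

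\textbf{Step 2: oscillation reduction.} Once $u$ is locally bounded and the Caccioppoli inequalities hold, I prove a reduction-of-oscillation lemma. Let $\omega=\operatorname{osc}_{Q}u$ on an intrinsically scaled cylinder $Q=B_\rho\times(t_0-\omega^{-m}\rho^2,t_0]$. The time scale $\omega^{-m}\rho^2$ is chosen to compensate for the degenerate diffusivity $(m+1)u^m$. The lemma states that there are $\eta,\sigma\in(0,1)$, depending only on $d,m$, such that
\begin{equation*}
\operatorname{osc}_{Q'}u\le \eta\,\omega \quad \text{on a smaller intrinsic cylinder } Q' \text{ of radius } \sigma\rho.
\end{equation*}
The proof has two alternatives:

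\begin{itemize}
\item Either $u$ is near its infimum on a large portion of some subcylinder. Then De Giorgi iteration on $(u-k)_-$ lifts the infimum.
\item Or it is not. Then a logarithmic estimate, combined with the measure-theoretic shrinking lemma, lowers the supremum.
\end{itemize}

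Iterating the lemma over nested cylinders gives $\operatorname{osc}\le C\omega_0(r/\rho_0)^\alpha$, which is local H\"older continuity, so $u\in C(Q_T)$.

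\textbf{Step 3: continuity in $L^1_{\loc}$.} Continuity of $u$ on $Q_T$ gives uniform continuity on every compact set $K\times[t_1,t_2]\subset Q_T$. This immediately yields $u\in C((0,T);L^1_{\loc}(\R^d))$.

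\textbf{Main obstacle.} I expect the main obstacle to be Step 2 near the degeneracy $u\approx 0$. There the equation loses uniform parabolicity, and the intrinsic scaling must be set up so that the De Giorgi constants do not depend on $\omega$.

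A secondary difficulty is the start of Step 1, obtaining local boundedness from merely $L^1_{\loc}$ data. For this I would first try the local $L^1\to L^\infty$ estimate $\sup_{Q_{\rho/2}}u\le C\big(\rho^{-d}(\theta\rho^2)^{-1}\iint_{Q_\rho}u\big)^{\kappa}+C(\rho^2/\theta)^{1/m}$, which is proved by Moser iteration using the energy inequalities for powers $u^p$.
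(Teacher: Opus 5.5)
\textbf{Gap in Step 1.} The paper gives no argument here; it quotes \cite[Theorem 13.16]{vazquez2007porous}, so your reconstruction has to carry the full weight. Your Step 2 (DiBenedetto's intrinsic-scaling De Giorgi method) and your Step 3 are the right tools for a nonnegative solution that is \emph{already} locally bounded with $\nabla(u^{m+1})\in L^2_{\loc}(Q_T)$. The hypotheses, however, only give $u,u^{m+1}\in L^1_{\loc}(Q_T)$ and the very weak identity \eqref{e.distIBP}. Step 1, which is meant to bridge this, does not work as described.

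\emph{Mollification.} The mollified function satisfies $\partial_t u_\varepsilon=\frac{1}{2d}\Delta\big((u^{m+1})_\varepsilon\big)$, not an equation in $u_\varepsilon^{m+1}$. Testing with $\zeta^2(u_\varepsilon^{m+1}-k^{m+1})_\pm$ produces the term $\nabla(u^{m+1})_\varepsilon\cdot\nabla(u_\varepsilon^{m+1}-k^{m+1})_\pm$, which has no sign. Controlling the commutator $(u^{m+1})_\varepsilon-u_\varepsilon^{m+1}$ as $\varepsilon\to0$ requires exactly the $L^2_{\loc}$ bound on $\nabla(u^{m+1})$ you are trying to prove. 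Testing with functions of $(u^{m+1})_\varepsilon$ instead makes the diffusion term coercive but destroys the chain rule in the time term. Without mollification the energy identity is not even meaningful: nothing gives $u^{m+2}\in L^1_{\loc}$ or $u^{m+1}\in L^2_{\loc}$.

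\emph{Local $L^\infty$ bound.} Obtaining the bound first does not rescue this. Moser iteration and the local $L^1$--$L^\infty$ estimate you quote are proved by testing with cut-offs times powers of $u$, so they presuppose the energy inequalities. The argument is therefore circular: you need boundedness to justify the test functions, and the test functions to get boundedness.

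\emph{Barrier.} The barrier option needs a comparison principle between a merely very weak solution and the barrier. That is a duality/uniqueness-type result in its own right, not something you may assume. Moreover, a Barenblatt profile cannot dominate a local solution whose values on the lateral boundary are arbitrary; you would need barriers that blow up on the parabolic boundary.

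What is missing is an actual proof, from \eqref{e.distIBP} and $u\ge 0$ alone, that $u\in L^\infty_{\loc}(Q_T)$ and $\nabla(u^{m+1})\in L^2_{\loc}(Q_T)$. Only after that do Steps 2--3 apply.

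For the way the paper actually uses the statement, this step can be bypassed. The functions it is applied to are the Barenblatt solution, which is explicitly continuous, and the limit $\tilde u$ in the proof of Theorem~\ref{t.mainpde}, which is an entropy solution on each $\R^d\times(\eta_i,T)$. Hence $\tilde u$ is bounded there with $\tilde u^{m+1}\in L^2(H^1)$, and your Step 2 applies directly.

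\textbf{Smaller slip in Step 2.} The two alternatives are swapped. De Giorgi iteration on $(u-k)_-$ lifts the infimum when the set where $u$ is close to its infimum fills only a \emph{small} fraction of some intrinsic subcylinder. In the complementary case, $u$ stays away from its supremum on a fixed fraction of every subcylinder. That is where the logarithmic estimate and the shrinking lemma lower the supremum.
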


As for the initial conditions of a distributional solution, we have the following definition.

\begin{definition}
	Let $u_{0}$ be a Radon measure on $\R^{d}$. We say that $u$ is a distributional solution of \eqref{e.pme} (the initial value problem) if $u$ is a distributional solution in the above sense, and if for all $\vp \in C_{c}(\R^{d})$, 
	\begin{equation*}
		\lim_{t \to 0^{+}} \int_{\R^{d}}u(x,t)\vp(x) \,dx = \int_{\R^{d}} \vp(x) u_{0}(dx).
	\end{equation*}
	In such a case we call $u_{0}$ the \emph{initial trace} of $u$. For $u_{0} \in L^{1}_{\loc}(\R^{d})$, we use the embedding of $u_{0}$ as a Radon measure, namely $u_{0}(dx) = u_{0}\,dx$, in the above definition. 
\end{definition}

We remark that if $u_{0} \in L^{1}_{\loc}(\R^{d})$, the above definition is equivalent to saying $u(\cdot,t) \xrightarrow{t\to 0^{+} }u_{0}$, in $L^{1}_{\loc}(\R^{d})$, and is thus consistent with the way we interpret initial conditions for nonnegative entropy solutions, albeit for convergence in $L^1_{\loc}(\R^{d})$ instead of $L^1(\R^{d})$.

The reason for introducing distributional solutions is that they allow for a theory of uniqueness, even when the initial condition $u_{0}$ is a Radon measure.

\begin{thm}[\cite{vazquez2007porous}, Theorem 13.8] \label{thm.distunique}
	For $T > 0$, let $u_{1}$ and $u_{2}$ be two nonnegative distributional solutions of $u_{t} - \frac{1}{2d}\Delta(u^{m+1}) = 0$ in $Q_{T}$. If the initial traces of both $u_{1}$ and $u_{2}$ coincide, then $u_{1} = u_{2}$ a.e.\@ in $Q_{T}$.
\end{thm}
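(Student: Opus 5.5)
The plan is a duality (Holmgren-type) argument, applied on a time slab $[\tau,T']$ with $0<\tau<T'<T$. Afterwards we send $\tau\to0^+$, using only the common initial trace. Set $w:=u_1-u_2$ and
\[
a:=\frac{u_1^{m+1}-u_2^{m+1}}{u_1-u_2}\quad(\text{with } a:=(m+1)u_1^m \text{ where } u_1=u_2),
\]
so $a\ge0$. By \eqref{e.distIBP}, $w$ is a distributional solution of the linear equation $w_t=\frac{1}{2d}\Delta(aw)$.

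The first step is to get a priori local regularity away from $t=0$. By Theorem \ref{thm.distribcont}, both $u_i$ are continuous in $Q_T$. I would then invoke the standard local smoothing estimates for nonnegative solutions of the PME: local $L^\infty$ bounds on $\R^d\times[\tau,T']$ in terms of local $L^1$ norms. The local $L^1$ norms are in turn controlled near $t=0$ by the mass of the initial trace on slightly larger balls. This makes $a$ locally bounded on each slab $[\tau,T']$. It also gives the uniform-in-$\tau$ local mass bound $\sup_{0<t<T'}\int_{B_\rho}u_i(x,t)\,dx\le C(\rho)$, which follows by testing \eqref{e.distIBP} against cutoffs.

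The second step is the dual problem. Fix $\psi\in C_c^\infty(Q_{T'})$ with $\psi\ge0$. Mollify and truncate to get a coefficient $a_\varepsilon$, smooth, with $\varepsilon\le a_\varepsilon\le C$ on the slab. Solve the backward problem
\[
\partial_t\varphi_\varepsilon+\tfrac{1}{2d}a_\varepsilon\Delta\varphi_\varepsilon=-\psi,\qquad \varphi_\varepsilon(\cdot,T')=0.
\]
To keep the support bounded, pose this on a large ball with zero Dirichlet data and let the radius tend to infinity. Use $\varphi_\varepsilon$ as a test function on $[\tau,T']$. This gives
\[
\iint w\psi=\int w(x,\tau)\varphi_\varepsilon(x,\tau)\,dx+\tfrac{1}{2d}\iint w\,(a-a_\varepsilon)\Delta\varphi_\varepsilon .
\]
Two estimates close the argument. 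First, the classical energy bound $\iint a_\varepsilon|\Delta\varphi_\varepsilon|^2\le C$, uniform in $\varepsilon$, obtained by multiplying by $\Delta\varphi_\varepsilon$. With the choice of $a_\varepsilon$ this sends the error term to $0$, as in Brezis--Crandall / Pierre. Second, a bound $0\le\varphi_\varepsilon\le C$ together with equicontinuity of $\varphi_\varepsilon(\cdot,\tau)$ that is uniform in $\tau$. For the latter, $\varphi_\varepsilon$ should be controlled by a supersolution depending only on $\psi$. Far-field tails are handled by the local mass bounds together with a barrier, which uses that $\psi$ has compact support.

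The final step is $\tau\to0^+$, and I expect it to be the main obstacle. Here $\varphi_\varepsilon(\cdot,\tau)$ depends on $\tau$ through the coefficient, while $w(\cdot,\tau)$ converges only weakly-$\ast$ to $0$, since the two initial traces agree. I would first extract $\varphi_\varepsilon(\cdot,\tau)\to\varphi_0$ uniformly on compacts, using the equicontinuity from the second step. Combined with the uniform local mass bound, this gives $\int w(\tau)\varphi_\varepsilon(\tau)\to\int\varphi_0\,d(\mu-\mu)=0$. This forces $\iint w\psi=0$ for every $\psi$, hence $u_1=u_2$ a.e. If the uniform-in-$\tau$ modulus for $\varphi_\varepsilon$ fails, because $a$ may blow up like $t^{-dm\beta}$ as $t\to0$, I would fall back on Pierre's potential method. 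That method compares the Newtonian potentials $(-\Delta)^{-1}u_i$, which satisfy an ordered parabolic inequality and converge uniformly as $t\to0$ thanks to the measure trace.
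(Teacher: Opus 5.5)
The paper gives no argument of its own for this statement. It quotes \cite[Theorem 13.8]{vazquez2007porous}, which is stated there for continuous solutions; that suffices by Theorem~\ref{thm.distribcont}. Judged on its own, your proposal has a genuine gap, and it is exactly where you feared: the limit $\tau\to0^+$.

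On a fixed slab your first two steps are standard. However, the only information about $\varphi_\varepsilon$ that is uniform in both $\varepsilon$ and $\tau$ is:
(a) the maximum-principle bound $0\le\varphi_\varepsilon\le\int_0^{T'}\|\psi(\cdot,s)\|_{L^\infty}\,ds$;
(b) the energy bound $\sup_t\|\nabla\varphi_\varepsilon(\cdot,t)\|_{L^2}^2+\iint a_\varepsilon|\Delta\varphi_\varepsilon|^2\le C(\psi)$, which does not even see $\sup a_\varepsilon$.

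Nothing in your argument produces the claimed equicontinuity of $\varphi_\varepsilon(\cdot,\tau)$. The dual operator $\partial_t+\frac{1}{2d}a_\varepsilon\Delta$ is in nondivergence form. Its coefficient has no positive lower bound: it is $\approx\varepsilon$ wherever $u_1=u_2=0$, e.g.\ outside the supports. It also has no upper bound as $\tau\to0$. So no interior regularity theory gives a modulus of continuity uniform in $\varepsilon,\tau$, and a supersolution controls size, not oscillation.

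Without locally uniform convergence of $\varphi_\varepsilon(\cdot,\tau)$, the weak-$*$ convergence $u_i(\cdot,\tau)\rightharpoonup\mu$ says nothing about $\int w(\cdot,\tau)\varphi_\varepsilon(\cdot,\tau)\,dx$. The $H^1$ bound cannot substitute for it. For $\mu=\delta$ and $d\ge2$ we have $\delta\notin H^{-1}_{\rm loc}$, so each $u_i(\cdot,\tau)$ is unbounded in $H^{-1}_{\rm loc}$. You would therefore need $w(\cdot,\tau)\to0$ in $H^{-1}_{\rm loc}$. That is a quantified cancellation between $u_1$ and $u_2$, which equality of weak traces does not provide. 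So the final step, which carries the whole difficulty of uniqueness with measure data, is not proved.

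What is missing is a way to turn the coincidence of the traces into \emph{strong} information about the difference at $t=0$. Test \eqref{e.distIBP} with $\zeta(x)\chi(t)$ and use the common trace. This gives $u_1(\cdot,\tau)-u_2(\cdot,\tau)=\frac{1}{2d}\Delta W(\cdot,\tau)$ in the sense of distributions, where $W(\cdot,\tau):=\int_0^\tau(u_1^{m+1}-u_2^{m+1})\,ds$, and $W$ satisfies $W_t=\frac{1}{2d}a\,\Delta W$. Moreover $W(\cdot,\tau)\to0$ in $L^1_{\rm loc}$, because $u_i^{m+1}$ is locally integrable up to $t=0$. For the solutions used in the paper this follows from $\|u_i(\cdot,s)\|_{L^\infty}\le Cs^{-d\beta}$, unit mass, and $dm\beta<1$. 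This reduces the problem to uniqueness with a genuinely zero initial datum, which is the core of Pierre's argument.

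Your fallback sentence points this way, since up to normalization $W$ is the difference of the Newtonian potentials. But it is not carried out, and as phrased it is inaccurate. For $\mu=\delta$ the potentials of $u_i(\cdot,t)$ cannot converge uniformly, because the limit potential is singular at the origin; only their difference can become small.

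Finally, the statement covers locally finite traces and solutions that may grow at infinity. At that generality, your Dirichlet dual problem produces boundary terms $\frac{1}{2d}\int_\tau^{T'}\!\int_{\partial B_R}(u_1^{m+1}-u_2^{m+1})\,\partial_\nu\varphi_\varepsilon$. Local mass bounds plus a barrier do not control these; that is where Dahlberg--Kenig-type growth estimates are needed. In the paper's application ($\mu=\delta$, finite mass) this part is harmless.
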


\begin{remark}
	In \cite[Theorem 13.8]{vazquez2007porous}, the result is for uniqueness of continuous nonnegative distributional solutions. However, by Theorem \ref{thm.distribcont}, all distributional solutions have an a.e.\@ continuous representative. Hence we can state Theorem \ref{thm.distunique} in the form given.
\end{remark}

Lastly, we show that all nonnegative entropy solutions are in fact distributional solutions.

\begin{prop} \label{prop.entropdist}
	Let $u$ be a nonnegative entropy solution to $u_{t} - \frac{1}{2d}\Delta(u^{m+1}) = 0$ in $Q_{T}$. Then $u$ is a distributional solution to $u_{t} - \frac{1}{2d}\Delta(u^{m+1}) = 0$ in $Q_{T}$.
\end{prop}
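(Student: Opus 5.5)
We must check conditions (1), (2), (4) of Definition \ref{defn.distsol}; condition (3) then follows from Theorem \ref{thm.distribcont}, and in any case is immediate because $u\in C((0,T);L^{1}(\R^{d}))\subset C((0,T);L^{1}_{\loc}(\R^{d}))$. Conditions (1) and (2) are direct from the entropy definition. Since $u\in L^{1}(Q_{T})\subset L^{1}_{\loc}(Q_{T})$, (1) holds. For (2), $u\geq 0$ and $u\in L^{\infty}(Q_T)$ give $0\le u^{m+1}\le \norm{u}_{L^\infty}^{m}u$, so $u^{m+1}\in L^{1}(Q_{T})$.

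The main step is the weak formulation \eqref{e.distIBP}, and the plan is the standard Kružkov trick of choosing extreme constants $c$ in \eqref{e.etrpycond}. First fix a nonnegative $\vp\in C^\infty_c(Q_T)$. Extend it by zero, so it lies in $C_c^\infty(\R^d\times[0,T])$ and is an admissible test function.
\begin{itemize}
\item Take $c=0$. Since $u\ge0$, we have $|u-0|=u$ and $|u^{m+1}-0|=u^{m+1}$. Then \eqref{e.etrpycond} gives $\iint (u\vp_t+u^{m+1}\tfrac1{2d}\Delta\vp)\ge 0$.
\item Take $c=M:=\norm{u}_{L^\infty(Q_T)}$. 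Since $u\le M$ a.e. and $s\mapsto s^{m+1}$ is nondecreasing on $[0,\infty)$, we get $|u-M|=M-u$ and $|u^{m+1}-M^{m+1}|=M^{m+1}-u^{m+1}$. Then \eqref{e.etrpycond} gives $\iint (M\vp_t+M^{m+1}\tfrac1{2d}\Delta\vp)-\iint (u\vp_t+u^{m+1}\tfrac1{2d}\Delta\vp)\ge0$. The constant terms vanish, because $\vp$ has compact support in $Q_T$, so $\int_0^T\vp_t\,dt=0$ and $\int_{\R^d}\Delta\vp\,dx=0$. This yields the reverse inequality.
\end{itemize}
Together the two bullets give equality in \eqref{e.distIBP} for every nonnegative test function. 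It remains to pass to a general $\vp\in C_c^\infty(Q_T)$. Choose $\psi\in C_c^\infty(Q_T)$ with $\psi\ge0$ and $\psi\ge|\vp|$, for instance $\psi=K\chi$ with $\chi$ a smooth cutoff equal to $1$ on $\operatorname{supp}\vp$ and $K=\max|\vp|$. Then $\vp=(\vp+\psi)-\psi$ is a difference of nonnegative smooth compactly supported functions, and linearity of the identity in $\vp$ concludes.

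The only subtle point is integrability of the terms, so that the constant terms may be split off and discarded. Here boundedness of $u$ and the compact support of $\vp$ make every integrand bounded and compactly supported, so this step is routine. The $H^{1}$ condition (4) of the entropy definition is not needed.
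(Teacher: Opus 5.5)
Your proof is correct and follows the paper's argument: you test the entropy inequality \eqref{e.etrpycond} with $c=0$ and with $c=\norm{u}_{L^{\infty}(Q_{T})}$ to obtain both inequalities for nonnegative test functions, using that the constant terms integrate to zero. The only difference is how you handle sign-changing $\varphi$. Writing $\varphi=(\varphi+\psi)-\psi$ with $\psi=K\chi\ge|\varphi|$ keeps both pieces in $C^{\infty}_{c}(Q_{T})$. This is cleaner than the paper's splitting $\varphi=\varphi^{+}-\varphi^{-}$, which requires smoothing $\varphi^{\pm}$ at their zero level sets and a limiting step that the paper only sketches.
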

\begin{proof}
	We remark that $u \in L^{\infty}(Q_{T})$ implies that $u, u^{m+1} \in L^{1}_{\loc}(Q_{T})$. Therefore all that remains to be shown is the integration by parts formula \eqref{e.distIBP}. Let $\vp \in C^{\infty}_{c}(Q_{T})$. First, we assume $\vp \geq 0$. By choosing $c = 0$ in \eqref{e.etrpycond}, we have
	\begin{equation*}
		\int_{\R^{d}}\int_{0}^{T}\left(u\vp_{t} +u^{m+1}\frac{1}{2d}\Delta \vp\right)\,dt\,dx \geq 0.
	\end{equation*}
	Next, by choosing $c = \norm{u}_{L^{\infty}(Q_{T})}$ in \eqref{e.etrpycond}, we have
	\begin{equation*}
		\int_{\R^{d}}\int_{0}^{T}\left((\norm{u}_{L^{\infty}(Q_{T})}-u)\vp_{t} +\left(\norm{u}_{L^{\infty}(Q_{T})}^{m+1}-u^{m+1}\right)\frac{1}{2d}\Delta \vp\right)\,dt\,dx \geq 0,
	\end{equation*}
which implies
	\begin{align*}
		\int_{\R^{d}}\int_{0}^{T}\left(u\vp_{t} +u^{m+1}\frac{1}{2d}\Delta \vp\right)\,dt\,dx &\leq  \int_{\R^{d}}\int_{0}^{T}\left(\norm{u}_{L^{\infty}(Q_{T})}\vp_{t} +\norm{u}_{L^{\infty}(Q_{T})}^{m+1}\frac{1}{2d}\Delta \vp\right)\,dt\,dx\\
		& = 0,
	\end{align*}
	where we used the fundamental theorem of calculus and the compact support of $\vp$ in the last step. Hence, we conclude that \eqref{e.distIBP} holds for for all $\vp \in C^{\infty}_{c}(Q_{T})$ with $\vp \geq 0$. To account for general $ \vp \in C^{\infty}_{c}(Q_{T})$, we decompose $\vp$ into $\vp = \vp^{+} - \vp^{-}$ for $\vp^{+}, \vp^{-} \geq 0$ and $\vp^{+}\vp^{-} \equiv 0$. We note that $\vp^{+}$ and $\vp^{-}$ may not be smooth at their $0$-level sets. However, by approximating each of $\vp^{+}$ and $\vp^{-}$ by smooth functions, we can still conclude \eqref{e.distIBP} holds for both $\vp$ replaced by $\vp^{+}$ and for $\vp$ replaced by $\vp^{-}$. Finally, recalling $\vp = \vp^{+} - \vp^{-}$, we obtain the desired result for $\vp$. 
\end{proof}

By combining Theorems \ref{thm.distunique} and Proposition \ref{prop.entropdist} we obtain the following result. 

\begin{cor}\label{cor:uniqueness}
	Let $T > 0$ and $u_{0} \in L^{1}(\R^{d}) \cap L^{\infty}(\R^{d})$. If $u_{1}$ and $u_{2}$ are two nonnegative entropy solutions to \eqref{e.pme}, then $u_{1} \overset{a.e.}{=} u_{2}$.
\end{cor}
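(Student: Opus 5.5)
The plan is to combine Proposition \ref{prop.entropdist} with Theorem \ref{thm.distunique}. Let $u_{1}, u_{2}$ be two nonnegative entropy solutions of the initial value problem \eqref{e.pme} with the same $u_{0}\in L^{1}(\R^{d})\cap L^{\infty}(\R^{d})$.

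\emph{Step 1.} By Proposition \ref{prop.entropdist}, each $u_{i}$ is a distributional solution of $u_{t}-\frac{1}{2d}\Delta(u^{m+1})=0$ in $Q_{T}$. Each is also nonnegative a.e., by condition (2) of the entropy definition.

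\emph{Step 2.} Show that both initial traces equal the Radon measure $u_{0}\,dx$. By definition of an entropy solution of the initial value problem, $\norm{u_{i}(\cdot,t)-u_{0}}_{L^{1}(\R^{d})}\to 0$ as $t\to 0^{+}$. For $\vp\in C_{c}(\R^{d})$ this gives
\begin{equation*}
\left|\int_{\R^{d}} u_{i}(x,t)\vp(x)\,dx-\int_{\R^{d}}u_{0}(x)\vp(x)\,dx\right|\leq \norm{\vp}_{L^{\infty}(\R^{d})}\norm{u_{i}(\cdot,t)-u_{0}}_{L^{1}(\R^{d})}\to 0 .
\end{equation*}
So $u_{0}\,dx$ is the initial trace of both $u_{1}$ and $u_{2}$, and in particular the traces coincide.

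\emph{Step 3.} Theorem \ref{thm.distunique} now gives $u_{1}=u_{2}$ a.e.\ in $Q_{T}$.

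The only point needing care is the mismatch in time continuity. Entropy solutions are assumed to lie in $C((0,T);L^{1}(\R^{d}))$, while condition (3) of Definition \ref{defn.distsol} asks for $C((0,T);L^{1}_{\loc}(\R^{d}))$. Global $L^{1}$ continuity implies local $L^{1}$ continuity, and in any case Theorem \ref{thm.distribcont} supplies this condition automatically. Beyond this check, the argument is a direct assembly of the stated results, and I expect no real obstacle.
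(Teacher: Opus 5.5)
Your proposal is correct and takes the same route as the paper, which obtains the corollary directly by combining Proposition \ref{prop.entropdist} (nonnegative entropy solutions are distributional solutions) with the uniqueness result Theorem \ref{thm.distunique}. Your Step 2 just spells out the routine check, noted in the paper's remark after the definition of initial trace, that $L^{1}$ convergence $u_{i}(\cdot,t)\to u_{0}$ gives both solutions the initial trace $u_{0}\,dx$.
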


\begin{remark} \label{rmk.solshift}
	We remark that by considering the function $\tilde{u}(x,t) := u(x,t+a)$, one can adapt the two definitions of solutions presented above to domains of the form $\R^{d}\times (a,b)$.
\end{remark}

\subsection{Tools and technical results}
In this section, we outline some of the tools and technical lemmas that we will use throughout the rest of this section.

We begin with a compactness result for $L^{1}_{\loc}(\R^{d}\times[a,b])$.
\begin{lem} [\cite{splitting}, Lemma 3.3] \label{lem.L1loccmpct}
	Let $(v_{N})_{N \in \N}$ be a sequence of functions defined on $\R^{d} \times [a,b]$ which satisfies the following conditions:
	\begin{enumerate}
		\item There exists $C> 0$ such that for all $N \in \N$ and $t \in [a,b]$,
		\begin{equation*}
			\norm{v_{N}(\cdot,t)}_{L^{1}(\R^{d})} \leq C \text{ and } \norm{v_{N}(\cdot,t)}_{L^{\infty}(\R^{d})} \leq C.
		\end{equation*}
		\item There exists a spatial modulus of continuity $\nu$ such that for all $N\in \N$ and $t \in [a,b]$,
		\begin{equation*}
			\norm{v_{N}(\cdot + y,t) - v_{N}(\cdot,t)}_{L^{1}(\R^{d})} \leq \nu(|y|).
		\end{equation*}
		\item There exists an approximate temporal modulus of continuity $\om$ such that for all $N \in \N$ and $s, t \in [a,b]$,
		\begin{equation*}
			\norm{v_{N}(\cdot,t) - v_{N}(\cdot,s)}_{L^{1}(\R^{d})} \leq \om(|t-s| + \tfrac{1}{N}).
		\end{equation*}
	\end{enumerate}
	Then $(v_{N})_{N\in \N}$ is compact in the strong topology of $L^{1}_{\loc}(\R^{d}\times[a,b])$. Moreover, if $v$ is an $L^{1}_{\loc}(\R^{d}\times[a,b])$-limit point of $(v_{N})_{N \in \N}$, then $v \in L^{1}(\R^{d}\times[a,b])\cap L^{\infty}(\R^{d}\times[a,b]) \cap C([a,b];L^{1}(\R^{d}))$, and $v$ has a spatial modulus of continuity $\nu$ on $\R^{d}\times[a,b]$ and $v$ has a temporal modulus of continuity $\om$ on $\R^{d}\times[a,b]$. 
\end{lem}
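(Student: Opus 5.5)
This is a Kolmogorov--Riesz/Fréchet type compactness statement, and I will prove it in two stages. First I establish relative compactness in $L^{1}_{\loc}(\R^{d}\times[a,b])$, then I identify the properties of any limit point.

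\textbf{Stage 1: compactness.} Fix a compact $K\subset\R^{d}$. It suffices to show that $(v_N)$ restricted to $K\times[a,b]$ is totally bounded in $L^1$. I mollify in space and time by $\rho_\ve(x)\eta_\ve(t)$, extending $v_N(\cdot,t)$ to $t$ outside $[a,b]$ by its value at the nearest endpoint. The spatial modulus $\nu$ and the approximate temporal modulus $\om$ give
\[
\norm{v_N-v_N*(\rho_\ve\eta_\ve)}_{L^1(\R^d\times[a,b])}\le (b-a)\,\nu(\ve)+(b-a)\,\om(\ve+\tfrac1N).
\]
This is small uniformly once $\ve$ is small and $N$ is large. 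The finitely many small $N$ are each a single $L^1_{\loc}$ function, so they do not affect total boundedness. For fixed $\ve$, the mollified family is uniformly bounded by $C$ (from the $L^\infty$ bound) and equi-Lipschitz on $K\times[a,b]$, with Lipschitz constant controlled by $C\norm{\nabla(\rho_\ve\eta_\ve)}_{L^1}$ via the uniform $L^\infty$ bound. Arzelà--Ascoli therefore makes it precompact in $C(K\times[a,b])$, hence in $L^1(K\times[a,b])$. An $\ve/3$ argument then gives total boundedness, and a diagonal argument over an exhaustion $K_j\uparrow\R^d$ gives a subsequence converging in $L^1_{\loc}$. The one delicate point is the $1/N$ in $\om$, and it is handled exactly as described: restrict to $N\ge N_0(\ve)$.

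\textbf{Stage 2: properties of a limit $v$.} Along the subsequence $v_{N}\to v$ in $L^1_{\loc}$, I pass to a further subsequence converging a.e. Then $|v|\le C$ a.e. For a.e. $t$, Fatou's lemma gives $\norm{v(\cdot,t)}_{L^1}\le C$, so $v\in L^1\cap L^\infty$.

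For the spatial modulus, Fubini shows that for a.e. $t$, $v_N(\cdot,t)\to v(\cdot,t)$ in $L^1_{\loc}$ along a subsequence. Lower semicontinuity of the $L^1(B_R)$ norm, followed by $R\to\infty$, yields $\norm{v(\cdot+y,t)-v(\cdot,t)}_{L^1}\le\nu(|y|)$.

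For the temporal modulus, the same argument at a.e. pair $s,t$ gives $\norm{v(\cdot,t)-v(\cdot,s)}_{L^1}\le\om(|t-s|+1/N)$ for all $N$ and hence, by continuity of $\om$, $\le\om(|t-s|)$. This requires only right-continuity of $\om$ at $|t-s|$; if $\om$ is merely nondecreasing, I replace it by its right-continuous (or continuous) upper envelope, which is allowed by Remark \ref{rmk.WLOGmod}.

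Finally, $v$ is uniformly continuous into $L^1$ off a null set of times. It therefore extends uniquely to a representative in $C([a,b];L^1(\R^d))$ that satisfies both moduli at every time, by density and completeness of $L^1$.

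The main obstacle is the bookkeeping in Stage 1. The $L^1_{\loc}$ precompactness must be uniform despite the $1/N$ defect and the lack of any uniform decay at infinity. Working on compacts $K$ avoids the decay issue entirely, since only $L^1_{\loc}$ compactness is claimed.
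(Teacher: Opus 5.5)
Your proof is correct, but it does not follow the paper, because the paper gives no proof of this lemma. It quotes \cite[Lemma 3.3]{splitting}. In the remark after the statement it only adds two things: the extension of $v$ to the endpoints $a,b$, using uniform continuity of $t\mapsto v(\cdot,t)$ and completeness of $L^1(\R^d)$, and the transfer of $\nu$ to those endpoints. That is essentially your last step.

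Your Stage~1 replaces the citation with a self-contained space--time mollification plus Arzel\`a--Ascoli argument. The constant-in-time extension of $v_N$ beyond $[a,b]$ handles the boundary correctly, and restricting to $N\ge N_0(\varepsilon)$ handles the $1/N$ defect correctly. Your Stage~2 then reads off the properties of $v$ from almost every time slice.

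What your route buys is an elementary proof that treats the closed interval $[a,b]$ uniformly and makes explicit which regularity of $\omega$ is used. What it does not give directly is convergence on \emph{every} time slice, or in $C([a,b];L^1_{\loc})$. The usual slice-by-slice proof of lemmas of this type (spatial compactness at countably many times, then interpolation through the approximate temporal modulus) produces that for free. In the paper it is recovered afterwards by Lemma~\ref{lem.tslice}, so nothing is lost.

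One small correction concerns the right-continuity detour. Replacing $\omega$ by its right-continuous envelope $\omega^{+}(r):=\lim_{h\downarrow 0}\omega(r+h)$ proves the conclusion for $\omega^{+}$, not for the stated $\omega$. The detour is unnecessary. Once you have the continuous representative, approximate a pair $s<t$ by good times $s_k\downarrow s$ and $t_k\uparrow t$, so that $|t_k-s_k|<|t-s|$. Then $\|v(\cdot,t_k)-v(\cdot,s_k)\|_{L^1(\R^d)}\le\omega^{+}(|t_k-s_k|)\le\omega(|t-s|)$. Letting $k\to\infty$ gives the modulus $\omega$ itself at every pair.
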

\label{rmk.endpoints}
\begin{remark}
	We note that the reference \cite{splitting} only concludes that $v \in C((a,b);L^{1}(\R^{d}))$ and not that $v \in C([a,b];L^{1}(\R^{d}))$. Because $v$ admits $\om$ as a temporal modulus of continuity, $t \mapsto v(\cdot,t)$ is uniformly continuous as a function from $[a,b]$ to $L^{1}(\R^{d})$. Hence, because $L^{1}(\R^{d})$ is complete, we can extend this mapping uniquely to define $v(\cdot,a)$ and $v(\cdot,b)$ as functions in $L^{1}(\R^{d})$, and then $v \in C([a,b];L^{1}(\R^{d}))$. The temporal continuity also implies that $\nu$ is a spatial modulus of continuity on all of $\R^{d}\times [a,b]$, and not merely $\R^{d} \times (a,b)$.
\end{remark}

Next, we state a technical lemma on the convergence of sequences of functions which admit a temporal modulus of continuity.

\begin{lem} \label{lem.tslice}
	Let $(w_{N})_{N \in \N}$ and $(z_{N})_{N \in \N}$ be two sequences in $L^{1}(\R^{d}\times[a,b])$ which have an approximate temporal modulus of continuity $\om$. Then for all compact $D \subset \R^{d}$, 
	\begin{equation*}
		\norm{w_{N} - z_{N}}_{L^{1}(D\times (a,b))} \xrightarrow{} 0 \text{ if and only if, for all } t \in [a,b], \norm{w_{N}(\cdot,t) - z_{N}(\cdot,t)}_{L^{1}(D)} \xrightarrow{} 0.
	\end{equation*}
\end{lem}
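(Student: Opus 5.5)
We prove the two implications separately. In both directions the idea is the same: use the approximate temporal modulus $\om$ to pass between integrals in time and values at a fixed time.

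\emph{Reverse direction.} Assume $\norm{w_N(\cdot,t)-z_N(\cdot,t)}_{L^1(D)}\to 0$ for every $t\in[a,b]$. Set
\[
f_N(t):=\norm{w_N(\cdot,t)-z_N(\cdot,t)}_{L^1(D)}.
\]
By Fubini, $\norm{w_N-z_N}_{L^1(D\times(a,b))}=\int_a^b f_N(t)\,dt$. We want dominated convergence, so we need a uniform bound on $f_N$, and the plan is to get it from equicontinuity.

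By the triangle inequality, $|f_N(t)-f_N(s)|\le 2\om(|t-s|+\tfrac1N)$. Fix $\varepsilon>0$. Since $\om$ is continuous at $0$ with $\om(0)=0$, choose $\delta>0$ such that $\om(2\delta)<\varepsilon$. Take a finite $\delta$-net $t_1,\dots,t_K$ of $[a,b]$. For $N>1/\delta$ and any $t$, pick $t_j$ with $|t-t_j|\le\delta$; then
\[
f_N(t)\le \max_j f_N(t_j)+2\varepsilon.
\]
Pointwise convergence at the finitely many $t_j$ then gives $\limsup_N \sup_t f_N(t)\le 2\varepsilon$. Since $\varepsilon$ was arbitrary, $f_N\to0$ uniformly, and therefore $\int_a^b f_N\,dt\to0$. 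This route is an Arzel\`a--Ascoli type argument; it does not need a uniform bound on the individual norms, only the shared modulus.

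\emph{Forward direction.} Assume $\int_a^b f_N(t)\,dt\to0$. Fix $t\in[a,b]$ and $\delta>0$, and let $I_\delta=[t-\delta,t+\delta]\cap[a,b]$, an interval of length at least $\min(\delta,b-a)$. For $s\in I_\delta$, the bound $|f_N(t)-f_N(s)|\le 2\om(\delta+\tfrac1N)$ gives $f_N(t)\le f_N(s)+2\om(\delta+\tfrac1N)$. Average this over $s\in I_\delta$:
\[
f_N(t)\le \frac{1}{|I_\delta|}\int_a^b f_N\,ds+2\om\!\left(\delta+\tfrac1N\right).
\]
Let $N\to\infty$: the first term tends to $0$, and since $\om$ is nondecreasing, $\om(\delta+\tfrac1N)\le \om(2\delta)$ eventually. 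Then let $\delta\to0$ to get $\limsup_N f_N(t)=0$.

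The one point needing care is the degenerate case $a=b$, which is trivial. Beyond that, the main obstacle is getting uniformity in $t$ in the reverse direction, and the finite-net equicontinuity argument is what handles it.
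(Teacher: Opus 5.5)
Your proof is correct and is essentially the paper's. The forward direction is the same averaging of $\|w_N(\cdot,t)-z_N(\cdot,t)\|_{L^1(D)}$ over a short time window next to $t$; your bound via $\om(2\delta)$ is slightly more careful than the paper's $2\om(\ve)$, since $\om$ need not be right-continuous. The reverse direction uses the same finite-sampling-plus-modulus idea, with one difference: you first upgrade to $\sup_t f_N(t)\to0$ via a $\delta$-net, whereas the paper bounds the integral directly by a Riemann-type sum over a uniform partition.

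One small correction: for $a=b$ the forward implication is not trivial but actually fails, because the left-hand side is identically $0$. So you should simply assume $a<b$, as the paper implicitly does.
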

\begin{proof}
	First, assume that 
	\begin{equation} \label{e.sliceloc}
		\norm{w_{N} - z_{N}}_{L^{1}(D\times (a,b))} \xrightarrow{N \to \infty} 0.
	\end{equation}
	Fix $t \in [a,b)$ and $\ve \in (0,b-t)$. We calculate
	\begin{align*}
		&\norm{w_{N}(\cdot,t) - z_{N}(\cdot,t)}_{L^{1}(D)} \\
		&= \frac{1}{\ve}\int_{t}^{t+\ve}\norm{w_{N}(\cdot,t) - z_{N}(\cdot,t)}_{L^{1}(D)}\,ds \\
		&\leq  \frac{1}{\ve}\int_{t}^{t+\ve}\norm{w_{N}(\cdot,t) - w_{N}(\cdot,s)}_{L^{1}(D)}\,ds  +
		\frac{1}{\ve}\norm{w_{N} - z_{N}}_{L^{1}(D\times(t,t+\ve))} \\
		&\quad\quad\quad\quad\quad\quad\quad+\frac{1}{\ve}\int_{t}^{t+\ve}\norm{z_{N}(\cdot,s) - z_{N}(\cdot,t)}_{L^{1}(D)}\,ds.
	\end{align*}
	We bound the final term in the upper bound by observing that
	\begin{equation*}
		\frac{1}{\ve}\int_{t}^{t+\ve}\norm{z_{N}(\cdot,s) - z_{N}(\cdot,t)}_{L^{1}(D)}\,ds \leq \frac{1}{\ve}\int_{t}^{t+\ve}\om(s-t+\tfrac{1}{N})\,ds \leq \om(\ve+\tfrac{1}{N}).
	\end{equation*}
	A similar calculation holds for the first term in the upper bound, so we have 
	\begin{equation*}
		\norm{w_{N}(\cdot,t) - z_{N}(\cdot,t)}_{L^{1}(D)}  \leq 2 \om(\ve+\tfrac{1}{N}) + \frac{1}{\ve}\norm{w_{N} - z_{N}}_{L^{1}(D\times(t,t+\ve))}.
	\end{equation*}
	Taking $N \to \infty$ in the above, we apply \eqref{e.sliceloc} to obtain that for every $t\in [a,b)$,
	\begin{equation*}
		\limsup_{N \to \infty}\norm{w_{N}(\cdot,t) - z_{N}(\cdot,t)}_{L^{1}(D)}  \leq 2 \om(\ve).
	\end{equation*}
	Lastly, sending $\ve \to 0$ allows us to conclude that $\norm{w_{N}(\cdot,t) - z_{N}(\cdot,t)}_{L^{1}(D)} \to 0$. In the case $t = b$, we can repeat a similar argument by instead integrating from $t - \ve$ to $t$.
	
	For the other implication, 
	fix $n \in \N$ and a partition $\{t_{i}\}_{i = 0}^{n}$ of $[a,b]$ with $t_{0} = a$ and $t_{i}-t_{i-1} = \frac{b-a}{n}$. We then have
	\begin{align*}
		&\norm{w_{N} - z_{N}}_{L^{1}(D\times (t_{i-1},t_{i}))} \\
		&\leq \int_{t_{i-1}}^{t_{i}}\norm{w_{N}(\cdot,s) - w_{N}(\cdot,t_{i})}_{L^{1}(D)}\,ds +  \int_{t_{i-1}}^{t_{i}}\norm{w_{N}(\cdot,t_{i})-z_{N}(\cdot,t_{i})}_{L^{1}(D)}\,ds \\ 
		& \quad\quad\quad\quad\quad\quad\quad+\int_{t_{i-1}}^{t_{i}}\norm{z_{N}(\cdot,t_{i}) - z_{N}(\cdot,s)}_{L^{1}(D)}\,ds \\
		&\leq 2\left(\tfrac{b-a}{n}\right)\cdot\om\left(\tfrac{b-a}{n} + \tfrac{1}{N}\right) + \tfrac{b-a}{n}\norm{w_{N}(\cdot,t_{i})-z_{N}(\cdot,t_{i})}_{L^{1}(D)} .
	\end{align*}
	Therefore, 
	\begin{align*}
		\norm{w_{N} - z_{N}}_{L^{1}(D\times (a,b))} &= \sum_{i = 1}^{n} \norm{w_{N} - z_{N}}_{L^{1}(D\times (t_{i-1},t_{i}))} \\
		&\leq 2\left(b-a\right)\cdot\om\left(\tfrac{b-a}{n}+\tfrac{1}{N}\right) + \tfrac{b-a}{n}\sum_{i=1}^{n}\norm{w_{N}(\cdot,t_{i})-z_{N}(\cdot,t_{i})}_{L^{1}(D)} 
	\end{align*}
	If for all $t \in [a,b]$, $\norm{w_{N}(\cdot,t) - z_{N}(\cdot,t)}_{L^{1}(D)} \xrightarrow{N \to \infty} 0$, then it follows from the above that 
	\begin{equation*}
		\limsup_{N \to \infty}\norm{w_{N} - z_{N}}_{L^{1}(D\times (a,b))} \leq 2\left(b-a\right)\cdot\om\left(\tfrac{b-a}{n}\right),
	\end{equation*}
	 and so we conclude by taking $n \to \infty$.
\end{proof}

Our lemma yields the following corollary.

\begin{cor} \label{cor.aetslice}
	For an interval $I \subset \R$, let $w,z \in L^{1}(\R^{d} \times I) \cap C(I;L^{1}(\R^{d}))$. Then
	\begin{equation*}
		w \overset{a.e.}{=} z \text{ in }\R^{d} \times I \text{ if and only if } w(\cdot,t) \overset{a.e.}{=} z(\cdot,t) \text{ in } \R^{d} \text{ for all } t \in I.
	\end{equation*}
\end{cor}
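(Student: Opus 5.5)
The plan is to deduce Corollary \ref{cor.aetslice} from Lemma \ref{lem.tslice} by working with constant sequences, after first reducing to a compact time interval and compact spatial sets. Write $I$ as an increasing union of compact intervals $[a_j,b_j]\subset I$; neither side of the equivalence changes if we restrict to each $[a_j,b_j]$, since a countable union of null sets is null and the slice condition is pointwise in $t$. Likewise $\R^d$ is a countable union of closed balls $D_R=\overline{B_R}$. So it is enough to show, for a compact interval $[a,b]\subset I$ and each $R$,
\[
\norm{w-z}_{L^{1}(D_R\times(a,b))}=0 \iff \norm{w(\cdot,t)-z(\cdot,t)}_{L^{1}(D_R)}=0 \text{ for all } t\in[a,b].
\]

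Set $w_N:=w$ and $z_N:=z$ for every $N$. Since $w,z\in C([a,b];L^1(\R^d))$ and $[a,b]$ is compact, the map $t\mapsto w(\cdot,t)$ is uniformly continuous into $L^1(\R^d)$. Hence it has a temporal modulus of continuity $\om_w$, as noted after the definitions. The same holds for $z$ with some $\om_z$. By Remark \ref{rmk.WLOGmod} we replace both by the common modulus $\om:=\om_w\vee\om_z$. Because $\om$ is nondecreasing, $\om(|t-s|)\le\om(|t-s|+\tfrac1N)$, so the constant sequences have approximate temporal modulus $\om$. Lemma \ref{lem.tslice} then applies to them with $D=D_R$.

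For constant sequences, the statement ``$\norm{w_N-z_N}\to0$'' just says that the fixed number $\norm{w-z}_{L^1(D_R\times(a,b))}$ equals $0$, which means $w=z$ a.e. on $D_R\times(a,b)$. Likewise, the slice condition says that for every $t$, $\norm{w(\cdot,t)-z(\cdot,t)}_{L^1(D_R)}=0$, i.e. $w(\cdot,t)=z(\cdot,t)$ a.e. on $D_R$. Lemma \ref{lem.tslice} gives exactly the equivalence of these two statements. Taking the union over $R$ and over the exhausting intervals completes the argument.

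The points needing care are minor. First, endpoints of $I$ that are not contained in $I$ are handled by the exhaustion, while endpoints that lie in $I$ are handled because the lemma covers $t\in[a,b]$. Second, the set $\{t\}\times\R^d$ is null in $\R^d\times I$, but this causes no difficulty: continuity into $L^1$ is exactly what makes the slice-wise statement meaningful for every $t$, not just almost every $t$. I expect no real obstacle; the main subtlety is justifying the existence of a temporal modulus, which follows from the uniform continuity noted above.
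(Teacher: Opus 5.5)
Your proposal is correct and matches the paper's proof: reduce to a compact interval $[a,b]$ and a compact $D\subset\R^{d}$, take a common temporal modulus for $w$ and $z$ (which then serves as an approximate temporal modulus for the constant sequences), and apply Lemma \ref{lem.tslice} to $w_N\equiv w$, $z_N\equiv z$. Your exhaustions of $I$ by compact intervals and of $\R^{d}$ by closed balls simply write out the paper's remarks that it suffices to treat compact $I$ and that $D$ and $[a,b]$ are arbitrary.
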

\begin{proof}
	It suffices to prove the corollary assuming $I$ is compact; so write $I=[a,b]$ for $a,b \in \R$. 
	Fix a compact $D \subset \R^{d}$. Because $w, z \in C([a,b];L^{1}(\R^{d}))$, both $w$ and $z$ have temporal moduli of continuity on $\R^{d}\times[a,b]$. By Remark \ref{rmk.WLOGmod}, without loss of generality, we can assume these moduli to be the same. Thus, we apply Lemma \ref{lem.tslice} to the constant sequences $w_{N} \equiv w$ and $z_{N} \equiv z$ to obtain
	\begin{equation*}
		\norm{w - z}_{L^{1}(D \times [a,b])} = 0 \text{ if and only if } \norm{w(\cdot,t)-z(\cdot,t)}_{L^{1}(D)}=0 \text{ for all } t \in [a,b].
	\end{equation*}
	Because $D$ and $[a,b]$ are arbitrary, the result follows.
\end{proof}

Finally, we state a result which, under certain hypotheses, allows us to upgrade a spatial modulus of continuity to a temporal modulus of continuity.

\begin{lem}[Kru{\v{z}}kov's interpolation lemma \cite{Kruzhkov1969ResultsCT}] \label{lem.KIL}
	Let $z \in L^{\infty}(\R^{d}\times[0,T])$. Assume that $z$ has a spatial modulus of continuity $\nu$ on $\R^{d}\times[0,T]$. Given $C>1$, if  $t_1, t_2 \in [0,T]$ are such that for all $\vp \in C^{\infty}_{c}(\R^{d})$,
	\begin{equation} \label{e.KILhyp}
		\left| \int_{\R^{d}} (z(x,t_2)-z(x,t_{1})) \vp(x) \, dx\right| \leq C\cdot\left(\sum_{i = 1}^{d} \norm{\partial_{x_{i}}^{2}\vp}_{L^{\infty}(\R^{d})}\right)\cdot |t_{2}-t_{1}|,
	\end{equation}
	then for all $\ve > 0$, 
	\begin{equation*}
		\norm{z(\cdot,t_{2}) - z(\cdot,t_{1})}_{L^{1}(\R^{d})} \leq 2C \cdot \left( \frac{|t_{2}-t_{1}|}{\ve^2} + \nu(\ve)\right).
	\end{equation*}
\end{lem}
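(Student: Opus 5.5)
The plan is a standard mollification argument: control $z(\cdot,t_2)-z(\cdot,t_1)$ in $L^1$ by testing against a smoothed sign function. I fix $t_1,t_2$ and write $w:=z(\cdot,t_2)-z(\cdot,t_1)\in L^\infty(\R^d)$. I will also use, as in the intended application, that $w\in L^1(\R^d)$; this follows from the spatial modulus setting, and otherwise one localizes to a large ball and lets it grow.

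\textbf{Step 1 (mollify $w$).} Let $\eta_\ve(x)=\ve^{-d}\eta(x/\ve)$ be a standard mollifier supported in $B_\ve$ with $\int\eta=1$. The spatial modulus applied to both time slices gives
\begin{equation*}
\norm{w*\eta_\ve-w}_{L^1(\R^d)}\le \int \eta_\ve(y)\norm{w(\cdot-y)-w}_{L^1}\,dy\le 2\nu(\ve).
\end{equation*}
Hence $\norm{w}_{L^1}\le \norm{w*\eta_\ve}_{L^1}+2\nu(\ve)$.

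\textbf{Step 2 (dualize with a mollified sign).} Write $\norm{w*\eta_\ve}_{L^1}=\int (w*\eta_\ve)\,\sigma$ with $\sigma=\mathrm{sgn}(w*\eta_\ve)$, truncated to a large ball $B_L$ so that everything is compactly supported; the tail error vanishes as $L\to\infty$ since $w*\eta_\ve\in L^1$. Next set $\vp:=(\sigma\indc_{B_L})*\eta_\ve$. Then $\vp\in C^\infty_c(\R^d)$, $|\vp|\le 1$, and $\norm{\partial_{x_i}^2\vp}_{L^\infty}\le \norm{\partial_i^2\eta_\ve}_{L^1}=c\ve^{-2}$. Because $\eta$ is even, Fubini gives $\int (w*\eta_\ve)\sigma\indc_{B_L}=\int w\,\vp$. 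Applying hypothesis \eqref{e.KILhyp} yields
\begin{equation*}
\Big|\int w\vp\Big|\le C\,d\,c\,\ve^{-2}|t_2-t_1|.
\end{equation*}

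\textbf{Step 3 (combine).} Letting $L\to\infty$ gives
\begin{equation*}
\norm{w}_{L^1}\le C' |t_2-t_1|\ve^{-2}+2\nu(\ve).
\end{equation*}

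The only real obstacle is matching the exact constant $2C$ in the statement rather than a dimension-dependent $C'$. My first attempt would be to choose a product mollifier $\eta=\prod_i\eta^{(i)}$ whose one-dimensional factors satisfy $\norm{(\eta^{(i)})''}_{L^1}$ small. The bound $\sum_i\norm{\partial_i^2\vp}_\infty\le \sum_i\norm{\partial_i^2\eta_\ve}_{L^1}$ then becomes at most $2\ve^{-2}$ after rescaling $\ve$; any leftover dimensional factor is absorbed by reparametrizing $\ve$ and enlarging the modulus, which Remark \ref{rmk.WLOGmod} permits. Since only the form $|t_2-t_1|/\ve^2+\nu(\ve)$ is used downstream, the constant bookkeeping is inessential.
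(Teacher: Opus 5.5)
Your Steps 1--3 are the standard mollification proof of Kru\v{z}kov's lemma, and they are sound. The paper gives no proof of its own, only the citation. With $\kappa:=\sum_{i}\norm{\partial_i^2\eta}_{L^1(\R^d)}$, your steps give
\[ \norm{z(\cdot,t_2)-z(\cdot,t_1)}_{L^1(\R^d)}\le C\kappa\,\frac{|t_2-t_1|}{\ve^2}+2\nu(\ve). \]
A side remark on integrability: you do not need to know $w\in L^1$ in advance, and it does not follow from the modulus alone (in one variable the Heaviside function has $L^1$ modulus $\nu(r)=r$). Step 2 bounds $\int_{B_L}|w*\eta_\ve|$ uniformly in $L$, so $w*\eta_\ve\in L^1$, and then $w\in L^1$ by Step 1.

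The gap is the constant $2C$, and your proposed remedy does not work.

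\emph{No mollifier can have small second derivatives.} Let $f$ be smooth and supported in $[a-r,a+r]$. Set $p(s)=\tfrac12(s-a)^2-\tfrac14r^2$, so $p''=1$ and $|p|\le r^2/4$ on that interval. Integrating by parts twice gives $\int f=\int p f''$, hence $\norm{f''}_{L^1}\ge 4|\int f|/r^2$. Every line parallel to $e_i$ meets $\overline{B_\ve}$ in an interval of half-length at most $\ve$. Apply the one-dimensional bound on each such line and integrate over the other $d-1$ variables; this gives $\norm{\partial_i^2\eta_\ve}_{L^1}\ge 4/\ve^2$ for every kernel, signed or not, supported in $\overline{B_\ve}$ with $\int\eta_\ve=1$. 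Hence $\sum_i\norm{\partial_i^2\eta_\ve}_{L^1}\ge 4d/\ve^2$. A product kernel inside $B_\ve$ is worse, with bound $4d^2/\ve^2$.

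\emph{The support cannot leave $\overline{B_\ve}$.} Step 1 needs the kernel supported there to produce $\nu(\ve)$ rather than $\nu$ at larger radii. So your route gives at best $4dC|t_2-t_1|/\ve^2+2\nu(\ve)$, never $2C$.

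\emph{Rescaling $\ve$ does not help.} It yields $2C|t_2-t_1|/\ve^2+2\nu(\sqrt{2d}\,\ve)$, which is not the asserted inequality. A modulus need not satisfy any doubling bound (take $\nu(r)=e^{-1/r}$), so $\nu(\sqrt{2d}\,\ve)$ is not controlled by $C\nu(\ve)$. Remark \ref{rmk.WLOGmod} only lets you pass to a larger modulus when asserting that \emph{some} modulus exists. It does not let you change the quantitative conclusion for the given $\nu$.

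What you have proved is the lemma with the coefficient $2C$ of $|t_2-t_1|/\ve^2$ replaced by $C\kappa$, where $\kappa\ge 4d$ depends on the dimension. That weaker form is all the paper actually uses: in Lemma \ref{lem.modulii}(iii) and Lemma \ref{lem.uNmodulii} the constant is absorbed into the modulus $\om$. As a proof of the statement as written, however, it is incomplete. Getting exactly $2C$ would need an idea beyond a fixed convolution; otherwise the statement should carry a dimensional constant.
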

It is worth noting that hypothesis \eqref{e.KILhyp} is a simplified version of the more general result stated in \cite{Kruzhkov1969ResultsCT}.

\subsection{Diffuse Initial Conditions} 
We consider the case of diffuse initial conditions, meaning we consider the PDE
\begin{equation} \label{e.pmev}
	\begin{cases}
		v_{t} - \frac{1}{2d}\Delta(v^{m+1}) = 0 &\text{in }Q_{T} \\
		v(x,0) = v_{0}(x) &\text{for }x \in \R^{d},
	\end{cases}
\end{equation}
where $v_{0} \in L^{1}(\R^{d}) \cap L^{\infty}(\R^{d})$. In this case, the discrete approximations $v_{N}$ satisfy some useful properties, as outlined in the following lemma. To understand the lemma statement, recall the definition of $\mathcal{S}_N$ from \eqref{e.SN} with the choice of $A(u)=u^{m+1}$, given by 
\begin{equation*}\mathcal{S}_{N}v(k\xmesh)= v(k\xmesh)+\frac{\tmesh}{2d (\xmesh)^2}\sum_{\ell \sim k}\left[(v(\ell \xmesh))^{m+1}-(v(k\xmesh))^{m+1}\right], 
\end{equation*}
and $1/N=\tmesh=\xmesh^{1/\beta}$. We recall that for $v_{0}\in L^{1}(\R^{d})$, we say that $v_{N}$ is generated by $\mathcal{S}_{N}$ with initial condition $v_{0}$ has initial condition defined by \eqref{e.initL1}. 
\begin{lem} \label{lem.modulii}
	Let $v_{0} \in L^{1}(\R^{d}) \cap L^{\infty}(\R^{d})$ where $v_{0} \geq 0$, and $v_{0}$ has a spatial modulus of continuity $\nu$. Let $v_{N}$ be generated by $\mathcal{S}_{N}$ with initial condition $v_{0}$. Then, there exists $N_{0} = N_{0}(d,m,\norm{v_{0}}_{L^{\infty}(\R^{d})}) \in \N$ such that the following hold. 
	\begin{enumerate}[(i)]
		\item For all $N \geq N_{0}$, and $t \geq 0$, 
		\begin{equation*}
			\norm{v_{N}(\cdot,t)}_{L^{\infty}(\R^{d})} \leq \norm{v_{0}}_{L^{\infty}(\R^{d})}\quad\text{and}\quad\norm{v_{N}(\cdot,t)}_{L^{1}(\R^{d})} \leq \norm{v_{0}}_{L^{1}(\R^{d})}.
		\end{equation*}
			
		\item For all $N \in \N$, $v_{N}$ admits $\nu$ as a spatial modulus of continuity on $\R^{d}\times[0,T]$.
		\item The tail sequence $(v_{N})_{N \geq N_{0}}$ admits an approximate temporal modulus of continuity on $\R^{d}\times[0,T]$.\label{vN.apptemp}
		\end{enumerate}
\end{lem}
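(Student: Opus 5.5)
The plan is to derive all three properties from the monotone and $L^1$-contractive structure of $\mathcal{S}_N$ recorded in Lemma \ref{lem.SN}, choosing $N_0$ so that the CFL-type condition \eqref{e.cfl} holds with $\La=\norm{v_{0}}_{L^{\infty}(\R^{d})}$. Concretely, take $N_0$ minimal with $N_0^{dm\beta}\geq 4(m+1)\norm{v_0}_{L^\infty}^m$. The initial data \eqref{e.initL1} is a cell average, so $0\le v_N(\cdot,0)\le \norm{v_0}_{L^\infty}$ and $\norm{v_N(\cdot,0)}_{L^1}\le\norm{v_0}_{L^1}$.

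\emph{Part (i).} For $N\ge N_0$, Lemma \ref{lem.SN}(ii) gives the $L^\infty$ bound at every grid time. Since $v_N$ is piecewise constant in time, this holds for all $t\geq 0$. Next, apply Lemma \ref{lem.SN}(iii) with $w'\equiv 0$, which is a fixed point of $\mathcal{S}_N$; this yields the $L^1$ bound.

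\emph{Part (ii).} The key observation is that $\mathcal{S}_N$ commutes with lattice translations. We first treat shifts $y=j\xmesh$ with $j\in\Z^d$. Then $v_N(\cdot+y,\cdot)$ is the scheme solution started from $v_N(\cdot+y,0)$, and $L^1$-contraction (Lemma \ref{lem.SN}(iii)) gives
\[
\norm{v_N(\cdot+y,t)-v_N(\cdot,t)}_{L^1}\le\norm{v_N(\cdot+y,0)-v_N(\cdot,0)}_{L^1}.
\]
The right-hand side is at most $\norm{v_0(\cdot+y)-v_0}_{L^1}\le\nu(|y|)$, because averaging over cells is an $L^1$ contraction and commutes with lattice shifts. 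For a general $y$, we write $y=j\xmesh+\theta$ with $\theta\in\square_N$. For a piecewise constant function, a sub-cell shift is a convex combination of lattice shifts in each coordinate direction. So $\norm{v_N(\cdot+y)-v_N}_{L^1}$ is bounded by a combination of lattice-shift differences with $|j'\xmesh|\le |y|+\sqrt d\,\xmesh$. This gives a bound of the form $C\nu(|y|+\sqrt d\xmesh)$. Two issues remain. One is the statement ``for all $N$'': strictly, contraction needs $N\ge N_0$, and finitely many $N$ can be absorbed by enlarging the modulus, using $L^1$-continuity of translation for each fixed $N$. The other is the additive $\xmesh$ term, which must be converted into a genuine modulus at small $|y|$. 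For $|y|<\xmesh$, the difference is at most $\frac{|y|}{\xmesh}\,[v_N(\cdot,t)]_{\TV}\,\xmesh^d$ in a cell-wise sense, and the discrete total variation is controlled by $\nu(\xmesh)/\xmesh$, again by contraction. Combining the two regimes gives a modulus of the form $C\nu(C|y|)$, which I then rename $\nu$ per Remark \ref{rmk.WLOGmod}.

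\emph{Part (iii).} This is the main obstacle, and I will use Kru\v{z}kov's interpolation (Lemma \ref{lem.KIL}). Fix $\vp\in C_c^\infty$ and test one step of the scheme against $\vp$ sampled on the grid. Discrete summation by parts (Lemma \ref{laprearranger}(i)) moves the discrete Laplacian onto $\vp$. Since $|\Delta_{\xmesh}\vp|\le\sum_i\norm{\partial_i^2\vp}_\infty$ and $\norm{v_N^{m+1}}_{L^1}\le\norm{v_0}_\infty^m\norm{v_0}_{L^1}$, the change over $n$ steps is bounded by $C\sum_i\norm{\partial_i^2\vp}_\infty\cdot n\tmesh$. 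Replacing the point samples of $\vp$ by integrals over cells costs $O(\xmesh\norm{\nabla\vp}_\infty\norm{v_0}_{L^1})$. To avoid this error, I would apply Lemma \ref{lem.KIL} to the mollified function $v_N*\eta_\ve$, or simply absorb the cost via the spatial modulus. Applying Lemma \ref{lem.KIL} with the spatial modulus from (ii), for grid times $s,t$ we get
\[
\norm{v_N(\cdot,t)-v_N(\cdot,s)}_{L^1}\le 2C\left(\frac{|t-s|+\tmesh}{\ve^2}+\nu(\ve)\right)+C\nu(\xmesh).
\]
Optimizing in $\ve$ yields an approximate temporal modulus $\om(|t-s|+\tfrac1N)$, using $\xmesh=N^{-\beta}$. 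The delicate points are making the cell-versus-point error vanish uniformly, and handling non-grid times; the latter only adds $\tmesh$ to the argument.
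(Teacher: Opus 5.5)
Your part (i) is the paper's argument. Your part (ii) uses the paper's idea (translation invariance plus the $L^1$ contraction of Lemma~\ref{lem.SN}(iii)) but is more careful than the paper's one-line version. That version is only literally valid for lattice shifts $y\in\xmesh\Z^d$, since cell averaging does not commute with other translations, and only for $N\ge N_0$. Your convex-combination reduction for sub-cell shifts handles the rest, and replacing $\nu$ by $C\nu(C\,\cdot)$ is harmless by Remark~\ref{rmk.WLOGmod}.

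One small repair in (ii): for $|y|<\xmesh$, passing from $\frac{|y|}{\xmesh}\nu(\sqrt d\,\xmesh)$ to $C\nu(C|y|)$ needs subadditivity, which an arbitrary modulus need not have. Run the argument with $\omega_{v_0}(s):=\sup_{|z|\le s}\norm{v_0(\cdot+z)-v_0}_{L^1(\R^d)}\le\nu(s)$. This is subadditive, so $\frac{|y|}{\xmesh}\omega_{v_0}(\sqrt d\,\xmesh)\le(\sqrt d+1)\,\omega_{v_0}(|y|)$.

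The genuine gap is in (iii). Testing the scheme against point samples $\vp(k\xmesh)$ controls $\sum_k(\xmesh)^d(V^{n_2}_k-V^{n_1}_k)\vp(k\xmesh)$, not $\int_{\R^d}(v_N(x,n_2\tmesh)-v_N(x,n_1\tmesh))\vp(x)\,dx$. The discrepancy is an additive error, of order $\xmesh\norm{\nabla\vp}_{L^\infty}\norm{v_0}_{L^1}$, or $(\xmesh)^2\norm{D^2\vp}_{L^\infty}\norm{v_0}_{L^1}$ if you use the symmetry of the cell. It is not proportional to $|t_2-t_1|$. So hypothesis \eqref{e.KILhyp} is not verified, and your displayed bound with the extra $C\nu(\xmesh)$ does not follow from Lemma~\ref{lem.KIL} as stated. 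Neither proposed patch closes this. Mollifying $v_N$ only replaces $\vp$ by $\vp*\eta_\ve$ and reproduces the same point-versus-cell mismatch. ``Absorbing via the spatial modulus'' would require re-proving Kru\v{z}kov's lemma with an additive error term.

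The fix, which is what the paper does, is to pair with cell integrals. Set $\overline{\vp}_N(k\xmesh):=\int_{\square_N(k\xmesh)}\vp\,dx$. Because $v_N$ is piecewise constant, $\int v_N(\cdot,n\tmesh)\vp\,dx=\sum_k V^n_k\,\overline{\vp}_N(k\xmesh)$ holds exactly. After summation by parts the second difference falls on $\overline{\vp}_N$, and it equals $\int_{\square_N(k\xmesh)}\int_0^{\xmesh}\int_0^{\xmesh}\partial_{x_i}^2\vp(x+(s_1+s_2-\xmesh)e_i)\,ds_1\,ds_2\,dx$. Its absolute value is therefore at most $(\xmesh)^{d+2}\norm{\partial^2_{x_i}\vp}_{L^\infty(\R^d)}$. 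This gives \eqref{e.KILhyp} verbatim, with $C$ depending on $\norm{v_0}_{L^\infty}$ and $\norm{v_0}_{L^1}$. Lemma~\ref{lem.KIL} with $\ve=|t_2-t_1|^{1/3}$ then gives a modulus at grid times, and off-grid times only add $2\tmesh$ to the argument. No $\nu(\xmesh)$ term ever appears.
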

\begin{proof}
	To prove (i), fix $t>0$, let $n = \floor{t/\tmesh}$, $\La := \norm{v_{0}}_{L^{\infty}(\R^{d})}$, and define
	\begin{equation*}
		N_{0} := (4(m+1)\La^{m})^{\frac{1}{dm\beta}}.
	\end{equation*}
	For $N \geq N_{0}$, $N$ satisfies \eqref{e.cfl}, and so Lemma \ref{lem.SN}(i, ii) gives that
	\begin{equation*}
	0 \leq v_{N}(\cdot, t) = \mathcal{S}^{n}_{N}v_{0} \leq \bigg\|\frac{1}{\square_{N}}\int_{\square_{N}(\cdot)} v_{0}(x)\, dx\bigg\|_{L^{\infty}(\Z^{d}\xmesh)}\leq \La = \norm{v_{0}}_{L^{\infty}(\R^{d})}.
	\end{equation*}
	Furthermore, choosing $w_{N} := v_{N}(\cdot,0)$ and $w'_{N} \equiv 0$ in Lemma \ref{lem.SN}(iii) gives that
	\begin{equation*}
		\norm{v_{N}(\cdot,t)}_{L^{1}(\R^{d})} = \norm{\mathcal{S}^n_{N}(v_{N}(\cdot,0))}_{L^{1}(\R^{d})} \leq \norm{v_{N}(\cdot,0)}_{L^{1}(\R^{d})} = \norm{v_{0}}_{L^{1}(\R^{d})},
	\end{equation*}
	and so (i) is proved.

	To prove (ii), let $N \in \N$. We remark that directly by definition \eqref{e.SN}, $\mathcal{S}_{N}$ is translation invariant, meaning that for $y \in \R^{d}$, 
	\begin{equation*}
		\big[\mathcal{S}_{N}v_{N}(\cdot + y)\big](x) = \big[\mathcal{S}_{N}v_{N}\big](x+y).
	\end{equation*}
	We use the translation invariance of $\mathcal{S}_{N}$, along with Lemma \ref{lem.SN}(iii), to obtain that for all $t \geq 0$, $y \in \R^{d}$, 
	\begin{equation*}
		\norm{v_{N}(\cdot + y, t) - v_{N}(\cdot,t)}_{L^{1}(\R^{d})} \leq \norm{v_{0}(\cdot+ y) - v_{0}(\cdot)}_{L^{1}(\R^{d})} \leq \nu(|y|).
	\end{equation*}
	
	To show (iii), we will use Kru{\v{z}}kov's interpolation lemma (Lemma \ref{lem.KIL}). Let $\vp \in C_{c}^{\infty}(\R^{d})$, and fix $n_{1}, n_{2} \in \N_{0}$. We will show that there is a constant $C>1$ such that
	\begin{equation*}
		\left| \int_{\R^{d}} (v_{N}(x,n_{2}\tmesh)-v_{N}(x,n_{1}\tmesh)) \vp(x) \, dx\right| \leq C\cdot\left(\sum_{i = 1}^{d} \norm{\partial_{x_{i}}^{2}\vp}_{L^{\infty}(\R^{d})}\cdot\right) |n_{2}\tmesh-n_{1}\tmesh|.
	\end{equation*}
	First, we use the fact that $v_{N}(\cdot,t)$ is $\Z^{d}$-piecewise constant for all $t \geq 0$ to calculate
	\begin{align*}
		&\int_{\R^{d}} (v_{N}(x,n_{2}\tmesh)-v_{N}(x,n_{1}\tmesh)) \vp(x) \, dx \\
		&= \sum_{k \in \Z^{d}} \int_{\square_{N}(k\xmesh)}(v_{N}(x,n_{2}\tmesh)-v_{N}(x,n_{1}\tmesh)) \vp(x) \, dx \\
		&= \sum_{k \in \Z^{d}} (v_{N}(k\xmesh,n_{2}\tmesh)-v_{N}(k\xmesh,n_{1}\tmesh)) \int_{\square_{N}(k\xmesh)}\vp(x) \, dx.
	\end{align*}
	Define $\overline{\vp}_{N}: \xgrid \to \R$ by $\overline{\vp}_{N}(k\xmesh) :=  \int_{\square_{N}(k\xmesh)}\vp(x)\,dx$. Introducing the shorthand $V^{n}_{k} := v_{N}(k\xmesh,n\tmesh)$, we can rewrite the last display as
	\begin{equation*}
		\int_{\R^{d}} (v_{N}(x,n_{2}\tmesh)-v_{N}(x,n_{1}\tmesh)) \vp(x) \, dx = \sum_{k \in \Z^{d}} (V^{n_{2}}_{k}-V^{n_{1}}_{k})\overline{\vp}_{N}(k\xmesh).
	\end{equation*}
	Assume without loss of generality that  $n_{2} \geq n_{1}$. By a telescoping sum, we can write
	\begin{equation} \label{e.KIL1}
		\int_{\R^{d}} (v_{N}(x,n_{2}\tmesh)-v_{N}(x,n_{1}\tmesh)) \vp(x) \,dx = \sum_{n=n_{1}}^{n_{2}-1}\sum_{k \in \Z^{d}} (V^{n+1}_{k}-V^{n}_{k})\overline{\vp}_{N}(k\xmesh).
	\end{equation}
	For a fixed $n \in \N_{0}$, we use the scheme satisfied by $v_{N}$ to calculate
	\begin{align} \label{e.beforeFTC}
		&\sum_{k \in \Z^{d}} (V^{n+1}_{k}-V^{n}_{k})\overline{\vp}_{N}(k\xmesh) \notag\\
		&= \frac{\tmesh}{2d(\xmesh)^{2}} \sum_{k \in \Z^{d}} \sum_{i=1}^{d} \left[(V^{n}_{k+e_{i}})^{m+1}-2(V^{n}_{k})^{m+1}+(V^{n}_{k-e_{i}})^{m+1}\right]\overline{\vp}_{N}(k\xmesh) \notag\\
		&= \frac{\tmesh}{2d(\xmesh)^{2}}  \sum_{k \in \Z^{d}} \sum_{i=1}^{d} (V^{n}_{k})^{m+1}\left[\overline{\vp}_{N}((k+e_{i})\xmesh) - 2\overline{\vp}_{N}(k\xmesh) + \overline{\vp}_{N}((k-e_{i})\xmesh)\right],
	\end{align}
	where in the last step we used summation by parts and the fact that $\vp$ has compact support. For a fixed $i \in [d]$, we remark that 
	\begin{align*}
	&|\overline{\vp}_{N}((k+e_{i})\xmesh) - 2\overline{\vp}_{N}(k\xmesh) + \overline{\vp}_{N}((k-e_{i})\xmesh)| \\
		& \leq \int_{\square_{N}(k\xmesh)} \left|\left(\vp(x+e_{i}\xmesh)-\vp(x)\right) - \left(\vp(x) - \vp(x-e_{i}\xmesh)\right)\right|\,dx\\
		 &= \int_{\square_{N}(k\xmesh)}\bigg| \int_{0}^{\xmesh}\left[\partial_{x_{i}}\vp(x + s_{1}e_{i}) - \partial_{x_{i}}\vp(x + (s_{1}-\xmesh)e_{i})\right]\,ds_{1}\bigg| dx \\
		&= \int_{\square_{N}(k\xmesh)}\bigg|\int_{0}^{\xmesh}\int_{0}^{\xmesh}\partial_{x_{i}}^{2}\vp(x + (s_{1}+s_{2}-\xmesh)e_{i})\,ds_{1}\,ds_{2}\bigg|\, dx.
	\end{align*}
	Using now that $|\square_{N}(k\xmesh)|=(\xmesh)^{d}$, we conclude that
		\begin{equation*}
		|\overline{\vp}_{N}((k+e_{i})\xmesh) - 2\overline{\vp}_{N}(k\xmesh) + \overline{\vp}_{N}((k-e_{i})\xmesh)| \leq \norm{\partial_{x_{i}}^{2}\vp}_{L^{\infty}(\R^{d})}(\xmesh)^{d+2}.
	\end{equation*}
	Returning to \eqref{e.beforeFTC}, we deduce that 
	\begin{align*}
		\left|\sum_{k \in \Z^{d}} (V^{n+1}_{k}-V^{n}_{k})\overline{\vp}_{N}(k\xmesh)\right| &\leq \frac{\tmesh}{2d(\xmesh)^{2}} \sum_{k \in \Z^{d}}  \sum_{i=1}^{d}(V^{n}_{k})^{m+1}\norm{\partial_{x_{i}}^{2}\vp}_{L^{\infty}(\R^{d})}(\xmesh)^{d+2} \\
		&= \frac{\tmesh}{2d} \cdot\left((\xmesh)^{d}\sum_{k \in \Z^{d}} (V^{n}_{k})^{m+1}\right) \cdot \left( \sum_{i=1}^{d}\norm{\partial_{x_{i}}^{2}\vp}_{L^{\infty}(\R^{d})}\right).
	\end{align*}
	Because $V^{n}_{k}:= v_{N}(k\xmesh,n\tmesh)$ and $v_{N}(\cdot,n\tmesh)$ is $\Z^{d}$-piecewise constant, 
	\begin{align*}
		(\xmesh)^{d}\sum_{k \in \Z^{d}} (V^{n}_{k})^{m+1} &= \int_{\R^{d}}(v_{N}(x,n\tmesh))^{m}v_{N}(x,n\tmesh) \, dx \\
		&\leq \norm{v_{N}(\cdot,n\tmesh)}_{L^{\infty}(\R^{d})}^{m} \norm{v_{N}(\cdot,n\tmesh)}_{L^{1}(\R^{d})}.
	\end{align*}
	By property (i) since $N\geq N_{0}$, we combine the previous two displays to obtain a constant $C_{v_{0}}$, where $C_{v_{0}} = C_{v_{0}}(d,m,\norm{v_{0}}_{L^{\infty}(\R^{d})}, \norm{v_{0}}_{L^{1}(\R^{d})}) > 0$, such that
	\begin{equation*}
		\left|\sum_{k \in \Z^{d}} (V^{n+1}_{k}-V^{n}_{k})\overline{\vp}_{N}(k\xmesh)\right|  \leq C_{v_{0}} \tmesh \left( \sum_{i=1}^{d}\norm{\partial_{x_{i}}^{2}\vp}_{L^{\infty}(\R^{d})}\right).
	\end{equation*}
	Thus, returning to \eqref{e.KIL1}, we have
	\begin{align*}
		\left|\int_{\R^{d}} (v_{N}(x,n_{2}\tmesh)-v_{N}(x,n_{1}\tmesh)) \vp(x) \,dx \right| &= \sum_{n=n_{1}}^{n_{2}-1} \left|\sum_{k \in \Z^{d}} (V^{n+1}_{k}-V^{n}_{k})\overline{\vp}_{N}(k\xmesh)\right| \\
		&\leq C_{v_{0}}|n_{2}\tmesh-n_{1}\tmesh| \left( \sum_{i=1}^{d}\norm{\partial_{x_{i}}^{2}\vp}_{L^{\infty}(\R^{d})}\right).
	\end{align*}
	Next, an application of Kru{\v{z}}kov's interpolation lemma (Lemma \ref{lem.KIL}), with the choice $\ve = |n_{2}\tmesh-n_{1}\tmesh|^{1/3}$, yields, for all $n_{1},n_{2} \in \N$,
	\begin{align*}
		\int_{\R^{d}} |v_{N}(x,n_{2}\tmesh)-v_{N}(x,n_{1}\tmesh)| \vp(x) \,dx&\leq 2C_{v_{0}}\big(|n_{2}\tmesh-n_{1}\tmesh|^{1/3} + \nu\big(|n_{2}\tmesh-n_{1}\tmesh|^{1/3}\big) \big) \\
		&:= \om(|n_{2}\tmesh-n_{1}\tmesh|),
	\end{align*}
	where we remark that $\om = \om(d,m,\norm{v_{0}}_{L^{\infty}(\R^{d})}, \norm{v_{0}}_{L^{1}(\R^{d})},\nu)$. 
	
	To conclude, we consider times which are not on the grid $\tgrid$. Let $t_{1},t_{2} \in [0,T]$. We have, by setting $n_{1} = \floor{t_{1}/\tmesh}$ and $n_{2} = \floor{t_{2}/\tmesh}$, that
	\begin{equation*}
		|n_{2}\tmesh - n_{1}\tmesh| \leq |t_{2}-t_{1}| + 2\tmesh.
	\end{equation*}
	Because $v_{N}$ is piecewise constant in time, we conclude that,
	\begin{align*}
		\int_{\R^{d}} |v_{N}(x,t_{2})-v_{N}(x,t_{1})| \,dx &= \int_{\R^{d}} |v_{N}(x,n_{2}\tmesh)-v_{N}(x,n_{1}\tmesh)| \,dx\\
		&\leq \om(|n_{2}\tmesh-n_{1}\tmesh|) \\
		&\leq \om(|t_{2}-t_{1}| + 2\tmesh)\\
		&\leq \om(2(|t_{2}-t_{1}| +\tfrac{1}{N})),
	\end{align*}
	where we used the fact that $\tmesh = N^{-1}$ in the final step.
\end{proof}
We now state a convergence result due to Karlsen and Risebro \cite{karlsenrisebro01}, which also yields an existence result for entropy solutions of the PME with ``diffuse'' initial conditions.
\begin{thm} \cite[Theorem 4.2]{karlsenrisebro01} \label{t.kr}
	Let $\La > 0$, and $A: [0,\La] \to \R$ be a function such that
	\begin{equation}\label{e.ahyp}
		A \text{ is non-decreasing, } A(0) = 0, \text{ and }A\text{ is Lipschitz continuous}.
	\end{equation}
	Let $v_{0} \in L^{1}(\R^{d}) \cap \BD{\La}(\R^{d})$, and let $v_{N}$ be generated by $\mathcal{S}_{N}$ with initial condition $v_{0}$. If there exists $\ve \in (0,1)$ such that for all $N$ sufficiently large, 
	\begin{equation} \label{e.cfleps}
		2N^{-\frac{dm}{dm+2}}\max_{r \in [0,\La]}A'(r) < 1 - \ve,
	\end{equation}
	then there is a function $v: \R^{d}\times [0, \infty)\rightarrow \R$ (independent of $\ve$) such that for any $T>0$, $v_{N} \to v$ in $L^{1}_{\text{loc}}(Q_{T})$, and $v$ is the unique entropy solution to 
	\begin{equation*}
	\begin{cases}
			v_{t}-\frac{1}{2d}\Delta(A(v))=0&\text{in $\R^{d}\times (0, \infty)$},\\
			v(0,x)=v_{0}(x)&\text{in $\R^{d}$}.
		\end{cases}
	\end{equation*}

\end{thm}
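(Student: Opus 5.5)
The plan is not to reprove the convergence theory but to reduce Theorem \ref{t.kr} to \cite[Theorem 4.2]{karlsenrisebro01}. The work is in checking that our scheme $\mathcal{S}_N$, our discretization \eqref{e.initL1}, and our notion of nonnegative entropy solution are a special case of theirs. Concretely, their setting is the degenerate convection--diffusion equation $v_t+\divergence f(v)=\Delta A(v)$, approximated by an explicit monotone finite difference scheme with mesh sizes $\Delta x,\Delta t$. Our equation is the case $f\equiv 0$ with $A$ replaced by $\tfrac{1}{2d}A$, or equivalently with a rescaled time variable.

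First I will rewrite \eqref{e.SN} in their notation. With $\lambda:=\tmesh/(\xmesh)^2=N^{-dm\beta}=N^{-dm/(dm+2)}$, the update reads $V^{n+1}_k=V^n_k+\tfrac{\lambda}{2d}\sum_{i}\bigl(A(V^n_{k+e_i})-2A(V^n_k)+A(V^n_{k-e_i})\bigr)$. This is exactly their explicit scheme with zero numerical flux and the standard second difference in each coordinate direction. Their CFL condition, which guarantees monotonicity and an $L^\infty$ bound, becomes $2\lambda\max_{[0,\La]}A'<1-\ve$, and this is \eqref{e.cfleps}.

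Since $A$ is only given on $[0,\La]$, I will extend it to all of $\R$, for instance constantly outside $[0,\La]$. This extension is nondecreasing, Lipschitz, and satisfies $A(0)=0$. The extension is harmless because of a maximum principle: the CFL condition makes the scheme monotone in the sense of Lemma \ref{l.Smonotone} (the rescaled map $\tilde A=\lambda A$ has $\tilde A'\in[0,1]$), so Lemma \ref{lem.SN}(ii) and Lemma \ref{lem.modulii}(i) keep $v_N$ in $[0,\La]$, where the extension is never evaluated. I also need the initial data to be the cell averages \eqref{e.initL1}, which is the discretization used in \cite{karlsenrisebro01}.

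Next I will invoke their result. For each fixed $T$ it gives the $L^1_{\loc}(Q_T)$ convergence of the piecewise-constant interpolants to an entropy solution, with compactness essentially as in Lemma \ref{lem.L1loccmpct} and Lemma \ref{lem.modulii}. It also gives the Lax--Wendroff type passage to the limit in the discrete entropy inequalities, including the $L^2(H^1)$ bound on $A(v)$. Their entropy solution definition coincides with ours when $v\ge0$, and nonnegativity of the limit follows from $v_N\ge 0$. Uniqueness (their Carrillo-type result, or Corollary \ref{cor:uniqueness} in the PME case) then upgrades subsequential convergence to convergence of the full sequence.

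To get a single $v$ on $\R^d\times[0,\infty)$, independent of both $\ve$ and $T$, I will argue from uniqueness. The limits obtained on $Q_T$ and $Q_{T'}$ for $T<T'$ agree a.e.\ on $Q_T$, since restricting an entropy solution to $Q_T$ gives an entropy solution there. These limits can therefore be glued into one function on $\R^d\times[0,\infty)$.

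The main obstacle I expect is bookkeeping rather than analysis: matching their exact constants and mesh normalization, in particular the factor $\tfrac{1}{2d}$ and the form of their CFL constant. If their condition is stated with a different constant, I would absorb the factor by replacing $A$ with a scalar multiple, or by rescaling time, and then check that \eqref{e.cfleps} still implies it.
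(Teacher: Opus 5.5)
Your proposal is correct and follows the paper's approach. The paper does not prove Theorem \ref{t.kr}; it imports it from \cite[Theorem 4.2]{karlsenrisebro01}, and Remark \ref{rmk.KRdiffs} checks exactly your points: take $f\equiv 0$, match the CFL condition, and use the invariance of $[0,\La]$ under the monotone scheme so that only the values of $A$ on $[0,\La]$ ever matter. Extending $A$ constantly outside $[0,\La]$, so that $\sup_{\R}A'=\max_{[0,\La]}A'$ and the Karlsen--Risebro hypotheses hold verbatim, is a slightly cleaner justification than the paper's appeal to the internals of their proof, and your gluing over $T$ simply makes explicit what the cited statement already provides.
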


\begin{remark} \label{rmk.KRdiffs}
	Here we make a few remarks about how we adapted the statement of \cite[Theorem 4.2]{karlsenrisebro01} to our situation. First, we note that the authors consider a more general PDE of the form $u_{t} + \divergence f(u) = \frac{1}{2d}\Delta A(u)$. We state their result with the choice $f \equiv 0$ for simplicity. 
	
	Second, they consider a slightly different set of assumptions on the function $A$. In \cite{karlsenrisebro01}, $A$ is defined in $\R$ rather than $[0,\La]$, $A$ is locally Lipschitz rather than Lipschitz, and in the condition \eqref{e.cfleps}, $\max_{r \in [0,\La]}A'(r)$ is replaced by $\sup_{r \in \R}A'(r)$. Because we seek to use $A(r) = r^{m+1}$, we restrict the domain of $A$, $\mathcal{D}(A)$, so that $\mathcal{D}(A) := [0,\La]$, to ensure that $A$ is non-decreasing and that $\max_{r \in \mathcal{D}(A)}A'(r) < \infty$. This domain restriction is made without loss of generality: the proof \cite[Theorem 4.2]{karlsenrisebro01} only relies on an analysis of $A(v_{N})$ and we know that $v_{N}(\cdot, t)\in \BD{\La}(\R^{d})$ whenever $v_{0} \in \BD{\La}(\R^{d})\cap L^1(\R^{d})$ (for $N$ sufficiently large, by Lemma \ref{lem.modulii}(ii)).
	
	Lastly, as discussed in the remarks preceding the statement of \cite[Theorem 4.2]{karlsenrisebro01}, this theorem is an existence result for nonnegative entropy solutions with initial condition $v_{0} \in L^{1}(\R^{d}) \cap L^{\infty}(\R^{d})$, so long as the function $A$ satisfies \eqref{e.ahyp}, which are conditions independent of $\Lambda$. 
\end{remark}

The following corollary rephrases the above convergence result for our specific case, when $A(u)=u^{m+1}$. 

\begin{cor} \label{c.diffuse}
	Let $v_{0} \in L^{1}(\R^{d}) \cap L^{\infty}(\R^{d})$ with $v_{0} \geq 0$. Let $v$ be the unique nonnegative entropy solution to \eqref{e.pmev} and let $v_{N}$ be generated by $\mathcal{S}_{N}$ with initial condition $v_{0}$. Then
	\begin{equation*}
		v_{N} \xrightarrow{L^{1}_{\text{loc}}(Q_{T})} v \text{ and }v_{N}(\cdot,t) \xrightarrow{L^{1}_{\text{loc}}(\R^{d})} v(\cdot,t)\text{ for all }t \in [0,T].
	\end{equation*}
\end{cor}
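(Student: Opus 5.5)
The corollary is a specialization of Theorem \ref{t.kr} to $A(u)=u^{m+1}$, followed by Lemma \ref{lem.tslice} to pass from space-time convergence to convergence of time slices.

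Set $\La:=\norm{v_{0}}_{L^{\infty}(\R^{d})}$, so that $v_{0}\in L^{1}(\R^{d})\cap\BD{\La}(\R^{d})$. On $[0,\La]$ the function $A(u)=u^{m+1}$ is nondecreasing, has $A(0)=0$, and is Lipschitz with constant $(m+1)\La^{m}$; hence \eqref{e.ahyp} holds. For the CFL-type condition \eqref{e.cfleps}, note that
\[
2N^{-\frac{dm}{dm+2}}\max_{[0,\La]}A' = 2(m+1)\La^{m}N^{-dm\beta}.
\]
This tends to $0$ as $N\to\infty$, so with $\ve=\tfrac12$ it is below $1-\ve$ for all $N$ sufficiently large. (When $N\ge N_{0}$ from Lemma \ref{lem.modulii}, it is already at most $\tfrac12$.) Theorem \ref{t.kr} therefore provides a limit $v$ with $v_{N}\to v$ in $L^{1}_{\loc}(Q_{T})$, and $v$ is the unique entropy solution.

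Next I identify this $v$ with the nonnegative entropy solution in the statement. By Lemma \ref{lem.modulii}(i), $v_{N}\ge0$ for large $N$, so the $L^{1}_{\loc}$ limit satisfies $v\ge0$ a.e. The limit is thus a nonnegative entropy solution. Corollary \ref{cor:uniqueness} then shows it agrees a.e. with the given $v$.

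For convergence at every fixed time, I apply Lemma \ref{lem.tslice} with $w_{N}=v_{N}$ and $z_{N}=v$ (a constant sequence) on $[0,T]$, for each compact $D$. Two temporal moduli are needed.
\begin{itemize}
\item[(a)] By Lemma \ref{lem.modulii}(iii), the tail $(v_{N})_{N\ge N_{0}}$ has an approximate temporal modulus. This requires $v_{0}$ to have a spatial modulus of continuity. Every $L^{1}$ function has one by continuity of translations in $L^{1}$, namely $\nu(h):=\sup_{|y|\le h}\norm{v_{0}(\cdot+y)-v_{0}}_{L^{1}}$, so this is automatic.
\item[(b)] The limit $v$ lies in $C([0,T];L^{1}(\R^{d}))$. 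This holds because the entropy solution is continuous on $(0,T)$ and, by the initial-data definition and the convention $v(\cdot,0)=v_{0}$ in the subsequent remark, continuous up to $t=0$. If continuity at $t=T$ is unclear, I will work on $[0,T']$ with $T'>T$. Then $v$ has a genuine temporal modulus on the compact interval, and by Remark \ref{rmk.WLOGmod} both sequences may share a single modulus $\om$.
\end{itemize}
Lemma \ref{lem.tslice} then gives $\norm{v_{N}(\cdot,t)-v(\cdot,t)}_{L^{1}(D)}\to0$ for every $t\in[0,T]$, which is exactly the slice convergence. Discarding finitely many $N$ does not affect either limit statement.

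The main obstacle is the bookkeeping in the slice step, namely confirming the hypotheses of Lemma \ref{lem.tslice}: the spatial modulus for $v_{0}$, and temporal continuity of $v$ including at the endpoint $t=T$. Once these are settled, the rest is direct substitution.
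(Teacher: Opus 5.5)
Your proof is correct and follows the paper's route: apply Theorem~\ref{t.kr} with $\varepsilon=\tfrac12$ for $N$ large, then combine Lemma~\ref{lem.modulii}(iii) with Lemma~\ref{lem.tslice} (taking $z_N\equiv v$) to get convergence on every time slice. The differences are minor. The paper obtains the temporal modulus of $v$ by passing the approximate modulus of $(v_N)$ to the limit via Lemma~\ref{lem.L1loccmpct}, whereas you read it off from $v\in C([0,T];L^{1}(\R^{d}))$ (extending to $T'>T$). You also explicitly check two points the paper leaves implicit: that $v_0\in L^1$ automatically has a spatial modulus of continuity, and that the limit from Theorem~\ref{t.kr} coincides with the given nonnegative entropy solution.
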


\begin{proof}
	Taking $\La = \norm{v_{0}}_{\infty}$, there is an a.e.\@ representative for $v_{0}$ which satisfies $v_{0}\in \BD{\La}(\R^{d})$. Let $A(u) := u^{m+1}$. Because $\max_{u\in[0,\La]}A'(u) = A'(\La) = (m+1)\La^{m}$, we see that the condition \eqref{e.cfleps} is satisfied with $\ve = \frac{1}{2}$, so long as
	\begin{equation}
		N^{\frac{dm}{dm+2}} \geq 4(m+1)\La^{m},
	\end{equation}
	which is also a sufficient condition to guarantee monotonicity of $\mathcal{S}_{N}$ (as seen in Lemma \ref{lem.SN}). Therefore, the hypotheses of Theorem \ref{t.kr} are satisfied and thus $v_{N} \to v$ in $L^{1}_{\text{loc}}(Q_{T})$. 
	
	Next, we show the final claim of $L^{1}_{\loc}(\R^{d})$ convergence on time slices. By Lemma \ref{lem.modulii}(iii), there is some $N_{0} = N_{0}(m,d,\norm{v_{0}}_{L^{\infty}(\R^{d})}) \in \N$ so that $\{v_{N}\}_{N \geq N_{0}}$ admits an approximate temporal modulus of continuity $\om$ on $\R^{d}\times[0,T]$. Because $v$ is an $L^{1}_{\text{loc}}(Q_{T})$-limit point of $\{v_{N}\}_{N \geq N_{0}}$, Lemma \ref{lem.L1loccmpct} gives that $\om$ is also a temporal modulus of continuity for $v$. Thus, we apply Lemma \ref{lem.tslice} to the sequence $w_{N} := v_{N}$ and the constant sequence $z_{N} := v$ to conclude that we also have the convergence $v_{N}(\cdot,t) \xrightarrow{L^{1}_{\text{loc}}(\R^{d})} v(\cdot,t)$, for all $t \in [0,T]$.
\end{proof}

\subsection{\bf Convergence to the Barenblatt} \label{sec.convtoBaren}
Recall that $1/N=\tmesh=\xmesh^{1/\beta}$. We begin this section by proving some important properties about $u_{N}: \R^{d} \times [0,\infty)\rightarrow [0, \infty)$, the unique $(\xgrid\times\tgrid)$-piecewise constant function defined by
\begin{equation*}
	u_{N}(k\xmesh,n\tmesh) := \frac{1}{|\square_{N}|} \p{X^{n} = k} = \frac{1}{(\xmesh)^{d}}p(k,n).
\end{equation*}
Recall that Remark \ref{rmk.uNtwodefs} showed that $u_{N}$ can equivalently be defined as the function $u_{N}$ generated by $\mathcal{S}_{N}[A]$ with $A(u) = u^{m+1}$ and initial measure $\delta$. 
\begin{prop} \label{prop.uNfacts}
	For $N \in \N$, let $u_{N}$ be generated by $\mathcal{S}_{N}[A]$ with $A(u) = u^{m+1}$ with initial measure $\delta$. There is a constant $C=C(m,d)>0$ such that the following holds: 
	\begin{enumerate}[(i)]
		\item \label{uN.mass} $\int_{\R^{d}}u_{N}(x,t)\,dx = 1$ for all $t \in [0,\infty)$.
		\item \label{uN.Linfty} For all $t \in (0,\infty)$ and $N \geq 2/t$,
		\begin{equation*}
			0 \leq u_{N}(\cdot,t) \leq Ct^{-d\beta}.
		\end{equation*}
		\item \label{uN.conc} For all $t \in (0,\infty)$, $N \geq 2/t$, and $r > 2\sqrt{d}\xmesh$,
		\begin{equation*}
			\int_{|x| > r} u_{N}(x,t) \,dx\leq C\exp\left(- rt^{-\beta}/C\right).
		\end{equation*}
		\item \label{uN.BV} For all $t \in (0,\infty)$, $N \geq 2/t$, and $r > 2\sqrt{d}\xmesh$,
		\begin{equation*}
			[u_{N}(\cdot,t)]_{\TV(B_{r})} \leq Ct^{-d\beta}r^{d-1}.
		\end{equation*}
	\end{enumerate}
\end{prop}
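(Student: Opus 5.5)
The plan is to derive each item of Proposition \ref{prop.uNfacts} directly from the corresponding discrete statement in Theorem \ref{prop.pfacts}, by rescaling through the identity $u_{N}(k\xmesh,n\tmesh)=(\xmesh)^{-d}p(k,n)$ together with $\tmesh=N^{-1}=(\xmesh)^{1/\beta}$. Throughout, fix $t>0$ and $N\ge 2/t$. Set $n=\floor{t/\tmesh}=\floor{Nt}$. Since $Nt\ge 2$, we have $n\ge 1$ and $n\ge Nt/2$, hence $t/2\le n\tmesh\le t$. The function $u_N$ is piecewise constant, so $u_N(\cdot,t)=u_N(\cdot,n\tmesh)$, and it suffices to estimate this grid time slice.

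Item (i) is immediate. The integral $\int u_N(x,t)\,dx$ equals $\sum_k(\xmesh)^d(\xmesh)^{-d}p(k,n)=\sum_k p(k,n)=1$, since $p(\cdot,n)$ is a probability mass function. Alternatively, one can invoke mass conservation from Corollary \ref{cor.Sell1}(ii). For item (ii), apply Theorem \ref{prop.pfacts}(i):
\[
u_N\le(\xmesh)^{-d}Cn^{-d\beta}=C(N^{-1}n)^{-d\beta}=C(n\tmesh)^{-d\beta}\le C2^{d\beta}t^{-d\beta}.
\]
Here we used $(\xmesh)^{-d}=N^{d\beta}$.

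For item (iii), write $\int_{|x|>r}u_N(x,t)\,dx$ as a sum over the cubes $\square_N(k\xmesh)$ meeting $\{|x|>r\}$. Each such $k$ satisfies $|k\xmesh|\ge r-\sqrt d\,\xmesh/2\ge r/2$, because $r>2\sqrt d\xmesh$. The integral is therefore at most $\sum_{|k|\ge r/(2\xmesh)}p(k,n)$. Write $r/(2\xmesh)=\rho n^\beta$ with $\rho=r/(2\xmesh n^\beta)=r/(2(n\tmesh)^\beta)\ge r t^{-\beta}/2$. Theorem \ref{prop.pfacts}(ii) then gives the bound $C\exp(-\rho/C)\le C\exp(-rt^{-\beta}/(2C))$, and we absorb the factor of $2$ into $C$.

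For item (iv), the continuum total variation of a piecewise constant function on $B_r$ is bounded by the sum over adjacent cubes meeting $B_r$ of the jump times the shared face area $(\xmesh)^{d-1}$. All such cubes lie in $B_{r+\sqrt d\xmesh}\subset B_{2r}$ after rescaling. This yields
\[
[u_N(\cdot,t)]_{\TV(B_r)}\le 2(\xmesh)^{d-1}(\xmesh)^{-d}[p(\cdot,n)]_{\TV(B_{2r/\xmesh}\cap\Z^d)},
\]
where the factor $2$ is harmless and comes from the $\tfrac12$ in \eqref{e.dtv}. The step most likely to cause trouble is this comparison between the distributional definition \eqref{eq:tv_cont} and the lattice definition \eqref{e.dtv}. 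I would check it by testing against $w\in C_c^\infty(B_r;\R^d)$ and integrating by parts cube by cube, which leaves only boundary terms across faces. Once the comparison holds, Theorem \ref{prop.pfacts}(iii) with $2r/\xmesh=\rho n^\beta$, where $\rho=2r(n\tmesh)^{-\beta}$, gives
\[
\le C(\xmesh)^{-1}\rho^{d-1}n^{-\beta}=C\,r^{d-1}(n\tmesh)^{-(d-1)\beta}(n\tmesh)^{-\beta}\le C r^{d-1}t^{-d\beta}.
\]
The last equality uses $(\xmesh)^{-1}n^{-\beta}=(n\tmesh)^{-\beta}$, and the final inequality uses $n\tmesh\ge t/2$.
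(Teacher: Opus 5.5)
Your proposal is correct and follows the paper's proof essentially step by step: each item is obtained by rescaling the corresponding part of Theorem~\ref{prop.pfacts} through $u_N(k\xmesh,n\tmesh)=(\xmesh)^{-d}p(k,n)$, together with $t/2\le n\tmesh\le t$. The only difference is in (iv). The paper compares continuum and lattice total variation using the coarea formula for superlevel sets, with the looser radius $4r/\xmesh$; your cube-by-cube divergence-theorem argument against test fields $w\in C_c^\infty(B_r;\R^d)$ reaches the same face-jump bound more directly.
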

\begin{proof}
	\begin{enumerate}[(i)]
		
		\item Let $t \in [0,\infty)$. Because $u_{N}(\cdot,t)$, is $\xgrid$-piecewise constant, 
		\begin{align*}
			\int_{\R^{d}}u_{N}(x,t)\,dx = (\xmesh)^{d}\sum_{k \in \Z^{d}} u_{N}(k\xmesh,t) = \sum_{k \in \Z^{d}} p(k, \floor{t/\tmesh}) = \sum_{k \in \Z^{d}}\p{X^{\floor{t/\tmesh}} = k} = 1.
		\end{align*}
		\item By Theorem \ref{prop.pfacts}\eqref{p.Linfty}, there is a constant $C = C(m,d) > 0$, such that
		\begin{equation*}
			0 \leq p(k,n) \leq Cn^{-d\beta} \text{ for all } k \in \Z^{d}, n \in \N.
		\end{equation*}
		Fix $t \in (0,\infty)$, $x \in \R^{d}$, and integer $N \geq 2/t$. Choose $k \in \Z^{d}$ such that $x \in \square_{N}(k\xmesh)$ and let $n = \floor{t/\tmesh}$. Since $\xmesh = (\tmesh)^{\beta}$, we have
		\begin{equation*}
			0 \leq u_{N}(x,t) = (\tmesh)^{-d\beta} p(k,n) \leq (\tmesh)^{-d\beta} Cn^{-d\beta} = C (n\tmesh)^{-d\beta}.
		\end{equation*}
		We conclude using the fact that $N \geq 2/t$, meaning $\frac12 t \geq \tmesh$, so $n \tmesh \geq t - \tmesh \geq \frac12 t.$
		
		\item Since $r > 2\sqrt{d}\xmesh$, we have that $r - \sqrt{d}\xmesh > \frac{1}{2}r$, and thus,
		\begin{equation*}
			\int_{|x| >r} u_{N}(x,t)\,dx \leq (\xmesh)^{d}\sum_{|k\xmesh| > r - \sqrt{d}\xmesh} u_{N}(k\xmesh,t)\leq \sum_{|k\xmesh| \geq \frac12 r}p(k,\floor{t/\tmesh}).
		\end{equation*}
		Let $n = \floor{t/\tmesh}$. Since $\xmesh = \tmesh^{\beta}$, 
		\begin{equation*}
			\sum_{|k\xmesh| \geq \frac12 r}p(k,\floor{t/\tmesh}) = \sum_{|k| \geq \frac12 r\tmesh^{-\beta}}p(k,n) = \sum_{|k| \geq \frac12 r(n\tmesh)^{-\beta}n^{\beta}}p(k,n).
		\end{equation*}
		Combining the previous two displays, we can apply Theorem \ref{prop.pfacts}\eqref{p.conc} to conclude that there is some constant $C = C(m,d)>0$, such that:
		\begin{equation*}
			\int_{|x| >r} u_{N}(x,t)\,dx \leq \sum_{|k| \geq \frac12 r(n\tmesh)^{-\beta}n^{\beta}}p(k,n) \leq C\exp(-r(n\tmesh)^{-\beta}/C).
		\end{equation*}
		We conclude by noting that $\tmesh^{-1}= N \geq 2/t$ means $\frac12 t \geq \tmesh$ and so $n\tmesh \geq t - \tmesh  \geq \frac 12 t$.
		
		\item By Theorem \ref{prop.pfacts}\eqref{p.BV}, there is a constant $C = C(m,d) > 0$ such that
		\begin{equation} \label{e.pBValpha}
			[p(\cdot,n)]_{\TV(B_{\alpha}\cap\Z^{d})} \leq C\alpha^{d-1}n^{-d\beta} \text{ for all } \alpha > 0, n \in \N.
		\end{equation}
		Define the $s$-superlevel set of $u_{N}$ as $E^{s}:= \{x \in \R^{d} : u_{N}(x,t) > s\}$, and $E^{s}_{r} := E^{s} \cap B_{r}$. Using one of the equivalent definitions for total variation, we have
		\begin{equation} \label{e.TVdef2}
			[u_{N}(\cdot,t)]_{\TV(B_{r})} = \int_{-\infty}^{\infty} \mathcal{H}^{d-1}(\partial E^{s}_{r}) \, ds,
		\end{equation}
		where $\mathcal{H}^{d-1}$ is the $(d-1)$-dimensional Hausdorff measure (see for example \cite[Section 5.4, Theorem 5.4.4 and Remark 5.4.2]{ziemer}). Because $u_{N}(\cdot,t)$ is $\xgrid$-piecewise constant, it follows that
		\begin{equation*}
			\partial E^{s}_{r} \subset \bigcup_{k \in \Z^{d}} \partial\square_{N}(k\xmesh).
		\end{equation*}
		For $k \in \Z^{d}$, and $\ell \sim k$, define $\partial_{\ell}\square_{N}(k\xmesh)$ to be the face of the $d$-hypercube $\square_{N}(k\xmesh)$ which has normal vector $\ell-k$. We can re-write
		\begin{equation*}
			\bigcup_{k \in \Z^{d}} \partial\square_{N}(k\xmesh) = \bigcup_{k \in \Z^{d}}\bigcup_{\ell \sim k} \partial_{\ell}\square_{N}(k\xmesh).
		\end{equation*}
		We remark that for every $k \sim \ell$, $\partial_{\ell}\square_{N}(k\xmesh) = \partial_{k}\square_{N}(\ell\xmesh)$. Hence, the right hand side union (up to a set of $\mathcal{H}^{d-1}$ measure zero) exactly ``double counts'' each hypercube face. Therefore, we can re-write the integral \eqref{e.TVdef2} as
		\begin{equation*}
			[u_{N}(\cdot,t)]_{\TV(B_{r})} = \frac{1}{2}\sum_{k \in \Z^{d}} \sum_{\ell \sim k}\int_{-\infty}^{\infty} \mathcal{H}^{d-1}(\partial E^{s}_{r} \cap \partial_{\ell} \square_{N}(k\xmesh)) \, ds.
		\end{equation*}
		Since $E^{s}_{r}:= E^{s} \cap B_{r} \subset B_{r}$, if $|k\xmesh| \geq 2r$ then $|k\xmesh|> r + 2\sqrt{d}\xmesh$ and so for all $\ell\sim k$ in this range,
			$\partial E^{s}_{r} \cap \partial_{\ell} \square_{N}(k\xmesh) = \emptyset.$
		Thus, we can localize the sum over $k$ in the above display as
		\begin{align} \label{e.TVupperbnd}
			[u_{N}(\cdot,t)]_{\TV(B_{r})} &= \frac12\sum_{|k| < 2r(\xmesh)^{-1}} \sum_{\ell\sim k}\int_{-\infty}^{\infty} \mathcal{H}^{d-1}(\partial E^{s}_{r} \cap \partial_{\ell} \square_{N}(k\xmesh)) \, ds \notag\\
			&\leq \frac12\sum_{\substack{|k|,|\ell| < 4r(\xmesh)^{-1} \\ \ell \sim k}} \int_{-\infty}^{\infty} \mathcal{H}^{d-1}(\partial E^{s} \cap \partial_{\ell} \square_{N}(k\xmesh)) \, ds,
		\end{align}
		where we emphasize that we replaced $E^{s}_{r}$ by $E^{s}$ in the last step. 
		
		Next, for a fixed $k \in \Z^{d}$ and $i \in [d]$, we analyze the term $\int_{-\infty}^{\infty} \mathcal{H}^{d-1}(\partial E^{s} \cap \partial_{k+e_{i}} \square_{N}(k\xmesh)) \, ds$. Define
		\begin{align*}
			s_{\min} &:= \min\{u_{N}(k\xmesh,t), u_{N}((k+e_{i})\xmesh,t)\}\\
			s_{\max} &:= \max\{u_{N}(k\xmesh,t), u_{N}((k+e_{i})\xmesh,t)\},
		\end{align*}	
		so $u_{N}(\cdot,t)$ has a jump discontinuity of size $s_{\max}-s_{\min}$ across the face $\partial_{k+e_{i}} \square_{N}(k\xmesh)$. Because of this jump discontinuity, we have
		\begin{equation*}
			\partial E^{s} \cap \partial_{k+e_{i}} \square_{N}(k\xmesh) = \begin{cases}
				\partial_{k+e_{i}} \square_{N}(k\xmesh) &\text{ for } s \in (s_{\min},s_{\max}]\\
				\emptyset &\text{ otherwise}.
			\end{cases}
		\end{equation*}
		Hence, we calculate that 
		\begin{align*}
			\int_{-\infty}^{\infty} \mathcal{H}^{d-1}(\partial E^{s} \cap \partial_{k+e_{i}} \square_{N}(k\xmesh))\,ds
			&= \int_{s_{\min}}^{s_{\max}} \mathcal{H}^{d-1}(\partial_{k+e_{i}} \square_{N}(k\xmesh))\,ds \\
			&= (s_{\max}-s_{\min})\mathcal{H}^{d-1}(\partial_{k+e_{i}} \square_{N}(k\xmesh)) \\
			&= |u_{N}((k+e_{i})\xmesh,t) - u_{N}(k\xmesh,t)| (\xmesh)^{d-1} \\
			&= (\xmesh)^{-1} |p(k+e_{i},\floor{t/\tmesh}) - p(k,\floor{t/\tmesh})|.
		\end{align*}
		Letting $n = \floor{t/\tmesh}$, we combine the above display with \eqref{e.TVupperbnd}, letting $\ell=k\pm e_{i}$, to obtain that
		\begin{equation*}
			[u_{N}(\cdot,t)]_{\TV(B_{r})} \leq (\xmesh)^{-1} [p(\cdot,n)]_{\TV(B_{4r(\xmesh)^{-1}}\cap\Z^{d})}.
		\end{equation*}
		By applying \eqref{e.pBValpha} and $\xmesh = (\tmesh)^{\beta}$, this implies that
		\begin{equation*}
			[u_{N}(\cdot,t)]_{\TV(B_{r})} \leq (\xmesh)^{-1} C (r (\xmesh)^{-1})^{d-1}n^{-d\beta} = C r^{d-1}(n\tmesh)^{-d\beta}.
		\end{equation*}
		Finally, we conclude by again using that $n \tmesh \ge t/2$. 
	\end{enumerate}
\end{proof}
\begin{lem} \label{lem.uNmodulii}
	Fix $\eta \in (0,T)$. Let $u_{N}$ be generated by $\mathcal{S}_{N}$ with initial measure $\delta$.  Then $(u_{N})_{N\geq 2/\eta}$ admits a spatial modulus of continuity $\nu_{\eta}$ on $\R^{d}\times[\eta,T]$, and $(u_{N})_{N \geq 2/\eta}$ admits an approximate temporal modulus of continuity $\om_{\eta}$ on $\R^{d} \times [\eta,T]$.
\end{lem}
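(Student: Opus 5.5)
The plan is to run the argument of Lemma \ref{lem.modulii} again, but starting at time $\eta$. At that time we replace the control of the initial datum by the uniform bounds of Proposition \ref{prop.uNfacts}.

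\textbf{Spatial modulus.} Fix $t\in[\eta,T]$ and $N\ge 2/\eta$, so that $N\ge 2/t$ and Proposition \ref{prop.uNfacts} applies with constants depending only on $\eta$. Take $y\in\R^d$ and a radius $r>2\sqrt d\,\xmesh$ (any $r\ge 1$ works for large $N$; small $N$ is a finite family we handle separately). Split
\begin{equation*}
\norm{u_N(\cdot+y,t)-u_N(\cdot,t)}_{L^1(\R^d)}\le \norm{u_N(\cdot+y,t)-u_N(\cdot,t)}_{L^1(B_r)} + 2\int_{|x|>r-|y|}u_N(x,t)\,dx .
\end{equation*}
For the tail term, Proposition \ref{prop.uNfacts}(iii) gives a bound $2C\exp(-(r-|y|)\eta^{-\beta}... )$; more simply, for $|y|\le r/2$ it is at most $2C\exp(-r T^{-\beta}/(2C))$, uniformly over $t\le T$. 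For the bulk term, the standard BV translation estimate gives
\begin{equation*}
\norm{w(\cdot+y)-w}_{L^1(B_r)}\le |y|\,[w]_{\TV(B_{r+|y|})}
\end{equation*}
for piecewise constant $w$. Together with Proposition \ref{prop.uNfacts}(iv) this yields the bound $C|y|\eta^{-d\beta}(2r)^{d-1}$. We then choose $r=r(|y|)\to\infty$ slowly, for instance $r=\max\{1,|y|^{-1/(2d)}\}$, and obtain a modulus $\nu_\eta(s)\to0$ as $s\to0$. For $|y|$ large, the trivial bound $2$ from mass conservation, Proposition \ref{prop.uNfacts}(i), suffices. The finitely many $N$ with $2\sqrt d\,\xmesh\ge 1$ can be absorbed using the exact TV identity on the lattice, or by enlarging $\nu_\eta$, because each such $u_N(\cdot,t)$ takes only finitely many configurations for $t\in[\eta,T]$.

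\textbf{Approximate temporal modulus.} We follow part (iii) of Lemma \ref{lem.modulii}. For grid times $n_1\tmesh,n_2\tmesh\in[\eta,T]$ we telescope and sum by parts exactly as in \eqref{e.beforeFTC}. The factor $(\xmesh)^d\sum_k (V^n_k)^{m+1}$ is now bounded by $\norm{u_N(\cdot,n\tmesh)}_{L^\infty}^m\norm{u_N(\cdot,n\tmesh)}_{L^1}\le C\eta^{-dm\beta}$, using Proposition \ref{prop.uNfacts}(i),(ii) (we need $n\tmesh\ge\eta$ up to one mesh step, which is harmless since $N\ge2/\eta$). This gives hypothesis \eqref{e.KILhyp} with $C_\eta=C\eta^{-dm\beta}$. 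Kru\v{z}kov's interpolation lemma (Lemma \ref{lem.KIL}) is then applied on $[\eta,T]$ using the spatial modulus $\nu_\eta$ just obtained, with $\ve=|t_2-t_1|^{1/3}$. This gives $\om_\eta(s)=2C_\eta(s^{1/3}+\nu_\eta(s^{1/3}))$ at grid times. Passing to off-grid times via floors, exactly as at the end of Lemma \ref{lem.modulii}, produces the argument $|t_2-t_1|+1/N$ up to a factor 2.

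\textbf{Main obstacle.} The delicate step is the spatial modulus, specifically the interplay between the localized TV bound, which only holds on balls and grows like $r^{d-1}$, and the exponential tail. Choosing $r$ as a function of $|y|$ resolves it. A minor technical point is justifying the BV translation inequality for lattice piecewise-constant functions; this can be done directly by summing lattice differences along a monotone lattice path of length about $|y|_1/\xmesh$, which reduces it to the discrete TV bound of Theorem \ref{prop.pfacts}(iii).
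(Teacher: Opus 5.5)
Your proposal is correct and follows the paper's proof essentially step for step. You split into a ball and its complement, bound the tail with Proposition \ref{prop.uNfacts}(iii) and the bulk with the BV translation estimate plus Proposition \ref{prop.uNfacts}(iv), pick $r$ as a negative power of $|y|$, and rerun the Kru\v{z}kov interpolation argument of Lemma \ref{lem.modulii}(iii) using the bound $C\eta^{-d\beta}$ from Proposition \ref{prop.uNfacts}(ii). The few differences are minor and sound: your tail bound uniform in $t\le T$ correctly uses $T^{-\beta}$ (the paper's $\exp(-r\eta^{-\beta}/C)$ in \eqref{e.uNsmod1} goes the wrong way in $t$, though harmlessly), and your lattice-path justification of the translation estimate and your treatment of the finitely many small $N$ are valid substitutes for the paper's smooth-approximation step.
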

\begin{proof}
	Let $t \in [\eta,T]$. We first show that $u_{N}(\cdot,t)$ admits a spatial modulus of continuity $\nu_{\eta}$. Let $|y| \leq 1$ and $r \geq 4$. Then by partitioning $\R^{d}$, 
	\begin{multline} \label{e.uNsmod}
		\norm{u_{N}(\cdot + y,t) - u_{N}(\cdot,t)}_{L^{1}(\R^{d})} = \norm{u_{N}(\cdot+y,t)-u_{N}(\cdot,t)}_{L^{1}(B_{\sfrac{r}{2}})} \\+  \norm{u_{N}(\cdot+y,t)-u_{N}(\cdot,t)}_{L^{1}(B_{\sfrac{r}{2}}^{c})}.
	\end{multline}
	We begin by bounding the second term of \eqref{e.uNsmod}. Note that for $x \in \R^{d}$ with $|x| > r/2$, we have $|x+y| \geq |x|-|y| > r/2 - 1 \geq r/4$. Because $u_{N}$ is nonnegative, Proposition \ref{prop.uNfacts}\eqref{uN.conc} gives that there is a $C = C(m,d) > 0$ such that
	\begin{equation} \label{e.uNsmod1}
		\int_{|x| > r/2}|u_{N}(x+y,t)-u_{N}(x,t)|\,dx \leq 2\int_{|x| > r/4}u_{N}(x,t)\,dx \leq C \exp\left(-r\eta^{-\beta}/C\right),
	\end{equation}
	where we used $t \geq \eta$ in the last step. (Note that Proposition \ref{prop.uNfacts}\eqref{uN.conc} requires that $N>2/t$, which is satisfied since $N>2/\eta$.) 
	Next, we bound the first term in the left hand side of \eqref{e.uNsmod}. Note that for $x \in \R^{d}$ with $|x| < r/2$ and since $|y|<1$, we have $|x+y| < r$. By Proposition \ref{prop.uNfacts}\eqref{uN.BV}, there exists a constant $C = C(m,d) > 0$ such that $[u_{N}(\cdot,t)]_{\TV(B_{r})} \leq Ct^{-d\beta}r^{d-1} \leq C\eta^{-d\beta}r^{d-1}$, so by Proposition \ref{prop.uNfacts}(i), we conclude that $u_{N}(\cdot,t) \in \BV(B_{r})$. Therefore, we can approximate $u_{N}(\cdot,t)$ by a sequence of smooth functions $\{\vp^{i}\}_{i \in \N} \subset C^{\infty}(B_{r})$ (see, for example, \cite[Theorem 5.3.3]{ziemer}) such that 
	\begin{equation*}
		\lim_{i\to \infty}\norm{\vp^{i}-u_{N}(\cdot,t)}_{L^{1}(B_{r})} = 0 \text{ and } \lim_{i\to \infty}\norm{D\vp^{i}}_{L^{1}(B_{r})} = [u_{N}(\cdot,t)]_{\TV(B_{r})},
	\end{equation*}
	where $D\vp$ is the gradient of $\vp$. Note that for all $i \in \N$, 
	\begin{align*}
		\norm{\vp^{i}(\cdot+y)-\vp^{i}}_{L^{1}(B_{r/2})} 	&= \int_{B_{r/2}}\left|\int_{0}^{1}D\vp^{i}(x+sy)\cdot y\,ds\right|\,dx \\
		&\leq |y|  \int_{B_{r/2}}\int_{0}^{1}\left|D\vp^{i}(x+sy)\right|\,ds\,dx \\
		&= |y|  \int_{0}^{1}\norm{D\vp^{i}}_{L^{1}(B_{r/2}(sy))}\,ds \\
		&\leq |y| \cdot \norm{D\vp^{i}}_{L^{1}(B_{r})},
	\end{align*}
	where in the last step we used that $B_{r/2}(sy) \subset B_{r}$ because $|sy| \leq |y| \leq 1 \leq r/2$. 
	Taking $i \to \infty$ then yields
	\begin{equation*}
		\norm{u_{N}(\cdot+ y,t) - u_{N}(\cdot,t)}_{L^{1}(B_{r/2})} \leq   [u_{N}(\cdot,t)]_{\TV(B_{r})}\cdot |y| \leq C\eta^{-d\beta}r^{d-1}|y|.
	\end{equation*}
	Combining this with \eqref{e.uNsmod1}, we return to \eqref{e.uNsmod} to obtain that for all $|y| \leq 1$ and $r \geq 4$, 
	\begin{equation*}
		\norm{u_{N}(\cdot + y,t) - u_{N}(\cdot,t)}_{L^{1}(\R^{d})}\leq C\eta^{-d\beta}r^{d-1}|y| + C \exp\left(-r\eta^{-\beta}/C\right).
	\end{equation*}
	Choosing $r = 4|y|^{-1/d}$ in the above gives
	\begin{equation*}
		\norm{u_{N}(\cdot + y,t) - u_{N}(\cdot,t)}_{L^{1}(\R^{d})}\leq C\eta^{-d\beta}|y|^{1/d} + C \exp\left(-|y|^{-1/d}\eta^{-\beta}/C\right).
	\end{equation*}
	Because the above bound tends to $0$ as $|y| \to 0$, we may define the spatial modulus $\nu_{\eta}$ as
	\begin{equation*}
		\nu_{\eta}(|y|) = \begin{cases}
			0 & |y|=0\\
			C\eta^{-d\beta}|y|^{1/d} + C \exp\left(-|y|^{-1/d}\eta^{-\beta}/C\right) & 0 < |y| \leq 1\\
			\max\left\{C\eta^{-d\beta} + C \exp\left(-\eta^{-\beta}/C\right), 2\right\} &|y|> 1.
		\end{cases}
	\end{equation*}
To obtain an approximate temporal modulus of continuity $\om_{\eta}$ on $\R^{d} \times[\eta,T]$, we appeal to Kru{\v{z}}kov's interpolation lemma (Lemma \ref{lem.KIL}) in a manner similar to Lemma \ref{lem.modulii}(iii) albeit only for times in the interval $[\eta,T]$. The only difference is that that proof requires an initial condition $v_{0} \in L^{\infty}(\R^{d})$ to obtain bounds of the form
	\begin{equation*}
		\norm{v_{N}(\cdot,t)}_{L^{\infty}(\R^{d})} \leq \norm{v_{0}}_{L^{\infty}(\R^{d})} \text{ for } t > 0.
	\end{equation*}
	We can avoid such an assumption for $(u_{N})_{N \geq 2/\eta}$ by using Proposition \ref{prop.uNfacts}\eqref{uN.Linfty} to replace these bounds by bounds of the form
	\begin{equation*}
		\norm{u_{N}(\cdot,t)}_{L^{\infty}(\R^{d})}\leq Ct^{-d\beta} \leq C\eta^{-d\beta} \text{ for }t \in [\eta,T].\qedhere
	\end{equation*}
\end{proof}
The preceding lemma allows us to conclude that $(u_{N})_{N \geq 2/\eta}$ satisfies the hypotheses of Lemma \ref{lem.L1loccmpct} on the domain $\R^{d} \times [\eta,T]$. In Lemma \ref{lem.uNetaconv}, we will show that any $L^{1}_{\text{loc}}(\R^{d}\times [\eta, T])$-limit point of this sequence must be a nonnegative entropy solution of the porous medium equation. However, to prove that lemma, we will need the following result.

\begin{lem} \label{lem.conc}
	Let $u_{N}$ be generated by $\mathcal{S}_{N}[A]$ with $A(r)=r^{m+1}$ and initial measure $\delta$. Fix $t > 0$. Assume that there is a subsequence $(u_{N_i})_{i \in \N}$ such that $u_{N_{i}}(\cdot,t) \to w_{t}$ in $L^{1}_{\loc}(\R^{d})$ for some $w_{t} \in L^{1}(\R^{d})$. Then, 
	\begin{equation} \label{e.wunitmass}
		\int_{\R^{d}}w_{t}(x)\,dx = 1,
	\end{equation} 
	and there exists $C = C(m,d) > 0$ such that for all $r > 0$,
	\begin{equation} \label{e.wconc}
		 \int_{|x| > r} w_{t}(x)\,dx \leq C\exp\left(-rt^{-\beta}/C\right).
	\end{equation}
	Finally, we have that $u_{N_{i}}(\cdot,t) \to w_{t}$ in $L^{1}(\R^{d})$ (instead of merely in $L^{1}_{\loc}(\R^{d})$).
\end{lem}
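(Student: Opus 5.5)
The strategy is a standard tightness argument. The uniform tail bound of Proposition \ref{prop.uNfacts}\eqref{uN.conc} passes to the limit, and together with mass conservation (Proposition \ref{prop.uNfacts}\eqref{uN.mass}) it rules out any loss of mass at infinity. We only consider indices with $N_i \ge 2/t$, which holds for all large $i$, and $\xmesh(N_i)\to 0$.

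\emph{Tail bound \eqref{e.wconc}.} Fix $r>0$ and $R>r$, and set $A_{r,R}:=\{r<|x|<R\}$. For $i$ large enough we have $r>2\sqrt{d}\,\xmesh(N_i)$, so Proposition \ref{prop.uNfacts}\eqref{uN.conc} gives
\[
\int_{A_{r,R}} u_{N_i}(x,t)\,dx \le \int_{|x|>r} u_{N_i}(x,t)\,dx \le C\exp(-rt^{-\beta}/C).
\]
Since $A_{r,R}$ is bounded, $L^1_{\loc}$ convergence gives $\int_{A_{r,R}} u_{N_i}(\cdot,t)\to\int_{A_{r,R}} w_t$. Hence $\int_{A_{r,R}} w_t\le C\exp(-rt^{-\beta}/C)$. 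Note that $w_t\ge 0$ a.e., as an $L^1_{\loc}$ limit of nonnegative functions. Letting $R\to\infty$ and using monotone convergence yields \eqref{e.wconc}.

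\emph{Unit mass \eqref{e.wunitmass}.} For any $r$, split
\[
1=\int_{B_r}u_{N_i}(x,t)\,dx+\int_{B_r^c}u_{N_i}(x,t)\,dx .
\]
The first term converges to $\int_{B_r} w_t$. The second lies in $[0,C\exp(-rt^{-\beta}/C)]$ for large $i$. So
\[
\Bigl|\int_{B_r} w_t-1\Bigr|\le C\exp(-rt^{-\beta}/C),
\]
and sending $r\to\infty$ (monotone convergence again) gives $\int_{\R^d} w_t=1$.

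\emph{$L^1$ convergence.} Bound
\[
\|u_{N_i}(\cdot,t)-w_t\|_{L^1(\R^d)}\le \|u_{N_i}(\cdot,t)-w_t\|_{L^1(B_r)}+\int_{B_r^c}u_{N_i}(x,t)\,dx+\int_{B_r^c}w_t(x)\,dx .
\]
The first term tends to $0$ as $i\to\infty$. By the two tail bounds, the other two are each at most $C\exp(-rt^{-\beta}/C)$ for large $i$. Taking $\limsup_i$ and then $r\to\infty$ finishes the proof.

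The only delicate point is bookkeeping: Proposition \ref{prop.uNfacts}\eqref{uN.conc} needs $r>2\sqrt{d}\,\xmesh$ and $N\ge 2/t$. Both hold eventually along the subsequence for each fixed $r>0$, so neither is a real obstacle.
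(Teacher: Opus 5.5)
Your proof is correct and follows essentially the same route as the paper: you pass the uniform tail bound and mass conservation of Proposition \ref{prop.uNfacts} to the limit on bounded sets, then split $\R^d$ into $B_r$ and $B_r^c$ to get the $L^1$ convergence. The only difference is the order of the first two steps. You obtain \eqref{e.wconc} directly via annuli and monotone convergence, whereas the paper first proves unit mass using the bulk lower bound $\int_{B_r} w_t \ge 1 - C\exp(-rt^{-\beta}/C)$ and then reads the tail bound off from it.
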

\begin{proof}
We remark that $u_{N_{i}} \geq 0$, and so $w_{t} \geq 0$ almost everywhere. 
We first prove \eqref{e.wunitmass}.
Applying the hypotheses and Proposition \ref{prop.uNfacts}(i), we have
\begin{equation*}
	\int_{\R^{d}} w_{t}(x) \, dx = \lim_{r\to \infty} \int_{|x| \leq r} w_{t}(x) \, dx = \lim_{r\to \infty} \lim_{i\to\infty} \int_{|x| \leq r} u_{N_{i}}(x,t) \, dx \leq \lim_{r\to \infty} \lim_{i\to\infty} 1 = 1.
\end{equation*}
Next, we use Proposition \ref{prop.uNfacts} (statements (i) and (iii)) to obtain that for all $r > 0$, 
\begin{align}\label{e.wbulk} 
	\int_{|x| \leq r} w_{t}(x) \,dx &= \lim_{i\to\infty} \int_{|x| \leq r} u_{N_{i}}(x,t) \, dx \notag\\
	&= \lim_{i \to \infty}\left(1 - \int_{|x|> r} u_{N_{i}}(x,t) \, dx \right) \notag\\
	&\geq 1 - C\exp(-rt^{-\beta}/C),
\end{align}
where we remark that the hypotheses $N _{i}> 2/t$ and $r > 2\sqrt{d}\xmesh$ are satisfied for all $N_{i}$ large enough.
Taking $r \to \infty$ in the above gives that $\int_{\R^{d}} w_{t}(x)\,dx \geq 1$ and so \eqref{e.wunitmass} is proved. We note that \eqref{e.wconc} follows directly from \eqref{e.wunitmass} and \eqref{e.wbulk}.

For the convergence in $L^{1}(\R^{d})$, choose $r \geq 2\sqrt{d}(2/t)^{-\beta}$ so that $r \geq 2\sqrt{d}\xmesh$ for all $N \geq 2/t$. We combine Proposition \ref{prop.uNfacts}\eqref{uN.conc} and \eqref{e.wconc} to conclude that for $N_{i} \geq 2/t$, 
\begin{align*}
	\norm{u_{N_{i}}(\cdot,t) - w_{t}}_{L^{1}(\R^{d})} &= \norm{u_{N_{i}}(\cdot,t) - w_{t}}_{L^{1}(B_{r})} +  \norm{u_{N_{i}}(\cdot,t) - w_{t}}_{L^{1}(B^{c}_{r})} \\
	&\leq \norm{u_{N_{i}}(\cdot,t) - u^{\eta}(\cdot,t)}_{L^{1}(B_{r})} +  2C\exp\left(-rt^{-\beta}/C\right).
\end{align*}
Taking $N_{i}\to \infty$ followed by $r \to \infty$ in the above gives $u_{N_{i}}(\cdot,t) \to w_{t}$ in $L^{1}(\R^{d})$.
\end{proof}

\begin{lem} \label{lem.uNetaconv}
	Fix $\eta \in (0,T)$. Let $u_{N}$ be generated by $\mathcal{S}_{N}[A]$ with $A(r)=r^{m+1}$ and initial measure $\delta$. There is a subsequence $(u_{N^{\eta}_{n}})_{n \in \N}$ and function $u^{\eta}$ such that
	\begin{equation*}
		u_{N^{\eta}_{n}} \xrightarrow[n\to \infty]{L^{1}_{\loc}(\R^{d}\times[\eta,T])} u^\eta \text{ and, for all }t \in [\eta,T], u_{N^{\eta}_{n}}(\cdot,t) \xrightarrow[n\to\infty]{L^{1}_{\loc}(\R^{d})} u^\eta(\cdot,t).
	\end{equation*} Furthermore, $u^{\eta}$ is a nonnegative entropy solution to $u^{\eta}_{t} - \tfrac{1}{2d}\Delta((u^{\eta})^{m+1}) = 0 \text{ in } \R^{d}\times (\eta,T)$.
\end{lem}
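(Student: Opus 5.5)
The plan has two parts: first extract a convergent subsequence by compactness, then identify the limit as an entropy solution by comparing $u_N$ on $[\eta,T]$ with the Karlsen--Risebro approximations started from bounded data at time $\eta$.

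\textbf{Compactness.} By Proposition \ref{prop.uNfacts}(i),(ii), for $N\ge 2/\eta$ and $t\in[\eta,T]$ we have $\norm{u_N(\cdot,t)}_{L^1}=1$ and $\norm{u_N(\cdot,t)}_{L^\infty}\le C\eta^{-d\beta}$. Lemma \ref{lem.uNmodulii} gives a spatial modulus $\nu_\eta$ and an approximate temporal modulus $\om_\eta$. Lemma \ref{lem.L1loccmpct} then yields a subsequence $N^\eta_n$ and a limit $u^\eta\in L^1\cap L^\infty\cap C([\eta,T];L^1(\R^d))$ with $u_{N^\eta_n}\to u^\eta$ in $L^1_{\loc}(\R^d\times[\eta,T])$. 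The limit inherits the moduli $\nu_\eta$ and $\om_\eta$, and $u^\eta\ge0$. Applying Lemma \ref{lem.tslice} with $w_N=u_{N^\eta_n}$ and the constant sequence $z_N=u^\eta$ upgrades this to slice convergence $u_{N^\eta_n}(\cdot,t)\to u^\eta(\cdot,t)$ in $L^1_{\loc}(\R^d)$ for every $t\in[\eta,T]$. By Lemma \ref{lem.conc}, the convergence at $t=\eta$ actually holds in $L^1(\R^d)$.

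\textbf{Identification of the limit.} Set $v_0:=u^\eta(\cdot,\eta)\in L^1\cap L^\infty$, with $v_0\ge0$ and $\norm{v_0}_\infty\le C\eta^{-d\beta}$. Let $v_N$ be generated by $\mathcal S_N$ from $v_0$, started at time $\eta$ (Remark \ref{rmk.solshift}); more precisely, start from the discretization \eqref{e.initL1} of $v_0$ at the grid time $\floor{\eta N}\tmesh$. By Corollary \ref{c.diffuse}, $v_N\to v$ in $L^1_{\loc}(\R^d\times[\eta,T])$, where $v$ is the unique nonnegative entropy solution on $(\eta,T)$ with initial data $v_0$. Take $\La:=C\eta^{-d\beta}$ as a common bound for $u_N(\cdot,t)$ ($t\ge\eta$) and for $v_N$. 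For $N$ large, \eqref{e.cfl} holds with this $\La$, so Lemma \ref{lem.SN}(iii) gives $L^1$ contraction:
\[
\norm{u_N(\cdot,t)-v_N(\cdot,t)}_{L^1}\le \norm{u_N(\cdot,\floor{\eta N}\tmesh)-\bar v_{0,N}}_{L^1},
\]
where $\bar v_{0,N}$ denotes the cell averages of $v_0$.

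\textbf{Main obstacle.} The hard part is showing that this right-hand side tends to $0$ along the subsequence. Cell-averaging is an $L^1$ contraction, and $u_N(\cdot,\floor{\eta N}\tmesh)$ is already piecewise constant, so the right-hand side is at most
\[
\norm{u_N(\cdot,\floor{\eta N}\tmesh)-v_0}_{L^1}+\norm{\bar v_{0,N}-v_0}_{L^1}.
\]
The second term vanishes by Lebesgue differentiation in $L^1$. The first term vanishes by the $L^1(\R^d)$ convergence at time $\eta$ from Lemma \ref{lem.conc}; the grid offset $\eta-\floor{\eta N}\tmesh\le 1/N$ is absorbed by the approximate temporal modulus $\om_\eta$. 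Once the right-hand side goes to $0$, we get $u^\eta=v$ a.e.\ on $\R^d\times[\eta,T]$, so $u^\eta$ is a nonnegative entropy solution in $\R^d\times(\eta,T)$.
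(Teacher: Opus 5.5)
Your proposal is correct and follows essentially the same route as the paper. Compactness comes from Lemma \ref{lem.L1loccmpct} and Lemma \ref{lem.tslice}. The limit is identified with the Karlsen--Risebro solution $v$ started from $u^\eta(\cdot,\eta)$, using the $L^1$-contraction of Lemma \ref{lem.SN}(iii) and the $L^1(\R^d)$ upgrade from Lemma \ref{lem.conc}.

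The differences are cosmetic. You start $v_N$ at grid time $\floor{\eta N}\tmesh$ instead of shifting by $\ell_N=\floor{(t-\eta)/\tmesh}$ steps. So the one-step offset has to be absorbed on the $v_N$ side, via Lemma \ref{lem.modulii}(iii), when you invoke Corollary \ref{c.diffuse}; the paper absorbs it on the $u_N$ side via $\om_\eta$. Also, $u_N(\cdot,\floor{\eta N}\tmesh)=u_N(\cdot,\eta)$ exactly, and cell averaging is an $L^1$ contraction, so you can bound by $\norm{u_N(\cdot,\eta)-u^\eta(\cdot,\eta)}_{L^1}$ directly. That makes your Lebesgue-differentiation term unnecessary.
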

\begin{proof} 
	Let $N_{\eta} \in \N$ be such that $N_{\eta} \geq \frac{2}{\eta}$. Let $\nu_{\eta}$ and $\om_{\eta}$ be respectively a spatial modulus of continuity and an approximate temporal modulus of continuity for $(u_{N})_{N \geq N_{\eta}}$ on $\R^{d} \times [\eta,T]$, the existence of which is guaranteed by Lemma \ref{lem.uNmodulii}. The existence of these moduli of continuity, along with Proposition \ref{prop.uNfacts} (statements (i) and (ii)), entail that $(u_{N})_{N \geq N_{\eta}}$ satisfies the conditions of Lemma \ref{lem.L1loccmpct} on the domain $\R^{d} \times [\eta,T]$. Therefore, there is a subsequence $(u_{N^{\eta}_{n}})_{n \in \N}$ such that $u_{N^{\eta}_{n}} \to u^{\eta}$ in $L^{1}_{\loc}(\R^{d} \times [\eta,T])$ for some $u^{\eta} \in L^{1}(\R^{d}\times[\eta,T])\cap L^{\infty}( \R^{d}\times[\eta,T])\cap C([\eta,T];L^{1}(\R^{d}))$. To simplify the notation, we drop the subsequence $(u_{N^{\eta}_{n}})_{n\in\N}$ and assume without loss of generality that $u^{\eta}$ is a limit point of the entire sequence $(u_{N})_{N \geq N_{\eta}}$. 
	The conclusion of Lemma \ref{lem.L1loccmpct} also yields that $u^{\eta}$ has $\nu_{\eta}$ and $\om_{\eta}$ as a spatial and temporal modulus of continuity, respectively. Hence, we apply Lemma \ref{lem.tslice} to the sequence $(u_{N})_{N\geq N_{\eta}}$ and the constant sequence $(u^{\eta})_{N \in \N}$, deducing that
	\begin{equation} \label{e.uNetaslice}
		\text{for all }t\in [\eta, T], u_{N}(\cdot,t) \xrightarrow{L^{1}_{\loc}(\R^{d})}u^{\eta}(\cdot,t).
	\end{equation}
	
	Now we show that $u^{\eta}$ is a nonnegative entropy solution. By Proposition \ref{prop.uNfacts}\eqref{uN.Linfty}, there is a constant $C = C(m,d) > 0$ such that
	\begin{equation} \label{e.uNbdd}
		\text{for all }t\in [\eta,T], \quad0 \leq u_{N}(\cdot,t) \leq Ct^{-d\beta}.
	\end{equation}
	The convergence \eqref{e.uNetaslice} then implies that
	\begin{equation} \label{e.uetabdd}
		\text{for all }t\in [\eta, T], \quad0 \leq u^{\eta}(\cdot,t) \leq Ct^{-d\beta} \text{ a.e.\@ in } \R^{d}.
	\end{equation}
	Therefore, $u^{\eta}(\cdot,\eta)$ is nonnegative and belongs to $L^{\infty}(\R^{d})$, so by Theorem \ref{t.kr} (in particular, the last part of Remark \ref{rmk.KRdiffs}), there exists a unique nonnegative entropy solution $v$ solving
		\begin{equation} \label{e.veta}
		\begin{cases}
			v_{t} - \frac{1}{2d}\Delta(v^{m+1}) = 0 &\text{in } \R^{d} \times (0,T-\eta) \\
			v(\cdot,0) = u^{\eta}(\cdot,\eta) &\text{in } \R^{d}.
		\end{cases}
	\end{equation}
	By Remark \ref{rmk.solshift}, if we can show that $u^{\eta}(x,t) = v(x,t-\eta)$ for a.e.\@ $(x,t) \in \R^{d}\times(\eta,T)$, we can conclude that $u^{\eta}$ is itself a nonnegative entropy solution as desired. 

Let $v_{N}$ be generated by $\mathcal{S}_{N}$ with initial condition $u^{\eta}(\cdot,\eta)$. Because of \eqref{e.uetabdd}, we can apply the convergence result for diffuse initial conditions, Corollary \ref{c.diffuse}, to obtain that
	\begin{equation} \label{e.vNetaslice}
		v_{N}(\cdot,t) \xrightarrow{L^{1}_{\loc}(\R^{d})}v(\cdot,t)\text{ for }t \in [0,T-\eta].
	\end{equation}
	Since $u^{\eta}(\cdot,\eta)$ has a spatial modulus of continuity, Lemma \ref{lem.modulii}(iii) implies that $(v_{N})_{N \geq N_{\eta}}$ has an approximate temporal modulus of continuity, possibly after increasing $N_{\eta}$ depending on $\norm{u^{\eta}(\cdot,\eta)}_{L^{\infty}(\R^{d})}$. Without loss of generality (by Remark \ref{rmk.WLOGmod}), we take this approximate temporal modulus of continuity to be $\om_{\eta}$. 
	
	Next, let $D \subset \R^{d}$ be compact. For $t \in [\eta,T]$, we calculate
	\begin{align*}
		&\norm{u^{\eta}(\cdot,t) - v(\cdot,t-\eta)}_{L^{1}(D)} \\
		&\leq \norm{ (u^{\eta} - u_{N})(\cdot,t)}_{L^{1}(D)} + 
		\norm{u_{N}(\cdot,t) - v_{N}(\cdot,t-\eta) }_{L^{1}(D)} + 
		\norm{(v_{N} - v)(\cdot,t-\eta)}_{L^{1}(D)}.
	\end{align*}
	By \eqref{e.uNetaslice} and \eqref{e.vNetaslice}, we have
	\begin{equation} \label{e.uetav}
		\norm{u^{\eta}(\cdot,t) - v(\cdot,t-\eta)}_{L^{1}(D)} \leq 
		\limsup_{N\to\infty}\norm{u_{N}(\cdot,t) - v_{N}(\cdot,t-\eta)}_{L^{1}(D)}.
	\end{equation}
	To analyze this upper bound, fix $N \geq N_{\eta}$ and let $\ell_{N} = \floor{(t-\eta)/\tmesh}$. Because $\norm{v_{0}}_{L^{\infty}(\R^{d})} = \norm{u^{\eta}(\cdot,\eta)}_{L^{\infty}(\R^{d})} \leq C\eta^{-d\beta}$, Lemma \ref{lem.modulii}(i) gives that $\norm{v_{N}(\cdot,t)}_{L^{\infty}(\R^{d})} \leq C\eta^{-d\beta}$ for all $t \in [0,T-\eta]$. Furthermore, we have that $\norm{u_{N}(\cdot,t)}_{L^{\infty}(\R^{d})} \leq C\eta^{-d\beta}$ by \eqref{e.uNbdd}. Hence, by possibly taking $N_{\eta}$ larger so that $N \ge N_{\eta}$ satisfies \eqref{e.cfl} with $\Lambda := C\eta^{-d\beta}$, we apply Lemma \ref{lem.SN}(iii) to conclude that
	\begin{align} \label{e.uNvN}
		&\norm{u_{N}(\cdot,t) - v_{N}(\cdot,t-\eta)}_{L^{1}(D)} \\ \notag
		&\leq \norm{u_{N}(\cdot,t) - v_{N}(\cdot,t-\eta)}_{L^{1}(\R^{d})} \\\notag
		&= \norm{\mathcal{S}^{\ell_{N}}_{N}u_{N}(\cdot - \ell_{N}\tmesh)- \mathcal{S}^{\ell_{N}}_{N}v_{N}(\cdot, 0)}_{L^{1}(\R^{d})} \\\notag
		& \leq \norm{u_{N}(\cdot, t-\ell_{N}\tmesh) - v_{N}(\cdot, 0)}_{L^{1}(\R^{d})} \\\
		& \leq \norm{u_{N}(\cdot, t-\ell_{N}\tmesh) - u_{N}(\cdot,\eta)}_{L^{1}(\R^{d})} + \norm{u_{N}(\cdot,\eta) - v_{N}(\cdot, 0)}_{L^{1}(\R^{d})}.\label{e.uNvN2}
	\end{align}
	For the second term in the upper bound, the fact that $u_{N}$ and $v_{N}$ are piecewise constant gives
	\begin{equation*}
		\norm{u_{N}(\cdot,\eta) - v_{N}(\cdot,0)}_{L^{1}(\R^{d})} = (\xmesh)^{d} \sum_{k \in \Z^{d}} |u_{N}(k\xmesh,\eta) - v_{N}(k\xmesh,0)|.
	\end{equation*}
	Since $u_{N}$ is piecewise constant and $v_{N}$ has initial condition $u^{\eta}(0,\eta)$, we have
	\begin{equation*}
	u_{N}(k\xmesh,\eta) = \frac{1}{|\square_{N}|}\int_{\square_{N}}u_{N}(y+k\xmesh,\eta)\,dy , \text{and }  v_{N}(k\xmesh,0)=\frac{1}{|\square_{N}|}\int_{\square_{N}}u^{\eta}(y+k\xmesh,\eta)\,dy.
	\end{equation*}
	Combining the two displays above, we obtain
	\begin{align*}
		&\norm{u_{N}(\cdot,\eta) - v_{N}(\cdot,0)}_{L^{1}(\R^d)} \\&= (\xmesh)^{d} \sum_{k \in \Z^{d}} \left|\frac{1}{|\square_{N}|}\int_{\square_{N}}u_{N}(y+k\xmesh,\eta)\,dy - \frac{1}{|\square_{N}|}\int_{\square_{N}}u^{\eta}(y+k\xmesh,\eta)\right| \\&= \sum_{k \in \Z^{d}} \left|\int_{\square_{N}} \bigg(u_{N}(y+k\xmesh,\eta) - u^{\eta}(y+k\xmesh,\eta)\bigg)\,dy\right| \\
		&\leq \sum_{k \in \Z^{d}} \int_{\square_{N}} \left|u_{N}(y+k\xmesh,\eta) - u^{\eta}(y+k\xmesh,\eta)\right| \,dy\\
		&= \int_{\R^{d}} \left|u_{N}(y,\eta) - u^{\eta}(y,\eta)\right| \,dy \to 0,
	\end{align*}
	where in the last step, we use the final statement of Lemma \ref{lem.conc} to bootstrap the convergence $u_{N}(\cdot,\eta) \to u^{\eta}(\cdot,\eta)$ in $L^{1}_{\loc}(\R^{d})$ to convergence in $L^{1}(\R^{d})$. 
	
	For the first term in \eqref{e.uNvN2}, we use $\om_{\eta}$, an approximate temporal modulus of continuity for $(u_{N})_{N\geq\eta}$ on $\R^{d}\times[\eta,T]$, to obtain
	\begin{equation*}
		\norm{u_{N}(\cdot,t-\ell_{N}\tmesh) - u_{N}(\cdot,\eta)}_{L^{1}(\R^{d})} \leq \om_{\eta}(|t-\ell_{N}\tmesh - \eta| + \tfrac{1}{N}) \xrightarrow{N \to \infty} 0,
	\end{equation*}
	where we used that 	$\ell_{N}\tmesh = \floor{(t-\eta)/\tmesh}\tmesh  \xrightarrow{N \to \infty} (t-\eta)$. 
	Therefore, combining the previous two displays, we return to \eqref{e.uetav} to obtain
	\begin{equation*}
		\norm{u^{\eta}(\cdot,t) - v(\cdot,t-\eta)}_{L^{1}(D)} = 0  \text{ for all }t \in [\eta,T].
	\end{equation*}
	Since $D \subset \R^{d}$ was arbitrary, $u^{\eta}(x,t) = v(x,t-\eta)$ for a.e.\@ $(x,t) \in \R^{d}\times(\eta,T)$. Thus we conclude that $u^{\eta}$ is a nonnegative entropy solution of the porous medium equation with initial condition $\delta$.
\end{proof}

We now present a convergence result for $(u_{N})_{N>0}$ satisfying a finite difference scheme with initial measure $\delta$, which converges to the Barenblatt solution of the porous media equation.

\begin{thm} \label{t.mainpde}
	Let $u_{N}$ be generated by $\mathcal{S}_{N}[A]$ with $A(r)=r^{m+1}$ and initial measure $\delta$. We recall that the Barenblatt solution $\bar{u}$ is given by
	$
		\bar{u}(x,t):=t^{-d\beta}(C-\ga|x|^{2}t^{-2\beta})_{+}^{\frac{1}{m}}.
	$
	Then \begin{equation*}
		u_{N} \xrightarrow[]{L^{1}_{\loc}(Q_{T})} \bar{u} \text{ and, for all }t \in (0,T], u_{N}(\cdot,t) \xrightarrow[]{L^{1}_{\loc}(\R^{d})} \bar{u}(\cdot,t).
	\end{equation*}
\end{thm}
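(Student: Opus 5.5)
The plan is a compactness-plus-uniqueness argument. First I show that every subsequence of $(u_N)$ has a further subsequence converging in $L^1_{\loc}(\R^d\times[\eta,T])$ for every $\eta\in(0,T)$ to a single function $u$ on $Q_T$. Then I show that $u$ is a nonnegative distributional solution of the PME with initial trace $\delta$. Theorem \ref{thm.distunique} then forces $u=\bar{u}$ a.e., and since the limit does not depend on the subsequence, the whole sequence converges.

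\textbf{Step 1: extracting a global limit.} Take $\eta_j=T/(j+1)\downarrow 0$ and apply Lemma \ref{lem.uNetaconv} successively, with each subsequence nested in the previous one. A diagonal subsequence $(u_{N_i})$ then converges in $L^1_{\loc}(\R^d\times[\eta_j,T])$ to $u^{\eta_j}$ for every $j$, and also slice-wise for each $t\in[\eta_j,T]$. The limits are consistent, meaning $u^{\eta_j}=u^{\eta_{j'}}$ a.e. on the overlap, so they patch together into one $u$ on $Q_T$. This $u$ is a nonnegative entropy solution on each $\R^d\times(\eta_j,T)$. By Proposition \ref{prop.entropdist} it is therefore a distributional solution on each such strip. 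Any test function $\vp\in C_c^\infty(Q_T)$ is supported in some strip, so $u$ is a distributional solution on $Q_T$. Local integrability of $u$ and of $u^{m+1}$ follows from the bound $u(\cdot,t)\le Ct^{-d\beta}$ together with unit mass (Lemma \ref{lem.conc}), since $\int_0^T\int u^{m+1}\le \int_0^T Ct^{-dm\beta}\,dt<\infty$ because $dm\beta<1$.

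\textbf{Step 2: identifying the initial trace.} This is the main obstacle: I must show that $\int u(x,t)\vp(x)\,dx\to\vp(0)$ as $t\to0^+$ for every $\vp\in C_c(\R^d)$, with all estimates uniform in $N$.
\begin{itemize}
\item By Lemma \ref{lem.conc}, $\int u(\cdot,t)=1$ and $\int_{|x|>r}u(x,t)\,dx\le C\exp(-rt^{-\beta}/C)$. The mass therefore concentrates at the origin as $t\to0$.
\item I write $\int u(x,t)\vp(x)\,dx-\vp(0)=\int u(x,t)(\vp(x)-\vp(0))\,dx$ and split it at $|x|=r$.
\item The inner part is at most $\sup_{|x|\le r}|\vp(x)-\vp(0)|$, using unit mass.
\item The outer part is at most $2\|\vp\|_\infty C\exp(-rt^{-\beta}/C)$.
\item Sending $t\to0$ and then $r\to0$ gives the claim.
\end{itemize}
No finer control near $t=0$ is needed, because the concentration estimate of Theorem \ref{prop.pfacts}(ii) survives the limit. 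The one delicate point is that Lemma \ref{lem.conc} needs $N_i\ge 2/t$, which always holds along the subsequence as $N_i\to\infty$.

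\textbf{Step 3: concluding.} Both $u$ and $\bar u$ are nonnegative distributional solutions with initial trace $\delta$, so Theorem \ref{thm.distunique} gives $u=\bar u$ a.e. in $Q_T$. Since $\bar u$ does not depend on the subsequence, every subsequence of $(u_N)$ has a further subsequence converging to $\bar u$ in $L^1_{\loc}(\R^d\times[\eta,T])$, for each $\eta$. The full sequence therefore converges on each such strip.

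To reach $L^1_{\loc}(Q_T)$, I handle the thin layer $(0,\eta)$ separately. The mass identity $\int u_N(\cdot,t)=1$ holds for all $N$ and $t$, and $\bar u$ also has unit mass, so $\|u_N-\bar u\|_{L^1(D\times(0,\eta))}\le 2\eta$. Sending $\eta\to0$ finishes this part.

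For the time slices, pointwise convergence for each $t\in(0,T]$ follows from Lemma \ref{lem.tslice} applied on $[\eta,T]$ with $\eta<t$. This uses the approximate temporal modulus of continuity $\om_\eta$ from Lemma \ref{lem.uNmodulii}, and the continuity of $\bar u$ in $C([\eta,T];L^1)$.
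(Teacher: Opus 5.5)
Your proposal is correct and follows the paper's proof essentially step for step. The main moves match: the nested diagonal extraction via Lemma \ref{lem.uNetaconv}, patching the entropy solutions $u^{\eta_j}$ into a distributional solution on $Q_T$ via Proposition \ref{prop.entropdist}, identifying the initial trace $\delta$ from the unit-mass and concentration bounds of Lemma \ref{lem.conc}, and concluding with Theorem \ref{thm.distunique} and the subsequence principle. The differences are minor: you recover the time-slice convergence after identification by applying Lemma \ref{lem.tslice} with $z_N=\bar u$, rather than via $\tilde u\in C((0,T];L^1)$ and Corollary \ref{cor.aetslice}, and your layer estimate $\|u_N-\bar u\|_{L^1(D\times(0,\eta))}\le 2\eta$ is a harmless strengthening, since compact subsets of the open set $Q_T$ already stay away from $t=0$.
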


\begin{proof}
	In this proof, we frequently work with restrictions of the functions $u_N$ to a subdomain of the form $\R\times I$ for some interval $I$. We abuse notation by continuing to refer to such restrictions as $u_N$, rather than writing $u_N|_{\R\times I}$. Now that this convention has been pointed out, it should cause no confusion.

	Fix any increasing sequence of positive integers $(M_i)_{i \ge 1}$. 
	We first use a diagonalization argument to show that there exists a subsequence $(N_i)_{i \in\N}$ of $(M_i)_{i \in \N}$ such that
	\begin{equation} \label{e.utildeconv}
		u_{N_{i}} \xrightarrow[]{L^{1}_{\loc}(Q_{T})} \tilde{u} \text{ and, for all }t \in (0,T], u_{N_{i}}(\cdot,t) \xrightarrow[]{L^{1}_{\loc}(\R^{d})} \tilde{u}(\cdot,t),
	\end{equation}
	where $\tilde{u} \in L^{1}_{\loc}(Q_{T})$ is a distributional solution to $\tilde{u}_{t} - \frac{1}{2d}\Delta(\tilde{u}^{m+1}) = 0$. To this end, fix a decreasing sequence $(\eta_{i})_{i\in\N} \subset (0,T)$ with $\lim_{i\to\infty}\eta_{i} = 0$. By applying Lemma \ref{lem.uNetaconv} for each $\eta_{i}$, we have that, for each $i \in \N$, there exists an increasing sequence $(N^{\eta_{i}}_{j})_{j \in \N} \subset \N$ such that, as $j \to \infty$, 
	\begin{equation*}
		u_{N^{\eta_{i}}_{j}} \xrightarrow{L^{1}_{\loc}(\R^{d}\times(\eta_{i},T))} u^{\eta_{i}} \text{ and, for all }t \in [\eta_{i},T], u_{N^{\eta_{i}}_{j}}(\cdot,t) \xrightarrow{L^{1}_{\loc}(\R^{d})} u^{\eta_{i}}(\cdot,t),
	\end{equation*}
	where $u^{\eta_{i}}$ is an entropy solution to $u^{\eta_{i}}_{t} - \frac{1}{2d}\Delta((u^{\eta_{i}})^{m+1}) = 0$ in $\R^{d}\times(\eta_{i},T)$. Furthermore, these subsequences may be constructed iteratively so that $(N^{\eta_{i+1}}_{j})_{j \in \N}$ is a subsequence of $(N^{\eta_{i}}_{j})_{j \in \N}$ for all $i \in \N$.
	It follows that for all $i \in \N$,
	\begin{equation*}
		u_{N^{\eta_{i+1}}_{j}} \xrightarrow[j\to\infty]{L^{1}_{\loc}(\R^{d}\times(\eta_{i},T))}u^{\eta_{i}}\quad\text{ and }\quad u_{N^{\eta_{i+1}}_{j}} \xrightarrow[j\to\infty]{L^{1}_{\loc}(\R^{d}\times(\eta_{i+1},T))}u^{\eta_{i+1}} .
	\end{equation*}
	This means that 
	\begin{equation} \label{e.domainex}
		u^{\eta_{i+1}} \overset{a.e.}{=} u^{\eta_{i}}\text{ in }\R^{d} \times (\eta_{i},T).
	\end{equation}
	In other words, $u^{\eta_{i+1}}$ extends $u^{\eta_{i}}$ to the larger domain $\R^{d}\times(\eta_{i+1},T)$ (up to a set of measure zero). Next, we define $\tilde{u}: \R^{d} \times (0,T]\to \R$ as the a.e. limit of these domain extensions, meaning
	\begin{equation*}
		\tilde{u}(x,t) := \begin{cases}
			u^{\eta_{0}}(x,t) &\text{ if } \eta_{0} \leq t \\
			u^{\eta_{i}}(x,t) &\text{ for } t \in [\eta_{i+1},\eta_{i}), i \in \N.
		\end{cases}
	\end{equation*}
	
	We now show that $\tilde{u} \in C((0,T];L^{1}(\R^{d}))$. Because $u^{\eta_{i}} \in C([\eta_{i},T];L^{1}(\R^{d}))$ for every $i \in \N$, we combine \eqref{e.domainex} with Corollary \ref{cor.aetslice} to conclude that for all $i \in \N$, $t \geq \eta_{i}$ and $j \geq i$, $u^{\eta_{j}}(\cdot,t) \overset{a.e.}{=} u^{\eta_{i}}(\cdot,t)$. Thus, for all $i \in \N$, and $t \geq \eta_{i}$, $\tilde{u}(\cdot,t) \overset{a.e.}{=} u^{\eta_{i}}(\cdot,t)$, and so $\tilde{u} \in C([\eta_{i},T];L^{1}(\R^{d}))$. Therefore, because $\eta_{i}\to 0$, $\tilde{u} \in C((0,T];L^{1}(\R^{d}))$.
	
	Next, note that for all $i \in \N$, using \eqref{e.domainex} and the definition of $\tilde{u}$, we have that $\tilde{u} \overset{a.e.}= u^{\eta_{i}}$ in $\R^{d} \times (\eta_{i},T)$. Therefore, for all $i \in \N$, $\tilde{u}$ is an entropy solution of the porous medium equation in $\R^{d} \times (\eta_{i},T)$, which means, by Proposition \ref{prop.entropdist} that for all $i \in \N$, $\tilde{u}$ is a distributional solution in $\R^{d} \times (\eta_{i},T)$. Hence, because $\eta_{i} \to 0$, $\tilde{u}$ is a distributional solution in $Q_{T}$. Finally, setting $N_{i}:=N^{\eta_{i}}_{i}$, the sequence of functions $(u_{N_i})_{i \in \N}$ then satisfies \eqref{e.utildeconv}.

	Next, we show that $\tilde{u}$ is a.e.\@ equal to the Barenblatt solution $\bar{u}$. Let $\vp \in C_{c}(\R^{d})$. Because $\vp$ is compactly supported and continuous, it is uniformly continuous and thus admits a modulus of continuity, which we denote by $\rho$. Fix $t \in (0,T]$ and $\ve > 0$. By Lemma \ref{lem.conc},
	\begin{equation*}
		\int_{\R^{d}} \tilde{u}(x,t) \, dx = 1 \quad \text{ and }\quad \int_{|x| > \ve} \tilde{u}(x,t) \,dx \leq C\exp(-\ve t^{-\beta}/C).
	\end{equation*}
	This implies
	\begin{align*}
		\left|\int_{\R^{d}}\tilde{u}(x,t)\vp(x)\,dx - \vp(0) \right|&= \left|\int_{\R^{d}}\tilde{u}(x,t)(\vp(x)-\vp(0))\,dx \right| \\
		&\leq \int_{|x| \leq \ve}\tilde{u}(x,t)|\vp(x)-\vp(0)|\,dx + \int_{|x| > \ve}\tilde{u}(x,t)|\vp(x)-\vp(0)|\,dx \\
		&\leq \rho(\ve) \int_{|x| \leq \ve}\tilde{u}(x,t)\,dx + 2 \norm{\vp}_{L^{\infty}(\R^{d})} \int_{|x| > \ve}\tilde{u}(x,t)\,dx \\
		&\leq \rho(\ve) + 2 \norm{\vp}_{L^{\infty}(\R^{d})} C\exp(-\ve t^{-\beta}/C).
	\end{align*}
	Taking $t \to 0^{+}$, followed by $\ve \to 0^{+}$ in the above display gives
	\begin{equation*}
		\lim_{t\to0^{+}}\int_{\R^{d}}\tilde{u}(x,t)\vp(x)\,dx = \vp(0),
	\end{equation*}
	and hence $\tilde{u}$ has initial trace $\delta$. Therefore, by the uniqueness of distributional solutions (Theorem \ref{thm.distunique}), $\tilde{u} = \bar{u}$ a.e.\@ in $\R^{d} \times (0,T)$.
	
	To conclude that the two solutions are a.e.\@ equal on a fixed time slice, note that earlier in the proof, we showed $\tilde{u} \in C((0,T];L^{1}(\R^{d}))$. It is straightforward to use the definition of $\bar{u}$ in \eqref{hformula0} to see that that $\bar{u} \in C((0,T];L^{1}(\R^{d}))$, so by Corollary \ref{cor.aetslice}, it follows that for $t \in (0,T]$, $\tilde{u}(\cdot,t) = \bar{u}(\cdot,t)$ a.e.\@ in $\R^{d}$. 
	Hence, we can re-write \eqref{e.utildeconv} as
	\begin{equation*}
		u_{N_{i}} \xrightarrow[]{L^{1}_{\loc}(Q_{T})} \bar{u} \text{ and, for all }t \in (0,T], u_{N_{i}}(\cdot,t) \xrightarrow[]{L^{1}_{\loc}(\R^{d})} \bar{u}(\cdot,t).
	\end{equation*}
Thus, for any increasing sequence $(M_i)_{i \in \N}$ there exists a further subsequence $(N_i)_{i \in \N}$ along which the above convergences both hold; the result follows.
\end{proof}

We are now ready to prove Theorem \ref{t.main}
\begin{proof}[Proof of Theorem \ref{t.main}]
	
	For $N \in \N$ let $u_N$ be generated by $\cS_N[A]$ with $A(r)=r^{m+1}$ and  initial condition $\delta$. Choosing any $T \geq 1$, by Theorem \ref{t.mainpde} we have that $u_{N}(\cdot,1) \xrightarrow{L^{1}_{\text{loc}}(\R^{d})} \bar{u}(\cdot,1)$.
	
	Now we show convergence in distribution. Let $D \subset \R^{d}$ be a continuity set, meaning $|\partial D|=0$, and take $N \geq 2$. Since $u_{N}(k\xmesh,n\tmesh) = |\square_{N}|^{-1}\p{X^{n} = k}$, 
	\begin{align*}
		\p{\frac{1}{N^{1/(dm+2)}}X^{N} \in D} &= \p{\xmesh X^{N} \in D} \\
		&=\sum_{k\xmesh \in (D \cap \xgrid)}\p{X^{N}= k} \\
		&=\sum_{k\xmesh \in (D \cap \xgrid)} u_{N}(k\xmesh,1) |\square_{N}|,
	\end{align*}
	where in the last step we used that $|\square_{N}| = (\xmesh)^{d}$ and $N \tmesh = 1$. To write this last term as an integral of $u_{N}$ over $D$, we need to account for the discretization of the domain. Define
	\begin{equation*}
		D_{N} := \bigcup_{\square_{N}(k\xmesh) \subset D}\square_{N}(k\xmesh).
	\end{equation*}
	Continuing from our previous calculation, we have
	\begin{equation*}
		\p{\frac{1}{N^{1/(dm+2)}}X^{N} \in D} =\sum_{k\xmesh \in D \cap \xgrid} u_{N}(k\xmesh,1) |\square_{N}| =\int_{D_{N}}u_{N}(x,1)\,dx.
	\end{equation*}
	On the other hand, by the definition of the random variable $B$,
	\begin{equation*}
		\p{B \in D} = \int_{D}\bar{u}(x,1)\,dx.
	\end{equation*}
	Combining the above two displays, this implies
	\begin{align*}
		&\left| \p{\frac{1}{N^{1/(dm+2)}}X^{N} \in D} - \p{B \in D}\right| \\
		&= \left| \int_{D_{N}}u_{N}(x,1)\,dx - \int_{D}\bar{u}(x,1)\,dx\right| \\
		&\leq \left| \int_{D_{N}}u_{N}(x,1)\,dx - \int_{D}u_{N}(x,1)\,dx\right| + \left| \int_{D}u_{N}(x,1)\,dx - \int_{D}\bar{u}(x,1)\,dx\right|.
	\end{align*}
	For the latter term in the upper bound, since $u_{N}(\cdot,1) \xrightarrow[N\to\infty]{L^{1}_{\text{loc}}(\R^{d})} \bar{u}(\cdot,1)$, 
	\begin{equation*}
		\left| \int_{D}u_{N}(x,1)\,dx - \int_{D}\bar{u}(x,1)\,dx\right| \xrightarrow[N\to \infty]{} 0.
	\end{equation*}
	For the other term, for $N \geq 2$, by taking $t = 1$ in Proposition \ref{prop.uNfacts}\eqref{uN.Linfty}, we have that $\norm{u_{N}(\cdot,1)}_{L^{\infty}(\R^{d})} \leq C$ and so
	\begin{equation*}
		\left| \int_{D_{N}}u_{N}(x,1)\,dx - \int_{D}u_{N}(x,1)\,dx\right| = \left|\int_{D\setminus D_{N}}u_{N}(x,1)\,dx\right| \leq C |D \setminus D_{N}|.
	\end{equation*}
	Since $\limsup_{N\to\infty}|D\setminus D_{N}| = 0$, we combine the previous three displays to conclude that
	\begin{equation*}
		\p{\frac{1}{N^{1/(dm+2)}}X^{N} \in D} \xrightarrow[N\to\infty]{} \p{B \in D}\, ,
	\end{equation*}
	which implies that $N^{-1/(dm+2)}X^{N} \rightarrow B$ in distribution. 
\end{proof}

\section{Appendix}\label{app}

\subsection{A lower bound on the breakdown time}

In this section, we consider properties of $b$-lazy random walks. Let $Y_n$ be a $b$-lazy symmetric random walk in $d$ dimensions started from initial distribution $h(\cdot)=\indc_{0}(\cdot)$ (i.e. started from the origin). Let $q^{(b)}_n(\x) = \p{Y_n = \x}$.

For the convenience of the reader, we recall that $T(R,b)$ as in \eqref{defoftrb} is given by
\begin{equation*}
    T(R, b) := \min\left\{n \ge 0: \text{there exists } k \in \mathbb Z^d \text{ with }|\x| > R\text{ and }\Delta q^{(b)}_n(\x) < 0\right\}.\end{equation*}
In particular, since $q^{(b)}_{n+1} = q^{(b)}_n + \frac{b}{2d}\Delta q^{(b)}_n$, where $\Delta q^{(b)}_n(\x) = \sum_{\y \sim \x} (q^{(b)}_n(\y) - q^{(b)}_n(\x))$ is the discrete Laplacian, we see that
$$\Delta q_n^{(b)} = {q^{(b)}_{n+1} - q^{(b)}_n \over b/2d},$$
so $T(R, b)$ can be interpreted as the first time $n$ where for some $|\x|>R$, the sequence $(\p{Y_{n}=\x})_{n\geq 0}$ decreases in time. 

\subsubsection{Non-lazy random walk.}
\gdef\lapconst{\Lambda}
We begin by proving a related statement about a non-lazy random walk in $d$ dimensions, which we will later relate to lazy random walks. Let $\left\{\bar{Y}_{n}\right\}$ be the simple, symmetric random walk on $\Z^{d}$ started at the origin. That is, $\bar{Y}_0 = 0$, and $\bar{Y}_{n+1} - \bar{Y}_n$ is chosen uniformly and independently from $\{\pm e_{i}\}_{i=1}^{d}$. 

In this case, it does not make sense to compute $T(R,b)$ for $\bar{Y}$, because $\p{\bar{Y}_{n}=\x}=0$ whenever $n + \sum_{i=1}^{d} \x_i$ is odd. Nevertheless, we are able to compare $\p{\bar{Y}_n = \x}$ and $\p{\bar{Y}_{n-2} = \x}$. 
\begin{thm}\label{simple}Let $\bar{Y}_n$ be as above. There is a constant $\lapconst=\Lambda(d) > 0$ such that, if $2 \le n \le \lapconst|\x|^2$, then $$\left(1 + {1 \over n}\right)\p{\bar{Y}_{n-2} = \x} \le \p{\bar{Y}_n = \x}.$$\end{thm}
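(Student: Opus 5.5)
The plan is to express $\p{\bar{Y}_n = \x}$ through an exact combinatorial formula and compare it with the value two steps earlier. Write $a_n(\x) := \p{\bar{Y}_n = \x}$. The first approach I would try is the multinomial representation. The walk chooses its coordinate directions according to a multinomial$(n;1/d,\dots,1/d)$ vector $(n_1,\dots,n_d)$. Conditionally on this vector, each coordinate performs an independent one-dimensional simple walk of $n_i$ steps. Hence
\[
a_n(\x)=\sum_{n_1+\cdots+n_d=n}\binom{n}{n_1,\dots,n_d}d^{-n}\prod_{i=1}^d 2^{-n_i}\binom{n_i}{(n_i+\x_i)/2}.
\]
This reduces the question to ratios of binomial coefficients in the regime $n\le\Lambda|\x|^2$, where $|\x|$ is large relative to $\sqrt n$.

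The key one-dimensional observation concerns $b_j(x):=2^{-j}\binom{j}{(j+x)/2}$. A direct computation gives
\[
\frac{b_j(x)}{b_{j-2}(x)}=\frac{j(j-1)}{(j+x)(j-x)}\cdot\frac{1}{1}\cdot\frac14\cdot 4
=\frac{j(j-1)}{j^2-x^2}
\]
for $|x|\le j-2$. This ratio is at least $1+\tfrac{x^2-j}{j^2}$, which is large when $x^2\gg j$, and it equals $\infty$ when $|x|\in\{j-1,j\}$. In the multidimensional setting I would avoid summing over all splittings. Instead I would use the Markov property together with the generating function identity $a_n=\frac{1}{2d}\sum_{\pm e_i}a_{n-1}(\cdot\pm e_i)$.

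A cleaner route is the Fourier or Laplace-type representation
\[
a_n(\x)=\frac{1}{(2\pi)^d}\int_{[-\pi,\pi]^d}\phi(\vartheta)^n e^{-i\x\cdot\vartheta}\,d\vartheta, \qquad \phi(\vartheta)=\frac1d\sum_i\cos\vartheta_i .
\]
Even so, I will commit to the combinatorial route through a two-step coupling, which I expect to be more robust. The last two steps of $\bar Y_n$ form a pair of steps whose displacement $D$ is $0$ with probability $1/(2d)$, $\pm 2e_i$ with probability $1/(4d^2)$ each, and $\pm e_i\pm e_j$ ($i\ne j$) with probability $2/(4d^2)$ each. Therefore
\[
a_n(\x)=\E{a_{n-2}(\x-D)}.
\]
Dividing by $a_{n-2}(\x)$, it suffices to show that
\[
\E{\frac{a_{n-2}(\x-D)}{a_{n-2}(\x)}}\ge 1+\frac1n
\]
whenever $n\le\Lambda|\x|^2$.

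The main step, and the main obstacle, is a local ratio estimate for neighbours: for $|y|_1\le 2$,
\[
\frac{a_{n-2}(\x-y)}{a_{n-2}(\x)}\ge \exp\Big(\frac{c\,\x\cdot y}{n}-\frac{C}{n}\Big)
\]
in the large-deviation regime $|\x|\gtrsim\sqrt n$, uniformly up to $|\x|_1\le n$. I would derive this from the multinomial formula: shifting one coordinate by $\pm1$ or $\pm2$ multiplies each summand by an explicit binomial ratio $\frac{n_i+x_i}{n_i-x_i+2}$-type factor, which is always at least $\exp(2x_i/n_i)\cdot(1-O(1/n_i))$ and monotone in the right direction. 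Once this is in hand, Jensen's inequality and the symmetry of $D$ (which gives $\E{\x\cdot D}=0$) yield
\[
\E{e^{c\x\cdot D/n}}\ge 1+\frac{c^2\E{(\x\cdot D)^2}}{2n^2}\ge 1+\frac{c'|\x|^2}{n^2}.
\]
The $-C/n$ error is then absorbed once $|\x|^2\ge C''n$, that is, once $n\le\Lambda|\x|^2$ with $\Lambda$ small, leaving $1+1/n$.

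Two boundary cases need separate treatment. When $|\x|_1$ is close to $n$, the binomial ratios blow up, which only helps, and I would handle this by direct inspection. When $n-2<|\x|_1$, we have $a_{n-2}(\x)=0$ and the claim is trivial.
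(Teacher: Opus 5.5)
Your reduction $a_n(k)=\mathbf E[a_{n-2}(k-D)]$ is correct. The closing step (pairing $y$ with $-y$ and using $\cosh x\ge 1+x^2/2$) would also work, provided you had the local ratio bound $a_{n-2}(k-y)/a_{n-2}(k)\ge \exp(c\,k\cdot y/n-C/n)$ with one constant $c$ for both $y$ and $-y$. That bound carries the whole proof, and in the range the theorem needs it is false. Take $d=2$ and $k=(n-4)e_1$, which satisfies $n\le\Lambda|k|^2$ for large $n$. Counting paths gives $a_{n-2}(k)=(n-2)^2\,4^{-(n-2)}$ and $a_{n-2}(k+2e_1)=4^{-(n-2)}$. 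So for $y=-2e_1$ the ratio is $(n-2)^{-2}$, not $\ge e^{-2c-C}$. Near $|k|_1\approx n$ only the \emph{inward} ratios blow up; the outward ones vanish. So this regime does not ``only help'': the theorem survives there only because the inward gain ($\asymp n^2$) beats the outward loss, and a per-direction exponential bound cannot see this.

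The problem is not confined to the boundary either. Your mechanism bounds the inward factor $\frac{n_i+k_i}{n_i-k_i+2}$ using $n_i\le n$, which yields only $c=1$. The outward factor $\frac{n_i-k_i}{n_i+k_i+2}$, however, is \emph{increasing} in $n_i$, with worst case $0$ at small $n_i$. Already in the Gaussian regime $|k|\asymp\sqrt n$ (e.g.\ $k=k_1e_1$, $y=-2e_1$), the true outward log-ratio is $-2dk_1/n+O(1/n)$, which violates the $c=1$ bound. With mismatched constants the argument collapses: $\tfrac12(e^{c_{\rm in}s}+e^{-c_{\rm out}s})=1-\tfrac12(c_{\rm out}-c_{\rm in})s+O(s^2)$ with $s\asymp|k|/n\gtrsim n^{-1/2}$ drops below $1$. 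Getting $c=d$ on both sides would require concentration of the move counts $n_i$ near $n/d$ under the $k$-conditioned law, with $O(1/n)$ precision. You do not supply this, and it fails near the boundary. Moreover, for $y=\pm e_i\pm e_j$ the parities of $k_i,k_j$ change, so summands are not matched one-to-one and no single explicit factor exists.

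The repair is already in your one-dimensional computation: keep the endpoint $k$ fixed and add two steps, rather than shifting space. In the multinomial sum, send the summand of $a_{n-2}(k)$ with move vector $\mathbf m$ to the summand of $a_n(k)$ with move vector $\mathbf m+2e_i$. Your identity $b_j(x)/b_{j-2}(x)=j(j-1)/(j^2-x^2)$, combined with the ratio of multinomial coefficients, gives
\[
P_{n-2}(k;\mathbf m)=\frac{d^2\big((m_i+2)^2-k_i^2\big)}{n(n-1)}\,P_n(k;\mathbf m+2e_i),
\]
which is exactly the paper's identity. Averaging over $i$, Lemma~\ref{simple.estim} shows, pointwise in $\mathbf m$, that $\frac{n(n-1)}{d^3}\sum_i\big((m_i+2)^2-k_i^2\big)^{-1}\ge 1+\frac1n$ whenever $n\le\Lambda|k|^2$. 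The proof uses Jensen on $\sum_i (m_i+2)^{-2}$ plus $\sum_i k_i^2/(m_i+2)^4\ge|k|^2/n^4$. Summing over $\mathbf m$, and noting that each $\mathbf m'$ arises as $\mathbf m+2e_i$ for at most $d$ indices, gives the theorem. This route needs no local limit theorem, no concentration of $\mathbf m$, and no separate boundary case.
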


In order to prove Theorem \ref{simple}, we require a technical lemma which we first state (and delay its proof until after the proof of Theorem \ref{simple}). 

\begin{lem}\label{simple.estim}Suppose $\x, m \in \mathbb Z^d$ and $|\x_i| \le m_i$ with $\sum_{i=1}^{d} m_i = n-2$. Let $\lapconst := (8d^4)^{-1}$. If $2 \le n \le \lapconst|\x|^2$, then $${n(n-1) \over d^3} \sum_{i=1}^{d} {1 \over ((m_i+2)^2 - \x_i^2)} \ge 1 + {1 \over n}.$$\end{lem}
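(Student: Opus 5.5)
The inequality is elementary, so the plan is to bound each summand from below by a quantity involving only $n$ and $|\x|^2$, then sum.

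\textbf{Step 1 (individual summands).} Since $|\x_i|\le m_i$, we have $0<(m_i+2)^2-\x_i^2\le (m_i+2)^2$. Hence
\[
\sum_{i=1}^d \frac{1}{(m_i+2)^2-\x_i^2}\ \ge\ \sum_{i=1}^d \frac{1}{(m_i+2)^2}.
\]
Using $\sum_i m_i=n-2$ and $d\ge 1$, each term satisfies $m_i+2\le n$, so the sum is at least $d/n^2$. Then the left side is at least $(n-1)/(d^2 n)$. This is not $\ge 1+1/n$, because the crude bound discards the hypothesis $n\le \lapconst|\x|^2$. The hypothesis must therefore be used, and the real content is that some coordinate has $(m_i+2)^2-\x_i^2$ small.

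\textbf{Step 2 (choosing a good coordinate).} Pick $i$ with $\x_i^2\ge |\x|^2/d$, so $|\x_i|\ge |\x|/\sqrt d\ge \sqrt{n/(\lapconst d)}$. Then
\[
(m_i+2)^2-\x_i^2=(m_i+2-|\x_i|)(m_i+2+|\x_i|)\le (m_i+2-|\x_i|)\cdot 2(n).
\]
The key observation is that $m_i-|\x_i|$ is controlled. All the $m_j\ge|\x_j|$ sum to $n-2$, so
\[
m_i-|\x_i|\ \le\ n-2-\sum_j |\x_j|\ \le\ n-|\x|.
\]
Since $|\x|\ge \sqrt{n/\lapconst}=\sqrt{8d^4 n}$, we have $|\x|\ge n$ whenever $n\le 8d^4$... but for large $n$ this fails. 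So I would instead use $|\x|\le |\x|_1\le n-2$ (which follows from the constraints) together with $n\le\lapconst|\x|^2\le \lapconst n^2$. That gives only $n\ge 8d^4$, which is not enough on its own.

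\textbf{Step 3 (main obstacle).} The main obstacle is extracting a good upper bound on $(m_i+2)^2-\x_i^2$ for the chosen $i$. I would argue as follows. Write $s:=n-2-|\x|_1\ge 0$ for the total slack, so $m_i+2-|\x_i|\le s+2$. Then
\[
(m_i+2)^2-\x_i^2\le (s+2)\,(2n),
\]
and
\[
\frac{n(n-1)}{d^3}\cdot\frac{1}{2n(s+2)}=\frac{n-1}{2d^3(s+2)}.
\]
This is $\ge 1+1/n$ provided $s+2\le (n-1)/(4d^3)$, roughly.

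When the slack $s$ is large, I would use all coordinates instead. By Cauchy--Schwarz (convexity of $1/x$),
\[
\sum_i \frac{1}{(m_i+2)^2-\x_i^2}\ \ge\ \frac{d^2}{\sum_i\bigl((m_i+2)^2-\x_i^2\bigr)}.
\]
Each term satisfies $(m_i+2)^2-\x_i^2=(m_i+2-|\x_i|)(m_i+2+|\x_i|)$, so the denominator is at most $(s+2d)\cdot 2n$. This does not improve on the previous bound, so I suspect the intended route genuinely couples $s$ with $|\x|^2\ge n/\lapconst$.

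The fix I would try first is to bound $(m_i+2)^2-\x_i^2\le (m_i+2)^2$ only for coordinates with $|\x_i|$ small, and for large ones to note that $\sum_i(m_i+2)^2\le (n+2d)^2$ while $\sum_i \x_i^2=|\x|^2\ge 8d^4 n$. This gives
\[
\sum_i\bigl((m_i+2)^2-\x_i^2\bigr)\le (n+2d)^2-8d^4n.
\]
Combined with the Cauchy--Schwarz bound, the target becomes
\[
\frac{n(n-1)}{d\,\bigl((n+2d)^2-8d^4 n\bigr)}\ \ge\ 1+\frac1n.
\]
This requires only that $d(n+2d)^2\le n(n-1)(1+1/n)^{-1}+8d^5n$, i.e.\ roughly $dn^2+4d^2n+4d^3\le n^2-n+8d^5 n$. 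This can fail for large $n$ when $d>1$.

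So the realistic plan is the case split on slack:
\begin{itemize}
\item Small slack ($s\lesssim n/d^3$): use the single-coordinate bound from above.
\item Large slack: show it contradicts $|\x|^2\ge 8d^4 n$. Here $|\x|\le|\x|_1=n-2-s$, so large $s$ forces $|\x|\le n(1-c/d^3)$, which is compatible with $|\x|^2\ge 8d^4 n$.
\end{itemize}
Since the large-slack case is not actually contradicted, this step is where I expect to need more care, carefully tracking constants against $\lapconst=(8d^4)^{-1}$.
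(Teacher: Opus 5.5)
\textbf{There is a genuine gap: the large-slack case is left open, and it is the generic case, not a corner case.}

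The hypothesis only forces $|k|^2\ge 8d^4 n$, so $|k|$ need only be of order $\sqrt{n}$, while $|k|_1\le n-2$. Hence for large $n$ the slack $s=n-2-|k|_1$ is typically of order $n$. For example, take $k=(\lceil\sqrt{8d^4 n}\,\rceil,0,\dots,0)$ with $m$ spread evenly over the coordinates.

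Your small-slack argument only covers $s\lesssim n/d^3$, i.e.\ $|k|_1\ge (1-O(d^{-3}))n$. The diagnosis behind Steps 2--3 is also off: no coordinate needs $(m_i+2)^2-k_i^2$ to be unusually small. In the balanced configuration just described, the left side is $1+O(d^{O(1)}/n)$, so the inequality is nearly tight at $1$. The hypothesis is only needed to win a correction of order $1/n$.

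Your Cauchy--Schwarz attempt fails for a related reason. You apply the harmonic--arithmetic mean inequality to $a_i=(m_i+2)^2-k_i^2$, whose sum is not controlled, since $\sum_i (m_i+2)^2$ can be $\approx n^2$. Convexity should instead be applied in a variable whose sum is fixed.

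\textbf{The missing idea is already in your Step 1, where you throw away a factor $d^2$ by bounding each $m_i+2\le n$ separately.}

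Set $y_i=m_i+2$, so that $\sum_i y_i=n+2d-2$ is fixed. By convexity of $y\mapsto y^{-2}$ (Jensen), $\sum_i y_i^{-2}\ge d^3/(n+2d-2)^2$. Therefore
\[
\frac{n(n-1)}{d^3}\sum_i y_i^{-2}\ge \frac{n(n-1)}{(n+2d-2)^2}\ge 1-\frac{4d-3}{n},
\]
using $(1+x)^{-2}\ge 1-2x$. This holds with no assumption on $k$.

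To gain the extra $O(d/n)$, use the exact identity
\[
\frac{1}{y_i^2-k_i^2}=\frac{1}{y_i^2}+\frac{k_i^2}{y_i^2(y_i^2-k_i^2)}.
\]
Since $y_i^2-k_i^2\le y_i^2$ and $y_i\le n$, the second term is at least $k_i^2/n^4$. Its total contribution is therefore at least
\[
\frac{n(n-1)}{d^3}\cdot\frac{|k|^2}{n^4}\ge \frac{|k|^2}{2d^3n^2}\ge \frac{4d}{n},
\]
where the last step uses $|k|^2\ge 8d^4 n$. Adding the two bounds gives $1+3/n\ge 1+1/n$.

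This is the paper's argument. It needs no case split on the slack and no distinguished coordinate.
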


Equipped with Lemma \ref{simple.estim}, we now present the proof of Theorem \ref{simple}. 

\begin{proof}[Proof of Theorem \ref{simple}] Suppose we are given a path $(\bar{Y}_0, \ldots, \bar{Y}_n)\in (\Z^{d})^{n}$. Let $M(n) \in \mathbb Z^d$ denote the \emph{move vector} of the path up to time $n$, whose $i$th-coordinate is given by
\begin{equation*}
M(n)_i = \#\{0 \le j < n: \bar{Y}_{j+1} - \bar{Y}_j = \pm e_i\}.
\end{equation*}
Observe that for each $i=1,\ldots, d$, $M(n)_i$ is the number of times that the $i$-th coordinate has changed, up to time $n$.

For $k,m\in \Z^{d}$ and $n\in \N$, let $P_n(\x; m)$ be the probability that $\bar{Y}_n = \x$ and $M(n) = m$. These probabilities have an exact formula under certain hypotheses on $m$: if 
\begin{equation}\label{e.movec}
\begin{cases}
\sum_{i=1}^{d} m_i = n,\\
\text{$m_i \ge |\x_i|$, for $i=1, \ldots, d$},\\
\text{$m_i+\x_i$ is even for $i=1,\ldots,d$},
\end{cases}
\end{equation}
then
$$P_n(\x; m) = \left(\begin{gathered}
    n\\[-.2em]
    {\displaystyle {m_1 + \x_1 \over 2}, {m_1 - \x_1 \over 2}, \ldots, {m_d + \x_d \over 2}, {m_d - \x_d \over 2}}
    \end{gathered}\right) (2d)^{-n},$$
where $\genfrac{(}{)}{0pt}{}{n}{a,\,b,\,c, \ldots}$ denotes the multinomial coefficient. If any of those three conditions in \eqref{e.movec} fail, then the probability $P_n(\x;m)$ is zero.

Given $k,m \in \Z^d$, if $P_{n-2}(\x;m)>0$, then for any $i\in \left\{1, \ldots, d\right\}$, $P_{n}(\x;m+2e_i)>0$, and the above exact formula yields the identity 
\begin{align}\label{eq:pnidentity}
P_{n-2}(\x; m) &= {\displaystyle 4d^2 \left({m_i + 2 + \x_i \over 2}\right) \left({m_i + 2 - \x_i \over 2}\right) \over n(n-1)} P_n(\x; m + 2e_i) \nonumber\\
&= {d^2 \over n(n-1)} ((m_i + 2)^2 - \x_i^2) P_n(\x; m + 2e_i).\nonumber
\end{align}
Rearranging the above expression and taking an average over $i = 1, \ldots, d$, we obtain that for any valid vector $m$ for a path of length $n-2$, 
$$\left[{n(n-1) \over d^3}\sum_{i=1}^{d} {1 \over ((m_i+2)^2 - \x_i^2)}\right] P_{n-2}(\x; m) = {1 \over d} \sum_{i=1}^{d} P_n(\x; m + 2e_i).$$

By Lemma~\ref{simple.estim}, for $\lapconst = (8d^4)^{-1}$, since $2 \le n \le \lapconst|\x|^2$ by hypothesis, we conclude $$\left(1 + {1 \over n}\right) P_{n-2}(\x; m) \le {1 \over d} \sum_{i=1}^{d} P_n(\x; m + 2e_i).$$
Summing over $m\in \Z^{d}$ such that $P_{n-2}(k,m)>0$, we obtain
$$
\left(1 + {1 \over n}\right) \sum_{m:P_{n-2}(\x,m)>0} P_{n-2}(\x; m) \le {1 \over d} \sum_{m:P_{n-2}(\x,m)>0} \sum_{i=1}^{d} P_n(\x; m + 2e_i).
$$ 
The left side is $(1+1/n) \p{\bar{Y}_{n-2} = \x}$, and each move vector $m'$ with $\sum_{i=1}^{d} m'_i = n$ can be written as a sum $m + 2e_i$ in at most $d$ ways, so the right side is no greater than $\sum_{m'} P_n(\x; m') = \p{\bar{Y}_n = \x}$. This proves the theorem.
\end{proof}

We now return to the proof of Lemma \ref{simple.estim}. 
\begin{proof}[Proof of Lemma \ref{simple.estim}] For ease of notation, let $y_i = m_i+2$. We begin by factoring
\begin{equation}\label{e.factor}\sum_{i=1}^d {1 \over y_i^2 - \x_i^2} = \sum_{i=1}^d {1 \over y_i^2} + \sum_{i=1}^d {\x_i^2 \over (y_i^2-\x_i^2)y_i^2}.
\end{equation}
We estimate each term in the right hand side from below. Let $\mu := d^{-1} \sum_{i=1}^{d} y_i $. We apply Jensen's inequality with the function $f(x) = 1/x^2$:
$$\sum_{i=1}^{d} {1 \over y_i^2} = d \left({1 \over d} \sum_{i=1}^{d} {1 \over y_i^2}\right) \ge {d \over \mu^2} = {d^3 \over (\sum_{i=1}^{d} y_i)^2}.$$

Since $\sum_{i=1}^{d}m_{i}=n-2$, $\sum_{i=1}^{d} y_i=n+2d-2$, so\begin{align}
    {n(n-1) \over d^3} \sum_{i=1}^{d} {1 \over y_i^2}
        &\ge {n(n-1) \over (\sum_{i=1}^{d} y_i)^2}
        \notag
        \\&= {n(n-1) \over (n+2d-2)^2}
        \notag
        \\&= \left(1-{1 \over n}\right)\left(1+{2d-2 \over n}\right)^{-2}.
        \notag
\end{align} 
Then, since $(1+(2d-2)/n)^{-2} \ge (1-(2d-2)/n)^{2}\geq 1-(4d-4)/n$, we have 
$$
{n(n-1) \over d^3} \sum_{i=1}^{d} {1 \over y_i^2} 
\ge
\left(1-\frac1n\right)\left(1-\frac{4d-4}{n}\right)
\ge 1 - {4d-3 \over n}.
$$

We estimate the second term in \eqref{e.factor} in the following way. Since $\sum_{i=1}^{d} m_i=n - 2$, for each $i=1, \ldots, d$, $y_i = m_i + 2 \le n$. Moreover, $\x_i^2/(y_i^2 - \x_i^2)y_i^2$ is at least $\x_i^2/y_i^4$. Therefore\begin{align}
    {n(n-1) \over d^3} \sum_{i=1}^{d} {\x_i^2 \over (y_i^2 - \x_i^2)y_i^2}
    &\ge {n(n-1) \over d^3} \sum_{i=1}^{d} {\x_i^2 \over y_i^4}.
    \notag\\&\ge {n(n-1) \over d^3 n^4} \sum_{i=1}^{d} \x_i^2 \notag\\&\ge{1\over 2d^3n^2} |\x|^2\label{simple.estimb}
\end{align}
where in the final line, we used the estimate $n(n-1) \ge 1/2n^2$ because $n \ge 2$. Our assumption is that $n \le \lapconst |\x|^2$, so the estimate in (\ref{simple.estimb}) is bounded from below by $1/2\lapconst d^3 n = 4d/n$. Combining this with the prior display, we obtain $${n(n-1) \over d^3} \sum_{i=1}^{d} {1 \over y_i^2 - \x_i^2} \ge 1 - {4d-3 \over n} + {4d \over n} = 1 + {3 \over n}.$$This proves the lemma.
\end{proof}

Before returning back to the lazy random walk, we first prove a preparatory lemma which tells us the asymptotic behaviour of $q_n^{(1/2)}(\x)$ when $n = \lfloor |\x|^2 /16d^4 \rfloor$ and $\x \to \infty$.

Recall that $q^{(1/2)}_n(\x) = \mathbb P\left\{X_1 + \cdots + X_n = \x\right\}$ is the probability distribution at step $n$ of a $\frac12$-lazy random walk in $\mathbb{Z}^d$. Since $\mathbb E[X_1]$ is the zero vector and the covariance matrix is $\text{Cov}[X_1] = \text{Id}/2d$, the central limit theorem yields that $(X_1 + \cdots + X_n) / \sqrt{n} \to_d N(0, \text{Id}/2d)$. 

However, we will require a sharper version of this convergence. By a result of Bhattacharya and Ranga Rao \cite[Chapter 5, Theorem 22.1]{bhattacharya}, it follows that
\begin{equation}\label{weightlowerbound.a}\sup_{k \in \mathbb Z^d} (1 + |\x|^2)\left|q^{(1/2)}_n(k) - \varphi_n(\x)\right| = o(n^{-d/2})\end{equation}
where $\varphi_n(\x) := (d / \pi n)^{d/2}e^{-d|\x|^2/n}$ is the continuous probability density of $N(0, n\text{Id}/2d)$.

(Note: we only need the leading order, so in the notation of \cite{bhattacharya}, we set $s = 2$. Also note that the definition on page 52 of \cite{bhattacharya} says $\widetilde P_0 \equiv 0$, but in fact one should set $\widetilde P_0 \equiv 1$, and $P_0(-\phi; \{\chi_\nu\})$ is then the standard normal distribution.)

We are now ready to state the precise bound on $q_{n}^{(1/2)}$. 
\begin{lem}\label{weightlowerbound} Let $\eta > 0$. There are positive constants $k_{1}=k_{1}(\eta, d)$, $a=a(\eta, d)$ such that for any $\x \in \mathbb Z^d$ with $|\x| \ge k_{1}$,
$$q_n^{(1/2)}(\x) \ge \frac{a}{n^{d/2}} \qquad \text{where } n := \lfloor |\x|^2\eta^{-1}\rfloor.$$\end{lem}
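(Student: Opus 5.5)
The plan is to compare $q^{(1/2)}_n(\x)$ with the Gaussian density $\varphi_n(\x)$ using the local limit estimate \eqref{weightlowerbound.a}, and to show that the error term is negligible relative to $\varphi_n(\x)$ in the regime $n=\lfloor |\x|^2\eta^{-1}\rfloor$, i.e. $|\x|^2\asymp \eta n$.

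First, the Gaussian main term. Set $n=\lfloor |\x|^2\eta^{-1}\rfloor$, so $n\to\infty$ as $|\x|\to\infty$. For $|\x|$ large we have $n\ge 1$ and $n\le |\x|^2/\eta< n+1$, hence $|\x|^2/n\le \eta(n+1)/n\le 2\eta$. This gives
\[
\varphi_n(\x)=(d/\pi n)^{d/2}e^{-d|\x|^2/n}\ge (d/\pi)^{d/2}e^{-2d\eta}\,n^{-d/2}=:2a\,n^{-d/2}.
\]
The constant $a=a(\eta,d)$ is positive and depends only on $\eta,d$.

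Second, the error term. By \eqref{weightlowerbound.a}, applied at the single point $\x$,
\[
|q^{(1/2)}_n(\x)-\varphi_n(\x)|\le \frac{\epsilon_n n^{-d/2}}{1+|\x|^2}\le \epsilon_n n^{-d/2},
\]
where $\epsilon_n\to0$. The weight $(1+|\x|^2)$ is not even needed, since the plain $o(n^{-d/2})$ bound already suffices; the weight would only help further. Choose $n_1$ so that $\epsilon_n\le a$ for all $n\ge n_1$. Then choose $k_1$ large enough that $|\x|\ge k_1$ forces $n\ge n_1$ and $|\x|^2\ge\eta$; for instance $k_1^2\ge \eta(n_1+1)$ works. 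With these choices,
\[
q^{(1/2)}_n(\x)\ge \varphi_n(\x)-a\,n^{-d/2}\ge a\,n^{-d/2}.
\]

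The only real obstacle is making sure that \eqref{weightlowerbound.a} really supplies a uniform-in-$\x$ local estimate at scale $o(n^{-d/2})$ for the lazy walk. This is an aperiodicity issue: the $\frac12$-lazy walk has holding probability $\frac12>0$, so its increment distribution is strongly aperiodic on $\Z^d$ and the lattice local expansion of Bhattacharya--Ranga Rao applies. The covariance is $\frac{1}{2}\cdot\frac1d\,\mathrm{Id}=\mathrm{Id}/2d$, which matches $\varphi_n$. Once this is granted, as the paper does, the remaining steps are the elementary bookkeeping above.
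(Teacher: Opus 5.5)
Your proposal is correct and follows essentially the same route as the paper. The paper likewise lower-bounds $\varphi_n(k)$ by $2a\,n^{-d/2}$ using $|k|^2/n\le 2\eta$, with the same $a=\tfrac12(d/\pi)^{d/2}e^{-2d\eta}$, and takes $k_1$ large enough that $n\ge n_0$, where $\varepsilon_n\le a$, so that the local-limit error of \eqref{weightlowerbound.a} is absorbed.
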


\begin{proof}
Let $\varepsilon_n$ be the scaled maximum difference between the two functions $q_n^{(1/2)}$ and $\varphi_n$ over lattice points in $\mathbb Z^d$, i.e. $$\varepsilon_n := n^{d/2} \sup_{\x \in \mathbb Z^d} \left|q_n^{(1/2)}(\x) - \varphi_n(\x)\right|.$$ The supremum here is bounded above by the supremum in (\ref{weightlowerbound.a}), so $\varepsilon_n$ vanishes as $n \to \infty$.

Let $a := (1/2) (d/\pi)^{d/2} e^{-2d\eta}$. Let $n_0$ be the smallest positive integer so that $\varepsilon_n \le a$ for $n \ge n_0$. Such an integer exists because $\varepsilon_n \to 0$. Let $k_1: = \sqrt{2\eta n_0}$ so that $k_{1}^{2}\eta^{-1}=2n_{0}$.

Suppose $|\x| \ge k_{1}$. We first observe that, for any $\al\geq 1$, then $\lfloor \al \rfloor \ge \al/2$. Thus for $|\x|\geq k_{1}$, $|\x|^2\eta^{-1} \ge k_{1}^2\eta^{-1} = 2n_0 \ge 1$, so by definition $n = \lfloor |\x|^2\eta^{-1} \rfloor \ge |\x|^2 (2\eta)^{-1}$.

We conclude that for $n= \lfloor |\x|^2\eta^{-1} \rfloor$, $n\geq |\x|^2(2\eta)^{-1} \ge k_{1}^2 (2\eta)^{-1}= n_0$, and thus $\varepsilon_n\le{a}$. Furthermore, since $n \ge |\x|^2(2\eta)^{-1}$, this implies $d|\x|^2/n \le 2d\eta$, and hence, 
$$\varphi_n(\x) = \left(d \over \pi n\right)^{d/2} e^{-d|\x|^2/n} \ge \left(d \over \pi n\right)^{d/2} e^{-2d\eta} = {2a \over n^{d/2}}$$

The definition of the error $\varepsilon_n$ implies $|q_n^{(1/2)}(\x) - \varphi_n(\x)| \le \varepsilon_n/n^{d/2}$, and thus $q_n^{(1/2)}(\x) \ge \varphi_n(\x) - \varepsilon_{n}/n^{d/2}$. Combine this with the upper bound on $\varepsilon_n$ and the lower bound $\varphi_n(\x)$, we obtain 
\begin{equation}\label{weightlowerbound.b} q_n^{(1/2)}(\x) \ge \varphi_n(\x) - {\varepsilon_n \over n^{d/2}} \ge \varphi_n(\x) - {a \over n^{d/2}} \ge {a \over n^{d/2}}.\end{equation}\end{proof}

We now convert two-step information about a non-lazy random walk $\bar{Y}_{n}$ into one-step information about a 1/2-lazy random walk $Y_{n}$.

\begin{lem}\label{half}If $|\x| \ge 3$ and $1 \le n \le \lapconst|\x|^2$ for $\La=\La(d)$ as in Theorem \ref{simple}, then $$\left(1 + {1 \over 8n}\right)q^{(1/2)}_{n-1}(\x) \le  q^{(1/2)}_n(\x).$$\end{lem}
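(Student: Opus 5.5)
The plan is to reduce the $\tfrac12$-lazy walk to the simple walk $\bar Y$ of Theorem~\ref{simple} by subordination, and then turn the two-step comparison $f(j-2)\le(1+1/j)^{-1}f(j)$ into a one-step comparison in $n$. A $\tfrac12$-lazy walk satisfies $Y_n\overset{d}{=}\bar Y_{B_n}$, where $B_n\sim\mathrm{Bin}(n,\tfrac12)$ is independent of $\bar Y$. Writing $f(j):=\p{\bar Y_j=\x}$, we get
\[
q^{(1/2)}_n(\x)=\E{f(B_n)}.
\]
Since $f(j)=0$ unless $j\equiv|\x|_1 \pmod 2$, only one parity class $j\in p+2\NZ$ contributes. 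Set $F(i):=f(p+2i)$. The key input is Theorem~\ref{simple}: for every $j$ in the class with $2\le j\le n\le\lapconst|\x|^2$ we have $f(j)\ge(1+1/j)f(j-2)$. Hence $F$ is increasing on the whole relevant range, with multiplicative increments at least $1+1/n$. Only $j\le n$ matters because $B_n\le n$. The hypothesis $|\x|\ge3$ only serves to discard trivial small cases; for instance $f(0)=0$ since $\x\neq0$, so the $n=1$ case is immediate.

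Next, decompose one lazy step via $\p{B_n=j}=\tfrac12\big(\p{B_{n-1}=j}+\p{B_{n-1}=j-1}\big)$, which gives
\[
q_n-q_{n-1}=\tfrac12\Big(\E{f(B_{n-1}+1)}-\E{f(B_{n-1})}\Big).
\]
It therefore suffices to show $\E{f(B_{n-1}+1)}\ge(1+\tfrac1{4n})\E{f(B_{n-1})}$. Both expectations are sums $\sum_j w(j)f(j)$ over $j$ in the parity class $p+2\NZ$, with weights $w_1(j)=\p{B_{n-1}=j-1}$ and $w_0(j)=\p{B_{n-1}=j}$ respectively.

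I would compare these by Abel summation in the class. Write $f(j)=\sum_{i\le j}\big(f(i)-f(i-2)\big)$, summing over $i$ in the class. Then
\[
\sum_j w(j)f(j)=\sum_i \big(f(i)-f(i-2)\big)\,W(i),\qquad W(i):=\sum_{j\ge i,\ j\equiv p}w(j).
\]
Since every increment $f(i)-f(i-2)$ is nonnegative by Theorem~\ref{simple}, what remains is a comparison of tails: show $W_1(i)\ge W_0(i)$ for all $i$, or at least a weighted version of this. Here $W_1(i)=\p{B_{n-1}+1\ge i,\ \text{right parity}}$ is essentially the parity-restricted tail of $B_{n-1}+1$, which dominates that of $B_{n-1}$, up to boundary terms coming from the parity restriction. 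Domination alone only yields the non-strict inequality. To obtain the quantitative gain $1/(4n)$, I would keep the ratio information $f(i)-f(i-2)\ge \tfrac1i f(i-2)\ge\tfrac1n f(i-2)$ and show that the tail of $B_{n-1}+1$ exceeds that of $B_{n-1}$ by an amount comparable to a constant fraction of the point mass. This gives an extra $\gtrsim \tfrac1n\E{f(B_{n-1})}$, and after halving a constant like $1/8$ is natural.

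I expect the main obstacle to be the parity bookkeeping. Shifting $B$ by $1$ moves mass between parity classes, and the weights $w_1(j)/w_0(j)=j/(n-j)$ are not uniformly at least $1$; they fall below $1$ for $j<n/2$. If the tail comparison fails near $j\approx n/2$, my fallback is to pair the $\tfrac12$-lazy step differently. I would write $q_n$ in terms of $q_{n-2}$ via two Bernoulli increments, so that $B_{n-2}+\{0,1,2\}$ splits into a parity-preserving part, namely $+0$ and $+2$ each with probability $\tfrac14$, and a parity-flipping part. Theorem~\ref{simple} then applies directly to the $+2$ shift. The one-step statement would follow by combining this with the monotonicity $q_{n-1}\le q_n$ that is already implicit in the first decomposition.
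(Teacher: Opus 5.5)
Your reduction is sound. Since $q^{(1/2)}_n-q^{(1/2)}_{n-1}=\tfrac12\bigl(\mathbf{E}f(B_{n-1}+1)-\mathbf{E}f(B_{n-1})\bigr)$, the lemma is equivalent to $\mathbf{E}f(B_{n-1}+1)\ge(1+\tfrac1{4n})\mathbf{E}f(B_{n-1})$. Your tail domination $W_1\ge W_0$ is also true: the difference is an alternating tail sum of the symmetric binomial pmf, which is nonnegative by unimodality together with $\sum_m(-1)^m\mathbf{P}\{B_{n-1}=m\}=0$.

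\emph{The gap.} The quantitative gain is the whole content of the lemma, and neither of your mechanisms produces it. Test them on $|k|_1=n-1$, e.g.\ $k=(n-1)e_1$ with $n$ large, which satisfies $n\le\Lambda|k|^2$. Then $f$ vanishes below $n-1$, so $q^{(1/2)}_{n-2}(k)=0<q^{(1/2)}_{n-1}(k)$, while in fact $q^{(1/2)}_n(k)=\tfrac n2\,q^{(1/2)}_{n-1}(k)$.
\begin{itemize}
\item In your Abel-summation route, replacing each increment by $\tfrac1n f(i-2)$ gives $\tfrac1n\sum_i f(i-2)\,(W_1(i)-W_0(i))=0$. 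The only nonzero increment is at $i=n-1$, where $f(n-3)=0$, and the tails vanish for $i\ge n+1$. The true gain comes from the size of the jump combined with the steep top end of the binomial, $\mathbf{P}\{B_{n-1}=n-2\}=(n-1)\,\mathbf{P}\{B_{n-1}=n-1\}$, and your estimate throws exactly this away.
\item Your fallback, after applying Theorem~\ref{simple} to the $+2$ shift, amounts to $q^{(1/2)}_n\ge q^{(1/2)}_{n-1}+\tfrac1{4n}q^{(1/2)}_{n-2}$. In the test case the last term is $0$. The monotonicity $q^{(1/2)}_{n-2}\le q^{(1/2)}_{n-1}$ points the wrong way, so no multiplicative gain over $q^{(1/2)}_{n-1}$ follows.
\end{itemize}

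\emph{The missing idea.} Use the two-coin kernel you already wrote down, $\tfrac14f(B)+\tfrac12f(B+1)+\tfrac14f(B+2)$, not to compare with $q_{n-2}$ but to build a parity-free profile
$$\hat q_j:=f(j)+2f(j-1)+f(j-2),$$
so that $q^{(1/2)}_n=\tfrac14\mathbf{E}\,\hat q_{2+S_{n-2}}$ with $S_m\sim\mathrm{Bin}(m,\tfrac12)$. Because $\hat q$ lives on all integers, a one-step shift no longer switches parity class. Theorem~\ref{simple}, applied in both classes, then gives $\hat q_j\ge(1+\tfrac1{4j})\hat q_{j-1}$ for $3\le j\le\Lambda|k|^2$:
\begin{itemize}
\item when $\hat q_j=f(j)+f(j-2)$ and $\hat q_{j-1}=2f(j-2)$, use $f(j)\ge(1+\tfrac1j)f(j-2)$;
\item when $\hat q_j=2f(j-1)$ and $\hat q_{j-1}=f(j-1)+f(j-3)\le 2f(j-1)$, use $f(j-1)-f(j-3)\ge\tfrac1jf(j-1)$.
\end{itemize}
Now take $S=S_{n-3}$. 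Since $3+S\le n$, pointwise in $S$ we get
$$q^{(1/2)}_n=\tfrac18\mathbf{E}[\hat q_{2+S}+\hat q_{3+S}]\ge\tfrac18(2+\tfrac1{4n})\mathbf{E}\,\hat q_{2+S}=(1+\tfrac1{8n})q^{(1/2)}_{n-1}.$$
The cases $n\le2$ are trivial because $|k|\ge3$. This is how the paper proceeds, and it avoids all the tail and parity bookkeeping.
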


\gdef\hatq{{\widehat q}} 
\noindent \emph{Proof.} Throughout this proof, the point $\x$ is fixed, so we suppress it from the notation whenever there is no ambiguity. Let $\omega_1, \omega_2, \ldots$ be a sequence of IID $\text{Bernoulli}(1/2)$-distributed $\left\{0,1\right\}$-valued random variables. Then $q_n^{(1/2)}$ can be interpreted as an expectation of the non-lazy (simple) random walk $q^{(1)}_{\cdot}$ at a random time:
\begin{align*}
q^{(1/2)}_n &= \E{q^{(1)}_{\omega_1 + \cdots + \omega_n}}.\end{align*}
If $n=1,2$, then $q^{(1/2)}_{n-1}(k)$ is zero because $|\x| \ge 3$, so the conclusion of the lemma is vacuously true. If $n \ge 2$, then $\omega_1 + \omega_2 = 0,1,2$ with respective probabilities $1/4, 1/2, 1/4$, so
\begin{align*}
q^{(1/2)}_n &= \frac14 \E{q^{(1)}_{\omega_3 + \cdots + \omega_n} + 2q^{(1)}_{1 + \omega_3 + \cdots + \omega_n} + q^{(1)}_{2 + \omega_3 + \cdots + \omega_n}}\notag
\\&= \frac14 \E{\hatq_{2+\omega_3 + \cdots + \omega_n}}
\end{align*} where $\hatq_n := q^{(1)}_n + 2q^{(1)}_{n-1} + q^{(1)}_{n-2}$. The value of this depends on parity:$$\hatq_n(k) = \begin{cases}q^{(1)}_n(k) + q^{(1)}_{n-2}(k)&\text{ if }n+\sum k_i\text{ is even}\\2q^{(1)}_{n-1}(k)&\text{ if }n+\sum \x_i\text{ is odd.}\end{cases}$$

If $3 \le n \le \lapconst |\x|^2$, then Theorem~\ref{simple} holds for both $n$ and $n-1$: $$q_n^{(1)} \ge \left(1 + {1 \over n}\right)q_{n-2}^{(1)} \qquad q_{n-1}^{(1)} \ge \left(1 + {1 \over n-1}\right)q_{n-3}^{(1)} \ge \left(1 + {1 \over n}\right) q_{n-3}^{(1)}.$$ If $n + \sum \x_i$ is even, then $(n-1) + \sum \x_i$ is odd, so \begin{align*}\hatq_n &= q^{(1)}_n + q^{(1)}_{n-2} \\&\ge \Big(1+\frac{1}{n}+1\Big) q^{(1)}_{n-2} \ge \Big(1 + \frac{1}{2n}\Big) \hatq_{n-1}.\end{align*}
If $n + \sum \x_i$ is odd, then by similar reasoning, \begin{align*}\hatq_n &= 2q^{(1)}_{n-1} \\&= \Big(1 + \frac{1}{4n}\Big) q^{(1)}_{n-1} + \Big(1-\frac{1}{4n}\Big) q^{(1)}_{n-1} \\&\ge \Big(1 + \frac{1}{4n}\Big) q^{(1)}_{n-1} + \Big(1-\frac{1}{4n}\Big)\Big(1+\frac{1}{n}\Big) q^{(1)}_{n-3}  \\&\ge\Big(1 + \frac{1}{4n}\Big)(q^{(1)}_{n-1} + q^{(1)}_{n-3})\\&=\Big(1 + \frac{1}{4n}\Big) \hatq_{n-1},\end{align*}
where in the fourth line, we used the estimate that since $n\geq 1$, $(1-(4n)^{-1}) (1+n^{-1}) = 1 + 3(4n)^{-1} - (4n^2)^{-1} \ge 1+(2n)^{-1}$. 

In both cases, we get the inequality $(1+(4n)^{-1})\hatq_{n-1} \le \hatq_n$, and thus, since $2+\omega_{3}\cdots+\omega_{n-1}\leq n-1\leq n$, we have
\begin{align*} 
\Big(1+\frac{1}{4n}\Big) \E{\hatq_{2 + \omega_3 + \cdots + \omega_{n-1}}}
&\leq \frac{1}{4}\Big(1+\frac{1}{4(2 + \omega_3 + \cdots + \omega_{n-1})}\Big) \E{\hatq_{2 + \omega_3 + \cdots + \omega_{n-1}}}\\
&\leq \E{\hatq_{2 + \omega_3 + \cdots + \omega_{n-1}+1}}\\
&=2\E{\hatq_{2 + \omega_3 + \cdots + \omega_{n-1}+\omega_{n}}}-\E{\hatq_{2 + \omega_3 + \cdots + \omega_{n-1}}}. 
\end{align*}
We arranging and multiplying by $1/4$, we obtain 
\begin{equation*}
\frac{1}{4}\Big(2+\frac{1}{4n}\Big) \E{\hatq_{2 + \omega_3 + \cdots + \omega_{n-1}}}\leq \frac{2}{4}\E{\hatq_{2 + \omega_3 + \cdots + \omega_{n-1}+1}}, 
\end{equation*}
and hence
\begin{equation*}
\Big(2+\frac{1}{4n}\Big)q_{n-1}^{(1/2)}\leq 2q_{n}^{(1/2)},
\end{equation*}
which implies the result.
\qed 

Finally, we are ready to prove Theorem \ref{inc}. Recall that we aim to show that there are constants $\x_2$ such that , if the lazy parameter $b\leq 1/8$, then for $|\x| \ge \x_2$ and $n \le \lapconst |\x|^2/6b$ for $\La$ as in Theorem \ref{simple}, $$q^{(b)}_n(\x) \le q^{(b)}_{n+1}(\x).$$

\begin{proof}[Proof of Theorem \ref{inc}]

Let $\nu_1, \nu_2, \ldots$ be independent $\left\{0,1\right\}$-valued random variables that are $\text{Bernoulli}(2b)$-distributed. Consider the following process on $\mathbb Z^{d}$. Starting from the origin, at step $n$, if $\nu_n = 1$, then we take a $1/2$-lazy random walk step, i.e. with probability $1/2$, we take a unit step in one of the $2d$ directions, or with probability $1/2$, we stay where we are. If $\nu_n = 0$, we stay put.

On the one hand, this is just a $b$-lazy random walk, because we jump to a uniformly chosen neighbour with probability $b$ at each step. On the other hand, at the $n$-th step, we have taken $\nu_1 + \cdots + \nu_n$ steps in a $1/2$-lazy random walk. Consequently, $$q_n^{(b)}(\x) = \mathbb E\left[q_{\nu_1 + \cdots + \nu_n}^{(1/2)}(\x)\right].$$
We may equivalently express this as $$q^{(b)}_n(\x) = \sum_{m=0}^\infty q^{(1/2)}_m(\x) \,\mathbb P\{\nu_1 + \cdots + \nu_n = m\}.$$
Let $\delta q_j^{(1/2)}(k):= q^{(1/2)}_{j+1}(\x) - q^{(1/2)}_j(\x)$. Then since $|k|\geq k_{2}$ to be chosen, by a telescoping series, we have 
\begin{align*}\sum_{m=0}^\infty q^{(1/2)}_m(\x) \mathbb P\{\nu_1 + \cdots + \nu_n = m\}&=\sum_{m=0}^{\infty} \sum_{j =0}^{m-1} \delta q^{(1/2)}_j(k) \mathbb P\{\nu_1 + \cdots + \nu_n = m\} \\&=\sum_{j=0}^{\infty} \delta q^{(1/2)}_j(k)\sum_{m=j+1}^{\infty} \mathbb P\{\nu_1 + \cdots + \nu_n = m\} \\&=\sum_{j=0}^{\infty} \delta q_j ^{(1/2)}(k)\mathbb P\{\nu_1 + \cdots + \nu_n \ge j+1\}.\end{align*} 
That means that the difference of successive probabilities is \begin{align*}q^{(b)}_{n+1}(\x) - q^{(b)}_n(\x) &= \sum_{j=0}^{\infty} \delta q_j^{(1/2)}(k) \big(\mathbb P\{\nu_1 + \cdots + \nu_{n+1} \ge j+1\} - \mathbb P\{\nu_1 + \cdots + \nu_n \ge j+1\}\big) \\&= \sum_{j=0}^{\infty} \delta q_j^{(1/2)}(k)\mathbb P\{\nu_1 + \cdots + \nu_n = j, \nu_{n+1} = 1\} \\&= 2b \sum_{j=0}^{\infty} \delta q_j^{(1/2)}(k) \mathbb P\{\nu_1 + \cdots + \nu_n = j\}.\end{align*}This is a weighted sum of the differences $\delta q_j^{(1/2)}(k)$. By Lemma~\ref{half}, if $j + 1 \le \lapconst |\x|^2$, then $$\delta q^{(1/2)}_j (k)\geq{1 \over 8(j+1)} q_j^{(1/2)}(\x) \ge {1 \over 8\lapconst |\x|^2} q_j^{(1/2)}(\x).$$

Applying this lower bound to the prior display, for the indices $j = 0, \ldots, \lfloor \lapconst |\x|^2 \rfloor - 1$, we obtain \begin{equation}\begin{aligned}[t]\label{lowb} q^{(b)}_{n+1}(\x) - q^{(b)}_n(\x) &\ge {b \over 4\lapconst |\x|^2} \sum_{j=0}^{\lfloor \lapconst |\x|^2 \rfloor - 1} q^{(1/2)}_j(k) \,\mathbb P\{\nu_1 + \cdots + \nu_n = j\}\\&\qquad+2b\sum_{j=\lfloor \lapconst |\x|^2 \rfloor}^\infty \delta q_j^{(1/2)}(k) \,\mathbb P\{\nu_1 + \cdots + \nu_n = j\}\\
&=: S_{1}+S_{2}.\end{aligned}\end{equation} 
We want to show that the right hand side is nonnegative. The rough idea is that $\mathbb P\{\nu_1 + \cdots + \nu_n = j\}$ should decrease very quickly as $j$ increases when $j$ is much larger than $\mathbb E[\nu_1 + \cdots + \nu_n] = 2bn \le \lapconst |\x|^2/3$.

It turns out that we get exponential decay if $j \ge \lapconst |\x|^2/2$, as we now show. The probabilities in (\ref{lowb}) have a simple formula, $\mathbb P\{\nu_1 + \cdots + \nu_n = j\} = (2b)^j (1-2b)^{n-j} \genfrac{(}{)}{0pt}{}{n}{j}
$. In particular, the ratio of consecutive probabilities is $${\mathbb P\{\nu_1 + \cdots + \nu_n = j+1\} \over \mathbb P\{\nu_1 + \cdots + \nu_n = j\}} = {\genfrac{(}{)}{0pt}{}{n}{j+1}
 (2b)^{j+1} (1-2b)^{n-j-1} \over \genfrac{(}{)}{0pt}{}{n}{j}
 (2b)^j (1-2b)^{n-j}} = {2b(n-j) \over (1-2b)(j+1)}.$$ If $j + 1 \ge \lapconst |\x|^2 / 2$,  then since by assumption $n \le \lapconst |\x|^2 / 6b$ and $b \le 1/8$, then
$${2b(n-j) \over (1-2b)(j+1)} \le {2bn \over (3/4) (\lapconst |\x|^2/2)} \le \frac{\Lambda |k|^{2}}{3}\frac{4}{3}\frac{2}{\Lambda |k|^{2}}= {8 \over 9},$$
which implies that 
\begin{equation*}
{\mathbb P\{\nu_1 + \cdots + \nu_n = j+1\} \over \mathbb P\{\nu_1 + \cdots + \nu_n = j\}}\leq \frac{8}{9}. 
\end{equation*}
We now iterate this estimate. Let $\ga := \lfloor \lapconst |\x|^2 / 2 \rfloor$. If $j \ge \ga$, then \begin{equation}\label{inc.chain}\mathbb{P}\{\nu_1 + \cdots + \nu_n = j\} \le (8/9) \mathbb{P}\{\nu_1 + \cdots + \nu_{n} = j-1\} \le \cdots \le (8/9)^{j-\ga} \rho\end{equation}where $\rho := \mathbb{P}\{\nu_1 + \cdots + \nu_n = \ga\}$.

We will use this to compare $|S_{2}|$ with $S_{1}$.  Let $a, \x_1$ be the constants from Lemma \ref{weightlowerbound} with $\eta=\La^{-1}$. Let $\x_2\geq \max\{1, 4/\lapconst, \x_1\}$, and sufficiently large so that, for $c$ to be chosen,
\begin{equation}\label{ak0bound}
{a \over 4\lapconst^{1+d/2} |\x|^{2+d}} \ge c\left(\frac{8}{9}\right)^{\tfrac{\lapconst |\x|^2}{2}}\qquad \text{ for }|\x| \ge \x_2.
\end{equation}
This is possible because the right-hand side decreases exponentially, while the left-hand side only decreases polynomially.

The term with index $\gamma$ is part of the first sum, because $\ga \le \lapconst |\x|^2/2 \le \lfloor \lapconst |\x|^2 \rfloor$.\footnote{The assumptions on the constant $\x_2$ certainly imply that $\lapconst |\x|^2 / 2 \ge 2$, so $\lapconst |\x|^2 / 2 \le \lapconst |\x|^2 - 1 \le \lfloor \lapconst |\x|^2 \rfloor$.} Every term in the first sum is nonnegative, so $S_1$ is at least equal to the $\ga$ term. Then by an application of Lemma~\ref{weightlowerbound}, and the choice of constants, we have
\begin{align}S_1&\ge {b \over 4\lapconst |\x|^2} q_{\ga}^{(1/2)}(\x) \,\rho \ge {ab \over 4\lapconst |\x|^2 \ga^{d/2}} \,\rho\ge {ab \over 4\lapconst^{1+d/2} |\x|^{2 +d}} \,\rho.\label{s1bound}{}\end{align}

On the other hand, for $|S_{2}|$, we claim there exists an absolute constant $c$ such that, by (\ref{inc.chain}), 
\begin{align}\label{s2bound}
    |S_2| &\le 2b\sum_{j=\lfloor \lapconst |\x|^2 \rfloor}^\infty \delta q_j^{(1/2)}(k)\mathbb P\{\nu_1 + \cdots + \nu_n = j\}\le 4b\sum_{j=\lfloor \lapconst |\x|^2 \rfloor}^\infty (\tfrac{8}{9})^{j-\ga}\rho\le cb(\tfrac{8}{9})^{\lapconst |\x|^2/2}\rho.
\end{align}
The first term of the sum is $(8/9)^{\lfloor \lapconst |\x|^2 \rfloor - \lfloor \lapconst |\x|^2/2 \rfloor} \rho$, so $c = 4\times 9 \times (9/8)$ would make the inequality true, for example.

By (\ref{ak0bound}), the lower bound (\ref{s1bound}) is larger than the upper bound (\ref{s2bound}), so $$q^{(b)}_{n+1}(\x) - q^{(b)}_n \ge S_1 + S_2 \ge S_{1}-|S_{2}|\geq 0.$$ Therefore $q^{(b)}_n(\x) \le q^{(b)}_{n+1}(\x)$ as long as $n \le \lapconst |\x|^2/6b$. 
\end{proof}

\section*{Acknowledgements}

LAB acknowledges support from the NSERC Discovery Grants program and from the Canada Research Chairs program. This material is based in part upon work supported by the National Science Foundation under Grant No. DMS-1928930, while LAB was in residence at the Simons Laufer Mathematical Sciences Institute in Berkeley, California, during the semester of Spring 2025. GB from the NSERC Alexander Graham Bell Canada Graduate Scholarship. JL acknowledges support from NSERC Discovery Grant 2018-06371 and 2025-05575, and the Canada Research Chairs program 2018-00154 and 2023-00081.

\bibliographystyle{abbrv}
\bibliography{scmd}

\end{document}